\title{Structure of quasiconvex virtual joins}
\author{Lawk Mineh}
\begin{document}

\begin{abstract}
    Let \(G\) be a relatively hyperbolic group and let \(Q\) and \(R\) be relatively quasiconvex subgroups.
    It is known that there are many pairs of finite index subgroups \(Q' \leqslant_f Q\) and \(R' \leqslant_f R\) such that the subgroup join \(\langle Q', R' \rangle\) is also relatively quasiconvex, given suitable assumptions on the profinite topology of \(G\). 
    We show that the intersections of such joins with maximal parabolic subgroups of \(G\) are themselves joins of intersections of the factor subgroups \(Q'\) and \(R'\) with maximal parabolic subgroups of \(G\).
    As a consequence, we show that quasiconvex subgroups whose parabolic subgroups are almost compatible have finite index subgroups whose parabolic subgroups are compatible, and provide a combination theorem for such subgroups.
\end{abstract}

\maketitle

\section{Introduction}

The notion of a relatively hyperbolic group was proposed by Gromov \cite{Gromov1987} as a generalisation of word hyperbolic groups. 
The concept has been expanded on by various authors \cite{OsinRHG, BowditchRHG, FarbRHG,DS,GrovesManning}.
A group \(G\) is said to be \emph{hyperbolic relative to} a specified collection of \emph{peripheral subgroups} \(\relto\) when \(G\) exhibits hyperbolic-like behaviour away from the subgroups in this collection.
Archetypal examples of relatively hyperbolic groups include fundamental groups of finite volume manifolds of pinched negative curvature and small cancellation quotients of free products, which are hyperbolic relative to their cusp subgroups and the images of the free factors respectively.

In word-hyperbolic groups, finitely generated subgroups may be very ill-behaved in general, so it is often useful to consider the well-behaved class of \emph{quasiconvex subgroups}.
Quasiconvex subgroups play a central role in the theory of hyperbolic groups: they are exactly the finitely generated undistorted subgroups of hyperbolic groups and are themselves hyperbolic.
Analogously, in a relatively hyperbolic group there is the class of \emph{relatively quasiconvex subgroups} which play a similar role to quasiconvex subgroups in hyperbolic groups.
Relatively quasiconvex subgroups are themselves relatively hyperbolic in a way that is compatible with the ambient group.
Finitely generated undistorted subgroups and \emph{parabolic subgroups} (i.e., subgroups conjugate into the peripheral subgroups) form two basic classes of examples of relatively quasiconvex subgroups.

The intersection of two relatively quasiconvex subgroups is always relatively quasiconvex \cite{HruskaRHCG}, though their subgroup join may not be.
In a previous paper the author and Minasyan establish the relative quasiconvexity of joins of finite index subgroups of relatively quasiconvex subgroups, under some hypotheses on the \emph{profinite topology} of the group.
In particular, we often require that our groups are \emph{QCERF}, meaning that all finitely generated relatively quasiconvex subgroups are closed in the profinite topology (see Subsection~\ref{subsec:profinite_topology} for definitions and examples).

\begin{theorem}[{\cite[Theorem 1.2]{MinMin}}]
\label{thm:virtual_join_qc}
    Let \(G\) be a finitely generated group. Suppose that \(G\) is QCERF hyperbolic relative to a collection of double coset separable subgroups and let \(Q, R \leqslant G\) be finitely generated relatively quasiconvex subgroups.
    Then there are finite index subgroups \(Q' \leqslant_f Q\) and \(R' \leqslant_f R\) of \(Q\) and \(R\) respectively such that \(\langle Q', R' \rangle\) is relatively quasiconvex.
\end{theorem}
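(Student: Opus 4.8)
The plan is to prove this in two phases: first establish a geometric criterion in the relative Cayley graph guaranteeing that a subgroup join is relatively quasiconvex, and then exploit the separability hypotheses to pass to finite-index subgroups satisfying that criterion.

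\emph{Geometric set-up and criterion.} Fix a finite relative generating set $X$ for $G$; by Osin's work the relative Cayley graph $\relcay$ is $\delta$-hyperbolic, and a subgroup $S \leqslant G$ is relatively quasiconvex exactly when geodesics of $\relcay$ with endpoints in $S$ stay uniformly $X$-close to $S$, together with a properness condition relating $S$ to the parabolic cosets it meets. Recalling the standard machinery (components of relative geodesics, no-backtracking, and the Bounded Coset Penetration property), one formulates a \emph{transversality condition}: there is a constant $C = C(\delta, X, Q, R)$ such that if $Q'' \leqslant_f Q$ and $R'' \leqslant_f R$ satisfy, for every $g \in G$ with $\dxh(1,g) \leqslant C$, that $(Q'')^g \cap R''$ is contained in the corresponding conjugate of $Q \cap R$ or of a maximal parabolic subgroup, then $\langle Q'', R'' \rangle$ is relatively quasiconvex. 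Geometrically this is a ``no thin neck'' statement for the family of cosets $\{qR'' : q \in Q''\} \cup \{rQ'' : r \in R''\}$: distinct members do not $C$-fellow-travel in $\relcay$ except in ways forced by their overlaps.

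\emph{The windmill.} To prove the criterion suffices, build an ascending chain $W_0 \subseteq W_1 \subseteq \cdots$ of subsets of $\relcay$: take $W_0$ to be a uniformly relatively quasiconvex thickening of $Q'' \cup R''$, and let $W_{n+1}$ be $W_n$ together with all translates $s W_n$, with $s$ ranging over fixed symmetric finite generating sets of $Q''$ and of $R''$. Prove by induction that each $W_n$ is $\varepsilon$-relatively quasiconvex for one fixed $\varepsilon$: the inductive step glues $W_n$ to a translate $s W_n$ along an overlap containing a conjugate of $Q''$ or of $R''$, and shows, via $\delta$-thinness of polygons in $\relcay$ and the transversality condition, that a relative geodesic between two points of $W_n \cup s W_n$ fellow-travels the union. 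Then $\bigcup_n W_n$ contains $1$, is $\langle Q'', R''\rangle$-invariant and $\varepsilon$-relatively quasiconvex, so $\langle Q'', R''\rangle$ is a relatively quasiconvex subset; the parabolic properness condition follows from the transversality condition restricted to parabolic $g$, yielding relative quasiconvexity of the subgroup itself. (The same geometric content can alternatively be packaged as a Martinez-Pedroza--type combination theorem.)

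\emph{Achieving the criterion by separability.} It remains to produce $Q' \leqslant_f Q$ and $R' \leqslant_f R$ meeting the transversality condition. Only finitely many double cosets $QgR$ contain an element $g$ with $\dxh(1,g)\leqslant C$, and for each of these the violating configurations form an obstruction that can be separated off in the profinite topology: this is exactly where the hypotheses enter, QCERF together with double coset separability of the peripherals supplying enough closedness of relatively quasiconvex subgroups and of the relevant double cosets. The parabolic obstructions are handled inside the finitely many relevant maximal parabolics $P$, via the double cosets $(Q \cap P)\, g\, (R \cap P)$ with $g \in P$ of bounded length, using double coset separability of the peripheral containing $P$. Intersecting the finitely many finite-index subgroups so obtained gives $Q', R'$ as required.

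\textbf{Main obstacle.} The technical heart is the inductive gluing step of the windmill with a constant $\varepsilon$ that does not degrade with $n$. Unlike the word-hyperbolic case, relative geodesics can take long excursions through parabolic cosets, so the ``thin neck'' analysis must control how a maximal parabolic coset shared between a $Q''$-translate and an $R''$-translate of $W_n$ is entered and exited; this is precisely where double coset separability of the peripherals is indispensable, ensuring that after passing to $Q', R'$ the parabolic parts $Q' \cap P$ and $R' \cap P$ interact only in the forced way, so excursions cannot leak between the two families of translates and pinch the overlap. Making the Bounded Coset Penetration estimates uniform across the windmill, so that $\varepsilon$ is genuinely independent of $n$, is the crux.
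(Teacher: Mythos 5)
This theorem is quoted from \cite{MinMin} rather than proved in the present paper, but the machinery the paper recalls from that source (path representatives of minimal type, tamability, the shortcutting procedure of Section~\ref{sec:shortcutting}, the metric conditions \descref{C1}--\descref{C5}, and Proposition~\ref{prop:sep->C1-C5}) reflects the actual argument: one shows that a minimal-type broken line representing an element of $\langle Q', R' \rangle$ is tamable once $Q'$ and $R'$ satisfy \descref{C1}--\descref{C5} with large constants, so its shortcutting is a uniform quasigeodesic without backtracking, and quasiconvexity of the join follows directly. That is a global quasigeodesicity argument about a single path, not an inductive thickening. Your windmill strategy is a genuinely different route, closer in spirit to combination theorems, and it is not wrong in outline --- but as written it has a real gap rather than being a complete alternative proof.

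The gap is exactly the step you flag as the ``main obstacle'': the inductive claim that each $W_{n+1}$ is $\varepsilon$-relatively quasiconvex for an $\varepsilon$ independent of $n$ is asserted, not established, and your transversality condition is not strong enough to carry it. The condition you state (that $(Q'')^g \cap R''$ for short $g$ lies in a conjugate of $Q \cap R$ or of a maximal parabolic) permits precisely the hard configuration: a maximal parabolic $P$ with $Q'' \cap P$ and $R'' \cap P$ both infinite and neither contained in the other. In that case the families $\{qR''\}$ and $\{rQ''\}$ really do fellow-travel for arbitrarily long distances inside cosets of $P$, the ``no thin neck'' heuristic fails, and $\langle Q'', R''\rangle \cap P$ is a genuine join $\langle Q''\cap P, R''\cap P\rangle$ (this is what Theorem~\ref{thm:parab_join_better} of the present paper quantifies). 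To make the gluing step close up one needs quantitative control of how $\langle Q''\cap P, R''\cap P\rangle$ sits inside $P$ relative to $Q\cap P$ and $R\cap P$ --- this is the content of conditions \descref{C3}--\descref{C5}, and it is exactly what double coset separability of the peripherals is used to arrange; your sketch invokes separability of double cosets $(Q\cap P)g(R\cap P)$ but never formulates the estimate that the induction would consume. Relatedly, the parenthetical suggestion that the argument can be ``packaged as a Mart\'{i}nez-Pedroza--type combination theorem'' does not apply in this generality: Theorem~\ref{thm:compat_parab_amalgam} requires compatible parabolics, which $Q$ and $R$ need not have (even virtually) under the hypotheses of Theorem~\ref{thm:virtual_join_qc}, and the join is in general not an amalgam over $Q'\cap R'$.
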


In fact, the theorem above establishes the existence of many finite index subgroups of \(Q\) and \(R\) whose join is relatively quasiconvex rather than just one pair, though the existential statement is little technical.
In this article we will be interested in the existence of families of finite index subgroups \(Q' \leqslant Q\) and \(R' \leqslant R\) quantified as follows:

\begin{itemize}
    \descitem{E} there exists \(L \leqslant_f G\) with \(Q \cap R \subseteq L\) such that for any \(L' \leqslant_f L\) satisfying \(Q \cap R \subseteq L'\), there exists \(M \leqslant_f L'\) with \(Q \cap L' \subseteq M\) such that for any \(M' \leqslant_f M\) satisfying \(Q \cap L' \subseteq M'\), we can choose \(Q' = Q \cap M'\) and  \(R'=R \cap M' \leqslant_f R\).
\end{itemize}

The reader less interested in technicalities may roughly interpret \descref{E} as expressing that there exist sufficiently many finite index subgroups \(Q' \leqslant_f Q\) and \(R' \leqslant_f R\) for most purposes.

A relatively quasiconvex subgroup of a relatively hyperbolic group \(G\) is itself relatively hyperbolic with respect to its infinite intersections with maximal parabolic subgroups of \(G\) \cite{HruskaRHCG}.
It is natural to ask, then, about the structure of maximal parabolic subgroups of the relatively quasiconvex joins \(\langle Q', R' \rangle\) obtained from Theorem~\ref{thm:virtual_join_qc}.
The main goal of this paper is to establish that \(Q'\) and \(R'\) can be chosen such that the intersection of \(\langle Q', R' \rangle\) with a maximal parabolic subgroup of \(G\) is, up to conjugacy, itself a join of maximal parabolic subgroups of \(Q'\) and \(R'\).

\begin{theorem}
\label{thm:parab_join}
    Under the conditions of Theorem~\ref{thm:virtual_join_qc}, there is a family of pairs of finite index subgroups \(Q' \leqslant_f Q\) and \(R' \leqslant_f R\) as in \descref{E} such that the following is true. 
    
    Suppose that \(P \leqslant G\) is a maximal parabolic subgroup with \(\langle Q', R' \rangle \cap P\) infinite. 
    Then there is \(u \in \langle Q', R' \rangle\) such that
    \[
        \langle Q', R' \rangle \cap P = u \langle Q' \cap K, R' \cap K \rangle u^{-1},
    \]
    where \(K = u^{-1} P u\).
\end{theorem}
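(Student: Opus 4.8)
The plan is to work inside the relative Cayley graph \(\relcay\) and exploit the fine combinatorial description of \(\langle Q', R'\rangle\) supplied by the machinery of \cite{MinMin} together with the structural results established in the body of the paper: an element of the join is represented by an admissible alternating product \(s_0 s_1 \cdots s_m\) whose consecutive factors lie in distinct translates of \(Q'\) and \(R'\), a \(\dxh\)-geodesic between two vertices of the join fellow-travels the associated broken path, and --- crucially for dealing with parabolics --- every sufficiently long excursion of that geodesic into a peripheral coset is confined to a single factor translate. I would begin by refining the family produced by Theorem~\ref{thm:virtual_join_qc} (which has the shape \descref{E}) so that, in addition, the chosen \(Q' \leqslant_f Q\) and \(R' \leqslant_f R\) are deep enough that: \(\langle Q', R'\rangle\) is relatively quasiconvex with explicit fellow-travelling and excursion constants; for each of the finitely many \(G\)-conjugacy classes of maximal parabolics \(P_\nu\), \(\nu \in \Nu\), meeting \(Q\) or \(R\) infinitely, the relevant intersections \(Q \cap g P_\nu g^{-1}\), \(R \cap g P_\nu g^{-1}\) pass coherently into \(Q'\), \(R'\); and suitable double cosets of these parabolic intersections are separated in the profinite topology of \(G\). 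This last requirement is exactly where the QCERF and double-coset-separability hypotheses of Theorem~\ref{thm:virtual_join_qc} are spent, and it is what will make the coherence argument below go through.

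Fix now a maximal parabolic \(P \leqslant G\) with \(N := \langle Q', R'\rangle \cap P\) infinite; then \(N\) is an infinite maximal parabolic subgroup of the relatively quasiconvex group \(\langle Q', R'\rangle\). For any \(u \in \langle Q', R'\rangle\), writing \(K = u^{-1} P u\), one has \(\langle Q' \cap K, R' \cap K\rangle \subseteq \langle Q', R'\rangle \cap K = u^{-1} N u\), since \(Q' \cap K\) and \(R' \cap K\) lie in both \(\langle Q', R'\rangle\) and \(K\); so it suffices to exhibit \(u\) realising the reverse inclusion. Write \(P = g P_\nu g^{-1}\) and let \(v = g P_\nu\) be the corresponding cone-vertex of \(\relcay\), fixed by every element of \(N\). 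Given \(n \in N\) and an admissible expression \(n = s_0 \cdots s_m\), I would apply the excursion-confinement statement to a \(\dxh\)-geodesic from \(1\) to \(n\): the long piece of this geodesic lying near \(v\) is carried by one factor translate, so there is an index \(i\) and \(s := s_0 \cdots s_{i-1}\) with \(s^{-1} v\) a cone-vertex at bounded \(\dxh\)-distance from \(1\); consequently \(s^{-1} P s\) meets \(Q'\) (or \(R'\)) infinitely and ``accounts for'' \(n\), in the sense that \(s^{-1} n s \in \langle Q' \cap s^{-1} P s,\, R' \cap s^{-1} P s\rangle\).

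The heart of the proof is to upgrade this per-element statement to a single conjugator valid for all of \(N\) simultaneously. I would argue that, for the choices fixed in the first paragraph, the factor \(s\) may be taken independent of \(n\) modulo \(\langle Q' \cap K, R' \cap K\rangle\), where \(K = s^{-1} P s\): given \(n_1, n_2 \in N\) with associated conjugators \(s_1, s_2\), one reruns the analysis on \(n_1 n_2^{-1} \in N\) and uses the separation of parabolic double cosets arranged above to force \(s_1^{-1} s_2 \in \langle Q' \cap K, R' \cap K\rangle\) (after absorbing an element of \(Q' \cap K\) or \(R' \cap K\) if necessary). Running this over a finite generating set of \(N\) then gives \(\langle Q', R'\rangle \cap K = \langle Q' \cap K, R' \cap K\rangle\), and \(u = s\) satisfies the conclusion. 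I expect this coherence step to be the main obstacle: moving from ``each element of \(N\) is absorbed into some translate of a factor'' to ``all of \(N\) is absorbed into one common translate'' is precisely where the profinite hypotheses and the depth afforded by \descref{E} are needed, and making the excursion-confinement lemma quantitatively uniform --- so that one bound governs every \(n \in N\) despite \(N\) being infinite --- is itself a delicate point that must be handled first.
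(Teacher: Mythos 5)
There is a genuine gap at the central step. After locating the peripheral excursion and passing to the prefix \(s\), you assert that ``consequently'' \(s^{-1}ns \in \langle Q' \cap s^{-1}Ps,\, R' \cap s^{-1}Ps\rangle\). Nothing you have set up delivers this: \(s^{-1}ns\) is still a long alternating product \(s_i\cdots s_m s_0\cdots s_{i-1}\), and knowing that it lies in a parabolic whose cone-vertex is \(\dxh\)-close to \(1\) does not by itself decompose it as a product of \emph{parabolic} elements of \(Q'\) and \(R'\). Moreover the premise you invoke --- that a long excursion into a peripheral coset is ``confined to a single factor translate'' --- is the opposite of what actually happens: for a minimal-type path representative \(p_1\cdots p_n\) of a long parabolic element of the join, the relevant peripheral excursion is an instance of consecutive backtracking running through \emph{all} the segments \(p_1,\dots,p_n\) (this is the content of Lemmas~\ref{lem:parab_short_one_ek} and~\ref{lem:parab_short_no_sides}). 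The desired decomposition is then extracted by an inductive construction along these connected components, using the metric condition \descref{C3} via Lemma~\ref{lem:(c3)->vertex_constr}, and finitely many exceptional elements are absorbed by Lemma~\ref{lem:almost_containment_inf_subgps}. This is exactly Proposition~\ref{thm:parab_join_short_conj}, the technical heart of the theorem, and your proposal black-boxes it behind one ``consequently''.

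Your coherence step is also problematic, in two ways. First, it is unnecessary if the reduction is organised correctly: since conjugation by any \(u\in\langle Q',R'\rangle\) is an automorphism of \(\langle Q',R'\rangle\), a \emph{single} long element of \(\langle Q',R'\rangle\cap P\) suffices to produce one \(u\) with \(K=u^{-1}Pu\) lying in a fixed finite set of uniformly short conjugates of the \(H_\nu\) (Lemma~\ref{lem:pass_to_q_conj} and Proposition~\ref{prop:back_to_Q'R'}), and then \(\langle Q',R'\rangle\cap P = u\bigl(\langle Q',R'\rangle\cap K\bigr)u^{-1}\) holds automatically, so the short-conjugator proposition applies to the whole subgroup at once; no alignment of element-dependent conjugators is needed. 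Second, as stated your alignment argument does not work: \(N=\langle Q',R'\rangle\cap P\) is a parabolic subgroup and need not be finitely generated (the peripherals are arbitrary), so ``running this over a finite generating set of \(N\)'' is unjustified, and the double-coset separability hypothesis is spent in this theory on constructing \(Q',R'\) satisfying conditions \descref{C1}--\descref{C5} (Proposition~\ref{prop:sep->C1-C5}), not on forcing \(s_1^{-1}s_2\) into \(\langle Q'\cap K, R'\cap K\rangle\) after the fact --- you give no mechanism by which separability would accomplish that.
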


In fact, we obtain a stronger -- though more technical -- characterisation of \(\langle Q', R' \rangle \cap P\) below in Theorem~\ref{thm:parab_join_better}.
Note that the conjugator \(u\) in the above statement is strictly necessary: suppose \(K \leqslant G\) is a maximal parabolic subgroup of \(G\) such that either \(Q' \cap K\) or \(R' \cap K\) is infinite.
Then for any \(v \in \langle Q', R' \rangle\), the intersection \(\langle Q', R' \rangle \cap vKv^{-1}\) contains \(v(Q' \cap K)v^{-1}\) and \(v(R' \cap K)v^{-1}\), and is therefore infinite.
However, it may be that \(u \in \langle Q', R' \rangle\) is such that the subgroups \(Q' \cap P\) and \(R' \cap P\) are both trivial, where \(P = u^{-1} K u\).
This precludes the possibility that they generate \(\langle Q', R' \rangle \cap P\).

Theorem~\ref{thm:parab_join} is a natural extension of a result of Mart\'{i}nez-Pedroza, which states that the intersections \(\langle Q', R' \rangle \cap P\) are conjugate into either \(Q'\) or \(R'\) in the special case that \(R\) is a parabolic subgroup of \(G\) \cite{MPComb}.

Given a maximal parabolic subgroup \(P \leqslant G\), we say that \(Q\) and \(R\) are \emph{compatible at \(P\)} if \(Q \cap P \leqslant R \cap P\) or \(R \cap P \leqslant Q \cap P\), and are \emph{almost compatible at \(P\)} if \(Q \cap R \cap P\) has finite index in either \(Q \cap P\) or \(R \cap P\).
If \(Q\) and \(R\) are (almost) compatible at every maximal parabolic subgroup \(P \leqslant G\), then we say that \(Q\) and \(R\) have \emph{(almost) compatible parabolics}.
The notion of almost compatible parabolics was introduced by Baker and Cooper in the setting of discrete subgroups of \(\operatorname{Isom}(\mathbb{H}^n)\) \cite{Baker_Cooper}. 
We may now state the stronger version of Theorem~\ref{thm:parab_join}.

\begin{theorem}
\label{thm:parab_join_better}
    Let \(G\) be a finitely generated QCERF relatively hyperbolic group, and let \(Q, R \leqslant G\) be finitely generated relatively quasiconvex subgroups.
    Suppose that either \(Q\) and \(R\) have almost compatible parabolics or that each peripheral subgroup of \(G\) is double coset separable.    
    Then there is a finite set \(\mathcal{K}\) of maximal parabolic subgroups of \(G\) and a family of pairs of finite index subgroups \(Q' \leqslant_f Q\) and \(R' \leqslant_f R\) as in \descref{E} such that the following is true.
    
    Suppose that \(P \leqslant G\) is a maximal parabolic subgroup with \(\langle Q', R' \rangle \cap P\) infinite.
    Then there is an element \(u \in \langle Q', R' \rangle\) such that either
    \begin{enumerate}[label=(\roman*)]
        \item \(\langle Q', R' \rangle \cap P = u Q' u^{-1} \cap P\) or,
        \item \(\langle Q', R' \rangle \cap P = u R' u^{-1} \cap P\) or,
        \item \(\langle Q', R' \rangle \cap P =  u\langle Q' \cap K, R' \cap K \rangle u^{-1}\) where \(K = u^{-1} P u\) is an element of \(\mathcal{K}\), and \(Q'\) and \(R'\) are not almost compatible at \(K\).
    \end{enumerate}
    
    Moreover, if either \(Q' \cap P\) or \(R' \cap P\) is infinite, then we may take \(u = 1\) in cases (i) and (ii), and \(u \in Q' \cup R'\) in case (iii).
\end{theorem}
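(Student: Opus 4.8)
Write $F=\langle Q',R'\rangle$. The plan is to analyse the parabolic elements of $F$ through the fellow-travelling machinery and normal forms developed in \cite{MinMin} to prove Theorem~\ref{thm:virtual_join_qc}. Recall that, for the finite index subgroups produced there, every $g\in F$ has a \emph{normal form} $g=g_1g_2\cdots g_n$ with the syllables $g_i$ lying alternately in $Q'$ and $R'$, whose associated path in $\relcay$ --- running through the cosets $g_1\cdots g_{i-1}S_i$ (with $S_i\in\{Q',R'\}$) and through the \emph{phase vertices} $w_i=g_1\cdots g_i$ --- is a quasigeodesic with uniform constants and uniformly bounded backtracking; one pictures the $F$-translates of these cosets as the \emph{petals} of a windmill swept out by $F$. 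Since $F$ is relatively quasiconvex it is, by \cite{HruskaRHCG}, relatively hyperbolic relative to its infinite intersections with the maximal parabolic subgroups of $G$; in particular, whenever $P$ is a maximal parabolic of $G$ with $\Pi:=F\cap P$ infinite, $\Pi$ is one of the finitely many, up to $F$-conjugacy, maximal parabolic subgroups of $F$, so it suffices to treat one $P$ in each such conjugacy class.

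\textbf{Step 1 (reduction to a generation statement).} The crux will be that $\Pi$ is generated by its intersections with two petals: there are $a,b\in F$ so that the cosets $aQ'$ and $bR'$ share a phase vertex $v$ of some normal-form path and $\Pi=\langle\,\Pi\cap aQ'a^{-1},\ \Pi\cap bR'b^{-1}\,\rangle$. Granting this, put $u:=v\in F$; since $v\in aQ'$ and $v\in bR'$ one has $uQ'u^{-1}=aQ'a^{-1}$ and $uR'u^{-1}=bR'b^{-1}$, so, writing $K:=u^{-1}Pu$ and using $u\in F$ (hence $aQ'a^{-1},bR'b^{-1}\leqslant F$),
\[
  \Pi=\langle\,uQ'u^{-1}\cap P,\ uR'u^{-1}\cap P\,\rangle=u\langle\,Q'\cap K,\ R'\cap K\,\rangle u^{-1}.
\]
If one of the two factors on the right contains the other, this is case (i) or (ii) (after conjugating back, $F\cap P=uQ'u^{-1}\cap P$ or $uR'u^{-1}\cap P$); otherwise it is case (iii), once we have also arranged in Step 3 that $Q'$ and $R'$ are not almost compatible at $K$ and that $K$ lies in a prescribed finite set $\mathcal{K}$. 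For the ``moreover'' clause one checks that when $Q'\cap P$ (resp.\ $R'\cap P$) is already infinite, the identity-based petal $Q'$ (resp.\ $R'$) may be taken as one of the two petals, forcing $v\in Q'\cup R'$, with $v=1$ exactly in cases (i) and (ii).

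\textbf{Step 2 (the generation statement).} This is the technical heart. Let $\pi\in\Pi$; then $\pi$, together with all its powers, lies within uniformly bounded $\relcay$-distance of a fixed point $g_0$ of a single parabolic coset $\Theta$ of $G$, which has $\relcay$-diameter $1$. Applying the quasigeodesic bound to the normal-form path of $\pi$ bounds its number of syllables uniformly over $\pi\in\Pi$, while bounded backtracking forces every phase vertex of the path to lie in a uniformly bounded $\relcay$-neighbourhood of $\Theta$; a separation argument in $\relcay$ then identifies the cosets met by the path as exactly those petals meeting $\Theta$ in an infinite set, with transitions occurring at shared phase vertices. The decisive further point --- and this is where the finite index subgroups must be chosen with care, using the room afforded by \descref{E} --- is that once $Q'$ and $R'$ are sufficiently deep, the configuration of these petals-in-$\Theta$ and their gluings is tree-like and collapses, modulo the $\Pi$-action, to a single gluing of one $Q'$-petal to one $R'$-petal; the resulting amalgam description yields that $\Pi$ is generated by the two petal stabilisers. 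I expect the main obstacle to be precisely this last point: showing that the path of a parabolic element does not leave and re-enter $\Theta$ except through cancelling syllables, and pinning down the petal/gluing structure inside $\Theta$ tightly enough (after passing to suitable finite index subgroups) to extract the two-petal amalgam.

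\textbf{Step 3 (finiteness, compatibility, and the form \descref{E}).} Since $F$ has only finitely many conjugacy classes of maximal parabolic subgroups, only finitely many $K$ arise in case (iii); choosing one representative per class gives $\mathcal{K}$. Moreover, at a maximal parabolic $K$ where $Q$ and $R$ are almost compatible, so are all their finite index subgroups (a short verification), and we ensure such $K$ never need case (iii): using the separability hypotheses we refine $Q'$ and $R'$ --- within \descref{E}, at the cost of finitely many further finite-index conditions and preserving the form of the conclusion --- so that at each such $K$ the subgroups $Q'\cap K$ and $R'\cap K$ are nested, whence $\langle Q'\cap K,R'\cap K\rangle$ is the larger of them and we land in case (i) or (ii). The dichotomy of hypotheses enters here: if $Q$ and $R$ have almost compatible parabolics this covers every relevant $K$, so $\mathcal{K}=\varnothing$ and only cases (i) and (ii) occur; if instead each peripheral subgroup is double coset separable, it is that hypothesis --- applied as in \cite{MinMin} to control the parabolic intersections of $Q'$ and $R'$ --- that supplies the nestings wherever almost-compatibility holds, the genuinely incompatible peripheral data being gathered into $\mathcal{K}$. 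Checking that the finitely many nesting conditions at different conjugacy classes can be imposed simultaneously and compatibly with the construction of \cite{MinMin} is routine once Step 2 is in hand; Theorem~\ref{thm:parab_join} then follows, since in each of (i)--(iii) one has $\langle Q',R'\rangle\cap P=u\langle Q'\cap K,R'\cap K\rangle u^{-1}$ with $K=u^{-1}Pu$.
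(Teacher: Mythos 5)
Your high-level architecture (reduce to a generation statement for $F\cap P$ in terms of two conjugates of $Q'$ and $R'$ meeting at a common element $u$, then use separability to collapse the almost-compatible cases into (i)/(ii)) matches the shape of the paper's argument, and your Step 1 algebra is fine. But there are two genuine gaps.

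First, Step 2 --- which you correctly identify as the technical heart --- is not actually carried out, and the sketch you give contains a false step: the number of syllables of a minimal-type path representative of $\pi\in F\cap P$ is \emph{not} uniformly bounded over $\pi$ (alternating products of elements of $Q'\cap P\setminus S$ and $R'\cap P\setminus S$ lie in $F\cap P$ and can have arbitrarily many syllables; quasigeodesicity of the shortcutting only bounds the length of the \emph{shortcut} path, which collapses the consecutive backtracking spanning all the segments into a single $\mathcal H$-edge). The paper's Proposition~\ref{thm:parab_join_short_conj} instead shows that \emph{every} segment of the representative participates in one instance of consecutive backtracking (Lemmas~\ref{lem:h_conn_to_ek}--\ref{lem:parab_short_no_sides}) and then inductively builds shadow paths $p_1',\dots,p_{n-1}'$ with labels in $(Q'\cup R')\cap P$ via condition \descref{C3} and Lemma~\ref{lem:(c3)->vertex_constr}, giving $\pi\in\langle Q'\cap P, R'\cap P\rangle$ directly; your appeal to a ``tree-like petal structure collapsing to a two-petal amalgam'' is not substantiated and is not an argument one can check.

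Second, Step 3 is circular as written: to impose the nesting $Q'\cap K\leqslant R'\cap K$ (or vice versa) by separability you must know the finite set $\mathcal K$ \emph{before} choosing $Q'$ and $R'$, but you define $\mathcal K$ via the conjugacy classes of maximal parabolics of $F=\langle Q',R'\rangle$, which depends on $Q'$ and $R'$ (and refining them changes $F$ and hence the relevant conjugators $u$). The paper avoids this by proving a quantitative reduction to short conjugators --- Lemma~\ref{lem:pass_to_q_conj} and Proposition~\ref{prop:back_to_Q'R'} show $K=u^{-1}Pu$ can always be taken of the form $yH_\nu y^{-1}$ with $\abs{y}_X\le\tau$ for a constant $\tau$ depending only on $G,X,Q,R$ --- so that $\mathcal K$ is fixed in advance and the finitely many coset-separation conditions of Proposition~\ref{prop:parabs_infinite_index_only} can be built into the choice of $Q'$ and $R'$ within \descref{E}. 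Some version of this uniform-length control on the conjugators is needed for your plan to close, and it is absent from the proposal.
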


We note that the set \(\mathcal{K}\) is independent of the particular finite index subgroups \(Q'\) and \(R'\).
As an application of Theorem~\ref{thm:parab_join_better}, we show that the condition of having almost compatible parabolics can be virtually promoted to that of having compatible parabolics.

\begin{corollary}
\label{cor:almost_compat->virtual_compat}
    Let \(G\) be a finitely generated QCERF relatively hyperbolic group.
    Suppose that \(Q, R \leqslant G\) are finitely generated relatively quasiconvex subgroups with almost compatible parabolics.
    There are finite index subgroups \(Q' \leqslant_f Q\) and \(R' \leqslant_f R\) such that \(Q'\) and \(R'\) have compatible parabolics.
\end{corollary}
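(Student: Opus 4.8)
The plan is to derive the corollary from Theorem~\ref{thm:parab_join_better} after a preliminary step that disposes of the maximal parabolic subgroups meeting $Q$ in a finite subgroup. Note first that $G$ is residually finite: the trivial subgroup is finitely generated and relatively quasiconvex, hence separable by the QCERF hypothesis; consequently $Q$ is residually finite as well. Since $Q$ is itself relatively hyperbolic, with finitely generated peripheral subgroups \cite{HruskaRHCG}, the finite subgroups of the form $Q \cap P$, where $P$ ranges over the maximal parabolic subgroups of $G$ with $Q \cap P$ finite, fall into only finitely many $Q$-conjugacy classes (this uses the structure of finite subgroups in relatively hyperbolic groups, together with the finiteness of the peripheral structure of $Q$). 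Using residual finiteness of $Q$, I would choose a normal finite-index subgroup $Q_0 \trianglelefteq Q$ with $Q_0 \cap (Q \cap P) = \{1\}$ for a representative $P$ of each such class; by normality, $Q_0 \cap P = \{1\}$ for \emph{every} maximal parabolic subgroup $P$ of $G$ with $Q \cap P$ finite.

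Next, observe that the property of having almost compatible parabolics is inherited by finite-index subgroups: if $Q \cap R \cap P$ has finite index in $Q \cap P$ and $Q_1 \leqslant_f Q$, $R_1 \leqslant_f R$, then $Q_1 \cap R \cap P = (Q_1 \cap P) \cap (Q \cap R \cap P)$ has finite index in $Q_1 \cap P$, and $Q_1 \cap R_1 \cap P = (Q_1 \cap R \cap P) \cap R_1$ has finite index in $Q_1 \cap R \cap P$, so $Q_1 \cap R_1 \cap P$ has finite index in $Q_1 \cap P$; the symmetric statement holds too. Hence $Q_0$ and $R$ have almost compatible parabolics, and so will any further finite-index subgroups. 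Now apply Theorem~\ref{thm:parab_join_better} to the pair $Q_0, R$ (using the alternative that these have almost compatible parabolics): this yields a finite set $\mathcal{K}$ of maximal parabolic subgroups and a family of pairs of finite-index subgroups $Q' \leqslant_f Q_0 \leqslant_f Q$ and $R' \leqslant_f R$. Fix any one such pair $Q', R'$.

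It remains to check that $Q'$ and $R'$ are compatible at every maximal parabolic subgroup $P$ of $G$. If $Q \cap P$ is finite, then $Q' \cap P \leqslant Q_0 \cap P = \{1\}$, so $Q' \cap P = \{1\} \leqslant R' \cap P$. Otherwise $Q \cap P$, and hence $Q' \cap P$ (of finite index in it), is infinite, so $\langle Q', R' \rangle \cap P$ is infinite and Theorem~\ref{thm:parab_join_better} applies at $P$. Case~(iii) cannot occur: by the previous paragraph $Q'$ and $R'$ have almost compatible parabolics, hence are almost compatible at every element of $\mathcal{K}$, contradicting the hypothesis of~(iii). So we are in case~(i) or~(ii), and since $Q' \cap P$ is infinite the ``moreover'' clause lets us take $u = 1$, giving $\langle Q', R' \rangle \cap P = Q' \cap P$ or $\langle Q', R' \rangle \cap P = R' \cap P$. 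In either case one of $Q' \cap P$, $R' \cap P$ contains the other, so $Q'$ and $R'$ are compatible at $P$. As $P$ was arbitrary, $Q'$ and $R'$ have compatible parabolics, as required.

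Apart from Theorem~\ref{thm:parab_join_better} itself, the only genuinely non-formal ingredient is the finiteness of the number of $Q$-conjugacy classes of the finite intersections $Q \cap P$ (and the consequent construction of $Q_0$); everything else is elementary index bookkeeping together with an application of the final clause of Theorem~\ref{thm:parab_join_better}. I therefore expect that finiteness statement to be the one point requiring care.
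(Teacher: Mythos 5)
Your overall route is the same as the paper's: first pass to a finite index subgroup of \(Q\) whose intersections with maximal parabolics are trivial whenever \(Q\)'s are finite, then observe that almost compatibility passes to finite index subgroups, and finally apply the main theorem (the paper uses Corollary~\ref{cor:almost_compat_parabs}, the specialisation of Theorem~\ref{thm:parab_join_better} in which case (iii) is ruled out by almost compatibility) with \(u=1\) at the parabolics where \(Q'\cap P\) is infinite. That part of your argument, including the index bookkeeping and the exclusion of case (iii), is fine and matches the paper's proof of Theorem~\ref{thm:almost_compat->virtual_compat_existence}.

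The gap is exactly where you suspected it: the claim that the finite subgroups \(Q\cap P\) fall into finitely many \(Q\)-conjugacy classes is not justified and is not clearly true. A relatively hyperbolic group has finitely many conjugacy classes of finite subgroups \emph{not} conjugate into a peripheral subgroup, but a finite \(Q\cap P\) may well be conjugate into a peripheral subgroup \(Q\cap H\) of \(Q\), and such a peripheral subgroup can a priori have infinitely many conjugacy classes of finite subgroups (the peripherals of \(Q\) need not even be finitely generated, so your appeal to \cite{HruskaRHCG} for that is also shaky). The paper's Proposition~\ref{prop:killing_finite_parabolics} achieves precisely the statement you need, but it works at the level of \emph{elements} rather than subgroups: a nontrivial torsion element of a finite \(Q\cap P\) is either a finite-order hyperbolic element of \(Q\) (finitely many \(Q\)-conjugacy classes, by Osin) or lies in two distinct maximal parabolic subgroups of \(G\) (finitely many \(G\)-conjugacy classes, by Corollary~\ref{cor:finitely_many_conj_classes_of_doubly_parabolic}, which rests on the geometric Proposition~\ref{prop:int_of_parabs_conj_to_short}). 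One then takes a normal finite index subgroup of \(G\) avoiding the second family and a normal finite index subgroup of \(Q\) avoiding the first, and intersects. Note that the second family is only controlled up to \(G\)-conjugacy, so you genuinely need a normal finite index subgroup of \(G\) here; killing representatives inside \(Q\) alone, as you propose, would not suffice. Replacing your first paragraph with a citation of Proposition~\ref{prop:killing_finite_parabolics} (or with its element-level argument) closes the gap and recovers the paper's proof.
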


In some special cases, Theorem~\ref{thm:virtual_join_qc} was known before \cite{MinMin}: see \cite{MPComb,Yang,MPS,McCl}.
The extra assumptions appearing in each of these cases imply the condition that \(Q\) and \(R\) have almost compatible parabolics.
Moreover, in these cases it was determined that the joins \(\langle Q', R' \rangle\) decompose as an amalgamated free product.
Using Corollary~\ref{cor:almost_compat->virtual_compat}, we unify and generalise these results as follows.

\begin{corollary}
\label{thm:almost_compat_amalgamation}
    Let \(G\) be a finitely generated QCERF relatively hyperbolic group. 
    Suppose that \(Q, R \leqslant G\) are finitely generated relatively quasiconvex subgroups with almost compatible parabolics.
    Then there are finite index subgroups \(Q' \leqslant_f Q\) and \(R' \leqslant_f R\) such that \(\langle Q', R' \rangle\) is relatively quasiconvex and
    \[
        \langle Q', R' \rangle \cong Q' \ast_{Q' \cap R'} R'.
    \]
\end{corollary}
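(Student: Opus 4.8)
The plan is to use Corollary~\ref{cor:almost_compat->virtual_compat} to replace \(Q\) and \(R\) by finite index subgroups having \emph{compatible} parabolics, then to feed these into Theorem~\ref{thm:parab_join_better} to pin down the parabolic subgroups of the join, and finally to run a ping-pong argument producing the amalgam decomposition (and, along the way, relative quasiconvexity of the join). The observation linking the first two steps is that compatibility at a maximal parabolic \(P\) is inherited by pairs of the shape \((Q \cap S, R \cap S)\) for every \(S \leqslant G\): if, say, \(Q \cap P \leqslant R \cap P\), then \((Q \cap S) \cap P = (Q \cap P) \cap S \leqslant (R \cap P) \cap S = (R \cap S) \cap P\). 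Since every pair occurring in a family as in \descref{E} has the form \((Q \cap M', R \cap M')\) for one common \(M' \leqslant_f G\), having compatible parabolics passes down any such family. (In the same way, having \emph{almost} compatible parabolics passes to all finite index subgroups, so Theorem~\ref{thm:parab_join_better} remains applicable after truncation.)

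Concretely: by Corollary~\ref{cor:almost_compat->virtual_compat} fix \(Q_0 \leqslant_f Q\), \(R_0 \leqslant_f R\) with compatible parabolics; these stay finitely generated and relatively quasiconvex, so Theorem~\ref{thm:parab_join_better} applies to \((Q_0, R_0)\) via the almost compatible parabolics alternative and yields a finite set \(\mathcal K\) of maximal parabolic subgroups of \(G\) together with a family of pairs \(Q' \leqslant_f Q_0\), \(R' \leqslant_f R_0\) as in \descref{E}. By the inheritance observation each such \((Q', R')\) again has compatible, hence almost compatible, parabolics; in particular \(Q'\) and \(R'\) are almost compatible at every member of \(\mathcal K\), so alternative (iii) of Theorem~\ref{thm:parab_join_better} never occurs. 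Hence, whenever \(P \leqslant G\) is a maximal parabolic subgroup with \(\langle Q', R' \rangle \cap P\) infinite, there is \(u \in \langle Q', R' \rangle\) with \(\langle Q', R' \rangle \cap P = uQ'u^{-1} \cap P\) or \(\langle Q', R' \rangle \cap P = uR'u^{-1} \cap P\); conjugating by \(u^{-1}\) this shows that every maximal parabolic subgroup of \(\langle Q', R' \rangle\) is conjugate, inside \(\langle Q', R' \rangle\), to a maximal parabolic subgroup of \(Q'\) or of \(R'\).

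Fix such a pair \((Q', R')\). The remaining claim --- that \(\langle Q', R' \rangle\) is relatively quasiconvex and the natural surjection \(\varphi \colon Q' \ast_{Q' \cap R'} R' \to \langle Q', R' \rangle\) is an isomorphism --- I would deduce from a single quasigeodesic estimate, in the spirit of the ping-pong argument of \cite{MPComb}. Since the restrictions of \(\varphi\) to the vertex groups \(Q'\) and \(R'\) are their defining inclusions into \(G\), a nontrivial element of \(\ker \varphi\) has syllable length at least \(2\), hence yields a reduced alternating relation \(a_1 b_1 \cdots a_n b_n = 1\) in \(G\) with \(a_i \in Q' \setminus (Q' \cap R')\) and \(b_i \in R' \setminus (Q' \cap R')\). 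The goal is to show that the concatenation of relative geodesics through \(1, a_1, a_1 b_1, \dots, a_1 \cdots b_n\) is a relative quasigeodesic; since a quasigeodesic cannot close up into a loop, this rules the relation out, and the same estimate applied to arbitrary alternating words (rather than relations) yields undistortedness of \(\langle Q', R' \rangle\), hence its relative quasiconvexity. That \((Q', R')\) comes from a family as in \descref{E} is exactly what removes enough ``slack'', via the windmill-type construction of \cite{MinMin}, to force large turns of this polygon away from the parabolic subgroups; compatibility of parabolics handles the parabolic subgroups themselves, because when \(Q' \cap P \leqslant R' \cap P\) one has \(Q' \cap P \leqslant Q' \cap R'\), so no syllable of a \emph{reduced} word can run deep into a parabolic subgroup shared by \(Q'\) and \(R'\) (such a syllable would already lie in the edge group \(Q' \cap R'\)). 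I expect this last step --- making the turn estimate uniform over the now well-understood parabolic subgroups of the join, equivalently proving \(\varphi\) injective --- to be the main obstacle, and it is probably cleanest to isolate it as a stand-alone combination theorem; everything before it is bookkeeping with Corollary~\ref{cor:almost_compat->virtual_compat} and Theorem~\ref{thm:parab_join_better}.
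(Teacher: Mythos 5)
Your first step --- passing to finite index subgroups with genuinely compatible parabolics via Corollary~\ref{cor:almost_compat->virtual_compat}, and observing that compatibility is inherited by all pairs \((Q\cap M', R\cap M')\) occurring in a family as in \descref{E} --- matches the paper exactly (the paper uses the sharper Theorem~\ref{thm:almost_compat->virtual_compat_existence}, which packages the family so that the pairs are already compatible). After that, however, your proposal has a genuine gap: the entire second half (injectivity of \(Q' \ast_{Q'\cap R'} R' \to G\) plus relative quasiconvexity of the image) is left as a sketched ping-pong argument that you yourself flag as ``the main obstacle'' to be ``isolated as a stand-alone combination theorem''. That stand-alone theorem already exists and is quoted in the paper's preliminaries: it is the Mart\'{i}nez-Pedroza--Sisto combination theorem, Theorem~\ref{thm:compat_parab_amalgam}. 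The paper's proof simply verifies its hypotheses and invokes it; nothing like your proposed re-derivation is attempted, and your detour through Theorem~\ref{thm:parab_join_better} to classify the parabolic subgroups of the join is not needed for this corollary.

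Even taking Theorem~\ref{thm:compat_parab_amalgam} as given, your sketch does not engage with how its hypotheses are met, and there is a real quantifier issue here. The constant \(M = M(Q,R,S')\) of that theorem depends on the edge group \(S'\), so \(S'\) must be fixed \emph{before} the finite index subgroups are chosen; the paper does this by setting \(S' = Q_1 \cap R\) with \(Q_1\) the subgroup from Theorem~\ref{thm:almost_compat->virtual_compat_existence}, and then \descref{E} guarantees \(Q' \cap R' = S'\) for every pair in the family. The lower bound \(\abs{g}_X \geq M\) for all \(g \in (Q' \cup R') \setminus S'\) is not supplied by any ``windmill-type'' estimate but by the separability-based condition \descref{C2} with \(B = M\) (via Proposition~\ref{prop:sep->C1-C5} and Lemma~\ref{lem:C2->C2_old}). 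To repair your proof, replace the ping-pong sketch with: fix \(S'\), take \(M = M(Q,R,S')\), choose the family so that \descref{C1}--\descref{C2} hold with \(B = M\), and apply Theorem~\ref{thm:compat_parab_amalgam}.
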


In general, almost compatibility is a necessary condition for Corollary~\ref{thm:almost_compat_amalgamation} to hold.
Indeed, if \(Q\) and \(R\) are subgroups of the same abelian peripheral subgroup that do not have almost compatible parabolics, then no pair of finite index subgroups \(Q' \leqslant_f Q\) and \(R' \leqslant_f R\) will generate an amalgamated free product over their intersection. 
Corollary~\ref{thm:almost_compat_amalgamation} was known in the case when \(G\) is a discrete subgroup of \(\operatorname{Isom}(\mathbb{H}^n)\) and \(Q\) and \(R\) are geometrically finite subgroups of \(G\) \cite{Baker_Cooper}.

A relatively quasiconvex subgroup of \(G\) is said to be \emph{strongly relatively quasiconvex} if its intersection with each maximal parabolic subgroup of \(G\) is finite, and \emph{full} if its intersection with each maximal parabolic subgroup of \(G\) is either finite or has finite index in that parabolic.
Strongly relatively quasiconvex subgroups are necessarily hyperbolic \cite[Theorem 4.16]{OsinRHG}.
Note that if either of \(Q\) and \(R\) are strongly quasiconvex or full, then they have almost compatible parabolics.
As a consequence of Theorem~\ref{thm:parab_join_better} one obtains the analogue of Theorem~\ref{thm:virtual_join_qc} for each of these types of subgroups.

\begin{corollary}
\label{cor:full_strongly_qc_join}
    Let \(G\) be a finitely generated QCERF relatively hyperbolic group, and let \(Q\) and \(R\) be finitely generated relatively quasiconvex subgroups.
    
    If \(Q\) and \(R\) are strongly (respectively, full) relatively quasiconvex subgroups, then there is a family of pairs of finite index subgroups \(Q' \leqslant_f Q\) and \(R' \leqslant_f R\) as in \descref{E} such that \(\langle Q', R' \rangle\) is also strongly (respectively, full) relatively quasiconvex.     
\end{corollary}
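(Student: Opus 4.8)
The plan is to read this off directly from Theorem~\ref{thm:parab_join_better}. First I would record the routine observation that both ``strongly relatively quasiconvex'' and ``full'' pass to finite index subgroups: if \(Q' \leqslant_f Q\) and \(P \leqslant G\) is a maximal parabolic subgroup, then \(Q' \cap P \leqslant_f Q \cap P\), so \(Q' \cap P\) is finite whenever \(Q \cap P\) is finite, and \(Q' \cap P \leqslant_f P\) whenever \(Q \cap P \leqslant_f P\). Hence any finite index subgroups \(Q' \leqslant_f Q\) and \(R' \leqslant_f R\) are again strongly (respectively, full) relatively quasiconvex, and therefore — as observed just before the statement of Corollary~\ref{cor:full_strongly_qc_join} — have almost compatible parabolics.

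Since \(Q\) and \(R\) have almost compatible parabolics, Theorem~\ref{thm:parab_join_better} supplies a finite set \(\mathcal{K}\) of maximal parabolic subgroups of \(G\) and a family of pairs \(Q' \leqslant_f Q\), \(R' \leqslant_f R\) as in \descref{E} for which every maximal parabolic \(P\) with \(\langle Q', R' \rangle \cap P\) infinite falls into one of the cases (i)--(iii); combining this family with the one produced by Theorem~\ref{thm:virtual_join_qc} (families satisfying \descref{E} admit common refinements), we may also assume that \(\langle Q', R' \rangle\) is relatively quasiconvex. Fix such a pair \(Q', R'\). By the previous paragraph \(Q'\) and \(R'\) have almost compatible parabolics, so case (iii) — which by definition asserts that \(Q'\) and \(R'\) are \emph{not} almost compatible at \(K\) — cannot occur.

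It remains to handle cases (i) and (ii). Let \(P \leqslant G\) be a maximal parabolic subgroup with \(\langle Q', R' \rangle \cap P\) infinite (if it is finite there is nothing to prove). In case (i) there is \(u \in \langle Q', R' \rangle\) with \(\langle Q', R' \rangle \cap P = u Q' u^{-1} \cap P = u\bigl(Q' \cap u^{-1} P u\bigr) u^{-1}\), and \(u^{-1} P u\) is again a maximal parabolic subgroup of \(G\). If \(Q\), hence \(Q'\), is strongly relatively quasiconvex, then \(Q' \cap u^{-1} P u\) is finite, contradicting infiniteness; so no such \(P\) exists and \(\langle Q', R' \rangle\) is strongly relatively quasiconvex. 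If \(Q\), hence \(Q'\), is full, then \(Q' \cap u^{-1} P u\), being infinite, has finite index in \(u^{-1} P u\), whence \(\langle Q', R' \rangle \cap P\) has finite index in \(u(u^{-1} P u)u^{-1} = P\). Case (ii) is identical with \(R'\) in place of \(Q'\). Thus in the full case every infinite intersection \(\langle Q', R' \rangle \cap P\) has finite index in \(P\), so \(\langle Q', R' \rangle\) is full.

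There is no genuinely hard step here: all the substance is contained in Theorem~\ref{thm:parab_join_better}. The only points requiring (minor) care are the inheritance of strong/full relative quasiconvexity under passage to finite index subgroups — which is exactly what rules out case (iii) — and the fact that conjugates of maximal parabolic subgroups are again maximal parabolic subgroups, so the conjugator \(u\) appearing in cases (i)--(iii) is harmless.
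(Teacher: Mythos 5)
Your proof is correct and follows essentially the same route as the paper: the paper simply routes through Corollary~\ref{cor:almost_compat_parabs} (the specialisation of Theorem~\ref{thm:parab_join_better} to almost compatible parabolics, which packages your elimination of case (iii)), and then runs the same finiteness argument in the strong case and the same finite-index argument in the full case.
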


\begin{remark}
    In the special case when either \(Q\) or \(R\) are full quasiconvex subgroups, a version of Theorem~\ref{thm:parab_join} appears in unpublished preprint of Yang \cite{Yang}, claiming that every parabolic subgroup of \(\langle Q', R' \rangle\) is conjugate into either \(Q'\) or \(R'\) in this case. The statement of Corollary~\ref{cor:full_strongly_qc_join} for full quasiconvex subgroups also follows from this.
\end{remark}

This paper is organised as follows.
Section~\ref{sec:prelim} contains the notation and terminology used in this paper and collects preliminary results. 
In Section~\ref{sec:parab} we prove a general result on controlling the parabolic subgroups of relatively quasiconvex subgroups.
In Sections~\ref{sec:shortcutting} and \ref{sec:parab_paths} we recall the construction of a \emph{shortcutting} of a broken line and study properties of paths whose labels represent parabolic elements.
Section~\ref{sec:path_reps} recalls and generalises the terminology relating to \emph{path representatives} as developed in \cite{MinMin}.
Sections~\ref{sec:short_conj} and \ref{sec:end} comprise the proofs of the main results.

\subsection{Acknowledgements}

The author would like to thank Ashot Minasyan for many helpful discussions during the writing of the paper, Eduardo Mart\'{i}nez-Pedroza for comments that helped improve the clarity of exposition, and the anonymous referee for their careful reading and corrections.


\section{Preliminaries}
\label{sec:prelim}

In this section we will establish our use of notation, define relative hyperbolicity, and introduce the basic terminology required in this paper.
Along the way, we collect some auxiliary results.

\subsection{Notation and terminology}

We write \(\NN\) for the set of natural numbers \(\{1, 2, 3 \dots\}\), and \(\NN_0\) for \(\NN \cup \{0\}\).

Let \(G\) be a group.
If \(H\) is a finite index (respectively, finite index normal) subgroup of \(G\), then we write \(H \leqslant_f G\) (respectively, \(H \lhd_f G\)). 

Let \(\mathcal{A}\) be a set with a map \(\mathcal{A} \to G\).
We say that \(\mathcal{A}\) is a \emph{generating set} for \(G\) if its image under this map generates \(G\).
We denote by \(\abs{g}_{\mathcal{A}}\) the length of the shortest word in \(\mathcal{A}^{\pm 1}\) representing \(g\) in $G$, letting \(\abs{g}_{\mathcal{A}} = \infty\) when there is no such word.
If \(\mathcal{A}\) is a generating set for \(G\), then we denote by \(\Gamma(G,\mathcal{A})\) the (left) Cayley graph of \(G\) with respect to \(\mathcal{A}\). 
The standard edge path length metric on \(\Gamma(G,\mathcal{A})\) will be denoted $d_{\mathcal{A}}(\cdot,\cdot)$. 
After identifying \(G\) with the vertex set of \(\Gamma(G,\mathcal{A})\), this metric induces the \emph{word metric} associated to $\mathcal{A}$: \(d_{\mathcal{A}}(g,h) = \abs{g^{-1}h}_{\mathcal{A}}\) for all $g,h \in G$.

Abusing the notation, we will identify the combinatorial Cayley graph $\Gamma(G,\mathcal{A})$ with its geometric realisation. 
The latter is a geodesic metric space and, given two points $x,y$ in this space, we will use $[x,y]$ to denote a geodesic path from $x$ to $y$ in $\Gamma(G,\mathcal{A})$. 
In general $\Gamma(G,\mathcal{A})$ need not be uniquely geodesic, so there will usually be a choice for $[x,y]$, which will either be specified or will be clear from the context (e.g., if $x$ and $y$ already belong to some geodesic path under discussion, then $[x,y]$ will be chosen as the subpath of that path).

Suppose that $p$ is a combinatorial path (edge path) in $\Gamma(G,\mathcal{A})$.
We will denote the initial and terminal endpoints of \(p\) by \(p_-\) and \(p_+\) respectively.
We will write \(\ell(p)\) for the length (i.e., the number of edges) of \(p\). 
We will also use $p^{-1}$ to denote the inverse of $p$, which is the path starting at $p_+$, ending at $p_-$ and traversing $p$ in the reverse direction.
If \(p_1, \dots, p_n\) are combinatorial paths with \((p_i)_+ = (p_{i+1})_-\), for each \(i \in \{1, \dots, n-1\}\), we will denote their concatenation by \(p_1 \dots p_n\).

Since $\Gamma(G,\mathcal{A})$ is a labelled graph, every combinatorial path $p$ comes with a label, which is a word over the alphabet $\mathcal{A}^{\pm 1}$.
We denote by \(\elem{p} \in G\) the element of \(G\) represented by the label of \(p\).
Finally, we write \(\abs{p}_{\mathcal{A}} = |\elem{p}|_{\mathcal{A}}=d_{\mathcal{A}}(p_-,p_+)\). 
Note that the label of $p^{-1}$ is the formal inverse of the label of $p$, so that $|p^{-1}|_{\mathcal{A}}=|p|_{\mathcal{A}}$ and $\widetilde{p^{-1}}={\elem{p}}^{-1}$.

\begin{definition}[Broken line] 
\label{def:broken_line}
    A \emph{broken line} in $\Gamma$ is a path $p$ which comes with a fixed decomposition as a concatenation of combinatorial geodesic paths \(p_1,\dots,p_n\) in \(\Gamma\), so that \(p=p_1p_2 \dots p_n\). 
    The paths \(p_1, \dots, p_n\) will be called the \emph{segments} of the broken line \(p\), and the vertices \(p_-=(p_1)_-, (p_1)_+=(p_2)_-, \dots, (p_{n-1})_+=(p_n)_-\) and \((p_{n+1})_+=p_+\) will be called the \emph{nodes} of \(p\).
\end{definition}

\begin{remark}
    Any combinatorial subpath of a broken line \(p\) is again a broken line, with the decomposition inherited from \(p\).
    Moreover, the concatenation of broken lines is also a broken line in the obvious way.
    We will freely use these facts without reference throughout the paper.
\end{remark}

\begin{definition}
    Let \(\lambda \geq 1\) and \(c \geq 0\). A combinatorial path \(p\) in \(\Gamma(G,\mathcal{A})\) is called (\(\lambda, c\))\emph{-quasigeodesic} if for every combinatorial subpath \(q\) of \(p\), we have
    \[
        \ell(q) \leq \lambda d_{\mathcal{A}}(q_-, q_+) + c.
    \]
\end{definition}

We will make use of the following elementary fact.

\begin{lemma}
\label{lem:almost_containment_inf_subgps}
    Let \(G\) be an infinite group and let \(H, K \leqslant G\) be infinite subgroups.
    If all but finitely many elements of \(H\) are contained in \(K\), then \(H \subseteq K\).
\end{lemma}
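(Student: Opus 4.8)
The plan is to argue by contradiction, exploiting the fact that a cofinite subgroup of an infinite group is infinite and that cosets preserve cardinality. Suppose there were some element $h \in H$ with $h \notin K$. The key observation is that the intersection $H \cap K$ is a subgroup of $H$ whose complement $H \setminus K$ is finite by hypothesis; since $H$ is infinite, this forces $H \cap K$ to be infinite.

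Next I would consider the left coset $h(H \cap K)$. Because $h \in H$ and $H \cap K \subseteq H$, this coset lies entirely in $H$. On the other hand, it is disjoint from $K$: if $hk \in K$ for some $k \in H \cap K$, then $h = (hk)k^{-1} \in K$, contradicting the choice of $h$. Hence $h(H \cap K) \subseteq H \setminus K$. But $h(H \cap K)$ has the same cardinality as $H \cap K$, which we just observed is infinite, while $H \setminus K$ is finite — a contradiction. Therefore no such $h$ exists and $H \subseteq K$.

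There is essentially no obstacle here: the only point that requires a moment's care is recording that $H \cap K$ is infinite (which uses that $H$ is infinite together with finiteness of $H \setminus K$), after which the coset-counting argument closes immediately. Note that the hypotheses that $G$ and $K$ are infinite are not actually needed, but they are harmless and match the intended application.
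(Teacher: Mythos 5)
Your proof is correct and is essentially the paper's argument: both derive a contradiction by noting that $H \cap K$ is infinite while $H \setminus K$ is finite, and then translating $H \cap K$ by the putative element of $H \setminus K$ to land an infinite set inside the finite one. The only cosmetic difference is that you phrase this as containment of the whole coset $h(H \cap K)$ in $H \setminus K$, whereas the paper extracts a single element $hg$ forced into $H \cap K$; your side remark that the infinitude of $G$ and $K$ is not needed is also accurate.
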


\begin{proof}
    Suppose that \(H \setminus K\) is finite, so that its complement (in \(H\)) \(H \cap K\) is infinite.
    Let \(g \in H \setminus K\).
    As \(H \setminus K\) is finite and \(H \cap K\) is infinite, there is some \(h \in H \cap K\) such that \(hg \notin H \setminus K\).
    That is to say, \(hg \in H \cap K\).
    It follows that \(g = (h^{-1})(hg) \in H \cap K\), a contradiction.
    Thus \(H \setminus K\) must be empty and \(H \subseteq K\) as required.
\end{proof}

\subsection{Relatively hyperbolic groups}

\begin{definition}[Relative generating set, relative presentation] 
\label{def:rel_gen_set}
    Let \(G\) be a group, \(X \subseteq G\) a subset and \(\lbrace H_\nu \, | \, \nu \in \Nu \rbrace\) a collection of subgroups of \(G\). 
    The group \(G\) is said to be \emph{generated by \(X\) relative to \(\lbrace H_\nu \, | \, \nu \in \Nu \rbrace\)} if it is generated by \(X \sqcup \mathcal{H}\), where \(\mathcal{H}= \bigsqcup_{\nu \in \Nu} (H_\nu \setminus\{1\})\) (with the obvious map $X \sqcup \mathcal{H} \to G$). 
        If this is the case, then there is a surjection
    \[
        F = F(X) \ast (\ast_{\nu \in \Nu} H_\nu) \to G,
    \]
    where \(F(X)\) denotes the free group on \(X\).
    Suppose that the kernel of this map is the normal closure of a subset \(\mathcal{R} \subseteq F\). Then \(G\) can equipped with the \emph{relative presentation}
\begin{equation} \label{eq:rel_pres}
\langle X, H_\nu, \nu \in \mathcal{N} \mid \mathcal{R} \rangle.    
\end{equation} 

If \(X\) is a finite set, then \(G\) is said to be \emph{finitely generated relative to \(\lbrace H_\nu \, | \, \nu \in \Nu \rbrace\)}. If \(\mathcal{R}\) is also finite, \(G\) is said to be \emph{finitely presented relative to \(\lbrace H_\nu \, | \, \nu \in \Nu \rbrace\)} and the presentation above is a \emph{finite relative presentation}.
\end{definition}

With the above notation, we call the Cayley graph \(\relcay\) the \emph{relative Cayley graph} of \(G\) with respect to  \(X\) and \(\lbrace H_\nu \, | \, \nu \in \Nu \rbrace\).
Note that when \(X\) is itself a generating set of \(G\), \(d_{X\cup\mathcal{H}}(g,h) \leq d_X(g,h)\), for all \(g,h \in G\).

\begin{definition}[Relative Dehn function]
    Suppose that \(G\) has a finite relative presentation \eqref{eq:rel_pres} with respect to a collection of subgroups \(\lbrace H_\nu \, | \, \nu \in \Nu \rbrace\).
    If \(w\) is a word in the free group \(F(X\sqcup\mathcal{H})\), representing the identity in \(G\), then it is equal in \(F\) to a product of conjugates
    \[
        w \stackrel{F}{=} \prod_{i=1}^n a_i r_i a_i^{-1},
    \]
    where \(a_i \in F\) and \(r_i \in \mathcal{R}\), for each \(i\).
    The \emph{relative area} of the word \(w\) with respect to the relative presentation, \(Area^{rel}(w)\), is the least number \(n\) among products of conjugates as above that are equal to \(w\) in \(F\).

    A \emph{relative isoperimetric function} of the above presentation is a function \(f \colon \NN \to \NN\) such that \(Area^{rel}(w)\) is at most \( f(\abs{w})\), for every freely reduced word \(w\) in \(F(X\sqcup\mathcal{H})\) representing the identity in \(G\).
    If an isoperimetric function exists for the presentation, the smallest such function is called the \emph{relative Dehn function} of the presentation.
\end{definition}

\begin{definition} [Relatively hyperbolic group]
\label{def:rh_gp}
    Let \(G\) be a group and let \(\lbrace H_\nu \, | \, \nu \in \Nu \rbrace\) be a collection of subgroups of \(G\). 
    If \(G\) admits a finite relative presentation with respect to this collection of subgroups which has a well-defined linear relative Dehn function, it is called \emph{hyperbolic relative to} \(\lbrace H_\nu \, | \, \nu \in \Nu \rbrace\).
    When it is clear what the relevant collection of subgroups is, we refer to \(G\) simply as a \emph{relatively hyperbolic group}.
    The groups \(\lbrace H_\nu \, | \, \nu \in \Nu \rbrace\) are called the \emph{peripheral subgroups} of the relatively hyperbolic group \(G\), and their conjugates in \(G\) are called \emph{maximal parabolic subgroups}. 
    Any subgroup of a maximal parabolic subgroup is said to be \emph{parabolic}.
\end{definition}

\begin{lemma}[{\cite[Corollary 2.54]{OsinRHG}}]
\label{lem:Cayley_graph-hyperbolic}
    Suppose that \(G\) is a group generated by a finite set \(X\) and hyperbolic relative to a collection of subgroups \(\lbrace H_\nu \mid \nu \in \Nu \rbrace\), and let \(\mathcal{H} = \bigsqcup_{\nu \in \Nu} (H_\nu \setminus \{1\})\). 
    Then the Cayley graph $\relcay$ is $\delta$-hyperbolic, for some $\delta \ge 0$.
\end{lemma}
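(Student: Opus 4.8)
The plan is to recover this from the single piece of data built into the definition of relative hyperbolicity, namely the \emph{linear relative isoperimetric inequality}, by running the classical argument ``linear isoperimetric inequality $\Rightarrow$ thin triangles'' on an appropriate $2$-complex. The only feature distinguishing this from the word-hyperbolic case is that the ``peripheral'' $2$-cells of that complex are attached along arbitrarily long loops, so I will have to check that they are nonetheless harmless.

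First I would set up the \emph{relative Cayley complex} $\mathcal{C}$ attached to the finite relative presentation $\langle X, H_\nu, \nu\in\Nu \mid \mathcal{R}\rangle$. Its $1$-skeleton is $\relcay$, and it carries two kinds of $2$-cells: \emph{relator cells} --- at each vertex a loop reading each $r\in\mathcal{R}$, of which there are finitely many $G$-orbits, each of perimeter at most $M:=\max_{r\in\mathcal{R}}\ell(r)$ --- and \emph{peripheral cells}, attached so that the full subgraph on each left coset $gH_\nu$ becomes simply connected; concretely a triangle $2$-cell on each $\{g', g'h_1, g'h_1h_2\}$ with $g'\in gH_\nu$ and $h_1,h_2\in H_\nu$. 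Since $\langle X,H_\nu\mid\mathcal{R}\rangle$ presents $G$, the complex $\mathcal{C}$ is simply connected. The key geometric point is that every peripheral cell has $\dxh$-diameter at most $1$: its whole boundary lies in a single coset $gH_\nu$, all of whose vertices are pairwise joined by edges of $\mathcal{H}$. Hence all $2$-cells of $\mathcal{C}$ have $\relcay$-diameter at most $\max\{1,M\}$, even though peripheral cells may be combinatorially huge.

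Next I would invoke the metric form of the isoperimetric characterisation of hyperbolicity: if $Y$ is a simply connected combinatorial $2$-complex with $Y^{(1)}$ a geodesic space, and there are constants $K,D\ge 1$ such that every edge-loop $\gamma$ in $Y^{(1)}$ bounds a disc diagram over $Y$ with at most $K\ell(\gamma)$ two-cells, each of $Y^{(1)}$-diameter at most $D$, then $Y^{(1)}$ is $\delta$-hyperbolic with $\delta=\delta(K,D)$. This is proved exactly as the classical statement --- take minimal-area disc diagrams spanning geodesic bigons/triangles and argue combinatorially on the diagram to force uniform thinness, as in Bridson--Haefliger Ch.~III.H --- with the hypothesis of bounded cell perimeter replaced throughout by bounded cell diameter. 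So it remains only to exhibit, for each loop in $\relcay$, a disc diagram over $\mathcal{C}$ with linearly many cells. Let $\gamma$ be such a loop, of length $n$, with label $w$ over $X\cup\mathcal{H}$; then $\|w\|=n$ and $w=1$ in $G$, so by the linear relative Dehn function there is a reduced relative van Kampen diagram $\Delta$ over $\langle X,H_\nu\mid\mathcal{R}\rangle$ with $\partial\Delta$ labelled $w$ and at most $Cn$ cells of $\mathcal{R}$-type, the rest being $\mathcal{H}$-cells (boundary labels words over a single $H_\nu$ evaluating to $1$). After merging adjacent $\mathcal{H}$-cells --- legitimate in a reduced diagram, since each edge of $\relcay$ lies in a single $H_\nu$, so adjacent $\mathcal{H}$-cells share the same type --- no two $\mathcal{H}$-cells share an edge; hence each edge on the boundary of an $\mathcal{H}$-cell is either one of the $n$ edges of $\partial\Delta$ or shared with one of the $\le Cn$ relator cells, each of perimeter $\le M$, so the $\mathcal{H}$-cells have total perimeter at most $(1+MC)n$. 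Pushing $\Delta$ forward into $\mathcal{C}$ and filling each $\mathcal{H}$-cell $\Pi$ --- whose boundary loop lies in one coset-clique --- by at most $\ell(\partial\Pi)$ peripheral triangles yields a disc diagram for $\gamma$ over $\mathcal{C}$ with $O(n)$ cells, each of $\dxh$-diameter $\le\max\{1,M\}$. This is precisely the input required above, and $\delta$-hyperbolicity of $\relcay$ follows.

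The main obstacle, and the only place the argument departs from the classical word-hyperbolic proof, is isolating the right notion of \emph{reduced} relative van Kampen diagram and justifying the merging and non-adjacency of $\mathcal{H}$-cells used to convert the linear bound on $\mathcal{R}$-cells (all the relative Dehn function directly gives) into a linear bound on the \emph{total} size of the peripheral part of the diagram; without this combinatorial input the $\mathcal{H}$-cells could a priori carry unbounded perimeter and the counting argument would fail. Everything else is the standard passage from linear isoperimetric inequalities to the thin-triangles condition, together with the elementary observation that cosets are diameter-$1$ blobs in $\relcay$.
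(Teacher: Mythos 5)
The paper offers no proof of this lemma---it is quoted directly from Osin's memoir---and your argument is essentially a correct reconstruction of Osin's own proof of that result: triangulate the coset cliques so that the relative presentation becomes a (generally infinite) presentation with relators of uniformly bounded length, convert the linear bound on relative area into a linear bound on the total number of cells via the merging/edge-counting argument for the $\mathcal{H}$-cells, and conclude by the standard criterion that a linear isoperimetric inequality with uniformly small cells forces hyperbolicity of the Cayley graph. One small remark: since your peripheral cells are triangles, every $2$-cell of $\mathcal{C}$ already has perimeter at most $\max\{3,M\}$, so the classical bounded-perimeter form of the criterion suffices and the bounded-diameter (Bowditch-style) refinement you invoke is not actually needed.
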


In order to understand the structure of paths in \(\relcay\) it will be important to examine the behaviour of subpaths labelled by elements of \(\mathcal{H}\).
We collect the necessary definitions and facts for our analysis below.

\begin{definition}[Path components]
    Let \(p\) be a combinatorial path in \(\Gamma(G,X\cup\mathcal{H})\).
    A non-trivial combinatorial subpath of \(p\) whose label consists entirely of elements of \(H_\nu \setminus \{1\}\), for some \(\nu \in \Nu\), is called an \emph{\(H_\nu\)-subpath} of \(p\).

    An \(H_\nu\)-subpath is called an \emph{\(H_\nu\)-component} if it is not contained in any strictly longer \(H_\nu\)-subpath.
    We will call a subpath of \(p\) an \(\mathcal{H}\)-subpath (respectively, an \emph{\(\mathcal{H}\)-component}) if it is an \(H_\nu\)-subpath (respectively, an \(H_\nu\)-component), for some \(\nu \in \Nu\).
\end{definition}

\begin{lemma}[{\cite[Lemma 5.10]{MinMin}}]
\label{lem:rel_paths_with_short_comps}
    Let \(p\) be a path in \(\Gamma(G,X\cup\mathcal{H})\) and suppose there is a constant \(\Theta \geq 1\) such that for any \(\mathcal{H}\)-component \(h\) of \(p\), we have \(|h|_X \leq \Theta\). Then \(|p|_X \leq \Theta \ell(p)\).
\end{lemma}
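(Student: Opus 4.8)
The plan is to break \(p\) into short consecutive pieces along which \(|\cdot|_X\) is easy to estimate, and then sum up using subadditivity of the word length \(|\cdot|_X\).

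First I would record that every edge of \(p\) in \(\Gamma(G,X\cup\mathcal{H})\) is either an \emph{\(X\)-edge}, labelled by an element of \(X^{\pm 1}\), or an \emph{\(\mathcal{H}\)-edge}, labelled by an element of some \(H_\nu \setminus \{1\}\); moreover every \(\mathcal{H}\)-edge lies in exactly one \(\mathcal{H}\)-component of \(p\), since two \(\mathcal{H}\)-components cannot share an edge (either they have the same peripheral type, in which case maximality would merge them, or they have different types, which is impossible for a single edge as the generating alphabet \(X \sqcup \mathcal{H}\) is a disjoint union). Reading \(p\) from \(p_-\) to \(p_+\) thus yields a decomposition into consecutive subpaths \(p = q_1 q_2 \cdots q_k\), where each \(q_j\) is either a single \(X\)-edge or an \(\mathcal{H}\)-component of \(p\); in particular \(\ell(q_j) \geq 1\) for all \(j\) and \(\sum_{j=1}^k \ell(q_j) = \ell(p)\).

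Next I would bound \(|q_j|_X\) in each case, using \(\Theta \geq 1\). If \(q_j\) is a single \(X\)-edge then \(\elem{q_j} \in X^{\pm 1}\), so \(|q_j|_X \leq 1 \leq \Theta \leq \Theta\,\ell(q_j)\). If \(q_j\) is an \(\mathcal{H}\)-component of \(p\), then the hypothesis gives \(|q_j|_X \leq \Theta \leq \Theta\,\ell(q_j)\). Since \(\elem{p} = \elem{q_1}\,\elem{q_2}\cdots\elem{q_k}\), subadditivity of \(|\cdot|_X\) gives
\[
    |p|_X = |\elem{p}|_X \leq \sum_{j=1}^k |q_j|_X \leq \Theta \sum_{j=1}^k \ell(q_j) = \Theta\,\ell(p),
\]
which is the claim.

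There is no real obstacle here; the only point that deserves a sentence of care is the bookkeeping in the first step, namely that the \(\mathcal{H}\)-components of \(p\) together with its remaining (\(X\)-)edges partition the edge set of \(p\) into consecutive blocks. Note in particular that two \(\mathcal{H}\)-components of different peripheral types may be adjacent with no \(X\)-edge between them, but this causes no difficulty, since they are simply treated as separate blocks.
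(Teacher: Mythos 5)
Your proof is correct; the paper does not reprove this lemma but simply cites it from \cite{MinMin}, and your argument (decompose \(p\) into \(X\)-edges and \(\mathcal{H}\)-components, bound each block's \(X\)-length by \(\Theta\) times its edge-length, and sum using subadditivity of \(|\cdot|_X\)) is exactly the standard one. The bookkeeping point you flag — that the \(\mathcal{H}\)-components and remaining \(X\)-edges partition the edges of \(p\) into consecutive blocks, with adjacent components of different peripheral types treated as separate blocks — is the only place requiring care, and you handle it properly.
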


\begin{definition}[Connected and isolated components]
    Let \(p\) and \(q\) be edge paths in \(\relcay\) and suppose that \(s\) and \(t\) are \(H_\nu\)-subpaths of \(p\) and \(q\) respectively, for some \(\nu \in \Nu\).
    We say that \(s\) and \(t\) are \emph{connected} if \(s_-\) and \(t_-\) belong to the same left coset of \(H_\nu\) in \(G\). 
    This means that for all vertices \(u\) of \(s\) and \(v\) of \(t\) either \(u=v\) or there is an edge \(e\) in \(\relcay\) labelled by an element of \(H_\nu \setminus\{1\}\) with \(e_- = u\) and \(e_+ = v\).

    If \(s\) is an \(H_\nu\)-component of a path \(p\) and \(s\) is not connected to any other \(H_\nu\)-component of \(p\) then we say that \(s\) is \emph{isolated} in \(p\).
\end{definition}

\begin{definition}[Phase vertex]
    A vertex \(v\) of a combinatorial path \(p\) in \(\Gamma(G,X\cup\mathcal{H})\) is called \emph{non-phase} if it is an interior vertex of an \(\mathcal{H}\)-component of \(p\) (that is, if it lies in an \(\mathcal{H}\)-component which it is not an endpoint of).
    Otherwise \(v\) is called \emph{phase}.
\end{definition}

\begin{definition}[Backtracking]
    If all \(\mathcal{H}\)-components of a combinatorial path \(p\) are isolated, then \(p\) is said to be \emph{without backtracking}.
    Otherwise we say that \(p\) \emph{has backtracking}.
\end{definition}

\begin{remark}
\label{rem:comp_of_geod_is_an_edge}
    If \(p\) is a geodesic edge path in \(\Gamma(G,X\cup\mathcal{H})\) then every $\mathcal{H}$-component of $p$ will consist of a single edge, labelled by an element from $\mathcal{H}$. Therefore every vertex of $p$ will be phase.
    Moreover, it is easy to see that \(p\) will be without backtracking.
\end{remark}

\begin{lemma}[{\cite[Lemma 5.12]{MinMin}}]
\label{lem:qgds_with_long_comps}
    For any \(\lambda \geq 1\), \(c \geq 0\) and \(A \geq 0\) there is a constant \(\xi = \xi(\lambda,c,A) \geq 0\) such that the following is true.

    Suppose that \(p\) is a \((\lambda,c)\)-quasigeodesic path in $\relcay$ with an isolated \(\mathcal{H}\)-component \(h\) such that \(\abs{h}_X \geq \xi\). Then \(\abs{p}_X \geq A\).
\end{lemma}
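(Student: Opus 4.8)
The plan is to argue by contradiction: assuming $\abs{p}_X = d_X(p_-,p_+) < A$, I will produce a cycle of controlled length in $\relcay$ on which $h$ is still an isolated $\mathcal{H}$-component, and then apply the standard linear bound on the $X$-lengths of isolated components of a cycle to cap $\abs{h}_X$ by a quantity depending only on $\lambda$, $c$ and $A$. Taking the contrapositive then gives the required $\xi$.

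The first step is to fix a geodesic $q$ from $p_+$ to $p_-$ in the \emph{word} Cayley graph $\Gamma(G,X)$. The two features of this choice that matter are that $\ell(q) = d_X(p_-,p_+) < A$ and that $q$ is built entirely from $X$-edges, so it has no $\mathcal{H}$-components. Regarding $q$ as a path in $\relcay$ and forming the concatenation $\mathcal{P} = pq$ gives a cycle whose length is controlled: since $p$ is $(\lambda,c)$-quasigeodesic and $\dxh(p_-,p_+) \le d_X(p_-,p_+)$, we have $\ell(p) \le \lambda\,\dxh(p_-,p_+) + c < \lambda A + c$; and each edge of $q$ is also an edge of $\relcay$, so $\ell(\mathcal{P}) = \ell(p) + \ell(q) < (\lambda+1)A + c =: N$.

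The second step is to verify that $h$ is still an isolated $\mathcal{H}$-component of the cycle $\mathcal{P}$. Since $q$ contains no $\mathcal{H}$-edge, it contributes no $\mathcal{H}$-component, it does not lengthen any $\mathcal{H}$-component of $p$, and --- lying between $p_+$ and $p_-$ --- it prevents the first and last $\mathcal{H}$-components of $p$ from merging around the seam; hence the $\mathcal{H}$-components of $\mathcal{P}$ are exactly those of $p$, and $h$, being isolated in $p$, is connected to no other one. (The only degenerate case is $p_-=p_+$, where $q$ is empty: then either $p$ is itself a single $\mathcal{H}$-component, so $\widetilde p = 1$ and $\abs{h}_X = 0$, or one applies the cycle lemma directly to the cycle $p$, whose length is at most $c$.)

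Finally, I invoke the standard bound on isolated components of cycles (see, e.g., \cite[Lemma 2.27]{OsinRHG}), available because $G$, being relatively hyperbolic, is in particular finitely presented relative to $\relto$: there is a constant $L = L(G) > 0$, independent of $\lambda$, $c$ and $A$, such that the $X$-lengths of any set of isolated $\mathcal{H}$-components of a cycle $w$ in $\relcay$ sum to at most $L\,\ell(w)$. Applied to $w = \mathcal{P}$ and the single component $h$, this gives $\abs{h}_X = \abs{\widetilde h}_X \le L\,\ell(\mathcal{P}) < LN$, so setting $\xi(\lambda,c,A) := LN = L\big((\lambda+1)A+c\big)$ makes the hypothesis $\abs{h}_X \ge \xi$ incompatible with $\abs{p}_X < A$. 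I do not expect a genuine obstacle: once the cycle lemma is in hand the argument is bookkeeping. The one point requiring care is the claim that $h$ persists as an \emph{isolated} component of $\mathcal{P}$ --- which is precisely why $q$ is taken in $\Gamma(G,X)$, where it introduces no new coset structure, rather than directly in $\relcay$ --- together with the routine degenerate case $p_-=p_+$.
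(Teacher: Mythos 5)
Your argument is correct and is essentially the standard proof of this lemma (the paper only cites it from \cite{MinMin}, where the argument is the same): close up \(p\) with a geodesic in \(\Gamma(G,X)\), observe that this introduces no new \(\mathcal{H}\)-components so \(h\) remains isolated in the resulting cycle of length at most \((\lambda+1)A+c\), and apply Osin's linear bound on isolated components of cycles. Your resulting constant \(\xi = L((\lambda+1)A+c)\) is even consistent with the property the paper later uses in Lemma~\ref{lem:len_of_subgeodesic}, namely that \(\xi(\lambda,0,A)\) is a multiple of \(A\) depending only on \(\lambda\); the only cosmetic point is that the cycle lemma is stated for \(\abs{\cdot}_\Omega\) with \(\Omega\) a finite set, and one passes to \(\abs{\cdot}_X\) via the standing convention \(\Omega \subseteq X\).
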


\begin{lemma}
\label{lem:len_of_subgeodesic}
    There is a constant \(\xi_0 \geq 1\) such that if \(v\) is vertex of a geodesic \(p\) in \(\relcay\), then \(d_X(p_-,v) \leq \xi_0 \abs{p}^2_X\).
\end{lemma}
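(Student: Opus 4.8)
The plan is to control \(d_X(p_-,v)\) edge by edge along \(p\): an \(X\)-edge contributes at most \(1\) to the \(X\)-length, and there are at most \(\ell(p)=\dxh(p_-,p_+)\le\abs{p}_X\) edges in total, so the only issue is the \(X\)-length of the \(\mathcal H\)-edges that \(p\) passes through. We may assume \(p\) is non-trivial (otherwise \(v=p_-=p_+\) and there is nothing to prove), so that \(n:=\abs{p}_X=d_X(p_-,p_+)\ge 1\). Let \(p'\) be the subpath of \(p\) from \(p_-\) to \(v\); then \(d_X(p_-,v)=\abs{p'}_X\le\sum_{e}\abs{e}_X\), the sum being over the edges \(e\) of \(p'\). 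By Remark~\ref{rem:comp_of_geod_is_an_edge} the geodesic \(p\) is without backtracking and each of its \(\mathcal H\)-components is a single (hence isolated) edge; thus every \(\mathcal H\)-edge of \(p'\) is an isolated \(\mathcal H\)-component of \(p\), and
\[
 d_X(p_-,v)\;\le\;\ell(p)+\sum_{h}\abs{h}_X\;\le\;n+\sum_{h}\abs{h}_X,
\]
where \(h\) runs over the \(\mathcal H\)-edges of \(p\). It remains to bound \(\sum_h\abs{h}_X\) by \(O(n^2)\).

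To do this I would close \(p\) up into a cycle. Since \(X\) generates \(G\), there is a geodesic \(\sigma\) from \(p_-\) to \(p_+\) in \(\Gamma(G,X)\); regard it as a path in \(\relcay\) all of whose edges are \(X\)-edges, with \(\ell(\sigma)=n\), and set \(q=p\sigma^{-1}\), a cycle of length \(\ell(q)=\ell(p)+n\le 2n\). Because \(\sigma\) contains no \(\mathcal H\)-edges, and the edges of \(\sigma^{-1}\) adjacent in \(q\) to the endpoints of \(p\) are \(X\)-edges, the \(\mathcal H\)-components of \(q\) are exactly the \(\mathcal H\)-edges of \(p\), and each is still isolated in \(q\) (it is isolated in \(p\), and connectedness is a condition on cosets that does not depend on the ambient path). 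Now fix a finite relative presentation of \(G\) with linear relative Dehn function \(\delta\), say \(\delta(m)\le Km\). By the standard estimate bounding the combined \(X\)-length of the isolated \(\mathcal H\)-components of a cycle by a multiple of its length times the relative Dehn function (essentially \cite[Lemma~3.1]{OsinRHG}, which is also the main ingredient behind Lemma~\ref{lem:qgds_with_long_comps}), we get \(\sum_h\abs{h}_X\le C\,\ell(q)\,\delta(\ell(q))\le C(2n)(2Kn)=4CKn^2\) for a constant \(C=C(G)\). Combining with the previous paragraph, \(d_X(p_-,v)\le n+4CKn^2\le(1+4CK)n^2\) since \(n\ge 1\), so the lemma holds with \(\xi_0:=1+4CK\ge 1\).

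The only substantive step is the cycle estimate in the second paragraph: this is where relative hyperbolicity genuinely enters, through the linear relative Dehn function and the control it gives on how a cycle in \(\relcay\) can penetrate peripheral cosets. Everything else is bookkeeping with the decomposition of \(p\) (and of \(p'\)) into edges. A minor point that needs care is the verification that the \(\mathcal H\)-components of \(p\) remain isolated after concatenating with \(\sigma^{-1}\) — here it is essential that \(p\), being geodesic, is without backtracking and that \(\sigma\) uses no \(\mathcal H\)-edges. If one prefers to argue entirely within the present paper, one can instead apply Lemma~\ref{lem:qgds_with_long_comps} directly to each \(\mathcal H\)-edge of the geodesic \(p\), provided one notes that the constant \(\xi(\lambda,c,A)\) produced there may be taken to depend affinely on the target value \(A\); this yields the same quadratic bound.
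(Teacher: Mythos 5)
Your argument is correct, and its main route differs from the paper's. The paper stays entirely inside Lemma~\ref{lem:qgds_with_long_comps}: since every $\mathcal{H}$-component of the geodesic $p$ is a single isolated edge, and since (from the proof in \cite{MinMin}) the constant $\xi(1,0,A)$ may be taken linear in $A$, each such edge satisfies $\abs{h}_X \leq \xi_0\abs{p}_X$; multiplying by the at most $\abs{p}_{X\cup\mathcal{H}}\le\abs{p}_X$ edges of $p$ gives the quadratic bound. This is precisely the alternative you sketch in your final paragraph. Your primary argument instead unwinds that lemma one level: you close $p$ up into a cycle $q=p\sigma^{-1}$ with an $X$-geodesic $\sigma$ and appeal directly to the Dehn-function estimate for isolated components of cycles. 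Your verification that the $\mathcal{H}$-edges of $p$ remain isolated components of $q$ is the right thing to check and is done correctly. Two small points of bookkeeping: the relevant statement in \cite{OsinRHG} is Lemma~2.27, not 3.1, and it bounds $\sum_i\abs{h_i}_\Omega$ (summed over \emph{all} isolated components of the cycle) by $K\ell(q)$, i.e.\ it is already linear in $\ell(q)$; one then converts $\abs{\cdot}_\Omega$ to $\abs{\cdot}_X$ using finiteness of $\Omega$ (or $\Omega\subseteq X$ as in the remark after Proposition~\ref{prop:osin_polygon}). Your stated form $C\,\ell(q)\,\delta(\ell(q))$ is a weaker over-estimate, so your quadratic conclusion still follows; but with the sharp form your route in fact yields the stronger linear bound $d_X(p_-,v)=O(\abs{p}_X)$, which the paper's packaging of the estimate through $\xi(1,0,\cdot)$ does not directly give. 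In short: the paper's proof is shorter because it reuses an already-packaged lemma; yours goes back to the source and, as a bonus, could sharpen the exponent.
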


\begin{proof}
    For \(\lambda \geq 1\) and \(A \geq 0\), the constant \(\xi(\lambda,0,A)\) of Lemma~\ref{lem:qgds_with_long_comps} may be taken to be a multiple of \(A\) that depends only on \(\lambda\) (see equation (5.4) in the proof of \cite[Lemma 5.12]{MinMin}).
    Thus there is \(\xi_0 \geq 1\) such that \(\xi(1,0,\abs{p}_X) = \xi_0 \abs{p}_X\).
    Now an application of Lemma~\ref{lem:qgds_with_long_comps} tells us that if \(h\) is an \(\mathcal{H}\)-component of \(p\), then \(\abs{h}_X \leq \xi(1,0,\abs{p}_X) = \xi_0 \abs{p}_X\).
    
    Finally, noting that there are at most \(\abs{p}_{X\cup\mathcal{H}} \leq \abs{p}_X\) edges of \(p\) between \(p_-\) and \(v\) gives that \(d_X(p_-, v) \leq \xi_0 \abs{p}_X^2\) as required.
\end{proof}

In dealing with backtracking in broken lines, we make use of some more specialised terminology.

\begin{definition}[Consecutive backtracking]
    Let \(p=p_1 \dots p_n\) be a broken line in \(\Gamma(G,X\cup\mathcal{H})\).
    Suppose that for some \(i,j\), with \(1 \le i <j \le n\), and \(\nu \in \Nu\) there exist pairwise connected \(H_\nu\)-components \(h_i,h_{i+1},\dots, h_j\) of the paths \(p_i,p_{i+1}, \dots, p_j\), respectively.
    Then we will say that \(p\) has \emph{consecutive backtracking} along the components \(h_i,\dots,h_j\) of \(p_i, \dots, p_j\).
\end{definition}

A key property of relatively hyperbolic groups is that pairs of quasigeodesics in \(\relcay\) whose initial and terminal vertices are close fellow travel (with respect to a proper metric) even when passing through cosets of the peripheral subgroups.
In this paper, we use the below formulation of this property.

\begin{definition}[$k$-similar paths]
    Let \(p\) and \(q\) be paths in \(\Gamma(G,X\cup\mathcal{H})\), and let \(k \geq 0\).
    The paths \(p\) and \(q\) are said to be \emph{\(k\)-similar} if \(d_X(p_-,q_-) \leq k\) and \(d_X(p_+,q_+) \leq k\).
\end{definition}

\begin{proposition}[{\cite[Proposition 3.15, Lemma 3.21 and Theorem~3.23]{OsinRHG}}]
\label{prop:bcp}
    For any \(\lambda \geq 1\), \(c, k \geq 0\) there is a constant \(\kappa = \kappa(\lambda,c,k) \geq 0\) such that if \(p\) and \(q\) are $k$-similar \((\lambda,c)\)-quasigeodesics in \(\Gamma(G,X\cup\mathcal{H})\) and \(p\) is without backtracking, then
    \begin{enumerate}
        \item for every phase vertex \(u\) of \(p\), there is a phase vertex \(v\) of \(q\) with \(d_X(u,v) \leq \kappa\);
        \item every \(\mathcal{H}\)-component \(s\) of \(p\), with \(\abs{s}_X \geq \kappa\), is connected to an \(\mathcal{H}\)-component of \(q\).
    \end{enumerate}
    Moreover, if \(q\) is also without backtracking then
    \begin{enumerate}[resume]
        \item if \(s\) and \(t\) are connected \(\mathcal{H}\)-components of \(p\) and \(q\) respectively, then
        \[
            \max \{ d_X(s_-,t_-), d_X(s_+,t_+) \} \leq \kappa .
        \]
    \end{enumerate}
\end{proposition}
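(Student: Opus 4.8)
The statement packages together Osin's fellow-travelling and bounded coset penetration results, so my plan is to reduce it to hyperbolicity of the relative Cayley graph together with the no-deep-penetration principle already recorded as Lemma~\ref{lem:qgds_with_long_comps}. Throughout write \(\Gamma = \Gamma(G,X\cup\mathcal{H})\), which is \(\delta\)-hyperbolic by Lemma~\ref{lem:Cayley_graph-hyperbolic}, and observe that since \(d_X \geq d_{X\cup\mathcal{H}}\) on \(G\) the paths \(p\) and \(q\) are in particular \(k\)-similar \((\lambda,c)\)-quasigeodesics in the \(\delta\)-hyperbolic graph \(\Gamma\).

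\emph{Coarse fellow-travelling in the relative metric.} First I would invoke stability of quasigeodesics: each of \(p\) and \(q\) lies in the \(\mu\)-neighbourhood, for the \(d_{X\cup\mathcal{H}}\)-metric, of any \(\Gamma\)-geodesic joining its endpoints, with \(\mu = \mu(\lambda,c,\delta)\); and two \(\Gamma\)-geodesics with \(k\)-close endpoints lie in one another's \((k+4\delta)\)-neighbourhood by thinness of quadrilaterals. Composing these yields \(\sigma_1 = \sigma_1(\lambda,c,\delta,k)\) such that every vertex of \(p\) is within \(d_{X\cup\mathcal{H}}\)-distance \(\sigma_1\) of a vertex of \(q\) and vice versa; and since an interior vertex of an \(\mathcal{H}\)-component of \(q\) is one edge from that component's endpoints, I may always take the produced vertex of \(q\) to be phase.

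\emph{From the relative to the word metric, and matching of components.} For item~(1), given a phase vertex \(u\) of \(p\) and a phase vertex \(v\) of \(q\) with \(d_{X\cup\mathcal{H}}(u,v)\leq\sigma_1\), I would take a \(\Gamma\)-geodesic \(w = [u,v]\): it has at most \(\sigma_1\) edges, each \(\mathcal{H}\)-component a single edge (Remark~\ref{rem:comp_of_geod_is_an_edge}); applying Lemma~\ref{lem:qgds_with_long_comps} to a quasigeodesic obtained by attaching \(w\) to subpaths of \(p\) and \(q\) in which the relevant component stays isolated (using that \(p\) is without backtracking) bounds its \(d_X\)-length, and then Lemma~\ref{lem:rel_paths_with_short_comps} bounds \(d_X(u,v)\); this gives item~(1) once \(\kappa\) is set accordingly. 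For item~(2), let \(s\) be an isolated \(H_\nu\)-component of \(p\) with \(|s|_X\geq\kappa\); its endpoints are phase, so by item~(1) there are phase vertices \(v_\pm\) of \(q\) with \(d_X(s_\pm,v_\pm)\) bounded, whence \(d_X(v_-,v_+)\) is large while \(d_{X\cup\mathcal{H}}(v_-,v_+)\) stays bounded (the points \(s_-,s_+\) differ by one \(H_\nu\)-edge). Thus the subpath \(q[v_-,v_+]\) is combinatorially bounded, so by Lemma~\ref{lem:rel_paths_with_short_comps} it carries an \(\mathcal{H}\)-component of large \(d_X\)-length, and a further application of fellow-travelling together with Lemma~\ref{lem:qgds_with_long_comps} pins this component to the coset \(s_- H_\nu\), making it connected to \(s\). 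For item~(3), with \(q\) also without backtracking and \(s \sim t\) connected components, the entry points \(s_-,t_-\) into the common coset \(gH_\nu\) are \(d_{X\cup\mathcal{H}}\)-close by the first step; were \(d_X(s_-,t_-)\) large, a short \(\Gamma\)-geodesic between them would contain a long \(H_\nu\)-component in \(gH_\nu\), which spliced into \(p\) or \(q\) would enlarge \(s\) or \(t\), contradicting maximality of components in a path without backtracking. The same argument at the terminal endpoints finishes item~(3).

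\emph{Main obstacle.} The real content — and the only step where relative hyperbolicity is used rather than mere hyperbolicity of \(\Gamma\) — is the upgrade from \(d_{X\cup\mathcal{H}}\)-estimates to \(d_X\)-estimates: two vertices can be \(\Gamma\)-close yet buried deep inside a common peripheral coset, and ruling this out is precisely the bounded-penetration phenomenon encoded in Lemma~\ref{lem:qgds_with_long_comps}. Fixing \(\kappa\), tracking the auxiliary constants through the splicing arguments, and ordering items~(1)--(3) so that the deductions are not circular is the only substantial work; since all of this is done carefully by Osin, in the paper we simply cite \cite{OsinRHG}.
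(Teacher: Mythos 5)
The paper gives no proof of this proposition: it is imported verbatim from Osin \cite{OsinRHG}, and the citation with which you close is exactly the paper's own justification, so the approaches coincide. Your preliminary sketch (fellow-travelling in the relative metric via hyperbolicity of \(\Gamma(G,X\cup\mathcal{H})\), then upgrading to \(d_X\)-bounds via bounded coset penetration) is a plausible outline of the standard argument, and you have yourself flagged that the substantive work — the splicing constructions and the non-circular ordering of (1)--(3) — is carried out in \cite{OsinRHG} rather than here.
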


\begin{proposition}[{\cite[Proposition 3.2]{OsinFilling}}]
\label{prop:osin_polygon}
    There is a finite set \(\Omega \subseteq G\) and a constant \(L \ge 0\) such that if \(P\) is a geodesic \(n\)-gon in \(\relcay\) and some side \(p\) is an isolated \(\mathcal{H}\)-component of \(P\) then \(\abs{p}_\Omega \leq Ln\).
\end{proposition}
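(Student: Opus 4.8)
The plan is to argue with van Kampen diagrams over a finite relative presentation of \(G\). Fix a finite relative presentation \(\langle X, H_\nu\ (\nu \in \Nu) \mid \mathcal{R}\rangle\) of \(G\) with respect to \(\relto\) with linear relative Dehn function, and set \(\mathcal{S} = \bigcup_{\nu} \mathcal{S}_\nu\), where \(\mathcal{S}_\nu\) is the set of all words over \(H_\nu \setminus \{1\}\) representing \(1\) in \(G\). Then every word over \(X \cup \mathcal{H}\) representing the identity bounds a van Kampen diagram over \(\langle X, \mathcal{H} \mid \mathcal{S}, \mathcal{R}\rangle\) whose number of \(\mathcal{R}\)-cells is at most a fixed linear function of its length. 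Let \(\Omega \subseteq G\) be the finite symmetric set of those elements of \(\bigcup_\nu (H_\nu \setminus \{1\})\) that occur as letters in a relator of \(\mathcal{R}\); this will be the set claimed in the statement.

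Let \(P = p_1 \dots p_n\) be a geodesic \(n\)-gon in \(\relcay\) with \(p := p_1\) an isolated \(H_\mu\)-component of \(P\). By Remark~\ref{rem:comp_of_geod_is_an_edge} every side of \(P\) is a geodesic whose \(\mathcal{H}\)-components are single edges, so \(p\) is a single edge \(e\) labelled by some \(h = \elem{p} \in H_\mu \setminus \{1\}\). Take a van Kampen diagram \(\Delta\) for \(\Lab(P)\) over \(\langle X, \mathcal{H} \mid \mathcal{S}, \mathcal{R}\rangle\) minimising, in order, the number of \(\mathcal{R}\)-cells and then the total number of cells; after standard reductions we may also assume that no two \(\mathcal{S}_\nu\)-cells of the same type \(\nu\) share an edge. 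The boundary edge of \(\Delta\) corresponding to \(e\) borders a unique cell \(\Pi\).

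If \(\Pi\) is an \(\mathcal{R}\)-cell, then \(h\) occurs as a letter in a relator, so \(h \in \Omega\) and \(\abs{p}_\Omega = 1\); this is the easy case. Suppose instead that \(\Pi\) is an \(\mathcal{S}\)-cell. Since \(e\) is an \(H_\mu\)-edge, \(\Pi\) must be an \(\mathcal{S}_\mu\)-cell, so every edge of \(\partial\Pi\) is an \(H_\mu\)-edge and all its vertices lie in one left coset of \(H_\mu\). Here I would use that \(p\) is isolated: an edge \(f \neq e\) of \(\partial\Pi\) lying on \(\partial\Delta\) would be an \(H_\mu\)-edge on some side \(p_j\), hence by Remark~\ref{rem:comp_of_geod_is_an_edge} an \(H_\mu\)-component of \(P\) distinct from \(p\); as \(f\) and \(e\) lie in a common left coset of \(H_\mu\) they are connected, contradicting the isolation of \(p\). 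Thus \(e\) is the only edge of \(\partial\Pi\) on \(\partial\Delta\), so \(\partial\Pi = e^{\pm 1} f_1 \cdots f_m\) with each \(f_i\) an interior edge of \(\Delta\); the cell on the far side of \(f_i\) is not an \(\mathcal{S}_\mu\)-cell (by the reduction) and cannot be an \(\mathcal{S}_\nu\)-cell for \(\nu \neq \mu\) (since \(f_i\) is an \(H_\mu\)-edge), hence it is an \(\mathcal{R}\)-cell and \(\elem{f_i} \in \Omega\). Reading the relation carried by \(\partial\Pi\) now expresses \(h\) as a product of \(m\) elements of \(\Omega\), so \(\abs{p}_\Omega = \abs{h}_\Omega \leq m\).

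It remains to bound \(m\)---the combinatorial length of the \(\mathcal{S}_\mu\)-cell attached to \(e\)---linearly in \(n\), and this is where the real work lies. Deleting \(\Pi\) from \(\Delta\) yields a diagram with one fewer \(\mathcal{S}\)-cell whose boundary is obtained from \(\partial\Delta\) by replacing \(e\) with \(f_1 \cdots f_m\) (suitably oriented); by the same coset argument this replacement path is again an isolated \(\mathcal{H}\)-component (of type \(\mu\)), which suggests inducting on the number of \(\mathcal{S}\)-cells after first strengthening the statement to let the distinguished side be an arbitrary isolated \(\mathcal{H}\)-component. The crux is then the base case, in which all cells are \(\mathcal{R}\)-cells: one must show the distinguished component has length \(O(n)\), and this genuinely requires that \(p_2, \dots, p_n\) are geodesics in \(\relcay\). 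The natural tools are the thinness of geodesic polygons in the \(\delta\)-hyperbolic graph \(\relcay\) (Lemma~\ref{lem:Cayley_graph-hyperbolic}) together with bounded coset penetration (Proposition~\ref{prop:bcp}), which should force the \(\mathcal{R}\)-cells lined up along the distinguished component to interact with it at most \(O(n)\) times. Making this estimate precise, and keeping the number of sides under control through the induction, is the main obstacle---and is essentially the content of Osin's original argument.
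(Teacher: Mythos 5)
This proposition is quoted verbatim from Osin (\cite[Proposition 3.2]{OsinFilling}); the paper offers no proof of its own, so your attempt can only be judged on its own terms. Your van Kampen setup is the right machinery and correctly reproduces the proof of the \emph{weaker} precursor result (\cite[Lemma 2.27]{OsinRHG}): the reduction to the single $\mathcal{S}_\mu$-cell $\Pi$ attached to $e$, the use of isolation to show $e$ is the only boundary edge of $\Pi$, and the observation that each remaining edge of $\partial\Pi$ borders an $\mathcal{R}$-cell and hence is labelled by an element of $\Omega$, are all sound. What that argument yields, however, is $\abs{p}_\Omega \leq m \leq (\text{number of } \mathcal{R}\text{-cells}) \cdot \max_{r \in \mathcal{R}}\abs{r}$, and the linear relative Dehn function only bounds the number of $\mathcal{R}$-cells by a linear function of $\ell(P)$, the \emph{total length} of the polygon. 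That gives $\abs{p}_\Omega = O(\ell(P))$, not $O(n)$.

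The entire content of the proposition is precisely the passage from $O(\ell(P))$ to $O(n)$: the sides $p_2, \dots, p_n$ may be arbitrarily long geodesics, so the two bounds are genuinely different, and the improvement is what makes the proposition usable throughout this paper (e.g.\ in Proposition~\ref{prop:int_of_parabs_conj_to_short}, where nothing controls $\abs{b}_{X\cup\mathcal{H}}$). You explicitly defer this step (``Making this estimate precise \dots is the main obstacle---and is essentially the content of Osin's original argument''), so the proof is not complete: the deferred step is not a technical verification but the theorem itself. Gesturing at $\delta$-thinness and Proposition~\ref{prop:bcp} is not enough --- one must actually show, roughly, that the $\mathcal{R}$-cells contributing edges to $\partial\Pi$ number only $O(n)$, which requires exploiting geodesicity of each side to replace the boundary cycle by one of length controlled by $n$ before invoking the isoperimetric inequality. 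Until that estimate is supplied, the argument proves only the already-known Lemma 2.27 of \cite{OsinRHG}, not Proposition~\ref{prop:osin_polygon}.
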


\begin{remark}
    The previous result does not require that \(G\) is finitely generated.
    When \(G\) is finitely generated we can always choose the generating set \(X\) such that \(\Omega \subseteq X\).
    In this setting \(\abs{p}_X \leq \abs{p}_\Omega\), so we will replace the conclusion of the above with \(\abs{p}_X \leq Ln\).
\end{remark}

\begin{definition}(Relative quasiconvexity)
    A subgroup \(Q \leqslant G\) is said to be \emph{relatively quasiconvex} with respect to \(\relto\) if there is \(\varepsilon \geq 0\) such that for any vertex \(v\) of a geodesic in \(\relcay\) with endpoints in \(Q\), we have \(d_X(v,Q) \leq \varepsilon\).
    Moreover, we will call any such \(\varepsilon \geq 0\) a \emph{quasiconvexity constant} of \(Q\).
\end{definition}

The following is a well-known elementary fact; a proof may be found in \cite[Lemma 5.22]{MinMin}.

\begin{lemma}
\label{lem:props_of_qc_subgroups}
    If \(Q \leqslant G\) is relatively quasiconvex, so is any conjugate or finite index subgroup of \(Q\).
\end{lemma}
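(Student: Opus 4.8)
The statement splits into two independent claims; throughout I fix a quasiconvexity constant $\varepsilon \geq 0$ for $Q$ and a finite generating set $X$ of $G$. The finite-index case is purely combinatorial: if $Q' \leqslant_f Q$, pick a finite transversal $t_1, \dots, t_n$ for $Q'$ in $Q$ and put $D = \max_i |t_i|_X$, so that every element of $Q$ lies within $d_X$-distance $D$ of $Q'$ (if $q = q' t_i$ then $d_X(q, q') = |t_i|_X \leq D$). Given a geodesic $p$ in $\relcay$ with endpoints in $Q' \subseteq Q$ and a vertex $v$ of $p$, quasiconvexity of $Q$ gives $d_X(v, Q) \leq \varepsilon$, hence $d_X(v, Q') \leq \varepsilon + D$ by the triangle inequality. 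So $Q'$ is relatively quasiconvex with constant $\varepsilon + D$, and I expect no difficulty here.

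For the conjugate case it suffices to treat $gQg^{-1}$ for an arbitrary $g \in G$; set $D = |g|_X$. Let $p$ be a geodesic in $\relcay$ with endpoints $p_- = gqg^{-1}$, $p_+ = gq'g^{-1}$ (with $q, q' \in Q$), and let $u$ be a vertex of $p$. Conjugation is not induced by an automorphism of $\relcay$, but right translation by $g$ is the crucial substitute: it moves every vertex a $d_X$-distance exactly $D$ (since $d_X(h, hg) = |g|_X$) and carries $gQg^{-1}$ into the genuine left coset $gQ$, because $p_- g = gq$ and $p_+ g = gq'$ both lie in $gQ$. So I would take a geodesic $\sigma$ in $\relcay$ from $gq$ to $gq'$; then $p$ and $\sigma$ are $D$-similar. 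Left-translating $\sigma$ by $g^{-1}$ — now a genuine label-preserving automorphism of $\relcay$ — yields a geodesic with endpoints $q, q' \in Q$, so by quasiconvexity of $Q$ together with left-invariance of $d_X$, every vertex of $\sigma$ lies within $d_X$-distance $\varepsilon$ of $gQ$; and since $d_X(gq'', gq''g^{-1}) = |g^{-1}|_X = D$ with $gq''g^{-1} \in gQg^{-1}$, every vertex of $\sigma$ in fact lies within $\varepsilon + D$ of $gQg^{-1}$.

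It remains to pass from this estimate on vertices of $\sigma$ to the vertex $u$ of $p$, and this is the only genuinely delicate point: $\delta$-hyperbolicity of $\relcay$ by itself gives only $d_{X \cup \mathcal{H}}$-fellow travelling between $p$ and $\sigma$, which is useless since relative quasiconvexity is a statement about the word metric $d_X$. To obtain $d_X$-control I would invoke the bounded coset penetration property as recorded in Proposition~\ref{prop:bcp}(1): $p$, being geodesic, is without backtracking and all its vertices are phase, and $p, \sigma$ are $D$-similar $(1,0)$-quasigeodesics, so there is $\kappa = \kappa(1, 0, D)$ with $d_X(u, v) \leq \kappa$ for some phase vertex $v$ of $\sigma$. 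Combining the two estimates gives $d_X(u, gQg^{-1}) \leq \kappa + \varepsilon + D$, so $gQg^{-1}$ is relatively quasiconvex with constant $\kappa + \varepsilon + D$. Thus the only place where genuine relative-hyperbolic geometry enters is this single application of Proposition~\ref{prop:bcp}, and the expected obstacle is the bookkeeping needed to ensure every closeness bound is taken in $d_X$ and never merely in $d_{X \cup \mathcal{H}}$.
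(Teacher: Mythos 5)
Your proof is correct, and it is the standard argument for this fact (the paper itself does not reproduce a proof but defers to \cite[Lemma 5.22]{MinMin}, which proceeds along essentially the same lines): a transversal bound for the finite-index case, and for conjugates a comparison of the geodesic with endpoints in \(gQg^{-1}\) against a translated geodesic with endpoints in \(gQ\) via Proposition~\ref{prop:bcp}. You correctly identify that the only non-elementary input is the bounded coset penetration property needed to upgrade \(d_{X\cup\mathcal{H}}\)-fellow-travelling to a \(d_X\)-bound, and your bookkeeping (all vertices of a geodesic are phase and geodesics are without backtracking, per Remark~\ref{rem:comp_of_geod_is_an_edge}) is sound.
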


\begin{lemma}
\label{lem:hyperbolic_parabolic_quasiconvex}
    Let \(Q \leqslant G\) be a hyperbolic relatively quasiconvex subgroup of \(G\).
    Then for any maximal parabolic subgroup \(P \leqslant G\), the intersection \(Q \cap P\) is quasiconvex in \(Q\).
    In particular, \(Q \cap P\) is hyperbolic.
\end{lemma}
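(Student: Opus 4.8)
The plan is to pass to the relatively hyperbolic structure that \(Q\) inherits from \(G\) and then appeal to the standard fact that a relatively hyperbolic group which happens to be word-hyperbolic has quasiconvex peripheral subgroups.

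First, if \(Q \cap P\) is finite then it is automatically quasiconvex in \(Q\) and, being finite, hyperbolic; so we may assume that \(Q \cap P\) is infinite. Since \(Q\) is relatively quasiconvex in \(G\), the theorem of Hruska on inherited relative quasiconvexity \cite{HruskaRHCG} produces maximal parabolic subgroups \(P_1, \dots, P_m\) of \(G\), with each \(Q \cap P_j\) infinite, such that \(Q\) is hyperbolic relative to \(\{\, Q \cap P_j \mid 1 \le j \le m \,\}\) and such that every infinite subgroup of the form \(Q \cap P'\), for \(P'\) a maximal parabolic subgroup of \(G\), is conjugate in \(Q\) to one of the \(Q \cap P_j\). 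In particular \(Q \cap P = q (Q \cap P_j) q^{-1}\) for some \(j\) and some \(q \in Q\).

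Next I would invoke the hypothesis that \(Q\) is word-hyperbolic. The peripheral subgroups of a finitely generated relatively hyperbolic group that is itself word-hyperbolic are quasiconvex in it (with respect to any word metric) --- a well-known consequence of relative hyperbolicity, following for instance from the results of \cite{OsinRHG} and \cite{DS} characterising when a relatively hyperbolic group is word-hyperbolic. Applying this to \(Q\) shows that each \(Q \cap P_j\) is quasiconvex in \(Q\); hence, by conjugation-invariance of quasiconvexity in word-hyperbolic groups (the quasiconvex analogue of Lemma~\ref{lem:props_of_qc_subgroups}), so is \(Q \cap P = q(Q \cap P_j)q^{-1}\). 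Finally, a quasiconvex subgroup of a word-hyperbolic group is word-hyperbolic, which yields the last sentence of the statement.

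I expect the one genuinely non-formal ingredient --- and therefore the main obstacle, were one to insist on a self-contained argument --- to be the quasiconvexity of the peripheral subgroups of a word-hyperbolic relatively hyperbolic group; this is also the only step in which the word-hyperbolicity hypothesis on \(Q\) is essential. I would simply quote this fact: a first-principles proof amounts to re-running, in the special case of the coning-off of a hyperbolic group, the standard analysis of geodesics in relative Cayley graphs (compare Proposition~\ref{prop:bcp}), and I do not think it would add anything.
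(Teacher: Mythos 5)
Your proposal is correct and follows essentially the same route as the paper: both pass to the peripheral structure that \(Q\) inherits via Hruska's theorem, so that \(Q \cap P\) is (conjugate to) a maximal parabolic subgroup of \(Q\), and then use word-hyperbolicity of \(Q\) to conclude quasiconvexity and hence hyperbolicity. The only difference is in how the key step is justified: where you black-box the quasiconvexity of peripheral subgroups of a word-hyperbolic relatively hyperbolic group, the paper derives it by citing that peripheral subgroups are undistorted (\cite[Lemma 5.4]{OsinRHG}) and that undistorted subgroups of hyperbolic groups are quasiconvex.
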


\begin{proof}
    Recall that \(Q\) is hyperbolic relative to a collection of \(Q\)-conjugacy class representatives of infinite subgroups of the form \(Q \cap H\), where \(H \leqslant G\) is a maximal parabolic subgroup of \(G\) \cite[Theorem 9.1]{HruskaRHCG}.
    Thus if \(Q \cap P\) is infinite, it is a maximal parabolic subgroup of \(Q\) and is undistorted in \(Q\) by \cite[Lemma 5.4]{OsinRHG}.
    It follows that \(Q \cap P\) is quasiconvex in \(Q\) and hence hyperbolic \cite[Proposition III.\(\Gamma\).3.7]{Bridson_Haefliger}.
    On the other hand, if \(Q \cap P\) is finite then it is trivially hyperbolic.
\end{proof}

Mart\'{i}nez-Pedroza and Sisto proved the following combination theorem for relatively quasiconvex subgroups with compatible parabolics.

\begin{theorem}[{\cite[Theorem 2]{MPS}}]
\label{thm:compat_parab_amalgam}
    Let \(Q\) and \(R\) be relatively quasiconvex subgroups with compatible parabolics, and let \(S' \leqslant_f S = Q \cap R\) be a finite index subgroup of their intersection.
    There is a constant \(M = M(Q,R,S') \geq 0\) such that the following is true.

    If \(Q' \leqslant Q\) and \(R' \leqslant R\) satisfy \(Q' \cap R' = S'\) and \(\abs{g}_X \geq M\) for all \(g \in (Q' \cup R') \setminus S'\), then \(\langle Q', R' \rangle\) is relatively quasiconvex and \(\langle Q', R' \rangle \cong Q' \ast_{S'} R'\).
\end{theorem}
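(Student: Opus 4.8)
The plan is to prove both conclusions simultaneously by a ping-pong argument in the $\delta$-hyperbolic graph $\relcay$ (Lemma~\ref{lem:Cayley_graph-hyperbolic}). By the normal form theorem for amalgamated free products, the isomorphism $\langle Q', R'\rangle\cong Q'\ast_{S'}R'$ amounts to showing that if $g_1,\dots,g_n$ is a reduced sequence --- $n\ge1$, with consecutive entries lying alternately in $Q'\setminus S'$ and $R'\setminus S'$ --- then $g=g_1\cdots g_n\ne1$; and for relative quasiconvexity it suffices to establish the quantitative strengthening that $\abs{g}_X$ is bounded below by a fixed positive linear function of $n$. To such a sequence I would associate the broken line $p=p_1\cdots p_n$ in $\relcay$, where $p_i$ is a geodesic from $x_{i-1}$ to $x_i$ with $x_0=1$ and $x_i=g_1\cdots g_i$, so that each $p_i$ is a geodesic segment with $\elem{p_i}=g_i$ and each node $x_i$ lies in $\langle Q',R'\rangle$. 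The aim is to show that, after a controlled modification, $p$ becomes a quasigeodesic without backtracking with constants depending only on $Q$, $R$ and $S'$.

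Controlling the backtracking of $p$ is the crux, and is where both hypotheses --- compatible parabolics and the length bound $M$ --- are used. By Remark~\ref{rem:comp_of_geod_is_an_edge} each $\mathcal H$-component of a geodesic segment $p_i$ is a single edge, so backtracking in $p$ means two segments have $H_\nu$-edges in a common left coset $zH_\nu$; hyperbolicity together with Proposition~\ref{prop:bcp} then forces consecutive backtracking along $zH_\nu$ through every intermediate segment, with the relevant entry and exit vertices on $zH_\nu$ uniformly controlled. Setting $P=zH_\nu z^{-1}$, one checks that such a run forces the subproduct $g_i\cdots g_j$ --- up to conjugation by $x_{i-1}$ and bounded ``corner'' elements --- into $P$, and compatibility at $P$, say $Q\cap P\le R\cap P$, then places every factor-$Q'$ entry of the run into $Q'\cap P\subseteq R\cap P$; since $Q'\cap R'=S'$, this forces some $g_k$ into $S'$, contradicting reducedness, \emph{unless} the entire run collapses, via the relations of $H_\nu$, to a single $H_\nu$-edge. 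Replacing each maximal backtracking run by that edge --- the shortcutting of $p$, recalled in Section~\ref{sec:shortcutting} --- produces a broken line $\bar p$ from $1$ to $g$ which is without backtracking, whose segments are geodesics, and (crucially) all of whose $\mathcal H$-components have $X$-length at least $cM$ for a uniform $c>0$, because in each merged run at least one contributing $g_k$ has $\abs{g_k}_X\ge M$ and this length cannot be entirely cancelled without again forcing a corner into $S'$. This is exactly why $M$ must be permitted to depend on $S'$.

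Once $\bar p$ is without backtracking with long segments and long $\mathcal H$-components, a local-to-global (Morse-type) argument for broken lines in $\relcay$ --- combining the thin-triangles estimate with Lemma~\ref{lem:qgds_with_long_comps} and Proposition~\ref{prop:osin_polygon} to handle the $\mathcal H$-components --- shows that $\bar p$ is a $(\lambda,c_0)$-quasigeodesic, with $\lambda$ and $c_0$ depending only on $Q$, $R$, $S'$, provided $M$ is chosen large enough. Hence $\dxh(1,g)\ge\lambda^{-1}\ell(\bar p)-c_0>0$ once $n$ is large (each surviving segment or merged edge contributing at least one edge to $\bar p$), while small values of $n$ are dispatched directly from $Q'\cap R'=S'$; this proves $\langle Q',R'\rangle\cong Q'\ast_{S'}R'$, and in fact the promised linear lower bound on $\abs{g}_X$. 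For relative quasiconvexity, let $\gamma$ be a geodesic in $\relcay$ with endpoints $a,b\in\langle Q',R'\rangle$, write $a^{-1}b$ as a reduced sequence, and compare $\gamma$ with the translate $a\bar p$ (a quasigeodesic without backtracking from $a$ to $b$) via Proposition~\ref{prop:bcp}: every phase vertex of $\gamma$ is uniformly $X$-close to a phase vertex of $a\bar p$, while every vertex of $a\bar p$ is uniformly $X$-close to a coset $ax_{i-1}Q'$ or $ax_{i-1}R'$ --- since each original segment $p_i$ is a geodesic with both endpoints in $x_{i-1}Q'$ (or $x_{i-1}R'$), $Q'$ and $R'$ are relatively quasiconvex (in the applications they have finite index in $Q$ and $R$; see Lemma~\ref{lem:props_of_qc_subgroups}), and shortcutting moves vertices only a bounded amount --- and these cosets lie in $\langle Q',R'\rangle$ because each $ax_{i-1}\in\langle Q',R'\rangle$. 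Thus $\gamma$ stays in a uniform $X$-neighbourhood of $\langle Q',R'\rangle$.

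The main obstacle is the backtracking analysis: proving that the only backtracking in $p$ is the ``parabolic'' kind confined to a single peripheral coset, that the compatible-parabolics hypothesis is precisely what excludes the one remaining bad configuration (the two factors travelling along a common peripheral coset with no corner in $S'$), and that shortcutting absorbs these runs without ever creating a short $\mathcal H$-component --- the step for which the length bound $M$, and its dependence on $S'$, is indispensable. Making the local-to-global quasigeodesic estimate for broken lines precise in the relative setting, with constants uniform over all admissible $Q'$ and $R'$, is the remaining technical core.
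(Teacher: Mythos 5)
First, a point of comparison: the paper does not prove this statement at all --- it is quoted from Mart\'{i}nez-Pedroza--Sisto \cite[Theorem 2]{MPS} and used as a black box (only in the proof of Corollary~\ref{thm:almost_compat_amalgamation}). So there is no internal proof to measure your argument against; what can be assessed is whether your proposal would constitute a valid proof of the cited result. Your overall architecture --- the broken line through the nodes \(x_i=g_1\cdots g_i\), control of backtracking, shortcutting, a local-to-global quasigeodesic estimate, and quasiconvexity via Proposition~\ref{prop:bcp} --- is indeed the standard strategy, and matches both the spirit of \cite{MPS} and the machinery of Sections~\ref{sec:shortcutting}--\ref{sec:path_reps} here.

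Two of the steps you do spell out, however, do not go through. First, the deduction ``compatibility at \(P\), say \(Q\cap P\leqslant R\cap P\), places \(g_k\in Q'\cap P\subseteq R\cap P\); since \(Q'\cap R'=S'\) this forces \(g_k\in S'\)'' conflates \(R\) with \(R'\): it yields only \(g_k\in Q'\cap R\), and \(Q'\cap R\) need not equal \(Q'\cap R'=S'\) for the arbitrary subgroups \(Q'\leqslant Q\), \(R'\leqslant R\) allowed by the statement (it does in the applications where \(Q'=Q\cap M'\) and \(R'=R\cap M'\), but then you would be proving a weaker theorem than the one cited). Moreover the factors \(g_k\) are not themselves in \(P\); only bounded-distance approximations of certain subwords are, and the passage from ``connected \(H_\nu\)-components'' to ``\(g_k\in Q'\cap P\)'' --- via the corner estimates and the quasiconvexity of \(Q\) and \(R\) --- is precisely the content that needs proving. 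So the claimed dichotomy (a corner lands in \(S'\), or the run collapses to one edge) is not established. Second, the assertion that every merged \(\mathcal H\)-component of \(\bar p\) has \(X\)-length at least \(cM\) ``because at least one contributing \(g_k\) has \(\abs{g_k}_X\geq M\) and this length cannot be entirely cancelled'' is unjustified: a long factor can enter and leave a peripheral coset at nearby points, so \(d_X\bigl((h_i)_-,(h_j)_+\bigr)\) is not controlled from below by the lengths of the factors. Ruling out exactly this configuration is the analogue of condition (iii) of Definition~\ref{def:tamable}, and elsewhere in this paper it is secured only through the separability conditions \descref{C3}--\descref{C5}, which are not available under the bare hypotheses of the theorem; extracting it from compatibility alone is the genuine core of the Mart\'{i}nez-Pedroza--Sisto argument and is missing here. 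Until these two points are repaired, neither the injectivity of \(Q'\ast_{S'}R'\to G\) nor the quasiconvexity estimate follows.
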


\subsection{Profinite topology}
\label{subsec:profinite_topology}

Any group \(G\) can be equipped with the \emph{profinite topology}, which is based by left cosets of finite index subgroups of \(G\).
A subset of \(G\) is said to be \emph{separable} if it is closed in the profinite topology on \(G\).
A group \(G\) is said to be \emph{residually finite} if the trivial subgroup is separable, \emph{LERF} if every finitely generated subgroup of \(G\) is separable, and \emph{double coset separable} if every product of two finitely generated subgroups of \(G\) is separable.
Each of these properties pass to subgroups and finite index supergroups.
As an example, polycyclic groups are known to be double coset separable \cite{L-W}.

A relatively hyperbolic group is called \emph{QCERF} if each of its finitely generated relatively quasiconvex subgroups is separable.
Whether all groups hyperbolic relative to a collection of LERF and \emph{slender} (i.e. every subgroup is finitely generated) subgroups are QCERF is equivalent to a well-known open problem \cite[Theorem 1.2]{MMPSep}.
Many common examples of relatively hyperbolic groups are known to be QCERF, for example limit groups \cite{WiltonLimitGps}, \(C'(1/6)\) small cancellation quotients of LERF groups \cite[Theorem 1.7]{Einstein-Ng,MMPSep}, and geometrically finite Kleinian groups \cite{Agol}.

Our use of the profinite topology in this paper goes through the following elementary fact.

\begin{lemma}
\label{lem:sep_from_finite_set}
    Let \(G\) be a group, \(H \leqslant G\) a separable subgroup, and \(U \subseteq G\) a finite subset of \(G\) with \(H \cap U = \emptyset\).
    Then there is a finite index subgroup \(G' \leqslant_f G\) with \(H \subseteq G'\) and \(G' \cap U = \emptyset\).
    Moreover, if \(H\) is normal, \(G'\) may be taken to be a finite index normal subgroup of \(G\).
\end{lemma}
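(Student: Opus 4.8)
The plan is to reduce to the well-known characterisation that a subgroup is separable precisely when it is an intersection of finite index subgroups containing it, and then take a finite intersection to kill all of \(U\) at once. Concretely, I would fix \(u \in U\) and argue as follows. Since \(H\) is separable, \(G \setminus H\) is open in the profinite topology, and it contains \(u\); as left cosets of finite index subgroups form a basis, there is \(K_u \leqslant_f G\) with \(u K_u \subseteq G \setminus H\).

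The point where one has to be slightly careful is that \(K_u\) need not contain \(H\). To fix this, replace \(K_u\) by its normal core \(N_u = \bigcap_{g \in G} g K_u g^{-1}\), which is a finite index \emph{normal} subgroup of \(G\) contained in \(K_u\); then \(u N_u \subseteq u K_u \subseteq G \setminus H\), so \(u N_u \cap H = \emptyset\). Because \(N_u \lhd G\), the set \(H N_u\) is a subgroup of \(G\); it contains \(N_u\), hence has finite index, and it obviously contains \(H\). Moreover \(u \notin H N_u\): if \(u = h n\) with \(h \in H\) and \(n \in N_u\), then \(h = u n^{-1} \in u N_u \cap H\), contradicting \(u N_u \cap H = \emptyset\).

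Now I would set \(G' = \bigcap_{u \in U} H N_u\). Being a finite intersection of finite index subgroups (here \(U\) finite is used), \(G'\) has finite index in \(G\); it contains \(H\) since each \(H N_u\) does; and for each \(u \in U\) we have \(G' \subseteq H N_u\) with \(u \notin H N_u\), so \(G' \cap U = \emptyset\). For the last sentence of the statement, note that if \(H \lhd G\) then each \(H N_u\) is a product of two normal subgroups, hence normal, so \(G'\) is a finite intersection of finite index normal subgroups and is therefore a finite index normal subgroup of \(G\).

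There is no real obstacle here; the only mild subtlety is the passage to the normal core to arrange \(H \subseteq H N_u\), and everything else is a routine finite-intersection argument using that \(U\) is finite.
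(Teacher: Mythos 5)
Your proof is correct and follows essentially the same strategy as the paper: for each \(u \in U\) produce a finite index subgroup containing \(H\) but excluding \(u\), then intersect over the finite set \(U\). The only difference is that the paper simply cites the standard fact that a separable subgroup is the intersection of the finite index subgroups containing it, whereas you re-derive that fact from the coset-basis definition of the profinite topology via the normal core \(N_u\) and the product \(HN_u\) — a correct, slightly longer route to the same intermediate subgroups, with the small bonus that \(HN_u\) is automatically normal when \(H\) is, where the paper instead passes to normal cores at the end.
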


\begin{proof}
    Write \(U = \{u_1, \dots, u_n\}\). 
    As \(H\) is closed in the profinite topology, it is the intersection of the finite index subgroups containing it.
    Thus, for each \(i = 1, \dots, n\) there is \(G_i \leqslant_f G\) with \(H \subseteq G_i\) and \(u_i \notin G_i\).
    Then \(G' = \bigcap_{i=1}^n G_i \leqslant_f G\) satisfies the lemma statement.
    When \(H\) is normal is \(G\), we may replace each \(G_i\) with its normal core to obtain the latter statement. 
\end{proof}


\section{Controlling parabolic subgroups in relatively hyperbolic groups}
\label{sec:parab}

For this section, we let \(G\) be hyperbolic relative to \(\relto\) with finite relative generating set \(X\), and let \(L \geq 0\) be the constant and \(\Omega \subseteq G\) be the finite set provided by Proposition~\ref{prop:osin_polygon}.

\begin{proposition}
\label{prop:int_of_parabs_conj_to_short}
    Let \(a, b \in G\) and \(\lambda, \nu \in \Nu\) be such that \(a H_\lambda a^{-1} \ne b H_\nu b^{-1}\).
    Then each element of \(a H_\lambda a^{-1} \cap b H_\nu b^{-1}\) is conjugate to an element \(h \in G\) with \(\abs{h}_\Omega \leq 4L\).
\end{proposition}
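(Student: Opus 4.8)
The plan is to conjugate the given intersection element into the peripheral subgroup $H_\lambda$ as economically as possible, and then read off the bound from the polygon inequality of Proposition~\ref{prop:osin_polygon} applied to a suitable geodesic quadrilateral. First I would conjugate everything by $a^{-1}$ to reduce to the case $a = 1$: writing $c = a^{-1}b$, it then suffices to bound the conjugacy length of each element of $H_\lambda \cap cH_\nu c^{-1}$, where $H_\lambda \ne cH_\nu c^{-1}$. Fixing such an element $g_0 \ne 1$ (the case $g_0 = 1$ being trivial), I would consider the set $S$ of all pairs $(g', c')$ with $g'$ conjugate to $g_0$, $g' \in H_\lambda \cap c'H_\nu (c')^{-1}$, and $H_\lambda \ne c'H_\nu (c')^{-1}$, and choose $(g, c) \in S$ minimising $\abs{c}_{X\cup\mathcal{H}}$. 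Writing $g = h_\lambda$ inside $H_\lambda$ and $g = ch_\nu c^{-1}$ with $h_\nu \in H_\nu$ (both non-trivial), the quadrilateral to feed into Proposition~\ref{prop:osin_polygon} will be $P = e_\lambda \cdot (h_\lambda q) \cdot e_\nu^{-1} \cdot q^{-1}$, where $q$ is a geodesic from $1$ to $c$ in $\relcay$, $e_\lambda$ is the edge $1 \to h_\lambda$ with label $h_\lambda$, and $e_\nu^{-1}$ is the edge $ch_\nu \to c$ with label $h_\nu^{-1}$.

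The crux of the argument is a structural consequence of the minimality of $\abs{c}_{X\cup\mathcal{H}}$: no geodesic from $1$ to $c$ can pass through a vertex lying in $H_\lambda \setminus \{1\}$. Indeed, if $u$ were such a vertex on a geodesic from $1$ to $c$, then $(u^{-1}gu,\, u^{-1}c)$ would again lie in $S$ — here one uses $u \in H_\lambda$ to see $u^{-1}gu \in H_\lambda$, and notes that $u^{-1}(cH_\nu c^{-1})u = H_\lambda$ would force $cH_\nu c^{-1} = uH_\lambda u^{-1} = H_\lambda$ — while $\abs{u^{-1}c}_{X\cup\mathcal{H}} = \abs{c}_{X\cup\mathcal{H}} - d_{X\cup\mathcal{H}}(1,u) < \abs{c}_{X\cup\mathcal{H}}$ since subpaths of geodesics are geodesic, contradicting minimality. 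Two immediate corollaries I would record: the first edge of any geodesic from $1$ to $c$ is not an $H_\lambda$-edge (otherwise its terminal vertex would lie in $H_\lambda\setminus\{1\}$), and $c \notin H_\lambda$ whenever $\abs{c}_{X\cup\mathcal{H}} \ge 1$.

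With this in hand, if $c = 1$ then $H_\lambda \ne H_\nu$ and $g = h_\lambda = h_\nu$, so $P$ degenerates to a bigon whose first side is an isolated $H_\lambda$-component, and Proposition~\ref{prop:osin_polygon} with $n = 2$ gives $\abs{g}_\Omega \le 2L \le 4L$. When $\abs{c}_{X\cup\mathcal{H}} \ge 1$, $P$ is an honest geodesic $4$-gon, and I would check that $e_\lambda$ is an isolated $\mathcal{H}$-component of it: it is a maximal $H_\lambda$-subpath because the edges of $P$ adjacent to it are obtained from the first edge of $q$, which is not an $H_\lambda$-edge; and it is isolated because any $H_\lambda$-component of $h_\lambda q$ or of $q^{-1}$ connected to $e_\lambda$ would force a vertex of $q$ to lie in $H_\lambda \setminus \{1\}$, while the remaining component $e_\nu^{-1}$ (which is an $H_\lambda$-component only when $\lambda = \nu$) lies in the coset $cH_\nu$, and this is not $H_\lambda$ since $c \notin H_\lambda$. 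Proposition~\ref{prop:osin_polygon} with $n = 4$ then yields $\abs{h_\lambda}_\Omega \le 4L$, and since $h_\lambda = g$ is conjugate to $g_0$ this finishes the proof.

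The step I expect to be the main obstacle is the verification that $e_\lambda$ is genuinely an isolated $\mathcal{H}$-component of $P$: every way in which the coset $H_\lambda$ could recur along the quadrilateral must be ruled out, and this is exactly what the minimal choice of $c$ is designed to ensure. The reduction to $a = 1$ and the bookkeeping of which edges are $H_\lambda$-edges are routine by comparison.
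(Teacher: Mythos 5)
Your proof is correct and follows essentially the same strategy as the paper's: both construct the same geodesic quadrilateral (two peripheral edges joined by a geodesic $[1,c]$ and its translate) and apply Proposition~\ref{prop:osin_polygon} after using a minimality assumption on the conjugator to show that one of the peripheral edges is an isolated component. The only difference is a mirror image: the paper minimises $\abs{b}_{X\cup\mathcal{H}}$ within the coset $bH_\nu$ and isolates the $H_\nu$-edge on the far side, whereas you minimise over $H_\lambda$-translates of the whole configuration and isolate the $H_\lambda$-edge at the identity.
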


\begin{proof}
    Conjugating if necessary, we may assume that \(a = 1\). 
    Further, suppose that \(b \in G\) is such that \(\abs{b}_{X \cup \mathcal{H}}\) is minimal among elements in the coset \(b H_\nu\).
    Now let \(g \in H_\lambda \cap b H_\nu b^{-1}\) be a nontrivial element, and let \(h \in H_\nu\) be such that \(g = bhb^{-1}\).

    Let \(p\) be a geodesic in \(\relcay\) with \(p_- = 1\) and \(p_+ = b\). 
    Further, let \(u\) be the \(H_\lambda\)-edge of \(\relcay\) with \(u_- = 1\) and \(\elem{u} = g\), and let \(v\) be the \(H_\nu\)-edge of \(\relcay\) with \(v_- = b\) and \(\elem{v} = h\).
    Note that \(v_+ = b h = g b\) by definition, so that \(p' = g \cdot p\) (i.e. the translate of \(p\) by \(g\)) has endpoints \(u_+\) and \(v_+\).
    Now consider the geodesic quadrilateral \(Q\) with sides \(u\), \(p\), \(v\), and \(p'\).
    We will show that \(v\) is isolated in \(Q\).

    Suppose otherwise, for a contradiction.
    If \(u\) and \(v\) are connected, then we must have \(\lambda = \nu\) and both \(u_- = 1\) and \(v_- = b\) lie in the same \(H_\lambda\)-coset.
    However, this means that \(H_\lambda = b H_\nu b^{-1}\), contrary to the assumption.
    Therefore \(v\) must be connected to an \(H_\nu\)-component \(s\) of either \(p\) or \(p'\).
    We suppose, without loss of generality, that \(s\) lies in \(p\).
    Since \(v\) and \(s\) are connected and \(p_+ = v_- = b\), the endpoints of \(s\) satisfy \(d_{X\cup\mathcal{H}}(s_-, p_+) \leq 1\) and \(d_{X\cup\mathcal{H}}(s_+, p_+) \leq 1\).
    Therefore \(s\) must be the terminal edge of \(p\), for otherwise the geodesicty of \(p\) is contradicted.
    That is to say, \(s\) is an \(H_\nu\)-component of \(p\) with \(s_+ = p_+\).
    But then \(b \elem{s}^{-1} \in b H_\nu\) and 
    \[\abs{b \elem{s}^{-1}}_{X\cup\mathcal{H}} = d_{X \cup \mathcal{H}}(1,p_+ s_+^{-1} s_- ) = d_{X\cup\mathcal{H}}(1,s_-) < \abs{p}_{X\cup\mathcal{H}} = \abs{b}_{X\cup\mathcal{H}}\]
    contradicting the minimality of \(b\).

    As \(s\) cannot contain \(u\) or be an \(H_\lambda\)-component of \(p\) or \(p'\), \(v\) is isolated in \(Q\).
    Proposition~\ref{prop:osin_polygon} then tells us that \(\abs{h}_\Omega = \abs{v}_\Omega \leq 4L\), as required.
\end{proof}

As the set \(\Omega\) is finite, there are only finitely many elements of \(G\) whose length with respect to \(\Omega\) is less than any given number.
The following is then immediate.

\begin{corollary}
\label{cor:finitely_many_conj_classes_of_doubly_parabolic}
    There are finitely many conjugacy classes of elements in \(G\) belonging to more than one maximal parabolic subgroup.
\end{corollary}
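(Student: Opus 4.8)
The plan is to reduce the statement directly to Proposition~\ref{prop:int_of_parabs_conj_to_short}. Suppose \(g \in G\) belongs to more than one maximal parabolic subgroup. By definition every maximal parabolic subgroup has the form \(aH_\lambda a^{-1}\) for some \(a \in G\) and \(\lambda \in \Nu\), so there exist \(a, b \in G\) and \(\lambda, \nu \in \Nu\) with \(aH_\lambda a^{-1} \ne bH_\nu b^{-1}\) and \(g \in aH_\lambda a^{-1} \cap bH_\nu b^{-1}\). Proposition~\ref{prop:int_of_parabs_conj_to_short} then yields an element \(h \in G\) conjugate to \(g\) with \(\abs{h}_\Omega \leq 4L\).

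Next I would invoke the finiteness of \(\Omega\): there are only finitely many words of length at most \(4L\) in \(\Omega^{\pm 1}\), hence only finitely many elements \(h \in G\) with \(\abs{h}_\Omega \leq 4L\). Each such \(h\) determines a single conjugacy class, and by the previous paragraph every element lying in two distinct maximal parabolic subgroups is conjugate to one of these finitely many \(h\). Therefore there are only finitely many conjugacy classes of such elements, which is the assertion.

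There is essentially no obstacle here — the content is entirely carried by Proposition~\ref{prop:int_of_parabs_conj_to_short}, and the only thing to check is the trivial observation that a finite generating-type set produces finitely many short elements. The one point worth stating carefully is the translation from ``\(g\) belongs to more than one maximal parabolic subgroup'' to ``\(g\) lies in the intersection of two \emph{distinct} conjugates \(aH_\lambda a^{-1}\) and \(bH_\nu b^{-1}\)'', so that the hypothesis \(aH_\lambda a^{-1} \ne bH_\nu b^{-1}\) of the proposition is genuinely met.
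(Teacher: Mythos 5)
Your proposal is correct and is exactly the paper's argument: the paper also deduces the corollary immediately from Proposition~\ref{prop:int_of_parabs_conj_to_short} together with the observation that the finiteness of \(\Omega\) leaves only finitely many elements \(h\) with \(\abs{h}_\Omega \leq 4L\). Your extra care in checking that ``belongs to more than one maximal parabolic'' yields two \emph{distinct} conjugates \(aH_\lambda a^{-1} \ne bH_\nu b^{-1}\) is a sensible, if routine, point that the paper leaves implicit.
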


We can use the above to control the intersections of relatively quasiconvex subgroups with maximal parabolic subgroups in a residually finite hyperbolic group.

\begin{proposition}
\label{prop:killing_finite_parabolics}
    Suppose that \(G\) is residually finite, and let \(Q \leqslant G\) be a relatively quasiconvex subgroup.
    Then there is a finite index subgroup \(Q' \leqslant_f Q\) such that for any maximal parabolic subgroup \(P \leqslant G\), the subgroup \(Q' \cap P\) is either infinite or trivial.
\end{proposition}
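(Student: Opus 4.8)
The plan is to produce a finite-index \emph{normal} subgroup $Q' \lhd_f Q$ avoiding the set
\[
    B \;=\; \{\, g \in Q \setminus\{1\} \ \mid\ g \in P \text{ for some maximal parabolic } P \leqslant G \text{ with } Q\cap P \text{ finite}\,\}.
\]
Note that every element of $B$ has finite order, being contained in the finite group $Q\cap P$, and that $B$ is invariant under conjugation by $Q$: if $g\in B$ and $q\in Q$, then $qgq^{-1}\in qPq^{-1}$, a maximal parabolic with $Q\cap qPq^{-1}=q(Q\cap P)q^{-1}$ finite. Such a $Q'$ does the job: if $P$ is a maximal parabolic with $Q\cap P$ infinite, then $Q'\cap P = Q'\cap(Q\cap P)$ has finite index in $Q\cap P$ and so is infinite; and if $Q\cap P$ is finite, then a nontrivial element of $Q'\cap P$ would lie in $B$, so $Q'\cap P = \{1\}$.

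Since $G$ is residually finite so is $Q$, hence the trivial subgroup is separable in $Q$, and by Lemma~\ref{lem:sep_from_finite_set} it suffices to show that $B$ is contained in finitely many $Q$-conjugacy classes. Indeed, choosing a finite transversal $U\subseteq B$ for those classes, the lemma (applied with the normal subgroup $H=\{1\}$) yields $Q'\lhd_f Q$ with $Q'\cap U=\emptyset$; and if some $g\in B$ lay in $Q'$, writing $g=quq^{-1}$ with $u\in U$, $q\in Q$ would give $u = q^{-1}gq\in Q'$ by normality, a contradiction.

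To bound the number of $Q$-conjugacy classes in $B$, recall from Hruska's theorem \cite[Theorem 9.1]{HruskaRHCG} that $Q$ is itself hyperbolic relative to a finite collection $\mathcal{D}$ of $Q$-conjugacy-class representatives of the infinite subgroups $Q\cap P$, $P$ ranging over the maximal parabolics of $G$. I would write $B = B_1\sqcup B_2$, where $B_2$ consists of those elements of $B$ lying in at least two distinct maximal parabolic subgroups of $G$. If $g\in B_1$ lies in the \emph{unique} maximal parabolic $P$ (with $Q\cap P$ finite), then $g$ is not conjugate in $Q$ into any $D\in\mathcal{D}$: otherwise $q^{-1}gq\in Q\cap P'$ for some $q\in Q$ and a maximal parabolic $P'$ with $Q\cap P'$ infinite, forcing $g\in qP'q^{-1}$, which is a maximal parabolic distinct from $P$ (since $Q\cap qP'q^{-1}=q(Q\cap P')q^{-1}$ is infinite) and so contradicts uniqueness. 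Hence $B_1$ consists of finite-order elements of $Q$ lying outside the peripheral subgroups of the relatively hyperbolic group $Q$, and there are only finitely many $Q$-conjugacy classes of such elements — a standard fact about relatively hyperbolic groups. For $B_2$, Corollary~\ref{cor:finitely_many_conj_classes_of_doubly_parabolic} shows these elements lie in finitely many $G$-conjugacy classes, and since $Q$ is relatively quasiconvex each such class meets $Q$ in only finitely many $Q$-conjugacy classes. Thus $B$ meets finitely many $Q$-conjugacy classes, and the reduction above finishes the proof.

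The step I expect to be the main obstacle is the pair of finiteness inputs used for $B_1$ and $B_2$: that the finite subgroups of a relatively hyperbolic group not conjugate into a peripheral subgroup fall into finitely many conjugacy classes, and that a relatively quasiconvex subgroup meets each ambient conjugacy class in finitely many of its own conjugacy classes. Both are standard, but care is needed to pin them down precisely (or to re-derive them via shortest-conjugate arguments inside the hyperbolic Cayley graph of $Q$ relative to $\mathcal{D}$), particularly since the peripheral subgroups of $G$ are not assumed finitely generated here.
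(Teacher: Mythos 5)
Your overall strategy (avoid the set $B$ of ``bad'' torsion elements, after showing it meets only finitely many $Q$-conjugacy classes) is sound, and your treatment of $B_1$ via Osin's finiteness of conjugacy classes of finite-order hyperbolic elements is essentially what the paper does. The gap is in $B_2$: the assertion that a relatively quasiconvex subgroup meets each $G$-conjugacy class in only finitely many $Q$-conjugacy classes is not a standard fact, and it is false. For a parabolic (here, torsion) element the question reduces to conjugacy inside a peripheral subgroup $H$ relative to the subgroup $Q\cap H$, where no hyperbolicity is available. Concretely, let $L=(\bigoplus_{\ZZ}\ZZ/2)\rtimes\ZZ$ be the lamplighter group, $C=\langle f_0\rangle\cong\ZZ/2$ generated by the delta function at $0$, $K=\ZZ\times C$, and $G=L\ast_C K$, which is residually finite and hyperbolic relative to $\{L,K\}$. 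Take $Q=\bigoplus_{\ZZ}\ZZ/2\leqslant L$, a parabolic (hence relatively quasiconvex) subgroup. Each delta function $f_n$ lies in the maximal parabolic $t^nKt^{-n}$ (where $t$ is the shift), and $Q\cap t^nKt^{-n}=\langle f_n\rangle$ is finite, so every $f_n$ lies in your set $B_2$; all the $f_n$ are $G$-conjugate, but $Q$ is abelian, so they form infinitely many $Q$-conjugacy classes. Thus $B$ need not be covered by finitely many $Q$-conjugacy classes, and no finite-index normal subgroup of $Q$ obtained from Lemma~\ref{lem:sep_from_finite_set} applied inside $Q$ can be guaranteed to avoid it.

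The paper's proof circumvents exactly this point by exploiting residual finiteness of the \emph{ambient} group $G$ rather than of $Q$: it first produces a finite-index \emph{normal} subgroup $G_1\lhd_f G$ avoiding the finite set of $G$-conjugacy representatives of doubly parabolic elements from Corollary~\ref{cor:finitely_many_conj_classes_of_doubly_parabolic}, so that $Q_1=Q\cap G_1$ contains no element of your $B_2$ at all --- normality in $G$ kills an entire $G$-conjugacy class at once, no matter how many $Q$-classes it splits into. Only then does it apply the Osin finiteness argument (your $B_1$ step) inside $Q_1$. If you replace your separability argument in $Q$ for the $B_2$ part by this passage through a normal finite-index subgroup of $G$, the rest of your proof goes through.
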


\begin{proof}
    Let \(S\) be a set of representatives of conjugacy classes of nontrivial elements of \(G\) belonging to more than one maximal parabolic subgroups of \(G\).
    Corollary~\ref{cor:finitely_many_conj_classes_of_doubly_parabolic} tells us that the set \(S\) is finite.
    That \(G\) is residually finite means exactly that the trivial subgroup \(\{1\}\) is separable, and \(\{1\} \cap S = \emptyset\) so Lemma~\ref{lem:sep_from_finite_set} gives us a finite index normal subgroup \(G_1\lhd_f G\) with \(G_1 \cap S = \emptyset\).
    As \(G_1\) is normal, it thus contains no nontrivial elements that belong to more than one maximal parabolic subgroup of \(G\).

    Let \(Q_1 = G_1 \cap Q \lhd_f Q\).
    Now by \cite[Theorem 4.2]{OsinRHG}, there are only finitely many conjugacy classes of finite order hyperbolic elements in \(Q_1\) (an element of \(G\) is called \emph{hyperbolic} if it is not conjugate to an element of \(H_\nu\) for any \(\nu \in \Nu\)).
    Similarly to before, by residual finiteness there is \(Q' \lhd_f Q_1\) excluding each of these elements by Lemma~\ref{lem:sep_from_finite_set}.
    
    We will show that the subgroup \(Q' \leqslant_f Q\) has the desired property.
    Let \(\mathcal{P}\) be a set of maximal parabolic subgroups of \(G\) such that \(Q_1\) is hyperbolic relative to the collection of infinite subgroups \(\{Q' \cap H \, | \, H \in \mathcal{P}\}\) (see \cite[Theorem 9.4]{HruskaRHCG}).
    Let \(P \leqslant G\) be a maximal parabolic subgroup of \(G\), and suppose that \(Q' \cap P\) is nontrivial.
    If \(Q' \cap P\) contains an element of infinite order then we are done, so suppose \(x \in Q' \cap P\) is a nontrivial element of finite order.
    By construction, \(Q'\) contains no elements of finite order that are hyperbolic in \(Q_1\), so \(x\) must be parabolic in \(Q_1\).
    That is, there is \(q \in Q_1\) such that \(qxq^{-1} \in Q_1 \cap H\) for some \(H \in \mathcal{P}\).
    It follows that \(x \in Q_1 \cap P \cap q^{-1} H q \subseteq G_1 \cap P \cap q^{-1} H q\), whence we must have \(P = q^{-1} H q\) by the definition of \(G_1\).
    This implies that
    \[
        Q_1 \cap P = Q_1 \cap q^{-1} H q = q^{-1}(Q_1 \cap H) q
    \]
    and since \(Q_1 \cap H\) is infinite, \(Q_1\cap P\) is infinite as well.
    The result then follows by noting that \(Q' \cap P\) has finite index in \(Q_1 \cap P\).
\end{proof}


\section{Quasigeodesics and shortcuttings}
\label{sec:shortcutting}

\begin{convention}
\label{conv:GQR}
    For the remainder of this paper, we will use the convention that \(G\) is a group with finite generating set \(X\) and \(G\) is hyperbolic relative to the subgroups \(\relto\).
    \(Q\) and \(R\) will be finitely generated relatively quasiconvex subgroups of \(G\), and we will denote \(S = Q \cap R\).
    Moreover, \(\delta \geq 0\) will be a hyperbolicity constant for \(\relcay\) and \(\varepsilon \geq 0\) will be a quasiconvexity constant for both \(Q\) and \(R\).
\end{convention}

In this section we recall the construction of the shortcutting of a broken line in \(\relcay\) from \cite[Section 9]{MinMin} and show that shortcuttings of broken lines satisfying certain metric conditions have nice properties.
Analysing shortcuttings of broken lines comprises the main technical tool that we use to understand elements of joins of subgroups of \(G\). 

\begin{procedure}[$\Theta$-shortcutting]
\label{proc:shortcutting}
    Fix a natural number \(\Theta \in \NN\) and let \(p = p_1 \dots p_n\) be a broken line in \(\Gamma(G,X\cup\mathcal{H})\). Let \(v_0, \dots, v_d\) be the enumeration of all vertices of \(p\) in the order they occur along the path (possibly with repetition), so that \(v_0 = p_-\), \(v_d = p_+\) and $d=\ell(p)$.

    We construct broken lines \(\Sigma(p,\Theta)\) and \(\Sigma_0(p,\Theta)\), which we call \(\Theta\)-\emph{shortcuttings} of \(p\), which come with a finite set  \(V(p,\Theta) \subset \{0,\dots,d\} \times \{0,\dots,d\}\) corresponding to indices of vertices of \(p\) that we shortcut along.

    In the algorithm below we will refer to numbers \(s,t,N \in \{0,\dots,d\}\) and a subset \(V \subseteq \{0,\dots,d\} \times \{0,\dots,d\}\). To avoid excessive indexing these will change value throughout the procedure.
    The parameters $s$ and $t$ will indicate the starting and terminal vertices of subpaths of $p$ in which all $\mathcal H$-components have lengths less than $\Theta$. The parameter $N$ will keep track of how far along the path $p$ we have proceeded. The set $V$ will collect all pairs of indices $(s,t)$ obtained during the procedure.
    We initially take \(s = 0\), $N=0$ and \(V = \emptyset\).
    
    \begin{steps}
        \item
            If there are no edges of \(p\) between \(v_N\) and \(v_d\) that are labelled by elements of \(\mathcal{H}\), then add the pair $(s,d)$ to the set $V$ and skip ahead to Step 4.
            Otherwise, continue to Step 2.
        \item
            Let \(t \in \{0,\dots,d\}\) be the least natural number with \(t \geq N\) for which the edge of \(p\) with endpoints \(v_t\) and \(v_{t+1}\) is an \(\mathcal{H}\)-component $h_i$ of a geodesic segment $p_i$ of \(p\), for some \(i \in \{1, \dots, n\}\).

            If $i=n$ or if $h_i$ is not connected to a component of $p_{i+1}$ then set $j=i$. Otherwise, let \(j \in \{i+1,\dots,n\}\) be the maximal integer such that \(p\) has consecutive backtracking along \(\mathcal{H}\)-components \(h_i, \dots, h_j\) of segments \(p_i, \dots, p_j\).             
            Proceed to Step 3.

        \item 
            If \[\max\Big\{\abs{h_k}_X \, \Big| \, k = i, \dots, j\Big\} \geq \Theta,\] then add the pair \((s,t)\) to the set \(V\) and redefine $s=N$   in \(\{1,\dots,d\}\) to be the index of the vertex $(h_j)_+$ in the above enumeration $v_0,\dots,v_d$ of the vertices of $p$.
            Otherwise let $N$ be the index of $(h_i)_+$, and leave $s$ and \(V\) unchanged.
            
            Return to Step~1 with the new values of \(s\), $N$ and \(V\).
        \item
            Set \(V(p,\Theta) = V\). The above constructions gives a natural ordering of $V(p,\Theta)$: \[V(p,\Theta) = \{(s_0, t_0), \dots, (s_m,t_m)\},\] where \(s_k  \le t_k <  s_{k+1}\), for all \(k = 0, \dots, m-1\). Note that $s_0=0$ and $t_m=d$. Proceed to Step 5.
        \item
            For each $k=0,\dots,m$, let $f_k$ be a geodesic segment (possibly trivial) connecting $v_{s_k}$ with $v_{t_k}$. 
            Similarly for each \(k = 0, \dots, m\) let \(p'_k\) be the (possibly trivial) subpath of \(p\) with endpoints \(v_{s_k}\) and \(v_{t_k}\). 
            Note that when $k <m$, \(v_{t_k}\) and \(v_{s_{k+1}}\) are in the same left coset of $H_\nu$, for some $\nu \in \Nu$. 
            If \(v_{t_k} = v_{s_{k+1}}\) then let \(e_k\) be the trivial path at \(v_{t_k}\), otherwise let \(e_k\) be an edge of $\relcay$ starting at \(v_{t_k}\), ending at \(v_{s_{k+1}}\) and labelled by an element of \(H_\nu \setminus\{1\}\).

            We define the broken line \(\Sigma(p,\Theta)\) to be the concatenation \(f_0 e_1 f_1 e_2 \dots f_{m-1} e_m f_m\).
            We also define the broken line \(\Sigma_0(p,\Theta)\) to be the concatenation \(p'_0 e_1 p'_1 e_2 \dots p'_{m-1} e_m p'_m\).
    \end{steps}
\end{procedure}

\begin{remark}
\label{rem:cpts_of_p'k}
    Suppose that \(p\) is a broken line in \(\relcay\), \(\Theta \in \NN\), and let \(\Sigma_0(p,\Theta) = p'_0 e_1 p'_1 e_2 \dots p'_{m-1} e_m p'_m\) be the shortcutting obtained from Procedure~\ref{proc:shortcutting}.
    For each \(i = 0, \dots m\), the subpath \(p'_i\) is a broken line, each of whose segments contain no \(\mathcal{H}\)-components \(h\) with \(\abs{h}_X \geq \Theta\).
    In particular, if \(\Theta = 1\) then the paths \(p'_0, \dots, p'_m\) contain no edges labelled by elements of \(\mathcal{H}\).
\end{remark}

We recall also the definition of tamable broken lines, which serve as the prototypical input for Procedure~\ref{proc:shortcutting}.

\begin{definition}[Tamable broken line]
\label{def:tamable}
    Let \(p = p_1 \dots p_n\) be a broken line in $\relcay$, and let \(B, C, \zeta \geq 0, \Theta \in \NN\).
    We say that \(p\) is \emph{\((B,C,\zeta,\Theta)\)-tamable} if all of the following conditions hold:
    \begin{enumerate}[label=(\roman*)]
        \item \label{cond:tam_1} \(\abs{p_i}_X \geq B\), for \(i = 2, \dots, n-1\);
        \item \label{cond:tam_2} \(\langle (p_i)_-, (p_{i+1})_+ \rangle_{(p_i)_+} \leq C\), for each \(i = 1, \dots, n-1\);
        \item \label{cond:tam_3} whenever \(p\) has consecutive backtracking along \(\mathcal{H}\)-components \(h_i, \dots, h_j\), of segments \(p_i, \dots, p_j\), such that
            \[
                \max\Big\{ \abs{h_k}_X \, \Big| \, k = i, \dots, j \Big\} \geq \Theta,
            \]
        it must be that \(d_X\Bigl( (h_i)_-,(h_j)_+ \Bigr) \geq \zeta\).
    \end{enumerate}
\end{definition}

One of the central technical results obtained in \cite{MinMin} is the following.

\begin{proposition}[{\cite[Proposition 9.4]{MinMin}}]
\label{prop:shortcutting_quasigeodesic}
    Given arbitrary \(C \geq 14\delta\) and \(\eta \geq 0\) there are constants \( \lambda = \lambda(C) \geq 1\), \(c = c(C) \geq 0\) and \(\zeta = \zeta(\eta,C) \geq 1\) such that for any natural number \(\Theta \geq \zeta\) there is \(B_0 = B_0(\Theta,C) \geq 0\) satisfying the following.

    Let \(p = p_1 \dots p_n\) be a \((B_0,C,\zeta,\Theta)\)-tamable broken line in $\relcay$ and let \(\Sigma(p,\Theta)\) be the \(\Theta\)-shortcutting, obtained by applying Procedure~\ref{proc:shortcutting} to \(p\), with \(\Sigma(p,\Theta) = f_0 e_1 f_1 \dots f_{m-1} e_m f_m\). 
    Then $e_k$ is non-trivial for each $k=1,\dots,m$ and $\Sigma(p,\Theta)$  is \((\lambda,c)\)-quasigeodesic without backtracking.  

    Moreover, for any \(k \in \{ 1, \dots, m\}\), if we denote by \(e'_k\) the \(\mathcal{H}\)-component of \(\Sigma(p,\Theta)\) containing \(e_k\), then  \(\abs{e'_k}_X \geq \eta\).
\end{proposition}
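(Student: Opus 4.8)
The plan is to proceed in three stages: show that the edges $e_k$ are non-trivial, that $\Sigma(p,\Theta)$ is $(\lambda,c)$-quasigeodesic, and that it is without backtracking; the estimate $\abs{e'_k}_X\ge\eta$ will then be extracted from the last two. First I would fix the constants. Take $\lambda=\lambda(C)$ and $c=c(C)$ from the standard local-to-global principle for broken geodesics in a $\delta$-hyperbolic space: there is a threshold $B_1=B_1(C,\delta)$ such that a concatenation of geodesic segments whose non-extremal segments have length at least $B_1$, and at whose nodes consecutive segments meet with Gromov product at most $C$, is $(\lambda,c)$-quasigeodesic. Let $\kappa=\kappa(\lambda,c,0)$ be the constant of Proposition~\ref{prop:bcp}, choose $\zeta=\zeta(\eta,C)$ exceeding $\eta$, $\kappa$, and the stability constant of $(\lambda,c)$-quasigeodesics in $\relcay$ by a comfortable margin, and finally, given $\Theta\ge\zeta$, take $B_0=B_0(\Theta,C)\ge B_1$ large enough for the comparison arguments below; the room to do so is provided by Lemma~\ref{lem:rel_paths_with_short_comps} and Remark~\ref{rem:cpts_of_p'k}, which together say that a portion of $p$ carrying only $\mathcal H$-components of $X$-length below $\Theta$ but having large $d_X$-diameter must have many edges.

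Non-triviality of the $e_k$ is immediate from the construction: Procedure~\ref{proc:shortcutting} inserts an edge $e_k$ precisely when a maximal consecutive-backtracking run $h_i,\dots,h_j$ of $p$ satisfies $\max_k\abs{h_k}_X\ge\Theta$, in which case $e_k$ joins $(h_i)_-$ to $(h_j)_+$, so tamability condition~\ref{cond:tam_3} gives $\abs{e_k}_X=d_X\bigl((h_i)_-,(h_j)_+\bigr)\ge\zeta\ge1$. Showing that $\Sigma(p,\Theta)$ is quasigeodesic is the technical heart. I would do this via the local-to-global principle above, applied to $\Sigma(p,\Theta)$ itself regarded as a broken line of geodesic pieces $f_k$ interspersed with the single edges $e_k$: tamability conditions~\ref{cond:tam_1} and~\ref{cond:tam_2}, together with the choice of $B_0$ and Lemma~\ref{lem:rel_paths_with_short_comps} (which forces the non-extremal $f_k$, shortcutting portions of $p$ whose $\mathcal H$-components are all short, to be long), control the Gromov products at the nodes, while tamability condition~\ref{cond:tam_3}, through the estimate just obtained, keeps the inserted edges $e_k$ from being shortcut away. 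The remaining points — that a short $f_k$ wedged between two long edges $e_k,e_{k+1}$ (necessarily of distinct peripheral types, since consecutive collapsed runs cannot be connected) does no harm, and the bookkeeping for subpaths that begin or end inside some $f_k$ — are handled using Proposition~\ref{prop:bcp} and hyperbolicity.

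The main obstacle is ruling out backtracking in $\Sigma(p,\Theta)$. Because $\Sigma(p,\Theta)$ is quasigeodesic, two connected $\mathcal H$-components of it bound a subpath of length at most $\lambda\cdot O(1)+c$, so backtracking can occur only inside a window of $\Sigma(p,\Theta)$ spanning boundedly many pieces $f_\bullet,e_\bullet$. Three configurations are possible in such a window: two of the $e_k$ are connected, an $e_k$ is connected to an $\mathcal H$-component of a nearby $f_l$, or two $\mathcal H$-components of nearby $f_l$'s are connected. The first I would exclude using the maximality of the runs in Procedure~\ref{proc:shortcutting}: the bounded stretch of $p$ separating the two collapsed runs is too constrained to let their boundary components share a peripheral coset without those runs having been amalgamated into one. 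For the other two I would use Proposition~\ref{prop:bcp}, comparing each $f_l$ with the portion $p'_l$ it shortcuts to bound the $X$-length of every $\mathcal H$-component of $f_l$ in terms of $\Theta$, and comparing $\Sigma(p,\Theta)$ with a geodesic joining its endpoints; since the edges $e_k$ have $X$-length at least $\zeta$ while the non-extremal segments of $p$ have $X$-length at least $B_0\gg\Theta$, a putative backtracking would force two far-apart cosets of $\relcay$ to coincide, which hyperbolicity forbids. Reconciling the discrete bookkeeping of Procedure~\ref{proc:shortcutting} with the metric geometry of $\relcay$ after both collapsing and geodesifying is where the real work lies.

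Finally, with $\Sigma(p,\Theta)$ known to be quasigeodesic and without backtracking, the bound on $\abs{e'_k}_X$ follows. Since the $f_k$ are geodesics, their $\mathcal H$-components are single edges (Remark~\ref{rem:comp_of_geod_is_an_edge}), so the $\mathcal H$-component $e'_k$ of $\Sigma(p,\Theta)$ exceeds $e_k$ by at most one edge on each side — the last edge of $f_{k-1}$ or the first edge of $f_k$ — and (choosing the $f_k$ so that, where possible, they do not abut $e_k$ with an $\mathcal H$-edge of the same peripheral type, and otherwise estimating that extra edge via a final application of Proposition~\ref{prop:bcp}) one obtains $\abs{e'_k}_X\ge\abs{e_k}_X-O(\kappa)\ge\zeta-O(\kappa)\ge\eta$ by tamability condition~\ref{cond:tam_3} and the choice of $\zeta$, completing the proof.
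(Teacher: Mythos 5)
First, a remark on the target: the paper does not prove Proposition~\ref{prop:shortcutting_quasigeodesic} at all --- it is imported verbatim from \cite[Proposition 9.4]{MinMin} --- so there is no in-paper argument to compare against line by line. Judged on its own terms, your outline follows the broadly correct strategy (a local-to-global argument for broken lines plus BCP-type comparisons), and two pieces are essentially right: the non-triviality of the $e_k$ does follow from tamability condition (iii) giving $d_X\bigl((h_i)_-,(h_j)_+\bigr)\ge\zeta\ge 1$, and the final estimate $\abs{e'_k}_X\ge\eta$ does come from $\abs{e_k}_X\ge\zeta$ together with a bound on the one extra $\mathcal H$-edge of $f_{k-1}$ and of $f_k$ that may be absorbed into $e'_k$ (this is exactly Lemma~\ref{lem:short_ending_components}, which itself requires proof).

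The genuine gap is in the central step, the quasigeodesicity of $\Sigma(p,\Theta)$. You propose to feed $f_0e_1f_1\dots e_mf_m$ directly into the local-to-global principle for broken geodesics, but that principle requires the intermediate segments to be long in the metric of $\relcay$, i.e.\ in $d_{X\cup\mathcal H}$, and the $e_k$ are \emph{single edges}: $\ell(e_k)=1$. The bound $\abs{e_k}_X\ge\zeta$ is in the wrong metric and does not prevent $f_{k+1}$ from doubling back along $f_k$ across the length-one bridge $e_{k+1}$; ruling this out is precisely where the real argument (projections onto the peripheral coset of $e_{k+1}$, via Proposition~\ref{prop:bcp} and Proposition~\ref{prop:osin_polygon}) has to be done, and your sketch asserts rather than supplies it. A second, related gap: you claim the non-extremal $f_k$ are ``forced to be long'' by Lemma~\ref{lem:rel_paths_with_short_comps} and Remark~\ref{rem:cpts_of_p'k}, but those only show that the shortcut portion $p'_k$ of $p$ has many \emph{edges}; converting that into a lower bound on $\ell(f_k)=d_{X\cup\mathcal H}(v_{s_k},v_{t_k})$ requires first knowing that $p'_k$ is quasigeodesic, which is Lemma~\ref{lem:fk_quasigeodesic} and is itself a nontrivial ingredient of the proof rather than something available for free --- as written your plan is circular at this point. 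Finally, the parenthetical claim that consecutive collapsed runs are ``necessarily of distinct peripheral types'' is unjustified (two maximal runs can both be $H_\nu$-runs for the same $\nu$, merely in different cosets); the exclusion of connected $e_k,e_l$ has to go through the maximality of the consecutive-backtracking runs in Procedure~\ref{proc:shortcutting} together with a metric estimate, not through peripheral types.
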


As part of the proof of the above, one obtains the following under the same hypotheses:

\begin{lemma}[{\cite[Lemma 9.7]{MinMin}}]
\label{lem:fk_quasigeodesic}
    There is a constant \(c_0 = c_0(C)\) such that the subpaths of \(p\) between \(v_{s_k}\) and \(v_{t_k}\) are \((4,c_0)\)-quasigeodesic for each \(k = 0, \dots, m\).
\end{lemma}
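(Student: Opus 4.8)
The plan is to read the structure of $p'_k$ off from the way Procedure~\ref{proc:shortcutting} was run, and then feed it into the standard local-to-global argument for quasigeodesics in the $\delta$-hyperbolic graph $\relcay$.

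First I would describe $p'_k$ and reduce to a clean statement. As a combinatorial subpath of $p = p_1 \cdots p_n$, the broken line $p'_k$ inherits a decomposition $p'_k = r_1 \cdots r_l$ into geodesic segments in which $r_1$ and $r_l$ are (possibly trivial) subsegments of some $p_{a_0}$ and $p_{b_0}$, while $r_2, \dots, r_{l-1}$ are exactly the full segments $p_{a_0+1}, \dots, p_{b_0-1}$ of $p$; in particular every interior segment $r_m$ is a segment $p_j$ with $2 \le j \le n-1$, so $\abs{r_m}_X \ge B_0$ by Definition~\ref{def:tamable}(i). By Remark~\ref{rem:cpts_of_p'k} — which records that Step~3 of Procedure~\ref{proc:shortcutting} cuts $p$ as soon as it meets a block along which $p$ has consecutive backtracking and which contains an $\mathcal{H}$-component of $X$-length $\ge \Theta$ — every $\mathcal{H}$-component of every segment of $p'_k$ has $X$-length $< \Theta$. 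Any combinatorial subpath $q = q_1 \cdots q_r$ of $p'_k$ has the same features: each $q_i$ is a subsegment of one of the $r_m$, hence geodesic; the interior segments $q_2, \dots, q_{r-1}$ are interior segments of $p'_k$, hence honest long segments of $p$; every $\mathcal{H}$-component of $q$ has $X$-length $< 3\Theta$ (such a component cannot run through a long geodesic segment, so it lies inside one $q_i$ or straddles a node at an end of one of the at most two short segments $q_1, q_r$); and the Gromov product $\langle (q_i)_-, (q_{i+1})_+\rangle_{(q_i)_+}$ at each node of $q$ is at most $C + \delta$, by Definition~\ref{def:tamable}(ii) together with the elementary fact that $\langle y, z\rangle_w$ does not increase when $y$ and $z$ are moved along the relevant geodesics towards $w$. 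It therefore suffices to prove $\ell(q) \le 4\,\dxh(q_-, q_+) + c_0$ for a broken line $q$ of this type, with $c_0 = c_0(C)$.

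The second step turns ``long in $\abs{\cdot}_X$'' into ``long in $\dxh$''. Since each $q_i$ is geodesic in $\relcay$, its $\mathcal{H}$-components are single edges (Remark~\ref{rem:comp_of_geod_is_an_edge}) of $X$-length $< \Theta$, so Lemma~\ref{lem:rel_paths_with_short_comps} gives $\abs{q_i}_X \le \Theta\,\ell(q_i)$ and hence $\ell(q_i) \ge B_0/\Theta$ for $2 \le i \le r-1$. As $B_0 = B_0(\Theta, C)$ may be taken as large as we wish in Proposition~\ref{prop:shortcutting_quasigeodesic}, we may assume $\ell(q_i) \ge 10(C + \delta)$ for every interior $i$: the interior segments of $q$ are very long geodesics in $\relcay$.

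Finally I would run the concatenation argument in $\relcay$. Writing $x_i = (q_i)_+$, so that $\dxh(x_{i-1}, x_i) = \ell(q_i)$ and $\langle x_{i-1}, x_{i+1}\rangle_{x_i} \le C + \delta$, a short induction using only $\delta$-hyperbolicity and the length bound on the interior segments propagates the node estimate to $\langle x_1, x_k\rangle_{x_{k-1}} \le C + 2\delta$ for all relevant $k$, and symmetrically from the other end; telescoping these yields both $\dxh(x_1, x_{r-1}) \ge \tfrac{1}{2} \sum_{i=2}^{r-1}\ell(q_i)$ and $\ell(q_1) + \dxh(x_1, x_{r-1}) + \ell(q_r) \le \dxh(q_-, q_+) + O(C + \delta)$, and combining them gives $\ell(q) \le 2\,\dxh(q_-, q_+) + c_0$ with $c_0 = c_0(C)$ (the hyperbolicity constant $\delta$ of $\relcay$ being fixed) — more than the claimed bound. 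The part that actually needs care — and the reason it is convenient to state the lemma with the slack constant $4$ — is the bookkeeping for small $r$ (where $q$ is geodesic, or a concatenation of two geodesics with bounded Gromov product at their single node) and for the degenerate case in which an end segment $q_1$ or $q_r$ is itself very long; in the latter case one peels off the long end segments, which contribute honestly to $\dxh(q_-, q_+)$, and applies the estimate to what remains.
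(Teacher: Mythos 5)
Your argument is correct, and it is essentially the argument underlying the cited result: this paper gives no proof of the lemma, importing it from \cite[Lemma 9.7]{MinMin}, where the subpath between \(v_{s_k}\) and \(v_{t_k}\) is treated exactly as you do --- as a broken line whose interior segments are full segments of \(p\) with index in \(\{2,\dots,n-1\}\), hence of \(\dxh\)-length at least \(B_0/\Theta\) by Lemma~\ref{lem:rel_paths_with_short_comps}, with Gromov products at most \(C\) at the nodes (your monotonicity observation for Gromov products under moving endpoints toward the basepoint along the geodesic segments is what correctly handles the truncated end segments and arbitrary subpaths), to which the standard broken-line criterion in the \(\delta\)-hyperbolic graph \(\relcay\) applies. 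The only point needing care is the one you already flag: \(B_0(\Theta,C)\) must be fixed large enough (a suitable multiple of \((C+\delta)\Theta\)) before running the local-to-global step, which is legitimate because \(B_0\) is chosen within the proof of Proposition~\ref{prop:shortcutting_quasigeodesic} and enlarging it only strengthens the tamability hypothesis.
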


\begin{lemma}[{\cite[Lemma 9.8]{MinMin}}]
\label{lem:short_ending_components}
    There is a constant \(\rho = \rho(C) \geq 0\) such that if \(k \in \{0, \dots, m-1\}\) and \(h\) is an \(\mathcal{H}\)-component of \(f_{k}\) or \(f_{k+1}\) that is connected to \(e_{k+1}\), then \(\abs{h}_X \leq \rho\).
\end{lemma}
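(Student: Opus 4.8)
\emph{Proof idea.} The strategy is to cut the problem down to a statement about the extreme edges of $f_k$ and $f_{k+1}$, then to use Proposition~\ref{prop:bcp} to compare $f_k$ with the corresponding piece of $p$, and finally to rule out the resulting configuration, partly via the bookkeeping carried out inside Procedure~\ref{proc:shortcutting} and partly via a geodesic-polygon estimate.

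\textbf{Step 1: reduction to terminal edges.} First I would observe that an $\mathcal{H}$-component $h$ of $f_k$ connected to $e_{k+1}$ must be the final edge of $f_k$, and symmetrically an $\mathcal{H}$-component of $f_{k+1}$ connected to $e_{k+1}$ must be its initial edge. Indeed, $f_k$ is a geodesic of $\relcay$, hence is without backtracking with all of its $\mathcal{H}$-components single edges (Remark~\ref{rem:comp_of_geod_is_an_edge}); since $(e_{k+1})_- = (f_k)_+$, such an $h$ has both endpoints in the left coset of $H_\nu$ containing $(f_k)_+$, so the subpath of $f_k$ from $h_+$ to $(f_k)_+$ joins two vertices at $\dxh$-distance at most $1$ and therefore has length at most $1$. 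Thus it suffices to bound $\abs{h}_X$ when $h$ is the final edge of a nontrivial $f_k$ and is an $H_\nu$-edge connected to $e_{k+1}$, the case of $f_{k+1}$ being symmetric.

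\textbf{Step 2: comparison with $p$.} Let $p'_k$ be the subpath of $p$ between $v_{s_k}$ and $v_{t_k}$; by Lemma~\ref{lem:fk_quasigeodesic} it is $(4,c_0)$-quasigeodesic with $c_0 = c_0(C)$, and it is $0$-similar to the geodesic $f_k$. If $\abs{h}_X$ is at least the constant $\kappa = \kappa(1,0,0)$ of Proposition~\ref{prop:bcp}, applying that proposition with $f_k$ as the backtracking-free path produces an $\mathcal{H}$-component $s$ of $p'_k$ connected to $h$; being connected to $h$, $s$ is connected to $e_{k+1}$, hence to the consecutive-backtracking chain $h_i,\dots,h_j$ of $p$ whose endpoints $e_{k+1}$ joins, a chain which by Step~3 of Procedure~\ref{proc:shortcutting} contains a component of $X$-length at least $\Theta$. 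Moreover $s$ and $v_{t_k} = (p'_k)_+$ lie in one left coset of $H_\nu$, so the terminal subpath of $p'_k$ from $s$ to $v_{t_k}$ joins two vertices at $\dxh$-distance at most $1$ and hence has at most $4+c_0$ edges; in particular $s$ meets only a bounded number (depending on $C$) of the final segments of $p'_k$.

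\textbf{Step 3: eliminating $s$, and the main obstacle.} If $s$ reaches the end of $p'_k$, then $s$ followed by $h_i,\dots,h_j$ is a consecutive-backtracking chain of $p$ containing a component of $X$-length at least $\Theta$ whose first component lies strictly inside $p'_k$; but then Procedure~\ref{proc:shortcutting}, on reaching that component, would have executed the cutting branch of Step~3 and produced a pair of $V(p,\Theta)$ terminating before $v_{t_k}$, contradicting $(s_k,t_k)\in V(p,\Theta)$. Hence $s$ lies in the interior of $p'_k$. To bound $\abs{s}_X$ in this case I would form a geodesic polygon with a bounded number of sides --- built from $f_k$, $e_{k+1}$, and auxiliary geodesics whose existence is controlled by tamability conditions~\ref{cond:tam_2} and~\ref{cond:tam_3} --- on which $s$, after being merged with the $H_\nu$-edge joining its terminal vertex to $v_{t_k}$ and with $e_{k+1}$, appears as an isolated $\mathcal{H}$-component, so that Proposition~\ref{prop:osin_polygon} (or Lemma~\ref{lem:qgds_with_long_comps}) bounds its $X$-length in terms of $C$; together with $\abs{h}_X \le \abs{s}_X + 2\kappa$ (from part~(3) of Proposition~\ref{prop:bcp}, after checking via Remark~\ref{rem:cpts_of_p'k} and condition~\ref{cond:tam_3} that $p'_k$ has no backtracking near $s$, or more cheaply from part~(2)) this yields $\abs{h}_X \le \rho(C)$. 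The genuine difficulty is precisely this interior case: identifying the correct polygon and verifying the isolation of the merged component $s$, while ensuring that the number of sides --- and therefore, through Proposition~\ref{prop:osin_polygon}, the final constant --- depends only on $C$ and not on $\Theta$. This is the one place where the substance of the tamability hypotheses is used; everything else is essentially formal.
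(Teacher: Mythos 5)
A preliminary remark: the paper does not prove this lemma itself --- it is quoted from \cite[Lemma 9.8]{MinMin} --- so I am judging your argument on its own terms. Your Steps 1--2 and the first branch of Step 3 are essentially the right mechanism (compare $f_k$ with $p'_k$ via Proposition~\ref{prop:bcp} and contradict the cut-point selection in Procedure~\ref{proc:shortcutting}), but the proof has a genuine gap exactly where you flag it: the ``interior'' case is the generic one and is left unresolved. Indeed, $s$ can end at $v_{t_k}$ essentially only when $h_i$ is the first edge of its segment $p_i$ (since $p_i$ is geodesic, $h_i$ is isolated in $p_i$, so $s$ cannot meet $p_i$ at all); so your first branch almost never applies as stated. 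Moreover the polygon argument you propose for the interior case is not viable as sketched: the merged component you describe contains $e_{k+1}$, whose $X$-length is at least $\zeta$ by tamability condition \ref{cond:tam_3} and hence is not bounded in terms of $C$ alone, and any estimate of $\abs{s}_X$ through its constituent edges only gives a bound depending on $\Theta$, whereas $\rho$ must depend only on $C$. The transfer $\abs{h}_X \le \abs{s}_X + 2\kappa$ via part (3) of Proposition~\ref{prop:bcp} also requires $p'_k$ to be without backtracking, which you do not have.

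The repair is that no second argument is needed: the procedure-based contradiction of your first branch covers the interior case as well, so one can simply take $\rho = \kappa(4,c_0,0)$ (not $\kappa(1,0,0)$: $p'_k$ is only a $(4,c_0)$-quasigeodesic, so this is the relevant constant from Proposition~\ref{prop:bcp}, and it still depends only on $C$). Concretely, $s$ decomposes into $H_\nu$-components $g_{i'},\dots,g_{i''}$ of consecutive segments $p_{i'},\dots,p_{i''}$ of $p$, all connected to $h_i,\dots,h_j$, and $i''\le i-1$ by the isolation of $h_i$ in $p_i$. If $i''=i-1$, then $g_{i'},\dots,g_{i-1},h_i,\dots,h_j$ is consecutive backtracking containing a component of $X$-length at least $\Theta$ whose first edge $g_{i'}$ sits at a position in $[s_k,t_k)$; the scan in Procedure~\ref{proc:shortcutting} examines every such $\mathcal{H}$-edge in order and would therefore have cut at $g_{i'}$, contradicting $(s_k,t_k)\in V(p,\Theta)$ --- and this works regardless of where $s$ ends inside $p'_k$. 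If $i''<i-1$, the subpath of $p'_k$ from $s_+$ to $v_{t_k}$ contains an entire interior segment of $p$, which by tamability condition \ref{cond:tam_1}, Remark~\ref{rem:cpts_of_p'k} and Lemma~\ref{lem:rel_paths_with_short_comps} has more than $B/\Theta$ edges, incompatible with the bound of $4+c_0$ edges that you already computed, once $B$ is large relative to $\Theta$. Hence $\abs{h}_X<\kappa(4,c_0,0)$ in all cases. (A minor point: your Step 1 only shows that $h$ is the final \emph{or penultimate} edge of $f_k$, and that reduction is not actually needed for the argument.)
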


We begin with the following observation.

\begin{lemma}
\label{lem:shortuctting_of_quasigeodesic}
    Let \(\lambda \geq 1\) and \(c \geq 0\).
    Let \(p = p_1 \dots p_n\) be a \((\lambda,c)\)-quasigeodesic broken line in \(\relcay\) with \(\abs{p_i}_{X\cup\mathcal{H}} > \lambda + c\) for each \(i = 1, \dots, n\).
    Then the path \(\Sigma_0(p,1)\) obtained from Procedure~\ref{proc:shortcutting} is a \((\lambda,c)\)-quasigeodesic without backtracking.
\end{lemma}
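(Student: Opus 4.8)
The plan is to first make completely explicit what $\Sigma_0(p,1)$ looks like, then verify the two conclusions separately. Since $\Theta=1$ and every $\mathcal{H}$-edge $h$ of $p$ is nontrivial, so $\abs{h}_X\ge 1=\Theta$, the maximum computed in Step~3 of Procedure~\ref{proc:shortcutting} always meets the threshold; hence every $\mathcal{H}$-edge met in Step~2 is processed as a block and we reset. Consequently $\Sigma_0(p,1)=p'_0e_1p'_1\dots e_mp'_m$ is obtained from $p$ by replacing each \emph{maximal consecutive backtracking block} $B_k$ of $\mathcal{H}$-edges $h^{(k)}_i\in p_{a_k},\dots,h^{(k)}_j\in p_{b_k}$ (all of one type $\nu_k$, pairwise connected, with $b_k$ as large as possible) by a single $\mathcal{H}$-edge or trivial path $e_k$ joining $(h^{(k)}_i)_-$ to $(h^{(k)}_j)_+$; by Remark~\ref{rem:cpts_of_p'k} the pieces $p'_0,\dots,p'_m$ contain no $\mathcal{H}$-edges. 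In particular the vertices of $\Sigma_0(p,1)$ form a subsequence of those of $p$, and since $B_k$ contains at least the edge $h^{(k)}_i$ we have $\ell(e_k)\le 1\le\ell(B_k)$.

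For the quasigeodesic bound, take any subpath $q$ of $\Sigma_0(p,1)$ and let $\hat q$ be the subpath of $p$ with the same endpoints (well defined by the subsequence remark). Travelling along $\hat q$ one meets the same initial and terminal portions of the $p'_k$ as $q$ does, but with the blocks $B_k$ in place of the edges $e_k$, so $\ell(\hat q)\ge\ell(q)$. As $p$ is $(\lambda,c)$-quasigeodesic and $\hat q_\pm=q_\pm$, we get
\[
\ell(q)\le\ell(\hat q)\le\lambda\,d_{X\cup\mathcal{H}}(q_-,q_+)+c,
\]
so $\Sigma_0(p,1)$ is $(\lambda,c)$-quasigeodesic; note this step does not use the hypothesis on the lengths $\abs{p_i}_{X\cup\mathcal{H}}$.

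It remains to show $\Sigma_0(p,1)$ is without backtracking, which is where the length hypothesis is needed. Its only $\mathcal{H}$-edges are the $e_k$, and each $e_k$ lies in the same $H_{\nu_k}$-coset as the last edge $h^{(k)}_j$ of $B_k$; so it suffices to show no two $e_k$, $e_l$ with $k<l$ are connected (this also forces each $\mathcal{H}$-component of $\Sigma_0(p,1)$ to be a single $e_k$, since adjacent $\mathcal{H}$-edges of the same type are automatically connected). Suppose $e_k$ and $e_l$ are connected. Then $(h^{(k)}_j)_+$ and $(h^{(l)}_i)_-$ lie in a common coset $vH_\nu$, so the subpath $\pi$ of $p$ from $(h^{(k)}_j)_+$ to $(h^{(l)}_i)_-$ (a genuine forward subpath, since $B_k$ precedes $B_l$ along $p$) satisfies $d_{X\cup\mathcal{H}}(\pi_-,\pi_+)\le1$, hence $\ell(\pi)\le\lambda+c$. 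Since each segment $p_m$ is geodesic in $\relcay$ with $\ell(p_m)=\abs{p_m}_{X\cup\mathcal{H}}>\lambda+c$, the path $\pi$ contains no whole segment of $p$; as $\pi$ starts in $p_{b_k}$ and ends in $p_{a_l}$ this forces $a_l\le b_k+1$. If $a_l=b_k$ then $h^{(k)}_j$ and $h^{(l)}_i$ are distinct $\mathcal{H}$-components of the geodesic segment $p_{b_k}$ which are connected, contradicting Remark~\ref{rem:comp_of_geod_is_an_edge}. If $a_l=b_k+1$ then $h^{(l)}_i$ is an $\mathcal{H}$-component of $p_{b_k+1}$ connected to $h^{(k)}_j$, so $B_k$ extends by $h^{(l)}_i$, contradicting the maximality of $b_k$. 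Either way we reach a contradiction, and the proof is complete.

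The main obstacle is bookkeeping: reading off from Procedure~\ref{proc:shortcutting} that $\Sigma_0(p,1)$ is exactly $p$ with each maximal consecutive backtracking block collapsed, and then, in the no-backtracking argument, using the length hypothesis to cut the possibilities down to $a_l\le b_k+1$, after which the two remaining cases collapse at once via isolation of $\mathcal{H}$-components along geodesics and the maximality of the blocks.
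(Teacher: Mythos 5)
Your proof is correct and follows essentially the same route as the paper's: the quasigeodesic bound comes from comparing each subpath of $\Sigma_0(p,1)$ with the corresponding (longer) subpath of $p$, and the no-backtracking claim is reduced to connected $\mathcal{H}$-components of segments of $p$, ruled out by geodesicity of the segments, maximality of the consecutive-backtracking blocks in Procedure~\ref{proc:shortcutting}, and the hypothesis $\abs{p_i}_{X\cup\mathcal{H}}>\lambda+c$ when the components are far apart. The only difference is organisational (you derive the constraint $a_l\le b_k+1$ first and then split into two cases, where the paper splits into three cases directly), which does not change the substance.
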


\begin{proof}
    Let \(q\) be a subpath of \(\Sigma_0 = \Sigma_0(p,1) = p'_0 e_1 p'_1 \dots p'_{m-1} e_m p'_m\).
    Since for each \(i\), \(p'_i\) is a subpath of \(p\) and \(e_i\) consists of at most a single edge, \(q_-\) and \(q_+\) are vertices of \(p\).
    Let \(p'\) be the subpath of \(p\) with \(p'_- = q_-\) and \(p'_+ = q_+\).
    The path \(q\) can be obtained by replacing subpaths of \(p'\) with single edges, so that the length of \(q\) is bounded by the length of \(p'\).
    Then by the quasigeodescity of \(p\) we have
    \[
        \ell(q) \leq \ell(p') \leq \lambda d_{X\cup\mathcal{H}}(p'_-,p'_+) + c = \lambda d_{X\cup\mathcal{H}}(q_-,q_+) + c,
    \]
    so \(\Sigma_0\) is \((\lambda,c)\)-quasigeodesic.
    
    We must now show that \(\Sigma_0\) is without backtracking, so suppose for a contradiction that it does have backtracking.
    As noted in Remark~\ref{rem:cpts_of_p'k} the subpaths \(p'_0, \dots, p'_m\) contain no \(\mathcal{H}\)-subpaths.
    That is, if \(h\) is an \(\mathcal{H}\)-subpath of \(\Sigma_0\), it must be one of the paths \(e_1, \dots, e_m\).
    Therefore it must be that there are integers \(1 \leq k < l \leq m\) such that \(e_k\) and \(e_l\) are nontrivial connected \(\mathcal{H}\)-subpaths of \(\Sigma_0\).
    Thus,
    \begin{equation}
    \label{eq:hi_hj_conn}
        d_{X\cup\mathcal{H}}((e_k)_+,(e_l)_-) \leq 1.
    \end{equation}

    Let \(h_1\) be the \(\mathcal{H}\)-component of a segment of \(p\) with \((h_1)_+ = (e_k)_+\) and let \(h_2\) be the \(\mathcal{H}\)-component of a segment of \(p\) with \((h_2)_- = (e_l)_-\).
    Since \(e_k\) and \(e_l\) are connected, so are \(h_1\) and \(h_2\).
    Following Remark~\ref{rem:comp_of_geod_is_an_edge}, \(h_1\) and \(h_2\) cannot lie in the same segment of \(p\).
    If \(h_1\) and \(h_2\) lie in adjacent segments of \(p\), then they are part of the same instance of consecutive backtracking and the construction of \(\Sigma_0\) is contradicted.
    Otherwise, the path \(p'_k\) contains a full segment, say \(p_{s}\), of \(p\).
    Then by quasigeodesicity of \(p\) and (\ref{eq:hi_hj_conn}),
    \begin{equation}
    \label{eq:bound_on_ps_length}
        \ell(p_s) \leq \ell(p'_k) \leq \lambda d_{X\cup\mathcal{H}}((e_k)_+,(e_l)_-) + c \leq \lambda + c.
    \end{equation}
    However, since \(p_s\) is a geodesic, \(\abs{p_s}_{X\cup\mathcal{H}} = \ell(p_s)\).
    Therefore (\ref{eq:bound_on_ps_length}) contradicts the lemma hypothesis that \(\ell(p_s) > \lambda + c\).
\end{proof}

For the remainder of the section, we fix as constants some \(C \geq 14\delta\) and \(\eta \geq 0\), let \(\lambda = \lambda(C), c = c(C)\), and \(\zeta = \zeta(C,\eta)\) be the constants obtained from Proposition~\ref{prop:shortcutting_quasigeodesic}, and let \(c_0 = c_0(C)\) be the constant of Lemma~\ref{lem:fk_quasigeodesic}.
Let \(L \geq 0\) be the constant of Proposition~\ref{prop:osin_polygon}.

\begin{lemma}
\label{lem:backtracking_between_fk}
    For any \(\Theta \geq \zeta\) there is \(E_0 = E_0(\Theta) \geq 0\) such that for any \(B \geq E_0\) the following is true.
    
    Let \(p = p_1 \dots p_n\) be a \((B,C,\zeta,\Theta)\)-tamable broken line and let \(V(p,\Theta) = \{(s_k,t_k) \mid k = 0, \dots, m\}\) be the set from Procedure~\ref{proc:shortcutting}.
    Fix \(k \in \{0, \dots, m\}\) and denote by \(p'\) the subpath of \(p\) with \(p'_- = v_{s_k}\) and \(p'_+ = v_{t_k}\).
    If \(q\) and \(r\) are connected \(\mathcal{H}\)-edges of \(p'\), then \(d_X(q_-,r_+) \leq 3L + 2\Theta\).
\end{lemma}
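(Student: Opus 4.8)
The plan is to analyze the structure of the subpath $p'$ of $p$ lying between $v_{s_k}$ and $v_{t_k}$, using the key fact (recorded in Remark~\ref{rem:cpts_of_p'k} and built into Procedure~\ref{proc:shortcutting}) that every $\mathcal{H}$-component of every segment of $p'$ has $X$-length strictly less than $\Theta$. Suppose $q$ and $r$ are connected $\mathcal{H}$-edges of $p'$; conjugating nothing but relabelling if necessary, we may assume $q$ occurs before $r$ along $p'$. Let $q$ be an $\mathcal{H}$-component of segment $p_i$ and $r$ an $\mathcal{H}$-component of segment $p_j$ with $i \le j$. The first case to dispose of is $i = j$: then $q$ and $r$ are connected $\mathcal{H}$-components of a single geodesic segment $p_i$, but a geodesic in $\relcay$ is without backtracking (Remark~\ref{rem:comp_of_geod_is_an_edge}), so $q = r$ and the bound is trivial. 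The case $i < j$ with $q,r$ in the same instance of consecutive backtracking is also excluded, since by construction of $\Sigma_0$ / the defining property of $p'$ such a run of backtracking would have been shortcut away unless the components are short, and in any event this situation is governed by the tamability hypothesis — I expect to handle it via Lemma~\ref{lem:short_ending_components} or directly by condition~\ref{cond:tam_3}.

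The substantive case is when $i < j$ and there is at least one full segment of $p$ strictly between them, or at least one node of $p'$ separating $q$ from $r$. Here the idea is to build a geodesic polygon and invoke Proposition~\ref{prop:osin_polygon}. Concretely, let $h$ be the single edge (in $\relcay$) from $q_-$ to $r_+$ realising the connection of $q$ and $r$ — it is labelled by an element of the relevant $H_\nu \setminus \{1\}$ and $d_{X\cup\mathcal{H}}(q_-, r_+) \le 1$. Form the geodesic triangle (or bigon-with-an-edge) with one side $h$ and the other side a geodesic $[r_+, q_-]$ in $\relcay$ together with whatever pieces are needed: more precisely, take the subpath $p''$ of $p'$ from $q_-$ to $r_+$, replace it by a geodesic, and consider the resulting geodesic $\mathcal{H}$-component $h$ against that geodesic. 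If $h$ is isolated in the resulting polygon, Proposition~\ref{prop:osin_polygon} bounds $|h|_X$ by $Ln$ for a polygon with a bounded number of sides — but $|h|_X$ being small is not quite what we want; we want $d_X(q_-, r_+)$ small. So instead I would reason as follows: by Lemma~\ref{lem:fk_quasigeodesic} the subpath $p'$ is $(4,c_0)$-quasigeodesic, hence in particular $d_X(q_-, r_+)$ is controlled once we bound the $\relcay$-distance and the $X$-lengths of the $\mathcal{H}$-components between them; the $\mathcal{H}$-components have $X$-length $< \Theta$ by Remark~\ref{rem:cpts_of_p'k}, and the number of edges between $q_-$ and $r_+$ in $\Sigma_0(p',1)$-type reasoning is bounded using that connected $\mathcal{H}$-edges collapse.

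The cleanest route, which I would adopt, is: apply Proposition~\ref{prop:osin_polygon} to the geodesic triangle with sides $h$, a geodesic $g_1$ from $q_-$ to $r_+$ in $\relcay$ realising $d_{X\cup\mathcal{H}}(q_-,r_+)$, closing up — since $d_{X\cup\mathcal{H}}(q_-,r_+)\le 1$, this $g_1$ is a single edge or trivial, so $h$ together with $g_1$ forms a bigon; that is degenerate, so instead compare $h$ with the portion of $p'$ between $q_+$ and $r_-$ (which avoids $q$ and $r$ themselves). That portion, being a piece of $p'$, has all $\mathcal{H}$-components of $X$-length $<\Theta$; by Lemma~\ref{lem:rel_paths_with_short_comps} applied after bounding its combinatorial length, its $X$-length is at most $\Theta \cdot \ell$. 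The combinatorial length $\ell$ of that portion in turn is bounded: I expect it to come out as a small absolute number (like $1$ or $2$) because $q_+$ and $r_-$ are within $\relcay$-distance $2$ or $3$ of each other (using that $q,r,h$ are mutually connected and $d_{X\cup\mathcal{H}}$-close), and $p'$ is quasigeodesic so a short $\relcay$-distance forces a short combinatorial length. Then the triangle inequality $d_X(q_-, r_+) \le |q|_X + |\text{portion}|_X + |r|_X \le \Theta + \Theta \cdot 1 + \Theta$ would give $3\Theta$; squeezing the constants and accounting for the edge $h$ and the quasigeodesic additive error is where the precise value $3L + 2\Theta$ should emerge from invoking Proposition~\ref{prop:osin_polygon} (which contributes the $3L$) rather than Lemma~\ref{lem:rel_paths_with_short_comps}.

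\textbf{Main obstacle.} The delicate point is ruling out, or precisely accounting for, the backtracking configurations inside $p'$: after the shortcutting procedure, $p'$ can still contain consecutive backtracking along \emph{short} $\mathcal{H}$-components (those of $X$-length $<\Theta$), and a connected pair $q, r$ could be separated by a long stretch of such short backtracking. Controlling $d_X(q_-, r_+)$ then requires showing this stretch cannot be long, which is exactly where the tamability hypothesis \ref{cond:tam_3} and the choice of $B \ge E_0(\Theta)$ must enter: a long segment forces, via condition~\ref{cond:tam_1}, that the broken line genuinely makes progress, contradicting that $q$ and $r$ are connected (so their endpoints are $\relcay$-close). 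Making this quantitative — extracting $E_0(\Theta)$ so that the argument closes — is the real content, and I would expect to model it on the proof of Proposition~\ref{prop:shortcutting_quasigeodesic} in \cite{MinMin}, combining the $(4,c_0)$-quasigeodesicity of Lemma~\ref{lem:fk_quasigeodesic} with a polygon argument (Proposition~\ref{prop:osin_polygon}) to land the explicit constant $3L + 2\Theta$.
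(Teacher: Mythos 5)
Your overall skeleton matches the paper's: reduce to the case where \(q\) and \(r\) lie in adjacent segments of \(p\), and the reduction works exactly as you say at the end of your proposal --- tamability condition \ref{cond:tam_1} gives \(\abs{p_{k+1}}_X \geq B\) for any full segment trapped between \(q\) and \(r\), Lemma~\ref{lem:rel_paths_with_short_comps} converts this to \(\ell(p_{k+1}) \geq B/\Theta\) (since all components of \(p'\) have \(X\)-length at most \(\Theta\)), and the \((4,c_0)\)-quasigeodesicity of \(p'\) from Lemma~\ref{lem:fk_quasigeodesic} then forces \(d_{X\cup\mathcal{H}}(q_-,r_+) > 1\) once \(B \geq E_0(\Theta)\) with \(E_0(\Theta) = \max\{B_0, 4(1+c_0)\Theta\}+1\), contradicting connectedness. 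The case where \(q\) and \(r\) lie in the same segment is vacuous by Remark~\ref{rem:comp_of_geod_is_an_edge}, as you note.

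However, the adjacent-segment case --- which is where the constant \(3L\) actually comes from --- is not carried out, and the route you sketch in detail does not give the stated bound: estimating the portion of \(p'\) between \(q_+\) and \(r_-\) via Lemma~\ref{lem:rel_paths_with_short_comps} yields an \(X\)-length at most \(\Theta\ell\) with \(\ell \leq 4 + c_0\) (not \(1\) or \(2\)), hence a bound of the form \((6+c_0)\Theta\) rather than \(3L + 2\Theta\). The paper's argument is: since \(q\) and \(r\) are connected components of adjacent geodesic segments \(p_k\) and \(p_{k+1}\), either \(q_+ = r_-\) or there is a single \(H_\nu\)-edge \(h\) with \(h_- = q_+\) and \(h_+ = r_-\); this \(h\) is isolated in the geodesic triangle \(h \cup [q_+,(p_k)_+] \cup [(p_k)_+, r_-]\) whose other two sides are subpaths of the segments themselves (any component of those sides connected to \(h\) would be connected to \(q\), respectively to \(r\), contradicting that geodesics are without backtracking); Proposition~\ref{prop:osin_polygon} with \(n=3\) then gives \(d_X(q_+,r_-) = \abs{h}_X \leq 3L\), and the triangle inequality adds \(\abs{q}_X + \abs{r}_X \leq 2\Theta\). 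Your worry that ``\(\abs{h}_X\) being small is not quite what we want'' is misplaced: for a single edge \(h\) one has \(\abs{h}_X = d_X(h_-,h_+)\), so bounding the length of the connecting edge is exactly the point. Finally, connected components in adjacent segments of \(p'\) are not ``shortcut away'' and are not governed by tamability \ref{cond:tam_3} or Lemma~\ref{lem:short_ending_components}: the shortcutting only removes runs of consecutive backtracking whose maximal component has \(X\)-length at least \(\Theta\), and the short runs remaining inside \(p'\) are precisely the configuration this lemma must handle.
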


\begin{proof}
    We begin by choosing the constant
    \[
        E_0 = \max\{B_0, 4(1+c_0)\Theta\} + 1,
    \]
    where \(B_0 = B_0(\Theta,C)\) is obtained from Proposition~\ref{prop:shortcutting_quasigeodesic}, and let \(B \geq E_0\).
    The path \(p'\) is a broken line with \(p' = p'_i p_{i+1} \dots p_{j-1} p'_j\), where \(p'_i\) (respectively, \(p'_j\)) is a subpath of \(p_i\) with \((p'_i)_- = v_{s_k}\) and \((p'_i)_+ = (p_i)_+\) (respectively, of \(p_j\) with \((p'_j)_- = (p_j)_-\) and \((p'_j)_+ = v_{t_k}\)).
    As in Remark~\ref{rem:cpts_of_p'k}, each \(\mathcal{H}\)-component \(h\) of the paths \(p'_i, p_{i+1}, \dots, p_{j-1}, p'_j\) satisfies \(\abs{h}_X \leq \Theta\).
    This implies
    \begin{equation}
    \label{eq:len_of_q_r}
        \abs{q}_X + \abs{r}_X \leq 2\Theta.
    \end{equation}
    Since each segment of \(p\) is geodesic,  \(q\) and \(r\) must be connected \(\mathcal{H}\)-components of distinct segments of \(p\), say \(p_k\) and \(p_l\).
    Without loss of generality, we assume \(k < l\).
    If \(l > k+1\) then the subpath of \(p'\) between \(q_-\) and \(r_+\) contains the entire segment \(p_{k+1}\).
    By Lemma~\ref{lem:rel_paths_with_short_comps},
    \begin{equation}
    \label{eq:rel_len_of_pk+1}
        \ell(p_{k+1}) \geq \frac{1}{\Theta}\abs{p_{k+1}} \geq \frac{B}{\Theta},
    \end{equation}
    where the last inequality is given by condition \ref{cond:tam_1} of tamability.
    
    Lemma~\ref{lem:fk_quasigeodesic} tells us that \(p'\) is \((4,c_0)\)-quasigeodesic.
    Combining this fact with (\ref{eq:rel_len_of_pk+1}) and the choice of \(B\), we have
    \[
        d_{X\cup\mathcal{H}}(q_-,r_+) \geq \frac{1}{4}\ell(p_{k+1}) - \frac{c_0}{4} \geq \frac{B - \Theta c_0}{4\Theta} > 1.
    \]
    On the other hand, \(q\) and \(r\) are connected, so that \(d_{X\cup\mathcal{H}}(q_-,r_+) \leq 1\), a contradiction.
    Therefore \(q\) and \(r\) must lie in adjacent segments \(p_k\) and \(p_{k+1}\) of \(p\).
    
    If \(q_+ \ne r_-\), then there is an \(\mathcal{H}\)-edge \(h\) in \(\relcay\) with \(h_- = q+\) and \(h_+ = r_-\).
    The edge \(h\) must be isolated in the triangle \(h \cup [q_+,(p_k)_+] \cup [(p_k)_+,r_-]\).
    Thus by Proposition~\ref{prop:osin_polygon}, we have \(\abs{h}_X = d_X(q_+,r_-) \leq 3L\).
    Otherwise \(q_+ = r_-\) and \(d_X(q_+,r_-) = 0\).
    Together with (\ref{eq:len_of_q_r}), we obtain
    \begin{align*}
        d_X(q_-,r_+) &\leq d_X(q_-,q_+) + d_X(q_+,r_-) + d_X(r_-,r_+) \\
            &\leq \abs{q}_X + 3L + \abs{r}_X \leq 3L + 2\Theta,
    \end{align*}
    as required.
\end{proof}

The following combines the results of this section in a format we will find useful.

\begin{lemma}
\label{lem:double_shortcutting}
    For each \(\Theta \geq \zeta\), there is \(E_1 = E_1(\Theta) \geq 0\) such that for any \(B \geq E_1\) the following holds.
    
    Let \(p = p_1 \dots p_n\) be a \((B,C,\zeta,\Theta)\)-tamable broken line and denote by \(\Sigma_0(p,\Theta)\) the broken line \(p'_0 e_1 p'_1 \dots p'_{m-1} e_m p'_m\) obtained from Procedure~\ref{proc:shortcutting}.
    Then for each \(i = 0, \dots, m\), the shortcutting \(\Sigma_0(p'_i,1)\) is a \((4,c_0)\)-quasigeodesic without backtracking, and each of its \(\mathcal{H}\)-components \(h\) satisfies \(\abs{h}_X \leq 3L + 2\Theta\).
\end{lemma}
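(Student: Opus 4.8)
The plan is to apply Procedure~\ref{proc:shortcutting} twice. First apply it to $p$ with parameter $\Theta$, producing $\Sigma_0(p,\Theta) = p'_0 e_1 p'_1 \cdots e_m p'_m$; in the notation of that procedure, $p'_i$ is the subpath of $p$ between $v_{s_i}$ and $v_{t_i}$. I would then set $E_1(\Theta) := E_0(\Theta)$, the constant of Lemma~\ref{lem:backtracking_between_fk}, recalling from its proof that $E_0(\Theta) \ge B_0(\Theta,C)$ and $E_0(\Theta) > 4(1+c_0)\Theta$. For $B \ge E_1$ the hypotheses of Proposition~\ref{prop:shortcutting_quasigeodesic} hold, so by Lemma~\ref{lem:fk_quasigeodesic} each $p'_i$ is a $(4,c_0)$-quasigeodesic broken line, and by Remark~\ref{rem:cpts_of_p'k} no segment of $p'_i$ contains an $\mathcal{H}$-component of $X$-length at least $\Theta$. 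Fix $i$ and form $\Sigma_0(p'_i,1)$ via Procedure~\ref{proc:shortcutting} (this is legitimate: $p'_i$ is a broken line, with the decomposition inherited from $p$). That $\Sigma_0(p'_i,1)$ is $(4,c_0)$-quasigeodesic is immediate, since it is obtained from the $(4,c_0)$-quasigeodesic $p'_i$ by collapsing subpaths to single edges, which cannot increase the length of any subpath; this is precisely the first paragraph of the proof of Lemma~\ref{lem:shortuctting_of_quasigeodesic}, which uses no lower bound on segment lengths.

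The observation that takes the place of the segment-length hypothesis of Lemma~\ref{lem:shortuctting_of_quasigeodesic} is this: every segment of $p'_i$ other than its first and last is a full segment of $p$ whose index lies in $\{2,\dots,n-1\}$, so it has $X$-length at least $B$ by condition~\ref{cond:tam_1} of tamability; being geodesic with all its $\mathcal{H}$-components of $X$-length below $\Theta$, its combinatorial length is therefore at least $B/\Theta > 4(1+c_0) \ge 4+c_0$ by Lemma~\ref{lem:rel_paths_with_short_comps}. Granting this, I would establish that $\Sigma_0(p'_i,1)$ has no backtracking by rerunning the argument of Lemma~\ref{lem:shortuctting_of_quasigeodesic}. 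By Remark~\ref{rem:cpts_of_p'k} applied with $\Theta=1$, the subpaths between the shortcutting edges of $\Sigma_0(p'_i,1)$ carry no $\mathcal{H}$-labelled edges, so any instance of backtracking would yield two distinct connected shortcutting edges, say $d$ occurring before $d'$, with $d$ collapsing a maximal consecutive-backtracking block of $p'_i$ ending in an $\mathcal{H}$-edge $h$ and $d'$ one beginning with an $\mathcal{H}$-edge $h'$. Then $h$ and $h'$ are connected $\mathcal{H}$-edges of $p'_i$ lying in distinct segments (not the same one, since geodesic segments have isolated $\mathcal{H}$-components) that are not adjacent (adjacency would merge the two blocks into one, contradicting $d \ne d'$). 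Hence the subpath of $p'_i$ from $h_+$ to $h'_-$ contains an interior segment of $p'_i$, of combinatorial length greater than $4+c_0$, and the $(4,c_0)$-quasigeodesicity of $p'_i$ then forces $d_{X\cup\mathcal{H}}(h_+,h'_-) > 1$, contradicting that $h$ and $h'$ are connected.

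With no backtracking in hand, every $\mathcal{H}$-component of $\Sigma_0(p'_i,1)$ is a single shortcutting edge (again using Remark~\ref{rem:cpts_of_p'k} with $\Theta=1$), and by construction such an edge joins the initial vertex of the first $\mathcal{H}$-edge to the terminal vertex of the last $\mathcal{H}$-edge of a consecutive-backtracking block of $p'_i$; these two $\mathcal{H}$-edges are connected in $p'_i$, so Lemma~\ref{lem:backtracking_between_fk} (applicable since $B \ge E_0(\Theta)$ and $p$ is $(B,C,\zeta,\Theta)$-tamable) bounds its $X$-length by $3L+2\Theta$. The one point of friction I anticipate is the absence-of-backtracking step: Lemma~\ref{lem:shortuctting_of_quasigeodesic} cannot be quoted verbatim here because the first and last segments of $p'_i$ may be short, and the resolution is exactly the observation of the second paragraph — the proof of that lemma only ever uses the lower bound on the lengths of the \emph{interior} segments, and those are interior segments of $p$, hence long by tamability.
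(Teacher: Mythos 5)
Your proof is correct and follows essentially the same route as the paper's: Lemma~\ref{lem:fk_quasigeodesic} for quasigeodesicity of the \(p'_i\), the argument of Lemma~\ref{lem:shortuctting_of_quasigeodesic} for absence of backtracking, and Lemma~\ref{lem:backtracking_between_fk} for the bound on \(\mathcal{H}\)-components. The one point where you diverge is in fact a refinement: the paper applies Lemma~\ref{lem:shortuctting_of_quasigeodesic} as a black box, tacitly treating every segment of \(p'_i\) as having length greater than \(4+c_0\), whereas the first and last segments of \(p'_i\) may be short; your observation that the backtracking argument only ever uses the length bound on \emph{interior} segments — which are interior segments of \(p\) and hence long by tamability condition \ref{cond:tam_1} — is exactly the right way to close that gap.
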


\begin{proof}
    Define
    \[
        E_1 = \max\{E_0(\Theta), (4+c_0)\Theta\} + 1,
    \]
    where \(E_0(\Theta)\) is the constant obtained from Lemma~\ref{lem:backtracking_between_fk}, and let \(B \geq E_1\).
    Lemma~\ref{lem:fk_quasigeodesic} tells us that for each \(0 \leq i \leq m\), the path \(p'_i\) is a \((4,c_0)\)-quasigeodesic.
    Let \(t\) be a segment of \(p'_i\), which is a geodesic.
    By construction, any \(\mathcal{H}\)-component \(h\) of \(t\) has \(\abs{h}_X \leq \Theta\), so by Lemma~\ref{lem:rel_paths_with_short_comps}
    \[
        \abs{t}_{X\cup\mathcal{H}} = \ell(t) \geq \frac{B}{\Theta} \geq \frac{E_1}{\Theta} > 4 + c_0.
    \]
    Hence by Lemma~\ref{lem:shortuctting_of_quasigeodesic}, \(\Sigma_0(p'_i,1)\) is a \((4,c_0)\)-quasigeodesic without backtracking.
    Each of the \(\mathcal{H}\)-components \(h\) of \(\Sigma_0(p'_i,1)\) is either an \(\mathcal{H}\)-component of a segment of \(p'_i\) or shares its endpoints with two connected \(\mathcal{H}\)-components of segments of \(p'_i\).
    Therefore by Lemma~\ref{lem:backtracking_between_fk}, \(\abs{h}_X \leq 3L + 2\Theta\).
\end{proof}


\section{Shortcuttings for parabolic paths}
\label{sec:parab_paths}

In this section we study the behaviour of shortcuttings of tamable broken lines that represent elements from parabolic subgroups of \(G\).
The aim is to show that tamable broken lines representing elements of some \(b H_\nu b^{-1}\) consist of essentially a single instance of consecutive backtracking that involves all its segments, given that the element is sufficiently long in comparison to the conjugator \(b\).

As a simplifying assumption, throughout this section we will assume that \(b\) is such that \(\abs{b}_{X\cup\mathcal{H}}\) is minimal among elements in its left \(H_\nu\)-coset.
We observe that it does not cost us a lot to make such an assumption.

\begin{remark}
\label{rem:no_ending_cpts}
    Let \(b \in G\) and \(\nu \in \Nu\).
    Suppose \(\abs{b}_{X\cup\mathcal{H}}\) is not minimal among elements of \(bH_\nu\).
    Let \(b_1 = bh \in bH_\nu\) be such a minimal element, so that \(\abs{b_1}_{X\cup\mathcal{H}} < \abs{b}_{X \cup \mathcal{H}}\).
    Since \(b_1 \in bH_\nu\), it must be that \(\abs{b}_{X\cup\mathcal{H}} \leq \abs{b_1}_{X \cup \mathcal{H}} + 1\).
    Combining these inequalities, we in fact have that \(\abs{b}_{X\cup\mathcal{H}} = \abs{b_1}_{X \cup \mathcal{H}} + 1\).
    Therefore the path \(p = [1,b_1] \cup e\), where \(e\) is a \(H_\nu\)-edge labelled by \(h^{-1}\), is a geodesic in \(\relcay\).
    Moreover, if \(\abs{b}_X \leq M\) then by Lemma~\ref{lem:len_of_subgeodesic}, \(\abs{b_1}_X = d_X(1,e_-) \leq \xi_0 M^2\) where \(\xi_0\) is the constant of that lemma.
\end{remark}

\begin{lemma}
\label{lem:comp_of_parab_close_to_geodesic}
    For any \(M \geq 0\) there is \(N_0 = N_0(M) \geq 1\) such that the following is true.
    
    Let \(b \in G\) with \(\abs{b}_X \leq M\), and let \(p\) be a geodesic in \(\relcay\) with \(\elem{p} \in bH_\nu b^{-1}\) for some \(\nu \in \Nu\). 
    Suppose that \(\abs{b}_{X\cup\mathcal{H}}\) is minimal among elements of \(bH_\nu\) and denote by \(h\) the \(H_\nu\)-edge with \(h_- = p_- b\) and \(\elem{p} = b \elem{h} b^{-1}\).
    If \(\abs{p}_X \geq N_0\), then \(h\) is connected to an \(H_\nu\)-component \(h'\) of \(p\) with 
    \[
        d_X(h_-,h'_-) \leq 3L \quad \textrm{ and } \quad d_X(h_+,h'_+) \leq 3L,
    \]
    where \(L\) is the constant from Proposition~\ref{prop:osin_polygon}.
\end{lemma}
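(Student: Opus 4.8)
The plan is to locate $h'$ via the fellow-traveller property of Proposition~\ref{prop:bcp} applied to the pair $(h,p)$, and then to sharpen the bound it yields to $3L$ using the polygon estimate of Proposition~\ref{prop:osin_polygon}. First dispose of the trivial case: if $\tilde p = 1$ then $p$ is trivial and $|p|_X = 0$, so we may assume $\tilde p \neq 1$, whence $\tilde h = b^{-1}\tilde p b \in H_\nu \setminus\{1\}$ and $h$ is a genuine $H_\nu$-edge with $h_+ = p_+ b$. Then $h$ is a $(1,0)$-quasigeodesic without backtracking (a single $H_\nu$-edge, and $d_{X\cup\mathcal H}(h_-,h_+)=|\tilde h|_{X\cup\mathcal H}=1$), it is $|b|_X$-similar, hence $M$-similar, to the geodesic $p$ since $d_X(h_-,p_-)=d_X(h_+,p_+)=|b|_X$, and $|h|_X=|b^{-1}\tilde p b|_X \ge |\tilde p|_X - 2|b|_X \ge |p|_X - 2M$. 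Taking $N_0 = \max\{1,\,2M+\kappa(1,0,M)\}$ with $\kappa$ the constant of Proposition~\ref{prop:bcp}, the hypothesis $|p|_X\ge N_0$ forces $|h|_X\ge\kappa(1,0,M)$, so part (2) of that proposition shows that $h$ — its own unique $\mathcal H$-component — is connected to an $\mathcal H$-component of $p$; since connectedness happens inside a single peripheral and $p$ is geodesic, this component is a single $H_\nu$-edge $h'$ of $p$ by Remark~\ref{rem:comp_of_geod_is_an_edge}.

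Now write $p = p'\,h'\,p''$, pick geodesics $q_1$ from $p_-$ to $h_- = p_- b$ and $q_2$ from $p_+$ to $h_+ = p_+ b$, each of length $|b|_{X\cup\mathcal H}\le M$ and with label representing $b$, and let $u$ (respectively $w$) be the $H_\nu$-edge, or trivial path, from $h_-$ to $h'_-$ (respectively from $h_+$ to $h'_+$) available because $h$ and $h'$ are connected. It suffices to show $|u|_X\le 3L$ and $|w|_X\le 3L$. Consider the (possibly degenerate) geodesic triangle $T = p'\cdot u^{-1}\cdot q_1^{-1}$, i.e. the closed path $p_-\to h'_-\to h_-\to p_-$. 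No $H_\nu$-subpath of $p'$ is connected to $u$: such a subpath, extended to an $H_\nu$-component of $p$, would be connected to $h'$ (all of $h_\pm,h'_\pm$ lie in one left $H_\nu$-coset), contradicting that the geodesic $p$ is without backtracking, since $h'\not\subseteq p'$. No $H_\nu$-subpath $s$ of $q_1$ is connected to $u$ either: then $s_-\in h_-H_\nu=(p_-b)H_\nu$, so $(p_-)^{-1}s_-\in bH_\nu$; but $s_-$ is a vertex of the geodesic $q_1$ of length $|b|_{X\cup\mathcal H}$, so $d_{X\cup\mathcal H}(p_-,s_-)\le|b|_{X\cup\mathcal H}$, and minimality of $|b|_{X\cup\mathcal H}$ in $bH_\nu$ forces $d_{X\cup\mathcal H}(p_-,s_-)=|b|_{X\cup\mathcal H}$, i.e. $s_-=(q_1)_+=h_-$, which is impossible for a nontrivial subpath. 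Hence $u^{-1}$ is an isolated $\mathcal H$-component of $T$, and Proposition~\ref{prop:osin_polygon} with $n\le 3$ gives $d_X(h_-,h'_-)=|u|_X\le 3L$. The estimate $d_X(h_+,h'_+)=|w|_X\le 3L$ follows by the identical argument applied to the triangle $q_2\cdot w\cdot p''$, the obstruction to connectedness at $h_+$ coming again from the minimality of $|b|_{X\cup\mathcal H}$ in $bH_\nu$.

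I expect the isolation claim to be the main obstacle: checking that neither $u$ nor $w$ is connected to an $\mathcal H$-component of the auxiliary geodesics $q_1,q_2$ — which is precisely where the standing hypothesis that $|b|_{X\cup\mathcal H}$ is minimal in $bH_\nu$ enters — nor to an $\mathcal H$-component of the relevant subpath of $p$, which uses that $p$, being geodesic, is without backtracking. The remaining points are routine manipulation of the definitions of components and connectedness, together with bookkeeping of the auxiliary path lengths.
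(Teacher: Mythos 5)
Your proof is correct and takes essentially the same route as the paper's: apply Proposition~\ref{prop:bcp} to the \(M\)-similar pair consisting of the \(H_\nu\)-edge \(h\) and the geodesic \(p\) with \(N_0 = 2M + \kappa(1,0,M)\), then bound \(d_X(h_\pm,h'_\pm)\) by showing the connecting \(H_\nu\)-edge is isolated in a geodesic triangle built from a geodesic labelled by \(b\) and a subpath of \(p\), using minimality of \(\abs{b}_{X\cup\mathcal{H}}\) in \(bH_\nu\) and geodesicity of \(p\), and finally invoking Proposition~\ref{prop:osin_polygon}. The only difference is cosmetic: the paper phrases the isolation argument on the \(b\)-side as ``the component cannot be the final edge of \([p_-,h_-]\)'' rather than via your distance-from-\(p_-\) computation.
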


\begin{proof}
    Take \(N_0 = 2M + \kappa\), where \(\kappa = \kappa(1,0,M)\) is the constant from Proposition~\ref{prop:bcp} applied to \(M\)-similar geodesics.
    Suppose that \(\abs{p}_X \geq N_0\), so that \(\abs{h}_X \geq \abs{p}_X - 2M \geq \kappa\) by the triangle inequality.
    Now we apply Proposition~\ref{prop:bcp} to the \(M\)-similar geodesics \(h\) and \(p\), which shows that \(h\) is connected to an \(H_\nu\)-component \(h'\) of \(p\).
    If \(h'_- = h_-\), then we are done, so suppose otherwise.
    Take \(s = [h_-,h'_-]\) to be the \(H_\nu\)-edge in \(\relcay\) labelled by the element \(h_-^{-1}h'_- \in H_\nu\).
    
    We will show that \(s\) is isolated in the geodesic triangle \([p_-,h_-] \cup [p_-,h'_-] \cup s\), whence we can conclude that \(\abs{s}_X = d_X(h_-,h'_-) \leq 3L\) by applying Proposition~\ref{prop:osin_polygon}.
    Suppose for a contradiction that \(s\) is connected to an \(H_\nu\)-component \(t\) of \([p_-,h_-]\).
    Since \(s\) is connected to \(h\), \(h\) is also connected to \(t\).
    That is, the vertices \(t_-\) and \(h_-\) lie in the same \(H_\nu\)-coset which implies that \(d_{X\cup\mathcal{H}}(t_-,h_-) \leq 1\).
    However, by minimality of \(\abs{b}_{X\cup\mathcal{H}}\) among elements of \(bH_\nu\), \(t\) cannot be the final edge of \([p_-,h_-]\).
    This means that \(d_{X\cup\mathcal{H}}(t_-,h_-) \geq 2\) by geodesicity of \([p_-,h_-]\), a contradiction.
    Similarly, if \(s\) is connected to an \(H_\nu\)-component \(t\) of \([p_-,h'_-]\), then \(t\) is in turn connected to \(h'\), this time contradicting geodesicity of \(p\) (via Remark~\ref{rem:comp_of_geod_is_an_edge}).
    
    Thus \(d_X(h_-,h'_-) \leq 3L\) by Proposition~\ref{prop:osin_polygon}.
    The same argument, by symmetry, shows that \(d_X(h_+,h'_+) \leq 3L\).
\end{proof}
 
For the remainder of the section, we fix a constant \(C \geq 14\delta\), let \(\lambda = \lambda(C)\) and \(c = c(C)\) be the constants obtained from Proposition~\ref{prop:shortcutting_quasigeodesic}, and let \(c_0 = c_0(C)\) be the constant of Lemma~\ref{lem:fk_quasigeodesic}.
Given any \(\eta \geq 0\) we will write \(\zeta(\eta,C)\) for the constant obtained from the same proposition.
Finally, given any \(\Theta \geq \zeta(\eta,C)\), we write \(E_1(\Theta)\) for the constant obtained from Lemma~\ref{lem:double_shortcutting}.

\begin{lemma}
\label{lem:h_conn_to_ek}
    There is a constant \(\kappa_0 = \kappa_0(C) \geq 0\) such that for any \(M \geq 0, \eta \geq 0, \Theta \geq \zeta = \zeta(\eta,C), B \geq E_1(\Theta)\) there is \(N_1 = N_1(\Theta,M) \geq 1\) such that the following holds.
    
    Let \(b \in G\) with \(\abs{b}_X \leq M\).
    Let \(p = p_1 \dots p_n\) be a \((B,C,\zeta,\Theta)\)-tamable broken line, and suppose that \(\elem{p} \in b H_\nu b^{-1}\) for some \(\nu \in \Nu\).
    Suppose that \(\abs{b}_{X\cup\mathcal{H}}\) is minimal among elements of \(bH_\nu\) and denote by \(\Sigma(p,\Theta) = f_0 e_1 f_1 \dots f_{m-1} e_m f_m\) its \(\Theta\)-shortcutting.
    Let \(h\) be the \(H_\nu\)-edge in \(\relcay\) with \(h_- = p_-b\) such that \(\elem{p} = b\elem{h}b^{-1}\).
    
    If \(\abs{p}_X \geq N_1\), then \(h\) is connected to \(e_k\) for some \(k = 1, \dots, m\) and
    \[
        d_X(h_-,(e_k)_-) \leq \kappa_0 \quad \textrm{ and } \quad d_X(h_+,(e_k)_+) \leq \kappa_0.
    \]
\end{lemma}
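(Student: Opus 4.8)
The plan is to play three (quasi)geodesics off against one another using Proposition~\ref{prop:bcp}, while importing the geodesic case already established in Lemma~\ref{lem:comp_of_parab_close_to_geodesic}. Write \(\sigma=\Sigma(p,\Theta)\). Since \(B\geq E_1(\Theta)\) exceeds the threshold required by Proposition~\ref{prop:shortcutting_quasigeodesic}, that proposition applies: \(\sigma\) is a \((\lambda,c)\)-quasigeodesic without backtracking, each \(e_k\) is a single non-trivial edge, \(\sigma_-=p_-\), \(\sigma_+=p_+\) and \(\elem{\sigma}=\elem{p}\in bH_\nu b^{-1}\). Let \(h\) be the \(H_\nu\)-edge of the statement, so that \(h_-=p_-b\), \(h_+=p_+b\) and \(\abs{h}_X\geq\abs{p}_X-2M\); also fix a geodesic \(\gamma\) from \(p_-\) to \(p_+\) in \(\relcay\), so \(\elem{\gamma}=\elem{p}\) and \(\abs{\gamma}_X=\abs{p}_X\). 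The constant \(\kappa_0\) will be assembled purely from \(C\)-dependent constants, while the various smallness requirements on \(\abs{p}_X\) are collected into \(N_1=N_1(\Theta,M)\).

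First I would apply Lemma~\ref{lem:comp_of_parab_close_to_geodesic} to \(\gamma\): provided \(\abs{p}_X\geq N_0(M)\), it produces an \(H_\nu\)-component \(h'\) of \(\gamma\) connected to \(h\) with \(d_X(h_-,h'_-)\leq 3L\) and \(d_X(h_+,h'_+)\leq 3L\). Next, \(h\) (a single edge) and \(\sigma\) are \(M\)-similar \((\lambda,c)\)-quasigeodesics without backtracking, so Proposition~\ref{prop:bcp}, with \(\kappa=\kappa(\lambda,c,M)\), gives — once \(\abs{h}_X\geq\kappa\), which we arrange — an \(\mathcal{H}\)-component \(\sigma_0\) of \(\sigma\) connected to \(h\) with \(d_X(h_-,(\sigma_0)_-)\leq\kappa\) and \(d_X(h_+,(\sigma_0)_+)\leq\kappa\); in particular \(\abs{\sigma_0}_X\geq\abs{h}_X-2\kappa\), which we can make as large as we wish by enlarging \(N_1\).

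The next step is to show \(\sigma_0\) is actually one of the components \(e_k'\) of Proposition~\ref{prop:shortcutting_quasigeodesic}. Since each segment \(f_j\) of \(\sigma\) is geodesic and each \(e_k\) is a single edge, any \(\mathcal{H}\)-component of \(\sigma\) either contains some \(e_k\) — in which case it equals \(e_k'\) — or is a single edge lying inside some \(f_j\). Suppose the latter holds for \(\sigma_0\); then \(\sigma_0\) is an isolated \(\mathcal{H}\)-component of the geodesic \(f_j\), which is \(0\)-similar to the shortcutting \(\Sigma_0(p'_j,1)\) of the subpath \(p'_j\) of \(p\) with the same endpoints. By Lemma~\ref{lem:double_shortcutting} (applicable since \(B\geq E_1(\Theta)\)) this shortcutting is a \((4,c_0)\)-quasigeodesic without backtracking all of whose \(\mathcal{H}\)-components have \(X\)-length at most \(3L+2\Theta\), so Proposition~\ref{prop:bcp} applied to \(f_j\) and \(\Sigma_0(p'_j,1)\) forces \(\abs{\sigma_0}_X\leq 3L+2\Theta+2\kappa_2\), where \(\kappa_2=\kappa(4,c_0,0)\). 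Choosing \(N_1\) large enough in terms of \(\Theta\) and \(M\) that \(\abs{\sigma_0}_X\) exceeds this bound gives a contradiction; hence \(\sigma_0=e_k'\) for some \(k\in\{1,\dots,m\}\), and therefore \(h\) is connected to \(e_k\).

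Finally I would upgrade the \(M\)-dependent bound \(d_X(h_\pm,(\sigma_0)_\pm)\leq\kappa\) to one depending only on \(C\). The quasigeodesic \(\sigma\) and the geodesic \(\gamma\) are \(0\)-similar and both without backtracking, and \(\abs{\sigma_0}_X\) is by now far larger than \(\kappa_1:=\kappa(\lambda,c,0)\); so Proposition~\ref{prop:bcp} produces an \(\mathcal{H}\)-component \(h''\) of \(\gamma\) connected to \(\sigma_0\) with \(d_X((\sigma_0)_\pm,h''_\pm)\leq\kappa_1\). Since \(h''\) is connected to \(\sigma_0\), hence to \(h\), hence to \(h'\), and \(\gamma\) has no backtracking, we get \(h''=h'\); therefore \(d_X(h_\pm,(\sigma_0)_\pm)\leq 3L+\kappa_1\). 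As \((\sigma_0)_\pm=(e_k')_\pm\) lies within \(\rho\) of \((e_k)_\pm\) by Lemma~\ref{lem:short_ending_components}, the constant \(\kappa_0:=3L+\kappa_1+\rho\), which depends only on \(C\), works. The main obstacle is exactly the content of the last two paragraphs: pinning \(\sigma_0\) down to a genuine shortcutting edge \(e_k\) rather than an incidental long peripheral edge inside some \(f_j\), and then trading the \(M\)-similarity constant \(\kappa(\lambda,c,M)\) for an \(M\)-free one — both handled by comparing the quasigeodesic \(\sigma\) with the honest geodesic \(\gamma\) (for the distance) and with the controlled shortcutting \(\Sigma_0(p'_j,1)\) (to exclude the \(f_j\) case), using that the relevant bounded-perturbation constants at \(0\)-similarity, and the polygon constant underlying Lemma~\ref{lem:comp_of_parab_close_to_geodesic}, depend neither on \(M\) nor on \(\Theta\).
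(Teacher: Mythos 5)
Your proposal is correct and follows essentially the same route as the paper's proof: the same chain of comparisons via Proposition~\ref{prop:bcp}, the same exclusion of the case that the relevant component lies inside some \(f_j\) by comparing with \(\Sigma_0(p'_j,1)\) and invoking Lemma~\ref{lem:double_shortcutting}, and the same final constant \(\kappa_0 = 3L + \kappa_1 + \rho\). The only (harmless) difference is the order of the comparisons — you locate the component of \(\Sigma(p,\Theta)\) directly from \(h\) using the \(M\)-similar version of Proposition~\ref{prop:bcp} and then route back through the geodesic \([p_-,p_+]\) to remove the \(M\)-dependence from the distance bound, whereas the paper passes \(h \to h' \to h''\) through the geodesic first.
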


\begin{figure}[ht]
    \centering
    \includegraphics{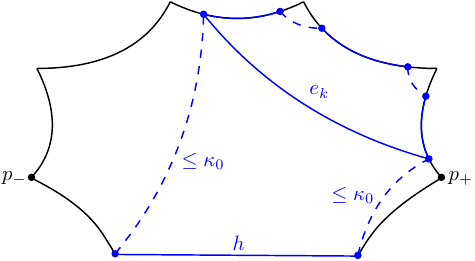}
    \caption{Illustration of Lemma~\ref{lem:h_conn_to_ek}.}
    \label{fig:h0_ek}
\end{figure}

\begin{proof}
    We take the constants
    \begin{itemize}
        \item \(\kappa_1 = \kappa(\lambda,c,0)\) and \(\kappa_2 = \kappa(4,c_0,0)\), obtained by applying Proposition~\ref{prop:bcp} to \((\lambda,c)\)- and \((4,c_0)\)-quasigeodesics with the same endpoints respectively;
        \item \(N_1 = \max\{N_0, 2M + 9L + 2\kappa_1 + 2\kappa_2 + 2\Theta\} + 1\), where \(N_0 = N_0(M)\) is the constant of Lemma~\ref{lem:comp_of_parab_close_to_geodesic} and \(L\) is the constant of Proposition~\ref{prop:osin_polygon};
        \item \(\kappa_0 = \kappa_1 + \rho + 3L\), where \(\rho = \rho(C)\) is the constant of Lemma~\ref{lem:short_ending_components}.
    \end{itemize}
    Suppose that \(\abs{p}_X \geq N_1\).
    First observe that \(N_1\) is greater than \(N_0\), so that by Lemma~\ref{lem:comp_of_parab_close_to_geodesic}, \(h\) is connected to an \(H_\nu\)-component \(h'\) of a geodesic \([p_-,p_+]\) (see Figure~\ref{fig:h0_h'_h''_h'''}) with
    \begin{equation}
    \label{eq:h_h'_close}
        d_X(h_-,h'_-) \leq 3L \quad \textrm{ and } \quad d_X(h_+,h'_+) \leq 3L.
    \end{equation}
    As \(\elem{p} = bhb^{-1}\), the triangle inequality gives us that
    \begin{equation}
    \label{eq:len_of_h0}    
    \begin{split}
        \abs{h}_X &\geq \abs{p}_X - 2\abs{b}_X \\
            & \geq N_1 - 2M \\
            & \geq 9L + 2\kappa_1 + 2\kappa_2 + 2\Theta + 1.
    \end{split}
    \end{equation}
    Combining (\ref{eq:h_h'_close}) and (\ref{eq:len_of_h0}) yields that \(\abs{h'}_X \geq \abs{h}_X - 6L \geq \kappa_1\).
    Moreover, by Proposition~\ref{prop:shortcutting_quasigeodesic}, \(\Sigma(p,\Theta)\)  is \((\lambda,c)\)-quasigeodesic without backtracking.
    Therefore Proposition~\ref{prop:bcp} tells us that there is an \(H_\nu\)-component \(h''\) of \(\Sigma(p,\Theta)\) connected to \(h'\) such that
    \begin{equation}
    \label{eq:h'_h''_close}
        d_X(h'_-,h''_-) \leq \kappa_1 \quad \textrm{ and } \quad d_X(h'_+,h''_+) \leq \kappa_1.
    \end{equation}
    
    Suppose for a contradiction that \(h''\) is an \(H_\nu\)-component of \(f_k\) for some \(k = 0, \dots, m\).
    Let \(p'\) be the subpath of \(p\) with \(p'_- = (f_k)_-\) and \(p'_+ = (f_k)_+\).
    Lemma~\ref{lem:double_shortcutting} tells us that \(\Sigma_0(p',1)\) is \((4,c_0)\)-quasigeodesic without backtracking.
    We have that \(\abs{h''}_X \geq \abs{h}_X - 6L - 2\kappa_1 \geq \kappa_2\) by combining equations (\ref{eq:h_h'_close}), (\ref{eq:len_of_h0}), and (\ref{eq:h'_h''_close}).
    Hence by Proposition~\ref{prop:bcp}, \(h''\) is connected to an \(H_\nu\)-component \(h'''\) of \(\Sigma_0(p',1)\) with
    \begin{equation}
    \label{eq:h''_h'''_close}
        d_X(h''_-,h'''_-) \leq \kappa_2 \quad \textrm{ and } \quad d_X(h''_+,h'''_+) \leq \kappa_2
    \end{equation}
    as in Figure~\ref{fig:h0_h'_h''_h'''}.

    \begin{figure}[ht]
        \centering
        \includegraphics{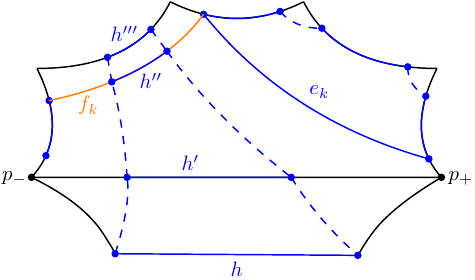}
        \caption{Illustration of proof of Lemma~\ref{lem:h_conn_to_ek}.}
        \label{fig:h0_h'_h''_h'''}
    \end{figure}
    
    By the triangle inequality and equations (\ref{eq:h_h'_close})-(\ref{eq:h''_h'''_close}), we have
    \[
        \abs{h'''}_X \geq \abs{h}_X - 6L - 2\kappa_1 - 2\kappa_2 \geq 3L + 2\Theta_1 + 1,
    \]
    whereas by Lemma~\ref{lem:double_shortcutting}, \(\abs{h'''}_X \leq 3L + 2\Theta\), a contradiction.
    Therefore \(h''\) cannot be an \(H_\nu\)-component of \(f_k\).
    It follows that \(h''\) is a component of \(\Sigma(p,\Theta)\) containing \(e_k\) for some \(k = 1, \dots, m\) and thus that \(h\) is connected to \(e_k\) (as in Figure~\ref{fig:h0_ek}).
    
    It remains to show the inequality in the lemma statement.
    Following Remark~\ref{rem:comp_of_geod_is_an_edge}, \(h''\) consists of at most three edges, one being \(e_k\) and the (possible) other two being edges, respectively the last and the first, of the geodesics \(f_{k-1}\) and \(f_k\).
    Lemma~\ref{lem:short_ending_components} then implies that
    \begin{equation}
    \label{eq:h''_ek_close}
        d_X(h''_-,(e_k)_-) \leq \rho \quad \textrm{ and } \quad d_X(h''_+,(e_k)_+) \leq \rho.
    \end{equation}
    Finally, (\ref{eq:h_h'_close}), (\ref{eq:h'_h''_close}), and (\ref{eq:h''_ek_close}) together with the choice of \(\kappa_0\) give the inequalities 
    \[
        d_X(h_-,(e_k)_-) \leq \kappa_0 \quad \textrm{ and } \quad d_X(h_+,(e_k)_+) \leq \kappa_0.
    \]
    This concludes the lemma.
\end{proof}

\begin{lemma}
\label{lem:parab_short_one_ek}
    For any \(M \geq 0\), there is a constant \(\eta_0 = \eta_0(M) \geq 0\) such that for any \(\Theta \geq \zeta = \zeta(\eta_0,C)\) and \(B \geq E_1(\Theta)\) the following is true.
    
    Let \(b \in G\) with \(\abs{b}_X \leq M\).
    Let \(p = p_1 \dots p_n\) a be a \((B,C,\zeta,\Theta)\)-tamable broken line, and suppose that \(\elem{p} \in b H_\nu b^{-1}\) for some \(\nu \in \Nu\).
    Suppose that \(\abs{b}_{X\cup\mathcal{H}}\) is minimal among elements of \(bH_\nu\) and denote by \(\Sigma(p,\Theta) = f_0 e_1 f_1 \dots f_{m-1} e_m f_m\) the \(\Theta\)-shortcutting of the path \(p\).
    If \(\abs{p}_X \geq N_1\) (where \(N_1 = N_1(\Theta,M)\) is the constant of Lemma~\ref{lem:h_conn_to_ek}), then \(m = 1\).
\end{lemma}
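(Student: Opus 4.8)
The strategy is to pit Lemma~\ref{lem:h_conn_to_ek}, which pins a single shortcutting edge \(e_k\) of \(\Sigma(p,\Theta)\) uniformly close (in the \(X\)-metric) to both endpoints of \(p\), against Lemma~\ref{lem:qgds_with_long_comps}, by which a quasigeodesic without backtracking carrying a sufficiently long isolated \(\mathcal H\)-component has its endpoints far apart. Any shortcutting edge strictly before or strictly after \(e_k\) provides, via Proposition~\ref{prop:shortcutting_quasigeodesic}, such a long \(\mathcal H\)-component on one side of \(e_k\), and so pulls the corresponding endpoint of \(p\) away — a contradiction.

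First I would fix the ambient constant \(C\) and choose \(\eta_0 = \eta_0(M)\) large enough (depending only on \(M\) and \(C\)) that \(\eta_0 \geq \xi(\lambda, c, \kappa_0 + M + 1)\), where \(\xi\) is the constant of Lemma~\ref{lem:qgds_with_long_comps}, \(\lambda = \lambda(C)\) and \(c = c(C)\) are the section's fixed constants and \(\kappa_0 = \kappa_0(C)\) is from Lemma~\ref{lem:h_conn_to_ek}. Then for \(\Theta \geq \zeta = \zeta(\eta_0, C)\) and \(B \geq E_1(\Theta)\), let \(N_1 = N_1(\Theta, M)\) be the constant produced by Lemma~\ref{lem:h_conn_to_ek} applied with \(\eta = \eta_0\). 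By Proposition~\ref{prop:shortcutting_quasigeodesic} (with \(\eta = \eta_0\)), \(\Sigma(p,\Theta)\) is a \((\lambda, c)\)-quasigeodesic without backtracking from \(p_-\) to \(p_+\), each \(e_j\) is non-trivial, and the \(\mathcal H\)-component \(e'_j\) of \(\Sigma(p,\Theta)\) containing \(e_j\) satisfies \(\abs{e'_j}_X \geq \eta_0\).

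Now suppose \(\abs{p}_X \geq N_1\). Since \(\elem{p} = b\elem{h}b^{-1}\), we have \(h_- = p_- b\) and \(h_+ = p_+ b\), so \(\abs{b}_X \leq M\) gives \(d_X(p_-, h_-) \leq M\) and \(d_X(p_+, h_+) \leq M\). By Lemma~\ref{lem:h_conn_to_ek}, \(h\) is connected to \(e_k\) for some \(k \in \{1, \dots, m\}\) with \(d_X(h_-, (e_k)_-) \leq \kappa_0\) and \(d_X(h_+, (e_k)_+) \leq \kappa_0\), whence
\[
    d_X\bigl(p_-, (e_k)_-\bigr) \leq \kappa_0 + M \quad\text{and}\quad d_X\bigl(p_+, (e_k)_+\bigr) \leq \kappa_0 + M.
\]
Suppose towards a contradiction that \(k \geq 2\), and let \(q = f_0 e_1 f_1 \cdots e_{k-1} f_{k-1}\) be the initial subpath of \(\Sigma(p,\Theta)\) from \(p_-\) to \((e_k)_-\). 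Then \(q\) contains the \(\mathcal H\)-component \(e'_1\) of \(\Sigma(p,\Theta)\) as an isolated \(\mathcal H\)-component of itself — here one must check that \(e'_1\) does not coalesce with \(e'_k\) into one \(\mathcal H\)-component straddling \((e_k)_-\), using that \(\Sigma(p,\Theta)\) is without backtracking and that (for \(B\) large) the pieces \(f_1, \dots, f_{k-1}\) separating consecutive shortcutting edges are too long to be absorbed into a single \(\mathcal H\)-component. Since \(\abs{e'_1}_X \geq \eta_0 \geq \xi(\lambda, c, \kappa_0 + M + 1)\), Lemma~\ref{lem:qgds_with_long_comps} applied to the \((\lambda, c)\)-quasigeodesic \(q\) gives \(\abs{q}_X = d_X(p_-, (e_k)_-) \geq \kappa_0 + M + 1\), contradicting the display above. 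Hence \(k = 1\); the symmetric argument applied to the terminal subpath \(f_k e_{k+1} f_{k+1} \cdots e_m f_m\) from \((e_k)_+\) to \(p_+\) (which contains \(e'_m\) when \(k \leq m-1\)) forces \(k = m\). Therefore \(m = k = 1\).

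The main obstacle is the parenthetical step: confirming that the long \(\mathcal H\)-component \(e'_1\) (resp. \(e'_m\)) really lies inside the truncated subpath and is isolated there rather than merging with \(e'_k\), which is where the lower bound on \(B\) and the lack of backtracking in \(\Sigma(p,\Theta)\) must be combined carefully (and where one may need to argue separately that, were \(e'_1 = e'_k\), the geodesic piece \(f_1\) would be forced to have length at most \(1\) while the subpath of \(p\) it shortcuts is long by tamability). The remaining ingredients are routine: the endpoint identifications \(h_\pm = p_\pm b\), and verifying that the chain of constant dependencies \(\eta_0 \to \zeta \to \Theta \to B \to N_1\) is acyclic, with \(\eta_0\) depending only on \(M\) and the fixed \(C\).
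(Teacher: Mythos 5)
Your argument takes a genuinely different route from the paper's. The paper never truncates \(\Sigma(p,\Theta)\): assuming \(m>1\), it takes a second edge \(e_l\), transfers its long component \(e'_l\) by two applications of Proposition~\ref{prop:bcp} (first to the geodesic \([p_-,p_+]\), then to the geodesic \([p_-,h_-]\) or \([h_+,p_+]\), which are \(3L\)-similar to subsegments of \([p_-,p_+]\) by Lemma~\ref{lem:comp_of_parab_close_to_geodesic}), and then applies Lemma~\ref{lem:qgds_with_long_comps} to the short geodesic representing \(b\) to contradict \(\abs{b}_X\le M\). You instead apply Lemma~\ref{lem:qgds_with_long_comps} directly to the initial (or terminal) subpath of \(\Sigma(p,\Theta)\) cut at \((e_k)_\pm\), using the bound \(d_X(p_\pm,(e_k)_\pm)\le M+\kappa_0\) from Lemma~\ref{lem:h_conn_to_ek}. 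Your endpoint identifications \(h_\pm=p_\pm b\), the resulting distance bounds, and the constant dependencies \(\eta_0(M)\to\zeta\to\Theta\to B\to N_1\) are all correct, and when it works your argument is shorter, avoiding the geodesic \([p_-,p_+]\) and the double BCP transfer entirely.

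However, the step you flag as the ``main obstacle'' is a genuine gap, and your sketch for closing it is not quite the right argument. The issue is precisely whether \(e'_1=e'_k\), i.e.\ whether \(e_1,f_1,e_2,\dots,f_{k-1},e_k\) coalesce into a single \(\mathcal H\)-component straddling \(q_+=(e_k)_-\). The absence of backtracking in \(\Sigma(p,\Theta)\) does not rule this out (a single component is always isolated), and ``\(f_1,\dots,f_{k-1}\) are too long to be absorbed'' is not the right criterion either: absorption depends on the labels of the edges of \(f_l\), not on \(\abs{f_l}_X\), and a single \(H_\nu\)-edge of large \(X\)-length would be absorbed. The correct way to close the gap is: if \(e'_1=e'_k\) then each \(f_l\) (\(1\le l\le k-1\)) is an \(H_\nu\)-subpath, hence, being geodesic, trivial or a single edge, so \(d_{X\cup\mathcal H}(v_{s_l},v_{t_l})\le 1\) and Lemma~\ref{lem:fk_quasigeodesic} forces \(\ell(p'_l)\le 4+c_0\). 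On the other hand, the consecutive-backtracking instances producing \(e_l\) and \(e_{l+1}\) consist of \(H_\nu\)-components in the same coset; they cannot lie in the same segment (geodesic segments are without backtracking, Remark~\ref{rem:comp_of_geod_is_an_edge}) nor in adjacent segments (this would contradict the maximality of \(j\) in Step~2 of Procedure~\ref{proc:shortcutting}), so \(p'_l\) contains an entire middle segment of \(p\), whence \(\ell(p'_l)\ge B/\Theta>4+c_0\) by tamability~\ref{cond:tam_1}, Lemma~\ref{lem:rel_paths_with_short_comps} and \(B\ge E_1(\Theta)\) --- a contradiction. (This is the same mechanism as in the proof of Lemma~\ref{lem:shortuctting_of_quasigeodesic}.) Once \(e'_1\ne e'_k\) is known, \(e'_1\) is contained in \(f_0e_1f_1\subseteq q\) and is isolated there because any component of \(q\) connected to it would sit inside a component of \(\Sigma(p,\Theta)\) connected to \(e'_1\), contradicting the absence of backtracking; with that, your contradiction goes through. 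So the route is viable, but this step requires the above argument, which you did not supply.
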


\begin{proof}
    We fix the following constants:
    \begin{itemize}
        \item \(\kappa_1 = \kappa(\lambda,c,0)\) and \(\kappa_2 = \kappa(1,0,3L)\), the constants obtained by applying Proposition~\ref{prop:bcp} to \((\lambda,c)\)-quasigeodesics with the same endpoints and \(3L\)-similar geodesics respectively;
        \item \(\xi = \xi(1,0,M+1)\) is provided by Lemma~\ref{lem:qgds_with_long_comps};
        \item \(\eta_0 = \xi + 2\kappa_1 + 2 \kappa_2 \geq 0\);
    \end{itemize}

    \begin{figure}[hb]
        \centering
        \includegraphics{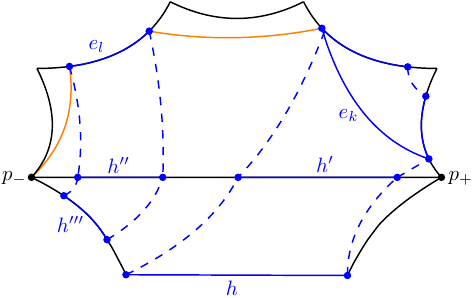}
        \caption{Illustration of  Lemma~\ref{lem:parab_short_one_ek}.}
        \label{fig:one_ek}
    \end{figure}

    Since \(\elem{p} \in b H_\nu b^{-1}\), denote by \(h\) the \(H_\nu\)-edge with \(h_- = p_- b\) and \(\elem{p} = b\elem{h}b^{-1}\).
    Lemma~\ref{lem:h_conn_to_ek} tells us that \(h\) is connected to \(e_k\) for some \(k = 1, \dots, m\), so \(m \geq 1\).
    Moreover, by Lemma~\ref{lem:comp_of_parab_close_to_geodesic}, \(h\) is connected to an \(H_\nu\)-component \(h'\) of a geodesic \([p_-,p_+]\) and
    \[
        d_X(h_-,h'_-) \leq 3L \quad \textrm{ and } \quad d_X(h_+,h'_+) \leq 3L.
    \]
    In particular, this implies that \([p_-,h_-]\) and \([p_-,h'_-]\) are \(3L\)-similar, and as are \([h_+,p_+]\) and \([h'_+,p_+]\).
    
    Suppose for a contradiction that \(m > 1\), so that there is \(l \ne k\) with \(1 \leq l \leq m\).
    By Proposition~\ref{prop:shortcutting_quasigeodesic}, the shortcutting \(\Sigma(p,\Theta)\) is \((\lambda,c)\)-quasigeodesic without backtracking, and further the \(\mathcal{H}\)-component \(e'_l\) of \(\Sigma(p,\Theta)\) containing \(e_l\) satisfies the inequality 
    \begin{equation}
    \label{eq:len_of_e'l}
        \abs{e'_l}_X \geq \eta_0.
    \end{equation}
    Now by Proposition~\ref{prop:bcp}, \(e'_l\) is connected to an \(\mathcal{H}\)-component \(h''\) of the geodesic \([p_-,p_+]\) with
    \begin{equation}
    \label{eq:h''_e'l_k1-sim}
        d_X(h''_-,(e'_l)_-) \leq \kappa_1 \quad \textrm{ and } \quad d_X(h''_+,(e'_l)_+) \leq \kappa_1.
    \end{equation}
    Since \(\Sigma(p,\Theta)\) is without backtracking, \(h''\) must be distinct from \(h'\): if not, then \(e'_l\) and \(e'_k\) would be connected \(\mathcal{H}\)-components of \(\Sigma(p,\Theta)\).
    
    We consider only the case that \(h''\) is an \(\mathcal{H}\)-component of the subpath \([p_-,h'_-]\) of \([p_-,p_+]\), with the other case being dealt with identically.
    It follows from (\ref{eq:len_of_e'l}), (\ref{eq:h''_e'l_k1-sim}), and the definition of \(\eta_0\) that \(\abs{h''}_X \geq \kappa_2\).
    Since \([p_-,h'_-]\) and \([p_-,h_-]\) are \(3L\)-similar geodesics, Proposition~\ref{prop:bcp} tells us that \(h''\) is connected to an \(\mathcal{H}\)-component \(h'''\) of \([p_-,h_-]\) (respectively \([h_+,p_+]\)) and \(h''\) and \(h'''\) satisfy
    \begin{equation}
    \label{eq:h''_h'''_k2-sim}
        d_X(h''_-,h'''_-) \leq \kappa_2 \quad \textrm{ and } \quad d_X(h''_+,h'''_+) \leq \kappa_2.
    \end{equation}
    Combining (\ref{eq:h''_e'l_k1-sim}), (\ref{eq:h''_h'''_k2-sim}), and (\ref{eq:len_of_e'l}), we see that
    \begin{align*}
        \abs{h'''}_X &\geq \abs{e'_l}_X - d_X(h'''_-,(e'_l)_-) - d_X(h'''_+,(e'_l)_+) \\
            &\geq \eta_0 - 2(\kappa_1 + \kappa_2) \geq \xi,
    \end{align*}
    where the last inequality comes from the definition of \(\eta_0\).
    Now we may apply Lemma~\ref{lem:qgds_with_long_comps} to see that 
    \[
        \abs{b}_X = \abs{[p_-,h_-]}_X \geq M + 1 > M
    \]
    contradicting the fact that \(\abs{b}_X \leq M\).
\end{proof}

\begin{lemma}
\label{lem:parab_short_no_sides}
    For any \(M \geq 0\) and \(\Theta \geq \zeta = \zeta(\eta_0,C)\) (where \(\eta_0 = \eta_0(M)\) is the constant of Lemma~\ref{lem:parab_short_one_ek}) there is \(E_2 = E_2(\Theta,M) \geq 0\) such that for any \(B \geq E_2\) the following is true.
    
    Let \(b \in G\) with \(\abs{b}_X \leq M\).
    Let \(p = p_1 \dots p_n\) be a \((B,C,\zeta,\Theta)\)-tamable broken line, and suppose that \(\elem{p} \in b H_\nu b^{-1}\) for some \(\nu \in \Nu\).
    Suppose that \(\abs{b}_{X\cup\mathcal{H}}\) is minimal among elements of \(bH_\nu\) and denote by \(\Sigma(p,\Theta) = f_0 e_1 f_1 \dots f_{m-1} e_m f_m\) the \(\Theta\)-shortcutting of the path \(p\).
    
    If \(\abs{p}_X \geq N_1\) (where \(N_1 = N_1(\Theta,M)\) is the constant of Lemma~\ref{lem:h_conn_to_ek}) then \((e_1)_-\) is a non-terminal vertex of \(p_1p_2\), and \((e_m)_+\) is a non-initial vertex of \(p_{n-1}p_n\).
    Moreover, if \(\abs{p_1}_X \geq B\) (respectively, \(\abs{p_n}_X \geq B\)), then \((e_1)_-\) is a non-terminal vertex of \(p_1\) (respectively, \((e_m)_+\) is a non-initial vertex of \(p_n\)).
\end{lemma}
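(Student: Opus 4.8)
The plan is to leverage the two preceding lemmas. First I would fix
\[
    E_2 = \max\bigl\{ E_1(\Theta),\ \Theta\bigl(4(M+\kappa_0)+c_0\bigr) \bigr\} + 1 ,
\]
where \(\kappa_0 = \kappa_0(C)\) is the constant of Lemma~\ref{lem:h_conn_to_ek}. With this choice \(B \ge E_2 \ge E_1(\Theta)\), so Lemma~\ref{lem:parab_short_one_ek} applies and forces \(m = 1\); thus \(\Sigma(p,\Theta) = f_0 e_1 f_1\), and writing \(V(p,\Theta) = \{(s_0,t_0),(s_1,t_1)\}\) with \(s_0 = 0\), \(t_1 = d = \ell(p)\) as in Procedure~\ref{proc:shortcutting}, we have \((e_1)_- = v_{t_0}\) and \((e_1)_+ = v_{s_1}\). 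Since \(v_{t_0}\) is the initial vertex and \(v_{s_1}\) the terminal vertex of an \(\mathcal{H}\)-edge of a segment of \(p\), we also get \(t_0 \le d-1\) and \(s_1 \ge 1\). Now let \(h\) be the \(H_\nu\)-edge with \(h_- = p_-b\) and \(\elem{p} = b\elem{h}b^{-1}\); then \(h_+ = h_-\elem{h} = p_-b(b^{-1}\elem{p}b) = p_+b\), so \(d_X(p_-,h_-) = d_X(p_+,h_+) = \abs{b}_X \le M\). Combining this with the bounds \(d_X(h_-,(e_1)_-) \le \kappa_0\), \(d_X(h_+,(e_1)_+) \le \kappa_0\) from Lemma~\ref{lem:h_conn_to_ek} (where, since \(m=1\), the connected component is necessarily \(e_1\)) and the inequality \(\dxh \le d_X\) yields
\[
    \dxh(p_-,(e_1)_-) \le M+\kappa_0 \qquad\text{and}\qquad \dxh(p_+,(e_1)_+) \le M+\kappa_0 .
\]

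Next I would derive the conclusions by length comparisons. Let \(p'_0\) be the subpath of \(p\) from \(p_- = v_{s_0}\) to \((e_1)_- = v_{t_0}\), and \(p'_1\) the subpath from \((e_1)_+ = v_{s_1}\) to \(p_+ = v_{t_1}\); by Lemma~\ref{lem:fk_quasigeodesic} both are \((4,c_0)\)-quasigeodesic, so the displayed inequalities give \(\ell(p'_0), \ell(p'_1) \le 4(M+\kappa_0)+c_0\). Suppose, for a contradiction, that \((e_1)_-\) is not a non-terminal vertex of \(p_1p_2\); assuming \(n \ge 3\), this forces the whole segment \(p_2\) to lie in \(p'_0\) as one of its segments, so Remark~\ref{rem:cpts_of_p'k} bounds every \(\mathcal{H}\)-component of \(p_2\) in \(X\)-length by \(\Theta\), and Lemma~\ref{lem:rel_paths_with_short_comps} together with tamability condition~\ref{cond:tam_1} gives
\[
    \ell(p'_0) \ge \ell(p_2) \ge \frac{1}{\Theta}\abs{p_2}_X \ge \frac{B}{\Theta} .
\]
By the choice of \(E_2\) we have \(B/\Theta > 4(M+\kappa_0)+c_0\), a contradiction; hence \((e_1)_-\) is a non-terminal vertex of \(p_1p_2\). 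The claim for \((e_1)_+\) and \(p_{n-1}p_n\) is the mirror image, run with \(p'_1\) and the segment \(p_{n-1}\) (again long by tamability condition~\ref{cond:tam_1}). For the last sentence of the lemma, when \(\abs{p_1}_X \ge B\) (resp.\ \(\abs{p_n}_X \ge B\)) one repeats the argument with \(p_1\) (resp.\ \(p_n\)) in the role of \(p_2\), invoking that hypothesis directly in place of tamability condition~\ref{cond:tam_1}. The case \(n = 2\) is immediate: there \(p_1p_2\) is all of \(p\) and the claim reads \((e_1)_- \ne p_+\), \((e_1)_+ \ne p_-\), which follow from \(t_0 \le d-1\) and \(s_1 \ge 1\).

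The metric bookkeeping above is just a chain of triangle inequalities, so the step I expect to require the most care is the combinatorial claim that in each bad configuration the relevant segment (\(p_2\), \(p_{n-1}\), or in the final clause \(p_1\) or \(p_n\)) really occurs as a \emph{full} segment of \(p'_0\) or \(p'_1\); otherwise Remark~\ref{rem:cpts_of_p'k} does not control its \(\mathcal{H}\)-components and the length estimate collapses. This in turn follows from tracking the indices \(s_k, t_k\) and the vertices \((e_1)_{\pm}\) produced by Procedure~\ref{proc:shortcutting}, together with the observation that Procedure~\ref{proc:shortcutting} would have shortcut earlier had any \(\mathcal{H}\)-component of a segment preceding \((e_1)_-\) had \(X\)-length \(\ge \Theta\). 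Everything else is a routine assembly of the cited lemmas.
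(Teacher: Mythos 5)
Your proof is correct and follows essentially the same route as the paper: force \(m=1\) via Lemma~\ref{lem:parab_short_one_ek}, bound \(\dxh(p_-,(e_1)_-)\) using the position of \(h\), and contradict the \((4,c_0)\)-quasigeodesicity of the prefix \(p'_0\) against the length of the swallowed segment \(p_2\) (via Remark~\ref{rem:cpts_of_p'k}, Lemma~\ref{lem:rel_paths_with_short_comps} and tamability). The only cosmetic difference is that the paper bounds \(\dxh(h_-,(e_1)_-)\le 1\) directly from the connectedness of \(h\) and \(e_1\) rather than via the \(d_X\)-bound \(\kappa_0\) of Lemma~\ref{lem:h_conn_to_ek}, which just yields a slightly smaller constant \(E_2\).
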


\begin{proof}
    Define \(E_2 = \max\{E_1(\Theta), (4M + 8 + c_0)\Theta\}\), where \(E_1(\Theta)\) is the constant obtained from Lemma~\ref{lem:double_shortcutting}, and let \(B \geq E_2\).

    Denote by \(h\) the \(H_\nu\)-edge with \(h_- = p_- b\) and \(\elem{p} = b\elem{h}b^{-1}\).
    By Lemma~\ref{lem:parab_short_one_ek} we have \(m = 1\), and so by Lemma~\ref{lem:h_conn_to_ek}, \(h\) is connected to \(e_1\).

    We prove only the statement involving \((e_1)_-\), for a symmetrical argument shows the corresponding statement for \((e_1)_+\).
    Suppose to the contrary, so that \((e_1)_-\) is a vertex of \(p_i\) for \(i > 2\).
    The subpath \(p'\) of \(p\) with endpoints \(p'_- = p_-\) and \(p'_+ = (e_1)_-\) is a \((4,c_0)\)-quasigeodesic broken line in \(\relcay\) by Lemma~\ref{lem:fk_quasigeodesic}, each \(\mathcal{H}\)-component \(h\) of the segments of which satisfy \(\abs{h}_X \leq \Theta\) by Remark~\ref{rem:cpts_of_p'k}.
    Moreover, \(p_2\) is a subpath of \(p'\) and \(\abs{p_2}_X \geq B \geq E_2\) by tamability condition \ref{cond:tam_1}.
    Then by Lemma~\ref{lem:rel_paths_with_short_comps}, 
    \[
        \ell(p') \geq \ell(p_2) \geq \frac{E_2}{\Theta} \geq 4M + 8 + c_0,
    \] 
    whence by quasigeodesicity of \(p'\) we have
    \begin{equation}
    \label{eq:p'_len_lower_bd}
        \abs{p'}_{X\cup\mathcal{H}} \geq \frac{1}{4}\ell(p') - \frac{c_0}{4} \geq M + 2.
    \end{equation}
    On the other hand, we have \(d_{X\cup\mathcal{H}}(p_-,h_-) \leq \abs{b}_{X\cup\mathcal{H}} \leq \abs{b}_X \leq M\) and that \(d_{X\cup\mathcal{H}}(h_-,(e_1)_-) \leq 1\) since \(h\) and \(e_1\) are connected.
    It follows, then, that:
    \[
        \abs{p'}_{X\cup\mathcal{H}} \leq M + 1,
    \]
    contradicting (\ref{eq:p'_len_lower_bd}).
    This means that \(p'\) cannot contain the entire subpath \(p_2\).
    Hence \(p'_- = (e_1)_-\) must be a non-terminal vertex of \(p_1p_2\).
    If, in addition, \(\abs{p_1}_X \geq B\) the same argument shows that \((e_1)_-\) is a non-terminal vertex of \(p_1\), as \(p_1\) is also a subpath of \(p'\).
\end{proof}


\section{Path representatives}
\label{sec:path_reps}

Let \(Q' \leqslant Q, R' \leqslant R\) and suppose that \(U, V \subseteq Q \cup R\) are nonempty subsets.
Elements of \(U \langle Q', R' \rangle V\) are labels of certain broken lines in \(\relcay\) which can be assigned a numerical invariant.
When this numerical invariant is minimal and \(Q'\) and \(R'\) satisfy certain conditions, these broken lines are tamable (with appropriate parameters, in the sense of Definition~\ref{def:tamable}).

In this section we define path representatives of elements of \(U \langle Q', R' \rangle V\), following \cite{MinMin}.
We then collect a variety of results about such path representatives, and recall the metric conditions that the subgroups \(Q'\) and \(R'\) must satisfy.
Proofs are mostly omitted in this section, as they are virtually identical to the associated ones in \cite{MinMin}.

\begin{definition}[Path representative]
\label{def:path_reps}
    Consider an element \(g \in U \langle Q', R' \rangle V\).
    Let \(p= q_1 p_1 \dots p_n q_2\) be a broken line in \(\Gamma(G,X\cup\mathcal{H})\) with geodesic segments \(q_1, p_1, \dots, p_n\), and \(q_2\) such that
    \begin{itemize}
        \item \(\elem{p} = g\)
        \item \(\elem{p_i} \in Q' \cup R'\) for each \(i \in \{1,\dots,n\}\)
        \item \(\elem{q_1} \in U\) and \(\elem{q_2} \in V\)
    \end{itemize}
    We will call \(p\) a \emph{path representative} of \(g\).
\end{definition}

Observe that when \(U = V = \{1\}\), we essentially recover the definition of \cite{MinMin} of path representatives for elements of \(\langle Q', R' \rangle\).
Indeed, in this case the initial and final segments \(q_1\) and \(q_2\) in the above definition are forced to be trivial (i.e. paths with only a single vertex), and we may omit their mention.
Moreover, with \(U = Q\) and \(V = R\) we obtain the path representatives of elements of \(Q \langle Q', R' \rangle R\) referenced in \cite[Definition 10.6]{MinMin}.

To choose an optimal path representative we define their types.

\begin{definition}[Type of a path representative] 
\label{def:type_of_path_rep}
    Suppose that $p= q_1 p_1\dots p_n q_2$ is a broken line in $\relcay$.
    Let \(T\) denote the set of all \(\mathcal{H}\)-components of the segments of \(p\). 
    We define the \emph{type} $\tau(p)$ of $p$ to be the triple 
    \[
        \tau(p)=\Big(n,\ell(p),\sum_{t \in T} |t|_X \Big) \in {\NN_0}^3.
    \]
\end{definition}

\begin{definition}[Minimal type]
    Given \(g \in U \langle Q', R' \rangle V\), the set $\mathcal S$ of all path representatives of $g$ is non-empty. 
    Therefore the subset $\tau(\mathcal{S})=\{\tau(p) \mid p \in \mathcal{S}\} \subseteq {\NN_0}^3$, where ${\NN_0}^3$ is equipped with the lexicographic order, will have a unique minimal element.

    We will say that $p= q_1 p_1 \dots p_n q_2$ is a \emph{path representative of $g$ of minimal type} if $\tau(p)$ is the minimal element of $\tau(\mathcal S)$.
\end{definition}

\begin{remark}
\label{rem:alt}
    Note that if \(p_1\) and \(p_2\) are paths with \((p_1)_+ = (p_2)_-\) whose labels both represent elements of \(Q'\) (or, respectively, both \(R'\)), then the label of any geodesic \([(p_1)_-,(p_2)_+]\) also represents an element of \(Q'\) (respectively, \(R'\)).
    Hence in a path representative of $g \in U \langle Q',R' \rangle V$ of minimal type, the labels of the consecutive segments necessarily alternate between representing elements of \(Q' \setminus S\) and \(R' \setminus S\), whenever \(g\) is not itself an element of \(UQ'V\) or \(UR'V\).
\end{remark}

\subsection{Metric conditions and path representatives of minimal type}

\begin{notation}
    Let \(U \subseteq G\) be a subset of \(G\). We write
    \[
        \minx U = \min \{ \abs{u}_X \, | \,  u \in U\}
    \]
    when \(U\) is nonempty, and \(\minx U = \infty\) otherwise.
\end{notation}

Recall the following metric conditions pertaining to subgroups \(Q' \leqslant Q\) and \(R' \leqslant R\) and family of maximal parabolic subgroups \(\mathcal{P}\) in \(G\).

\begin{itemize}
    \descitem{C1} \(Q' \cap R' = S\);
    \descitem{C2} \(\minx(Q \langle Q', R'\rangle Q \setminus Q ) \geq B\) and \(\minx(R \langle Q', R'\rangle R \setminus R ) \geq B\);
    \descitem{C3} \(\minx \Bigl( (PQ' \cup PR') \setminus PS\Bigr) \geq C\), for each $P \in \mathcal{P}$.
    \descitem{C4} \(Q_P \cap \langle Q_P', R_P' \rangle = Q'_P\) and \(R_P \cap \langle Q_P', R_P' \rangle = R_P'\), for every $P \in \mathcal{P}$;
    \descitem{C5}  \(\minx \Bigl(q \langle Q_P', R_P' \rangle R_P \setminus qQ'_P R_P\Bigr) \geq C \), for each $P \in \mathcal{P}$ and all $q \in Q_P$,
\end{itemize}
where for a subgroup \(H \leqslant G\) and \(P \in \mathcal{P}\), \(H_P\) denotes the subgroup \(H \cap P\).

The following is a straightforward observation about condition \descref{C2} that we will find useful.

\begin{lemma}[{\cite[Lemma 10.1]{MinMin}}]
\label{lem:C2->C2_old}
    Suppose that \(Q' \leqslant Q\) and \(R' \leqslant R\) satisfy \descref{C2} with constant \(B \geq 0\). Then
    \[
        \minx \Bigl( (Q' \cup R') \setminus S \Bigr) \geq B.
    \]
\end{lemma}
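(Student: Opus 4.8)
The plan is a short case analysis. Let \(g\) be an arbitrary element of \((Q' \cup R') \setminus S\); I want to show \(\abs{g}_X \geq B\), and then take the minimum over all such \(g\) to get the claimed bound on \(\minx((Q' \cup R') \setminus S)\).

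First I would handle the case \(g \in Q' \setminus S\). Since \(Q' \leqslant Q\) we have \(g \in Q\), and since \(g \notin S = Q \cap R\) this forces \(g \notin R\). On the other hand \(g = 1 \cdot g \cdot 1\) with \(1 \in R\) and \(g \in Q' \subseteq \langle Q', R' \rangle\), so \(g \in R \langle Q', R' \rangle R\); combined with \(g \notin R\) this places \(g\) in \(R \langle Q', R' \rangle R \setminus R\). The second inequality in \descref{C2} then yields \(\abs{g}_X \geq \minx(R\langle Q', R' \rangle R \setminus R) \geq B\). The case \(g \in R' \setminus S\) is entirely symmetric: now \(g \in R\), hence \(g \notin Q\), and \(g = 1 \cdot g \cdot 1 \in Q \langle Q', R' \rangle Q \setminus Q\), so the first inequality in \descref{C2} gives \(\abs{g}_X \geq B\). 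Since every \(g \in (Q' \cup R') \setminus S\) lies in at least one of these two cases, the lemma follows.

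There is no genuine obstacle here; the only point that requires care is the ``crossing'' of the two clauses of \descref{C2}: a non-\(S\) element of \(Q'\) must be detected through the \(R\)-flavoured double-coset complement, and a non-\(S\) element of \(R'\) through the \(Q\)-flavoured one. This is legitimate precisely because \(S = Q \cap R\), so an element of \(Q' \subseteq Q\) that avoids \(S\) necessarily avoids \(R\) (and symmetrically), which is exactly what is needed to conclude that it lies outside the relevant factor on both sides of the join.
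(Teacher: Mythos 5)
Your proof is correct, and it is the natural argument: the paper itself omits the proof (citing \cite[Lemma 10.1]{MinMin}), and the ``crossed'' use of the two clauses of \descref{C2} --- detecting an element of \(Q'\setminus S\) via \(R\langle Q',R'\rangle R\setminus R\) and vice versa, justified by \(S=Q\cap R\) --- is exactly the right observation.
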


We can leverage separability properties of \(G\) to find finite index subgroups of \(Q\) and \(R\) satisfying the above conditions for any given constants \(B\) and \(C\) and finite family \(\mathcal{P}\).

\begin{proposition}[{\cite[Proposition 14.3]{MinMin}}]
\label{prop:sep->C1-C5}
    Let \(G\) be a finitely generated QCERF relatively hyperbolic group with double coset separable peripheral subgroups, and let \(Q\) and \(R\) be finitely generated relatively quasiconvex subgroups.
    
    For any \(B \geq 0, C \geq 0\), and finite set \(\mathcal{P}\) of maximal parabolic subgroups of \(G\), there is a family of pairs of finite index subgroups \(Q' \leqslant_f Q\) and \(R' \leqslant_f R\) as in \descref{E} satisfying \descref{C1}-\descref{C5} with constants \(B\) and \(C\) and set \(\mathcal{P}\).
\end{proposition}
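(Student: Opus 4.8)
The plan is to rewrite each of \descref{C1}--\descref{C5} as the requirement that a certain explicit union of double cosets avoid a fixed finite ball (except in an allowed subset), and then to meet all of these requirements at once by descending to a deep enough finite-index subgroup of $G$, using separability. The three-fold quantification in \descref{E} will be organised so that the outermost subgroup $L$ only has to pin down $S$ and the relevant peripheral data, the middle subgroup $M$ is taken deep enough to force the ``global'' conditions \descref{C1}--\descref{C3} and to prepare the ``parabolic'' conditions \descref{C4}--\descref{C5}, and the innermost flexibility (``for any $M'$'') is then automatic, since all of the relevant sets only shrink when $M'$ is taken below $M$.

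Condition \descref{C1} comes for free: the nesting forces $S=Q\cap R\subseteq Q\cap L'\subseteq M'$, so $Q'\cap R'=(Q\cap M')\cap(R\cap M')=S$. For \descref{C2} and \descref{C3} one first enlarges $Q'$ to $Q\cap M$ and $R'$ to $R\cap M$; it then suffices to make the corresponding minima large for these. Since a fixed ball $\{g:\abs{g}_X<B\}$ (or $\{g:\abs{g}_X<C\}$) is finite, the sets $Q\langle Q\cap M,R\cap M\rangle Q\setminus Q$, $R\langle Q\cap M,R\cap M\rangle R\setminus R$ and $\bigl(P(Q\cap M)\cup P(R\cap M)\bigr)\setminus PS$ only need to avoid finitely many double cosets, each disjoint from $Q$, from $R$, respectively from $PS$. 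Because $G$ is QCERF with double coset separable peripheral subgroups, products of relatively quasiconvex subgroups of $G$ are separable; together with the combinatorial description of the elements of $\langle Q',R'\rangle$ by path representatives (Section~\ref{sec:path_reps}, following \cite{MinMin}), each of these double cosets is a separable set not containing the relevant base point, so finitely many applications of Lemma~\ref{lem:sep_from_finite_set} followed by an intersection produce an $M$ for which \descref{C2} and \descref{C3} hold for every admissible $M'$.

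For \descref{C4} and \descref{C5} one uses that there are only finitely many $P\in\mathcal{P}$, each a conjugate of a peripheral subgroup and hence double coset separable (in particular LERF): inside each such $P$ one runs the same kind of construction for the subgroups $Q_P=Q\cap P$ and $R_P=R\cap P$, producing finite-index $Q'_P\leqslant_f Q_P$ and $R'_P\leqslant_f R_P$ satisfying the $P$-internal analogues of \descref{C1}--\descref{C5}. It remains to arrange that the globally chosen subgroups restrict correctly, i.e.\ $Q\cap M'\cap P=Q'_P$ and $R\cap M'\cap P=R'_P$; since $Q'_P$ has finite index in $Q_P$ and is relatively quasiconvex, hence separable in $G$, it is enough to require that $M$ (and therefore every $M'\leqslant_f M$ with $Q\cap L'\subseteq M'$) contain $Q'_P$ and avoid the finitely many coset representatives of $Q_P/Q'_P$ lying outside $Q'_P$, and similarly for $R'_P$ — again via Lemma~\ref{lem:sep_from_finite_set}. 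Intersecting all the finite-index subgroups of $G$ produced here and in the previous step yields the desired $M$, and then $Q'=Q\cap M'$, $R'=R\cap M'$ is the required family. The main obstacle is precisely this coordination: choosing $M$ deep enough that a single pair $(Q',R')=(Q\cap M',R\cap M')$ satisfies all five conditions simultaneously, and in particular reconciling the behaviour of the ``global'' join $\langle Q',R'\rangle$ with that of the ``local'' joins $\langle Q'_P,R'_P\rangle$ inside the $P\in\mathcal{P}$; the one genuinely nontrivial input, where the QCERF and double-coset-separability hypotheses are really used, is the separability of the product sets $Q\langle Q',R'\rangle Q$, $PQ'$, $PR'$ and their analogues.
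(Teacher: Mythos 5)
The paper does not prove Proposition~\ref{prop:sep->C1-C5}: it is quoted verbatim from \cite[Proposition 14.3]{MinMin}, so your proposal can only be judged on its own terms, and as written it has a genuine gap in the treatment of \descref{C2}. You propose to verify \descref{C2} by observing that \(Q\langle Q\cap M,R\cap M\rangle Q\) need only avoid a finite set and then invoking ``separability of products of relatively quasiconvex subgroups'' plus path representatives. This fails on two counts. First, that separability statement is one of the \emph{conclusions} of \cite{MinMin}, derived from the virtual join theorem, which in turn relies on the very proposition you are proving; using it here is circular. Second, even granting separability of finite products, \(\langle Q',R'\rangle\) is an infinite union of alternating products \(Q'R'Q'\cdots\), and more fundamentally the set \(Q\langle Q',R'\rangle Q\) is defined in terms of the subgroups you are in the middle of constructing, so one cannot ``apply separability to it'' to choose \(M\). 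The correct mechanism is much more elementary: since \(Q'=Q\cap M'\) and \(R'=R\cap M'\) are both contained in \(M'\leqslant_f G\), one has \(\langle Q',R'\rangle\leqslant M'\); choosing \(L\) inside a finite index normal subgroup \(N\lhd_f G\) with \(QN\cap\{g:|g|_X<B\}\subseteq Q\) and \(RN\cap\{g:|g|_X<B\}\subseteq R\) (available from separability of \(Q\) and \(R\) alone, via Lemma~\ref{lem:sep_from_finite_set}) forces \(Q\langle Q',R'\rangle Q\subseteq QM'Q\subseteq QN\) for every admissible \(M'\), which gives \descref{C2}. The only place where double coset separability genuinely enters for the ``global'' conditions is \descref{C3}, through the separability of parabolic--quasiconvex products \(PS\), which is established in \cite{MinMin} independently of the join theorem.

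Two further points are under-justified. For \descref{C4}--\descref{C5} you assert that \(Q'_P\) is ``relatively quasiconvex, hence separable in \(G\)'' -- QCERF only gives separability of \emph{finitely generated} relatively quasiconvex subgroups, and finite generation of \(Q\cap P\) is not automatic; the separation of \(Q'_P\) from the cosets of \(Q_P/Q'_P\) has to be run inside \(P\) (using that \(P\) is double coset separable, hence LERF) and then transferred to \(G\). And the requirement that \(Q\cap M'\cap P=Q'_P\) hold for \emph{every} admissible \(M'\leqslant_f M\), not just for \(M\) itself, is exactly the coordination problem that the nested quantifiers of \descref{E} exist to solve; you correctly name it as the main obstacle but do not actually resolve it (one needs \(Q'_P\subseteq Q\cap L'\), so that every admissible \(M'\) automatically contains \(Q'_P\)).
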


Below we collect some results demonstrating useful properties of minimal type path representatives of elements of \(U \langle Q', R' \rangle V\).
We emphasise that the proofs of the following statements are very similar to the associated statements in \cite{MinMin}.

\begin{lemma}[{\cite[Lemma 6.7]{MinMin}}]
\label{lem:bdd_inn_prod}
    There is a constant \(C_0 \geq 0\) such that the following holds.

    Let $Q' \leqslant Q$ and $R' \leqslant R$ be subgroups satisfying condition \descref{C1}, and let \(q \in Q\) and \(r \in R\).
    Suppose that either \(U = qQ'\) or \(U = rR'\) and let \(V = U^{-1}\). 
    If \(p=q_1 p_1 \dots p_n q_2\) is a minimal type path representative of an element \(g \in U \langle Q', R' \rangle V\) and $f_0, \dots, f_{n+2} \in G$ are the nodes of $p$ then \(\langle f_{i-1}, f_{i+1} \rangle_{f_i}^{rel} \leq C_0\) for each \(i= 1, \dots, n+1\).
\end{lemma}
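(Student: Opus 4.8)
The plan is to argue by contradiction from the minimality of the type, performing a ``corner-cutting'' surgery at any node where the Gromov product is too large. I would fix a path representative $p = q_1 p_1 \dots p_n q_2$ of $g$ of minimal type, with nodes $f_0, \dots, f_{n+2}$ (so $\delta$ and $\varepsilon$ are as in Convention~\ref{conv:GQR}), and suppose $\langle f_{i-1}, f_{i+1}\rangle^{rel}_{f_i} > C_0$ for some $i \in \{1, \dots, n+1\}$, where $C_0$, depending only on $\delta$, $\varepsilon$, $Q$ and $R$, is pinned down at the end. Write $a$ and $b$ for the labels of the segments of $p$ terminating at and issuing from $f_i$; thus $a \in U$, $b \in Q' \cup R'$ when $i = 1$, $a \in Q' \cup R'$, $b \in V$ when $i = n+1$, and $a, b \in Q' \cup R'$ otherwise, and note $U, V$ lie inside $Q$ or inside $R$ according to whether $U = qQ'$ or $U = rR'$. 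First I would dispose of the cases where $a$ and $b$ lie in the same ``factor'' (both in $Q$-cosets among $\{Q', qQ'\}$, or both in $R$-cosets among $\{R', rR'\}$): there, using Remark~\ref{rem:alt} and the identities $qQ' \cdot Q' = qQ'$ etc., the two segments can be amalgamated into one whose label lies in that same coset, contradicting the minimality of $n$. So from now on I may assume $a$ and $b$ lie in different factors; the same-factor situations that this argument does not already eliminate occur only at the boundary nodes (or when $g$ lies in $UQ'V$ or $UR'V$) and would be handled in exactly the same way, but more easily, using $Q'$ or $R'$ in place of $S$ throughout.

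Next I would convert the large Gromov product into fellow-travelling. Since $\relcay$ is $\delta$-hyperbolic (Lemma~\ref{lem:Cayley_graph-hyperbolic}), there are vertices $x$ on the segment ending at $f_i$ and $y$ on the segment starting at $f_i$ with $\dxh(x, f_i)$ and $\dxh(y, f_i)$ both within a $\delta$-bounded amount of $t := \langle f_{i-1}, f_{i+1}\rangle^{rel}_{f_i}$, and with $\dxh(x, y)$ bounded in terms of $\delta$. Writing the two geodesic segments as $a = w_1 u$ and $b = v w_2$, with $\elem{[f_{i-1},x]} = w_1$, $\elem{[x,f_i]} = u$, $\elem{[f_i,y]} = v$, $\elem{[y,f_{i+1}]} = w_2$, one has $\abs{u}_{X\cup\mathcal{H}} = \dxh(x, f_i)$ and $\abs{v}_{X\cup\mathcal{H}} = \dxh(f_i, y)$, both comparable to $t$, and $\abs{uv}_{X\cup\mathcal{H}} = \dxh(x, y)$ bounded. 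Translating the segment ending at $f_i$ by $f_i^{-1}$ turns it into a geodesic from $a^{-1}$ to $1$, whose endpoints lie in $Q$ or in $R$ according to the factor of $a$, and the vertex at $x$ becomes $u^{-1}$; relative quasiconvexity then gives $d_X(u^{-1}, Q) \le \varepsilon$ (resp. $d_X(u^{-1}, R) \le \varepsilon$). Translating the other segment by $f_i^{-1}$ and applying relative quasiconvexity of the other factor gives the analogous bound for $v$, and since $\dxh(u^{-1}, v) = \dxh(x, y)$ is bounded, $u^{-1}$ is also within a bounded $d_X$-distance of the second factor. Thus $u^{-1}$ lies simultaneously within a bounded distance $k_0 = k_0(\delta, \varepsilon)$ of $Q$ and of $R$.

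The one substantial input I expect to need is a \emph{coset-intersection estimate}: there is $D_0 = D_0(Q, R) \ge 0$ such that every element of $G$ lying within $d_X$-distance $k_0$ of both $Q$ and $R$ lies within $d_X$-distance $D_0$ of $S = Q \cap R$. This is the relatively hyperbolic analogue of the classical fact that neighbourhoods of two quasiconvex subgroups of a hyperbolic group meet in a bounded neighbourhood of their intersection; it should follow from relative quasiconvexity of $Q$, $R$ and $Q \cap R$ together with the $\mathrm{BCP}$ property (Proposition~\ref{prop:bcp}), and verifying it carefully is where I expect the main difficulty to lie (alternatively, one may cite the corresponding lemma of \cite{MinMin}). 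Granting it, I would choose $s \in S$ with $d_X(u^{-1}, s) \le D_0$. By \descref{C1}, $S = Q' \cap R'$, so $s$ lies in both $Q'$ and $R'$; hence $a' := as$ lies in the same coset as $a$ and $b' := s^{-1}b$ in the same coset as $b$, while $a'b' = ab$, so replacing the two segments at $f_i$ by geodesics labelled $a'$ and $b'$ yields another path representative of $g$ with the same $n$. Finally $\abs{a'}_{X\cup\mathcal{H}} \le \abs{w_1}_{X\cup\mathcal{H}} + \abs{us}_{X\cup\mathcal{H}}$ and $\abs{b'}_{X\cup\mathcal{H}} \le \abs{s^{-1}v}_{X\cup\mathcal{H}} + \abs{w_2}_{X\cup\mathcal{H}}$, where $\abs{us}_{X\cup\mathcal{H}} = \dxh(u^{-1}, s) \le D_0$ and $\abs{s^{-1}v}_{X\cup\mathcal{H}} = \dxh(s, v) \le D_0 + \dxh(u^{-1}, v)$ are both bounded, whereas $\abs{a}_{X\cup\mathcal{H}} + \abs{b}_{X\cup\mathcal{H}} = \abs{w_1}_{X\cup\mathcal{H}} + \abs{w_2}_{X\cup\mathcal{H}} + \abs{u}_{X\cup\mathcal{H}} + \abs{v}_{X\cup\mathcal{H}}$ exceeds $\abs{w_1}_{X\cup\mathcal{H}} + \abs{w_2}_{X\cup\mathcal{H}} + 2t$ up to a $\delta$-bounded error. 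So $\ell$ strictly decreases once $t$ exceeds a threshold depending only on $\delta$ and $D_0$; taking $C_0$ equal to this threshold contradicts minimality of the type and finishes the argument at every node.
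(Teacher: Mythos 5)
Your overall strategy --- amalgamating adjacent same-factor segments to contradict minimality of $n$, and otherwise cutting the corner at $f_i$ with an element $s \in S = Q' \cap R'$ (using \descref{C1} to keep the new labels in the correct cosets) to strictly decrease $\ell(p)$ --- is the right one and matches the shape of the argument in \cite{MinMin}. The bookkeeping at the end is also fine: $a'b'=ab$, the number of segments is unchanged, $\abs{a's}_{X\cup\mathcal{H}} \le d_X(u^{-1},s)$ since $d_{X\cup\mathcal{H}} \le d_X$, and the total length drops by roughly $2t - 2D_0$. The coset-intersection estimate you isolate is a genuine and citable input for finitely generated relatively quasiconvex subgroups.

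The genuine gap is the sentence ``since $\dxh(u^{-1},v) = \dxh(x,y)$ is bounded, $u^{-1}$ is also within a bounded $d_X$-distance of the second factor.'' In $\relcay$ a bound on the relative distance $\dxh(x,y)$ gives no control on $d_X(x,y)$: the alphabet $X \cup \mathcal{H}$ contains every nontrivial element of every peripheral subgroup, so two vertices at relative distance $1$ (for instance two points of the same coset $gH_\nu$) can be arbitrarily far apart in $d_X$. Hence $d_X(v,R)\le\varepsilon$ together with $\dxh(u^{-1},v)\le 4\delta$ does not place $u^{-1}$ in any fixed $d_X$-neighbourhood of $R$, and the coset-intersection estimate cannot be invoked. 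Repairing this is the real technical content of the lemma: one must choose $x$ and $y$ as phase vertices and pass through Proposition~\ref{prop:bcp}, analysing separately the case in which the short relative path joining the two segments penetrates a peripheral coset deeply --- there the offending $\mathcal{H}$-component has to be compared with components of the two geodesics via Proposition~\ref{prop:osin_polygon} or parts (2)--(3) of Proposition~\ref{prop:bcp} before any $d_X$-bound is available. As written, the conversion of $\delta$-hyperbolicity of $\relcay$ into $d_X$-fellow-travelling is exactly where the relatively hyperbolic setting diverges from the hyperbolic one, and it cannot be waved through.
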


\begin{lemma}[{\cite[Lemma 7.3]{MinMin}}]
\label{lem:short_cusps}
    There exists a constant \(C_1 \ge 0\) satisfying the following.

    Let \(Q' \leqslant Q\) and \(R' \leqslant R\) be subgroups satisfying condition \descref{C1}, and let \(q \in Q\) and \(r \in R\).
    Suppose that either \(U = qQ'\) or \(U = rR'\), let \(V = U^{-1}\) and suppose that \(p\) is a minimal type path representative for an element \(g \in U \langle Q',R' \rangle V\). 
    If \(s\) and \(t\) are connected \(\mathcal{H}\)-components of adjacent segments \(a\) and \(b\) of \(p\) respectively, then \(d_X(s_+, a_+) \leq C_1\) and \(d_X(a_+,t_-) \leq C_1\).
\end{lemma}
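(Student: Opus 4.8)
The plan is to combine the bounded-Gromov-product estimate at nodes (Lemma~\ref{lem:bdd_inn_prod}) with the isolated-component machinery (Proposition~\ref{prop:osin_polygon}, Lemma~\ref{lem:qgds_with_long_comps}), and to close up with an induction exploiting minimality of $\tau(p)$. For the set-up, assume without loss of generality that $a$ precedes $b$ among the segments of $p$; write $y = a_+ = b_-$ for their common node and let $\nu \in \Nu$ with $s, t$ being $H_\nu$-components lying in one left coset $gH_\nu$. Since $a, b$ are geodesics in $\relcay$, the subpaths $s = [s_-, s_+]$ and $t = [t_-, t_+]$ are single $H_\nu$-edges, all four of $s_\pm, t_\pm$ lie in $gH_\nu$, and $[s_+, y] \subseteq a$, $[y, t_-] \subseteq b$ are geodesics.

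First I would extract a bound in the relative metric. By Lemma~\ref{lem:bdd_inn_prod} the relative Gromov product $\langle a_-, b_+ \rangle_y^{rel}$ is at most $C_0$ (the node $y$ is covered by that lemma whether or not $a$ or $b$ equals $q_1$ or $q_2$), and by the usual monotonicity of Gromov products this gives $\langle s_-, t_+ \rangle_y^{rel} \le C_0$ as well. Hence the subpath $\rho = s\,[s_+, y]\,[y, t_-]\,t$ of $ab$, which runs from $s_-$ to $t_+$ with both endpoints in $gH_\nu$, satisfies $\ell(\rho) = d_{X\cup\mathcal{H}}(s_-, y) + d_{X\cup\mathcal{H}}(y, t_+) = d_{X\cup\mathcal{H}}(s_-, t_+) + 2\langle s_-, t_+\rangle_y^{rel} \le 1 + 2C_0$. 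In particular $[s_+, y]$ and $[y, t_-]$ each have at most $1 + 2C_0$ edges, so $d_{X\cup\mathcal{H}}(s_+, a_+) \le 1 + 2C_0$ and $d_{X\cup\mathcal{H}}(a_+, t_-) \le 1 + 2C_0$.

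Next I would upgrade this to the $X$-metric. Consider the geodesic triangle with sides $[s_+, y]$, $[y, t_-]$ and the single $H_\nu$-edge $\gamma = [t_-, s_+]$. No $\mathcal{H}$-component of $[s_+, y]$ or $[y, t_-]$ is connected to $\gamma$: such a component would lie in $gH_\nu$, forcing $a$ or $b$ to backtrack, contrary to geodesicity. So $\gamma$ is isolated in the triangle, and Proposition~\ref{prop:osin_polygon} gives $d_X(s_+, t_-) = |\gamma|_X \le 3L$. It then suffices to bound $d_X(s_+, a_+)$; the bound for $d_X(a_+, t_-)$ follows by the triangle inequality. As $[s_+, y]$ has at most $1 + 2C_0$ edges, I need only bound the $X$-lengths of the $\mathcal{H}$-components along it. Such a component $u$ is either (a) not connected to any $\mathcal{H}$-component of $[y, t_-]$, in which case $u$ is an isolated $\mathcal{H}$-component of the $(1,2C_0)$-quasigeodesic $[s_+,y][y,t_-]$ (it cannot be connected to another component of $[s_+,y]$, since $a$ has no backtracking), so Lemma~\ref{lem:qgds_with_long_comps} together with the bound $d_X(s_+,t_-) \le 3L$ just obtained forces $|u|_X$ to be bounded in terms of $C$ and $L$; or (b) connected to an $\mathcal{H}$-component $v$ of $[y,t_-]$, in which case $(u,v)$ is itself a connected pair of $\mathcal{H}$-components of the adjacent segments $a,b$ whose span through $y$, namely $\ell([u_+,y]) + \ell([y,v_-])$, is strictly smaller than that of $(s,t)$. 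Pairs of type (b) are handled by induction on this span, which is uniformly bounded (by $1+2C_0$ from Step~1) so the induction is well founded, the inductive bound being combined with the polygon estimate $d_X(u_+,v_-) \le 3L$ produced by applying the triangle argument above to $(u,v)$. Summing the contributions over the at most $1+2C_0$ edges of $[s_+,y]$ bounds $d_X(s_+,a_+)$ by a constant $C_1$ depending only on $C$ and $L$.

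The hard part is this last upgrade from $d_{X\cup\mathcal{H}}$ to $d_X$: the relative metric cannot see how long an $\mathcal{H}$-component is in $d_X$, so the real content is ruling out a long $\mathcal{H}$-component of $a$ being trapped between $s$ and the node $y$, and organising the nested connected pairs arising in case (b) into a terminating induction while keeping $C_1$ uniform is the delicate bookkeeping the argument rests on. The boundary cases where $a$ or $b$ is $q_1$ or $q_2$, and the fully degenerate cases where $s_+ = y$ or $y = t_-$, introduce no new difficulty and are absorbed into Step~1 and the induction base.
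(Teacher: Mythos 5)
Your Steps 1 and 2 are sound: the monotonicity of the relative Gromov product together with Lemma~\ref{lem:bdd_inn_prod} does bound the number of edges of \([s_+,y]\) and \([y,t_-]\) by \(1+2C_0\), and the \(H_\nu\)-edge \([t_-,s_+]\) is indeed isolated in the triangle (any component of \([s_+,y]\) connected to it would be connected to \(s\), contradicting that geodesics are without backtracking), so \(d_X(s_+,t_-)\leq 3L\). Your case (a) is also fine. (For the record, the paper does not reprove this lemma -- it is quoted from \cite[Lemma 7.3]{MinMin} -- so I am assessing your argument on its own terms.)

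The gap is in case (b), and it is not a bookkeeping issue: the induction does not close. To "sum the contributions over the edges of \([s_+,y]\)" you need, for each component \(u\) of \([s_+,y]\) connected to a component \(v\) of \([y,t_-]\), a bound on \(\abs{u}_X = d_X(u_-,u_+)\). Your inductive hypothesis gives \(d_X(u_+,y)\) and \(d_X(y,v_-)\), and the polygon argument gives \(d_X(u_+,v_-)\leq 3L\) (and, by a similar quadrilateral using the outer edge \([t_-,s_+]\), even \(d_X(u_-,v_+)\leq 4L\)); none of these controls \(d_X(u_-,u_+)\). And this quantity genuinely need not be controlled by the ingredients you use: consider a configuration in which, between the pair \((s,t)\) and the node \(y\), both \(a\) and \(b\) penetrate deeply into a second peripheral coset \(g'H_\mu\) and traverse it in opposite directions, so that \(u=[u_-,u_+]\) and \(v=[v_-,v_+]\) are long antiparallel \(H_\mu\)-edges with \(u_-\) near \(v_+\) and \(u_+\) near \(v_-\) (and \(u_+,v_-\) near \(y\)). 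This is compatible with \(a\) and \(b\) being geodesics, with the Gromov product bound at \(y\) (which lives in \(d_{X\cup\mathcal H}\) and is blind to \(\abs{u}_X\)), and with every polygon estimate you invoke -- yet then \(d_X(s_+,y)\geq \abs{u}_X - d_X(s_+,u_-)-d_X(u_+,y)\) is unbounded, so the conclusion fails for \((s,t)\). In other words, the statement is false for a general broken line with geodesic \(Q'\cup R'\)-segments and bounded relative Gromov products at the nodes; excluding this nested long backtracking is exactly where the \emph{minimality of the type} of the path representative (via a surgery on the decomposition, using relative quasiconvexity of \(Q\) and \(R\) and condition \descref{C1} to relocate the node and strictly decrease the type) must be brought to bear. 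Your proof uses minimality only through Lemma~\ref{lem:bdd_inn_prod}, which is insufficient, so the "delicate bookkeeping" you defer to is in fact the missing core of the argument.
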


\begin{notation}
\label{not:Pm}
    Let \(M \geq 0\) and let \(C_1 \geq 0\) be the constant of Lemma~\ref{lem:short_cusps}. We define the following finite collection of maximal parabolic subgroups of \(G\):
    \[
        \mathcal{P}_M = \{b H_\nu b^{-1} \mid b \in G, \nu \in \Nu, \abs{b}_X \leq M + C_1\}.
    \]  
\end{notation}

If \(Q'\) and \(R'\) satisfy conditions \descref{C1}-\descref{C5} with sufficiently large constants \(B\) and \(C\), then minimal type path representatives of elements of \(U \langle Q', R' \rangle V\) are tamable.
We define the constant \(C'_0 = \max \{C_0, 14 \delta\}\), where \(C_0\) is the constant of Lemma~\ref{lem:bdd_inn_prod}.

\begin{lemma}[{\cite[Lemma 10.3]{MinMin}}]
\label{lem:pathreps_are_tamable}
    There are constants \(\lambda = \lambda(C'_0), c = c(C'_0)\) and, for each \(\eta \geq 0\), constants \(C_2 = C_2(\eta) \geq 0\), $\zeta=\zeta(\eta) \ge 1$, \(\Theta_1 = \Theta_1(\eta) \in \NN\) with \(\Theta_1 \geq \zeta\)  and  \(B_1 = B_1(\eta) \geq 0\)  such that the following is true.

    Suppose that \(Q'\leqslant Q\) and \(R'\leqslant R\) are subgroups satisfying conditions \descref{C1}-\descref{C5} with constants \(B \geq B_1\) and \(C \geq C_2\) and family \(\mathcal{P} \supseteq \mathcal{P}_0\). 
    Let \(q \in Q, r \in R\), suppose that either \(U = qQ'\) or \(U = rR'\), and let \(V = U^{-1}\).
    If \(p = q_1 p_1 \dots p_n q_2\) is a minimal type path representative for an element \(g \in U \langle Q', R' \rangle V\) then \(p\) is \((B,C'_0,\zeta,\Theta_1)\)-tamable.

    Moreover, let \(\Sigma(p,\Theta_1) = f_0 e_1 f_1 \dots f_{m-1} e_m f_m\) be the \(\Theta_1\)-shortcutting of \(p\) obtained from Procedure~\ref{proc:shortcutting}, and let $e_k'$ be the \(\mathcal{H}\)-component of \(\Sigma(p,\Theta_1)\) containing \(e_k\), $k=1,\dots,m$. Then \(\Sigma(p,\Theta_1)\) is a \((\lambda,c)\)-quasigeodesic without backtracking and  \(\abs{e'_k}_X \geq \eta\), for each \(k = 1, \dots, m\).
\end{lemma}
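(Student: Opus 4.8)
The plan is to verify the three conditions of Definition~\ref{def:tamable} for a minimal-type path representative $p = q_1 p_1 \dots p_n q_2$ of $g \in U \langle Q', R' \rangle V$, viewed as a broken line whose segments are $q_1, p_1, \dots, p_n, q_2$, whose internal segments are $p_1, \dots, p_n$, and whose internal nodes are the shared endpoints of consecutive segments. This follows the scheme of the proof of \cite[Lemma 10.3]{MinMin}; the only genuinely new feature is that the two extreme segments now carry labels in the cosets $U$ and $V$ rather than in $Q$ and $R$, and this is harmless because $U = qQ'$ or $rR'$ (and $V = U^{-1}$) is a single coset of $Q'$ or $R'$.

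Conditions \ref{cond:tam_1} and \ref{cond:tam_2} follow at once from results already recalled. Minimality of $\tau(p)$ forces each $\elem{p_i}$ to lie in $(Q' \cup R') \setminus S$: a label in $S$ could be absorbed into a neighbouring segment (note $\elem{q_1}\elem{p_1} \in U$ whenever $\elem{p_1}\in S$, since $S$ lies in both $Q'$ and $R'$), decreasing $n$; this is the content of Remark~\ref{rem:alt}. Since each $p_i$ is a geodesic, Lemma~\ref{lem:C2->C2_old} applied to \descref{C2} gives $\abs{p_i}_X = \abs{\elem{p_i}}_X \geq \minx((Q' \cup R') \setminus S) \geq B$, which is \ref{cond:tam_1}. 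For \ref{cond:tam_2}, Lemma~\ref{lem:bdd_inn_prod} (applicable as $U = qQ'$ or $rR'$ and $V = U^{-1}$) bounds the relevant Gromov product at every internal node of $p$ by $C_0 \leq C'_0$, which is exactly \ref{cond:tam_2} with the constant $C'_0$.

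The content of the lemma is condition \ref{cond:tam_3}. Fix $\eta \geq 0$, let $\lambda = \lambda(C'_0)$, $c = c(C'_0)$ and $\zeta = \zeta(\eta)$ be the constants of Proposition~\ref{prop:shortcutting_quasigeodesic} applied to $C'_0$ and $\eta$, and let $C_1$ be the constant of Lemma~\ref{lem:short_cusps}. We shall choose $\Theta_1 = \Theta_1(\eta) \geq \zeta$, then $C_2 = C_2(\eta) \geq C_1$, then $B_1 = B_1(\eta) \geq B_0(\Theta_1, C'_0)$, each large enough for the argument that follows. Assume, for a contradiction, that $p$ has consecutive backtracking along $\mathcal{H}$-components $h_i, \dots, h_j$ of consecutive segments of $p$ with $\max_k \abs{h_k}_X \geq \Theta_1$ but $d_X((h_i)_-, (h_j)_+) < \zeta$. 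All the $h_k$ lie in a single left coset $g_0 H_\nu$, so $(h_i)_-^{-1}(h_j)_+$ is an element of $H_\nu$ of $X$-length less than $\zeta$. Applying Lemma~\ref{lem:short_cusps} to each consecutive pair in the backtracking shows that every internal node strictly between the first and last backtracking segment is $X$-within $C_1$ of a vertex of $g_0 H_\nu$, and that the portion of each such segment beyond its component joins that vertex to the next node within $X$-distance $C_1$. Left-translating the whole configuration so that one such node becomes the identity (this preserves all segment labels, hence $\tau$), the coset $g_0 H_\nu$ becomes a coset $wH_\nu$ with $\abs{w}_X \leq C_1$, so the corresponding maximal parabolic subgroup $w H_\nu w^{-1}$ lies in $\mathcal{P}_0 \subseteq \mathcal{P}$. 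One now records the labels of the full segments strictly between $h_i$ and $h_j$, together with the (short) end-pieces of the two extreme backtracking segments, as a product; the bound $d_X((h_i)_-, (h_j)_+) < \zeta$ forces this product, read inside the parabolic subgroup $wH_\nu w^{-1}$, to be $X$-short. Conditions \descref{C3}, \descref{C4} and \descref{C5} for this subgroup (with constant $C_2$) then make this impossible unless the product already lies in the corresponding ``trivial'' coset product, in which case the sub-broken line of $p$ from $(h_i)_-$ to $(h_j)_+$ may be rewritten — collapsing the backtracking into a single $H_\nu$-edge and re-distributing the end-pieces — into a path representative of $g$ of strictly smaller type, contradicting minimality of $p$. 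Here $\Theta_1$ is chosen large relative to $\zeta$, $C_1$ and the above constants so that the offending component is genuinely long, and $B_1$ is chosen large so that the full intervening segments (of $X$-length at least $B$ by \ref{cond:tam_1}) cannot be swallowed by the rewriting. Carrying out this case analysis carefully — including the sub-cases according to whether the labels involved lie in $Q'$, $R'$, $U$ or $V$ — is the main obstacle in the proof.

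Finally, once $p$ is shown to be $(B, C'_0, \zeta, \Theta_1)$-tamable with $B \geq B_1 \geq B_0(\Theta_1, C'_0)$, the ``moreover'' part is simply an application of Proposition~\ref{prop:shortcutting_quasigeodesic} with parameters $C'_0$, $\eta$ and $\Theta_1$: it says precisely that $\Sigma(p, \Theta_1) = f_0 e_1 f_1 \dots f_{m-1} e_m f_m$ is a $(\lambda, c)$-quasigeodesic without backtracking with each $e_k$ nontrivial, and that the $\mathcal{H}$-component $e'_k$ of $\Sigma(p, \Theta_1)$ containing $e_k$ satisfies $\abs{e'_k}_X \geq \eta$.
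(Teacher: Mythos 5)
The paper does not actually prove this lemma: it is imported verbatim (apart from the added clause \(\Theta_1 \geq \zeta\)) from \cite[Lemma 10.3]{MinMin}, and the section's preamble states explicitly that proofs are omitted because they are virtually identical to those in \cite{MinMin}. So there is no in-paper argument to compare yours against line by line; the only in-paper content is the remark that \(\Theta_1\) is constructed in that proof to be at least \(\zeta\), which your choice of constants reproduces. Your reconstruction is consistent with that source's strategy: conditions \ref{cond:tam_1} and \ref{cond:tam_2} are correctly and completely derived (minimality plus Remark~\ref{rem:alt} puts each \(\elem{p_i}\) in \((Q'\cup R')\setminus S\), Lemma~\ref{lem:C2->C2_old} then gives \ref{cond:tam_1}, and Lemma~\ref{lem:bdd_inn_prod} gives \ref{cond:tam_2}), and the ``moreover'' clause is correctly reduced to a direct application of Proposition~\ref{prop:shortcutting_quasigeodesic} once tamability and \(B \geq B_0(\Theta_1, C'_0)\) are in hand.

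The one caveat is that your treatment of condition \ref{cond:tam_3} is an outline rather than a proof. The decisive steps --- that the product of labels read along the consecutive backtracking is forced by \descref{C3}--\descref{C5} into the ``trivial'' coset product, and that the resulting rewriting genuinely lowers the type \(\tau(p)\) in the lexicographic order without being blocked by the long intervening segments --- are asserted, and you yourself flag the case analysis (over whether the labels lie in \(Q'\), \(R'\), \(U\) or \(V\)) as deferred. That is exactly the portion of the argument the paper delegates wholesale to \cite{MinMin}, and your setup (translating the backtracking coset to within \(C_1\) of the identity so that the relevant parabolic lies in \(\mathcal{P}_0\), and using \(U = qQ'\) or \(rR'\), \(V = U^{-1}\) so that the extreme segments behave like \(Q'\)- or \(R'\)-segments) is the right way to adapt it; but as written the proposal does not constitute a self-contained proof of \ref{cond:tam_3}.
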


We note that the statement of the above lemma in \cite{MinMin} does not include the fact that \(\Theta_1 \geq \eta\) as stated here, though \(\Theta_1\) is explicitly constructed to be at least \(\zeta\) in its proof.

The following gives us a way of constructing parabolic paths that in some way approximate an instance of consecutive backtracking in a path representative.
The statement is a modification of \cite[Lemma 8.3]{MinMin} that allows for an extra parameter.
For completeness we include a proof.

\begin{lemma}
\label{lem:(c3)->vertex_constr} 
    Let \(M \geq 0\) and suppose that subgroups $Q' \leqslant Q$ and $R' \leqslant R$ satisfy conditions \descref{C1} and \descref{C3} with constant \(C\) and family \(\mathcal{P}  \) such that $C \ge M + C_1 + 1$ and \(\mathcal{P} \supseteq \mathcal{P}_M\) as in Notation~\ref{not:Pm}.
    Let $P=bH_\nu b^{-1} \in\mathcal{P}_M$, for some $\nu \in \Nu$ and $b \in G$, with $|b|_X \le M$, and let $p$ be a path in $\relcay$ with $\elem{p} \in Q' \cup R'$.

    Suppose that there is a vertex $v$ of $p$ and an element $u \in P$ satisfying $u^{-1}p_- \in S$, $v \in Pb$, and $d_X(v,p_+) \le C_1$. 
    Then there exists a geodesic path $p'$ such that $(p')_-=u$, $\elem{p'} \in P$, and $(p')_+^{-1}p_+ \in S$. 
    In particular, if $\elem{p} \in Q'$ (respectively, $\elem{p} \in R'$) then $\elem{p'} \in Q' \cap P$ (respectively, $\elem{p'} \in R' \cap P$).
\end{lemma}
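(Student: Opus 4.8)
The plan is to produce the geodesic $p'$ by directly constructing its endpoints from the hypotheses, and then verify the three claimed properties together with the "In particular" statement as a straightforward consequence of condition \descref{C3}. First I would set $(p')_- = u$ (as dictated by the statement) and define the terminal endpoint as follows: since $v \in Pb$, write $v = wb$ for some $w \in P$; and since $u \in P$ as well, the element $w u^{-1}$ lies in $P$. The natural candidate for $(p')_+$ is the vertex $wb = v$ itself, or rather a vertex of the form $u'b$ obtained by pushing $v$ into the coset structure so that $(p')_+^{-1}p_+ \in S$. The key computation is: $p_-^{-1}p_+ = \elem{p} \in Q' \cup R'$, and $u^{-1}p_- \in S = Q'\cap R'$, so $u^{-1}p_+ = (u^{-1}p_-)\elem{p} \in S\cdot(Q'\cup R') \subseteq Q' \cup R'$. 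Thus $u^{-1}p_+ \in Q' \cup R'$ and also $u^{-1}p_+ \in u^{-1}Pb$ hmm — one must be careful here, since $p_+$ need not lie in $Pb$ a priori, only $v$ does, and $v$ is merely $C_1$-close to $p_+$ in the $X$-metric.

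So the actual argument for the terminal endpoint must go through condition \descref{C3}. Here is the mechanism I expect to work. We know $u^{-1}p_- \in S$, hence $u^{-1}p_+ = (u^{-1}p_-)\elem{p}$, and $\elem{p} \in Q' \cup R'$, say $\elem{p}\in Q'$ (the $R'$ case being symmetric). Then $u^{-1}p_+ \in SQ' = Q'$, so $p_+ \in uQ'$. Now consider the element $g := v^{-1}p_+$. Since $v \in Pb$ we have $v = w b$ with $w \in P$; writing $P = bH_\nu b^{-1}$ one checks $w = b h b^{-1}$ for some $h \in H_\nu$, so $v = bh$. The point $v$ lies in $Pb = bH_\nu$. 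We want to know that $p_+$ differs from some element of $P u$ (equivalently, of $Pb$ composed suitably) by an element of $S$. The role of $d_X(v,p_+) \le C_1$ is to guarantee, via Notation~\ref{not:Pm} and the hypothesis $\mathcal{P} \supseteq \mathcal{P}_M$, that the relevant coset $P$ (built from a conjugator of length $\le M + C_1$) is actually a member of the family $\mathcal{P}$ to which condition \descref{C3} applies. Then \descref{C3}, which says $\minx((PQ'\cup PR')\setminus PS) \ge C$ with $C \ge M + C_1 + 1 > C_1 \ge d_X(v,p_+)$, forces $v^{-1}p_+ \notin (PQ' \cup PR')\setminus PS$ (because it has small $X$-length), hence $v^{-1}p_+ \in PS$, i.e. $p_+ \in PSv \cdot$... — more precisely $p_+ \in vPS$ is not right either; one applies \descref{C3} to the element $p_+ \in PQ'$ after translating so that it is measured from the correct coset representative. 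The clean way: $p_+ \in uQ' \subseteq Pu Q' = PQ'$ (since $u \in P$), and if $p_+ \notin PS$ then $\abs{p_+}'_X \ge C$ in the appropriate normalised sense, contradicting $d_X(v,p_+) \le C_1 < C$ once we observe $v \in P$ so $v$ and $p_+$ lie in a common... Let me instead phrase it as: the element $v^{-1} p_+$ has $d_X(1, v^{-1}p_+) \le C_1 < C$; since $v^{-1}p_+ \in v^{-1}uQ' $ and $v^{-1}u \in P$ (both $v = bh$ up to the identification $Pb = bH_\nu$, wait $v \in Pb$ not $P$)... I would resolve this indexing carefully in the write-up, using that $vb^{-1} \in P$ so that $v^{-1}u = (vb^{-1})^{-1}(ub^{-1})\cdot$ — the honest statement is $v, u$ both lie in the coset-geometry of $P$ relative to $b$, and the product $v^{-1}p_+$ lies in $PQ'$ or $PR'$ with small $X$-norm, so by \descref{C3} it lies in $PS$, giving $p_+ \in PS \cdot v$; intersecting with $uQ'$ (or $uR'$) and using $u \in P$ and \descref{C1} then yields an element $u' \in P$ with $p_+ \in u' S$, i.e. $(u')^{-1}p_+ \in S$.

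Once the endpoints $u$ and $u'$ (with $u^{-1}u' \in P$, since $u' \in u\cdot(\text{something in }P)$) are in hand, let $p'$ be any geodesic in $\relcay$ from $u$ to $u'$. Then $(p')_- = u$, $\elem{p'} = u^{-1}u' \in P$, and $(p')_+^{-1}p_+ = (u')^{-1}p_+ \in S$, which is exactly the conclusion. For the "In particular" clause: if $\elem{p} \in Q'$ then from $p_- \in uS$ and $p_+ \in u'S$ we get $\elem{p'} = u^{-1}u' = (u^{-1}p_-)\elem{p}(p_+^{-1}u') \in S Q' S = Q'$, and also $\elem{p'} \in P$, so $\elem{p'} \in Q' \cap P$; symmetrically for $R'$. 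The main obstacle I anticipate is purely bookkeeping: keeping straight the left-coset conventions (the Cayley graph is a left Cayley graph, so a coset $bH_\nu$ versus $H_\nu b$ versus the parabolic $P = bH_\nu b^{-1}$), and making sure the element to which \descref{C3} is applied is genuinely of the form (parabolic)$\cdot Q'$ or (parabolic)$\cdot R'$ modulo $(\text{parabolic})\cdot S$, and that its $X$-length is indeed bounded by $d_X(v,p_+) \le C_1 < C$. I expect no real difficulty beyond this, since all the geometric content is packaged into \descref{C3} and Notation~\ref{not:Pm}; the argument is essentially an algebraic manipulation with cosets, exactly as in \cite[Lemma 8.3]{MinMin}, with the extra parameter $M$ threaded through the size bound on the conjugator $b$.
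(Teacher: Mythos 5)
Your overall strategy coincides with the paper's: compute \(u^{-1}p_+=(u^{-1}p_-)\,\elem{p}\in S(Q'\cup R')= Q'\cup R'\), feed an element of small \(X\)-length into \descref{C3} to conclude membership in \(PS\), take a geodesic from \(u\) to the resulting point of \(P\), and deduce the ``in particular'' clause from \descref{C1} via \(\elem{p'}\in S\,\elem{p}\,S\). However, the one step you explicitly defer --- identifying the element to which \descref{C3} is applied --- is exactly the non-trivial point, and your provisional versions of it are incorrect. Since \(v\in Pb=bH_\nu\) (not \(v\in P\)), the element \(v^{-1}p_+\) lies in \(b^{-1}P(Q'\cup R')\), not in \(PQ'\cup PR'\); so your ``honest statement'' that \(v^{-1}p_+\in PQ'\cup PR'\) with \(X\)-norm at most \(C_1\) is false as written, and an argument built on it would stall. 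The correct move is to replace \(v\) by \(z=vb^{-1}\in P\). Then \(z^{-1}p_+=(z^{-1}u)(u^{-1}p_+)\in P(Q'\cup R')\) and
\[
\abs{z^{-1}p_+}_X \le d_X(z,v)+d_X(v,p_+)\le \abs{b}_X+C_1\le M+C_1<C,
\]
so \descref{C3} yields \(z^{-1}p_+=fh\) with \(f\in P\) and \(h\in S\), and one takes \(p'\) to be a geodesic from \(u\) to \(zf\in P\), giving \(\elem{p'}=u^{-1}zf\in P\) and \((p')_+^{-1}p_+=f^{-1}z^{-1}p_+=h\in S\).

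Two further corrections to your reading of the hypotheses. First, the relevant length bound is \(M+C_1\), not \(C_1\); this is precisely why the statement assumes \(C\ge M+C_1+1\), and it is where the parameter \(M\) enters (you do gesture at this at the end, but your displayed inequality ``\(d_X(v,p_+)\le C_1<C\)'' is attached to the wrong element). Second, the bound \(d_X(v,p_+)\le C_1\) plays no role in placing \(P\) inside the family \(\mathcal{P}\): that is already guaranteed by the hypothesis \(P=bH_\nu b^{-1}\in\mathcal{P}_M\subseteq\mathcal{P}\) with \(\abs{b}_X\le M\); its only role is in the length estimate above. With the substitution \(z=vb^{-1}\) the remainder of your plan (endpoints \(u\) and \(zf\), and the final clause via \descref{C1}) goes through exactly as in the paper.
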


\begin{proof} 
    Denote $x=p_-$,  $y=p_+$ and $z=vb^{-1} \in P$.
    Then $u^{-1}z \in P$ and $x^{-1}y=\elem{p} \in Q' \cup R'$.

    Since $u^{-1} x \in S = Q' \cap R'$, we obtain 
    \[
        u^{-1}y=(u^{-1}x) (x^{-1}y) \in Q' \cup R',
    \]
    whence $z^{-1}y=(z^{-1}u )(u^{-1}y) \in P (Q' \cup R')$. 
    Now, observe that
    \[
        |z^{-1} y|_X=d_X(z,y) \le d_X(z,v)+d_X(v,y) \le |b|_X+C_1 \le M + C_1 < C.
    \] 
    Condition \descref{C3} now implies that $z^{-1}y \in PS $, i.e., $z^{-1}y =fh$, for some $f \in P$ and $h \in S$.
    Let $p'$ be a geodesic path starting at $u$ and ending at $zf \in P$. Then $\elem{p'}=u^{-1}zf \in P$ and
    \[
        (p')_+^{-1}p_+=f^{-1}z^{-1}y=h \in S.
    \]
    The last statement of the lemma follows from \descref{C1} and the observation that 
    \[
        \elem{p'}=u^{-1}(p')_+=u^{-1} p_- \, \elem{p} \, (p_+)^{-1}(p')_+ \in S\,\elem{p}\, S. \qedhere
    \]
\end{proof}


\section{Reduction to the short conjugator case}
\label{sec:short_conj}

In this section we again follow the notation of Convention~\ref{conv:GQR}.
We will first prove the special case of the main result for conjugates of the peripheral subgroups by uniformly short elements.
In this case, taking \(Q'\) and \(R'\) with sufficiently deep index, the conjugator \(u \in \langle Q', R' \rangle\) in the statement of Theorem~\ref{thm:parab_join} will be trivial.
In particular, we will prove the following:

\begin{proposition}
\label{thm:parab_join_short_conj}
    For any \(M \geq 0\) there exist constants \(B_2 = B_2(M) \geq 0\) and \(C_3 = C_3(M) \geq 0\) such that the following is true.
    
    Suppose \(Q' \leqslant Q\) and \(R' \leqslant R\) satisfy conditions \descref{C1}-\descref{C5} with constants \(B \geq B_2, C \geq C_3\), and family \(\mathcal{P}_M\) (see Notation~\ref{not:Pm}).
    If \(P \in \mathcal{P}_M\) is such that \(\langle Q', R' \rangle \cap P\) is infinite, then
    \[
        \langle Q', R' \rangle \cap P = \langle Q' \cap P, R' \cap P \rangle.
    \]
\end{proposition}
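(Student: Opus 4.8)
The inclusion $\langle Q'\cap P, R'\cap P\rangle \subseteq \langle Q',R'\rangle\cap P$ is immediate, so the content is the reverse one, and I would prove it by combining the machinery developed in Sections~\ref{sec:shortcutting}--\ref{sec:path_reps} with a soft argument. The first move is a reduction to \emph{long} elements: it suffices to show that every $g\in\langle Q',R'\rangle\cap P$ with $\abs{g}_X$ exceeding a threshold $N=N(M)$ lies in $\langle Q'\cap P,R'\cap P\rangle$. Granting this, the two subgroups agree outside the finite ball $\{g:\abs{g}_X\le N\}$; since $\langle Q',R'\rangle\cap P$ is infinite, $\langle Q'\cap P,R'\cap P\rangle$ is infinite as well, and Lemma~\ref{lem:almost_containment_inf_subgps} promotes the almost-containment to $\langle Q',R'\rangle\cap P\subseteq\langle Q'\cap P,R'\cap P\rangle$.

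So fix $g$ with $\abs{g}_X\ge N$. By Remark~\ref{rem:no_ending_cpts} I would replace the conjugator by the minimal-length element of its $H_\nu$-coset, writing $P=bH_\nu b^{-1}$ with $\abs{b}_{X\cup\mathcal H}$ minimal in $bH_\nu$ and $\abs{b}_X\le M$ (this costs only a controlled enlargement of $M$, hence of $\mathcal P_M$ and of $B_2(M),C_3(M)$). Let $p=p_1\dots p_n$ be a minimal-type path representative of $g$, so $\abs{p}_X=\abs{g}_X\ge N$; if $n\le 1$ then $g\in(Q'\cap P)\cup(R'\cap P)$ and we are done, so assume $n\ge 2$, in which case $g\notin Q'\cup R'$ and consecutive segments alternate between $(Q'\setminus S)$- and $(R'\setminus S)$-elements (Remark~\ref{rem:alt}). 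Taking $\eta$ in Lemma~\ref{lem:pathreps_are_tamable} to be the constant $\eta_0=\eta_0(M)$ of Lemma~\ref{lem:parab_short_one_ek}, and $B_2(M),C_3(M)$ large, $p$ is $(B,C'_0,\zeta,\Theta_1)$-tamable; since $\elem{p}=g\in bH_\nu b^{-1}$ and $\abs{p}_X$ is large, Lemmas~\ref{lem:parab_short_one_ek}, \ref{lem:h_conn_to_ek} and \ref{lem:parab_short_no_sides} give that the $\Theta_1$-shortcutting is $\Sigma(p,\Theta_1)=f_0e_1f_1$, that the $H_\nu$-edge $h$ with $h_-=b$ and $\elem{p}=b\elem{h}b^{-1}$ is connected to $e_1$ with $d_X(h_\pm,(e_1)_\pm)\le\kappa_0$, and that $(e_1)_-$ is a vertex of $p_1p_2$ while $(e_1)_+$ is a vertex of $p_{n-1}p_n$.

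The crucial structural point I would then establish is that the single instance of consecutive backtracking underlying $e_1$ runs along $H_\nu$-edges $h_1,\dots,h_n$ of \emph{all} of the segments $p_1,\dots,p_n$. By the previous paragraph the backtracking is along $h_{i'},\dots,h_{j'}$ with $i'\in\{1,2\}$ and $j'\in\{n-1,n\}$. To exclude $i'=2$: the subpath $p'_0$ of $p$ from $p_-$ to $(e_1)_-$ then contains all of $p_1$, is $(4,c_0)$-quasigeodesic (Lemma~\ref{lem:fk_quasigeodesic}), and satisfies $\abs{\elem{p'_0}}_X=d_X(1,(e_1)_-)\le\abs{b}_X+\kappa_0$, so it has bounded combinatorial length; as its segments have short $\mathcal H$-components (Remark~\ref{rem:cpts_of_p'k}), Lemma~\ref{lem:rel_paths_with_short_comps} forces $\abs{\elem{p_1}}_X$ to be bounded by a constant depending only on $M$ --- contradicting $\elem{p_1}\in(Q'\cup R')\setminus S$ and $\minx((Q'\cup R')\setminus S)\ge B$ (Lemma~\ref{lem:C2->C2_old} and \descref{C2}) once $B_2(M)$ is large. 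The case $j'=n-1$ is ruled out symmetrically via the subpath $p'_1$. Since the $h_k$ are pairwise connected and $h_n$ is connected through $e_1$ to $h$, every $(h_k)_\pm$ lies in the coset $bH_\nu$, and Lemma~\ref{lem:short_cusps} yields $d_X((h_k)_+,(p_k)_+)\le C_1$ for $k=1,\dots,n-1$.

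The last step is a telescoping via Lemma~\ref{lem:(c3)->vertex_constr}. Write $w_k=\elem{p_1}\cdots\elem{p_k}$ for the nodes of $p$ (so $w_0=1$, $u_0=1$, $u_0^{-1}w_0=1\in S$), and build $u_0,u_1,\dots,u_{n-1}\in P$ recursively: given $u_{k-1}\in P$ with $u_{k-1}^{-1}w_{k-1}\in S$, feed Lemma~\ref{lem:(c3)->vertex_constr} the path $p_k$, the element $u_{k-1}$, and the vertex $v=(h_k)_+\in bH_\nu$ (which lies within $C_1$ of $(p_k)_+$) to obtain a geodesic path with label $\elem{p'_k}\in(Q'\cap P)\cup(R'\cap P)$ in the same factor as $\elem{p_k}$, and put $u_k=u_{k-1}\elem{p'_k}\in P$, noting $u_k^{-1}w_k\in S$. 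Then $u_{n-1}=\elem{p'_1}\cdots\elem{p'_{n-1}}\in\langle Q'\cap P,R'\cap P\rangle$ and, with $s=u_{n-1}^{-1}w_{n-1}\in S$, the element $u_{n-1}^{-1}g=s\,\elem{p_n}$ lies in $Q'\cup R'$ (as $s\in S$) and in $P$ (as $u_{n-1},g\in P$), hence in $(Q'\cap P)\cup(R'\cap P)$; so $g=u_{n-1}\bigl(u_{n-1}^{-1}g\bigr)\in\langle Q'\cap P,R'\cap P\rangle$, as required. The main obstacle is the structural step of the third paragraph --- showing the backtracking meets every segment, which is precisely what lets Lemma~\ref{lem:(c3)->vertex_constr} be applied at every stage --- together with the bookkeeping of choosing $B_2(M),C_3(M)$ and $N$ above all the invoked thresholds ($B_1(\eta_0)$, $E_1(\Theta_1)$, $N_1(\Theta_1,M)$, $M+C_1+1$, and the bounded quantities produced above).
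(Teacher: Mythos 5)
Your proposal is correct and follows essentially the same route as the paper's proof: reduce to long elements via Lemma~\ref{lem:almost_containment_inf_subgps}, use tamability (Lemma~\ref{lem:pathreps_are_tamable}) together with Lemmas~\ref{lem:h_conn_to_ek}, \ref{lem:parab_short_one_ek} and \ref{lem:parab_short_no_sides} to show the shortcutting is $f_0e_1f_1$ with the consecutive backtracking running through every segment, and then telescope with Lemma~\ref{lem:(c3)->vertex_constr} and Lemma~\ref{lem:short_cusps}. The only cosmetic difference is that you re-derive the ``moreover'' clause of Lemma~\ref{lem:parab_short_no_sides} (excluding $i'=2$, $j'=n-1$) by hand, whereas the paper invokes it directly after noting $\abs{p_i}_X\ge B$ for all $i$ via minimality and \descref{C2}.
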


\begin{proof}
    Let \(P \in \mathcal{P}_M\) and suppose that \(\langle Q', R' \rangle \cap P\) is infinite.
    We will fix the following notation for the proof:
    \begin{itemize}
        \item \(P = b H_\nu b^{-1}\) where \(\nu \in \Nu\) and \(b \in G\) with \(\abs{b}_X \leq M\);
        \item \(b_1 \in b H_\nu\) has minimal length with respect to \(d_{X\cup\mathcal{H}}\) and \(\abs{b_1}_X \leq \xi_0 M^2\) (as in Remark~\ref{rem:no_ending_cpts}), where \(\xi_0\) is the constant of Lemma~\ref{lem:len_of_subgeodesic};
        \item \(C'_0 = \max\{C_0, 14 \delta\}\), where \(C_0\) is the constant of Lemma~\ref{lem:bdd_inn_prod};
        \item \(\eta_0 = \eta_0(\xi_0 M^2)\) is the constant of Lemma~\ref{lem:parab_short_one_ek};
        \item \(B_1 = B_1(\eta_0), C_2 = C_2(\eta_0)\), and \(\Theta_1 = \Theta_1(\eta_0)\) are the constants obtained from Lemma~\ref{lem:pathreps_are_tamable} applied with \(\eta_0\);
        \item \(N_1 = N_1(\Theta_1,\xi_0 M^2)\) and \(\kappa_0 = \kappa_0(C'_0)\) are the constants of Lemma~\ref{lem:h_conn_to_ek};
        \item \(B_2 = \max\{B_1, E_2(\Theta_1,\xi_0 M^2)\}\), where \(E_2(\Theta_1,\xi_0 M^2)\) is the constant of Lemma~\ref{lem:parab_short_no_sides} and \(C_3 = \max\{C_2, M + C_1 + 1\}\), where \(C_1\) is the constant of Lemma~\ref{lem:short_cusps}.
    \end{itemize}
    By assumption, \(\langle Q', R' \rangle \cap P\) is infinite, so there is an element \(g \in \langle Q', R' \rangle \cap P\) with \(\abs{g}_X \geq N_1\).
    Let \(p = p_1 \dots p_n\) be a path representative of minimal type for \(g\) (as an element of \(U \langle Q', R' \rangle V\), with \(U = V = \{1\}\)) with \(p_- = 1\).
    If \(n = 1\) then \(g = \elem{p} \in (Q' \cup R') \cap P\) and we are done, so suppose that \(n > 1\).
    We write \(h\) for the \(H_\nu\)-edge of \(\relcay\) with \(h_- = b_1\) and \(g = b_1\elem{h}b_1^{-1}\).
    
    We consider the shortcutting \(\Sigma(p,\Theta_1) = f_0 e_1 f_1 \dots f_{m-1} e_m f_m\) of \(p\) obtained from Procedure~\ref{proc:shortcutting}.
    Lemma~\ref{lem:C2->C2_old}, together with the fact that \(p\) is minimal and \(n > 1\), gives us that \(\abs{p_i}_X \geq B\) for each \(i = 1, \dots, n\).
    Moreover, Lemma~\ref{lem:pathreps_are_tamable} gives that \(p\) is \((B,C'_0,\zeta,\Theta_1)\)-tamable.
    Lemmas~\ref{lem:parab_short_one_ek} and \ref{lem:parab_short_no_sides} tell us that \(m = 1\) and that \((e_1)_-\) and \((e_1)_+\) are non-terminal and non-initial vertices of \(p_1\) and \(p_n\) respectively.
    As such, we may suppose that \(f_0\) and \(f_1\) are chosen to be subpaths of the geodesics \(p_1\) and \(p_n\) respectively, so that \(e_1\) is an \(\mathcal{H}\)-component of \(\Sigma(p,\Theta_1)\).
    Moreover, Lemma~\ref{lem:h_conn_to_ek} implies that \(e_1\) is connected to \(h\) with
    \begin{equation}
         d_X(h_-,(e_1)_-) \leq \kappa_0 \quad \textrm{ and } \quad d_X(h_+,(e_1)_+) \leq \kappa_0.
    \end{equation}
    It follows that \((e_1)_- H_\nu = b_1 H_\nu = b H_\nu\).
    Denote by \(h_1, \dots, h_n\) the pairwise connected \(H_\nu\)-components of the segments \(p_1, \dots, p_n\) that constitute the instance of consecutive backtracking associated to \(e_1\).
    
    We will inductively construct a sequence of paths \(p'_1, \dots, p'_{n-1}\) (cf. \cite[Proposition 8.4]{MinMin})  with the following properties:
    \begin{itemize}
        \item \((p'_1)_- = 1\);
        \item \(\elem{p'_i} \in (Q' \cup R') \cap P \) for each \(i = 1, \dots, n-1\);
        \item \((p'_i)^{-1}_+ (p_i)_+ \in S\) for each \(i = 1, \dots, n-1\).
    \end{itemize}
    
    It is straightforward to verify using Lemma~\ref{lem:short_cusps} that \(p_1\) satisfies the hypotheses of Lemma~\ref{lem:(c3)->vertex_constr} with \(u = 1, v = (h_1)_+\), and subgroup \(b H_\nu b^{-1}\).
    Thus there is \(p'_1\) with \((p'_1)_- = 1, \elem{p'_1} \in (Q' \cup R') \cap P\), and \((p'_1)_+^{-1} (p_1)_+ \in S\).
    Similarly, for any \(1 < i \leq n-1\), we can verify that we can apply Lemma~\ref{lem:(c3)->vertex_constr} to the path \(p_i\) with \(u = (p'_{i-1})_+, v = (h_i)_+,\) and \(P = b H_\nu b^{-1}\).
    We thus obtain a path \(p'_i\) with \((p'_i)_- = (p'_{i-1})_+\), \(\elem{p'_i} \in (Q' \cup R') \cap P\), and \((p'_i)^{-1} p_i \in S\).
    
    We will write \(z = (p'_{n-1})_+ = \elem{p'_1} \dots \elem{p'_{n-1}} \in \langle Q' \cap P, R' \cap P \rangle\).
    Since \(g \in P\) and \(z \in P\), it is also true that \(z^{-1} g \in P\). 
    Moreover, 
    \begin{align*}
        z^{-1} g &= z^{-1} (p_{n-1})_+ (p_{n-1})_+^{-1} g \\
            &=  ((p'_{n-1})_+^{-1} (p_{n-1})_+) \elem{p_n} \in S(Q' \cup R') = Q' \cup R',
    \end{align*}
    so that \(z^{-1} g \in (Q' \cap P) \cup (R' \cap P)\).
    Thus \(g = zz^{-1}g \in \langle Q' \cap P, R' \cap P \rangle\).
    
    Since \(g\) was an arbitrary element of \(\langle Q', R' \rangle \cap P\) with \(\abs{g}_X \geq N_1\), we have shown that all but finitely many elements of \(\langle Q', R' \rangle \cap P\) lie in \(\langle Q' \cap P, R' \cap P \rangle\).
    Now applying Lemma~\ref{lem:almost_containment_inf_subgps} shows that the former subgroup is contained in the latter.
    The reverse inclusion is immediate.
\end{proof}

To complete the proof of Theorem~\ref{thm:parab_join}, we reduce computation of the subgroup \(\langle Q', R' \rangle \cap P\), where \(P = bH_\nu b^{-1}\) is an arbitrary maximal parabolic subgroup, to the case when \(P\) belongs to a fixed finite set of maximal parabolic subgroups.
An application of Proposition~\ref{thm:parab_join_short_conj} will then yield the general result.

To do this, we observe that when \(\langle Q', R' \rangle \cap P\) is infinite, the conjugator \(b\) has a decomposition as an element of \(\langle Q', R' \rangle Q x\) or \(\langle Q', R' \rangle R x\) where \(x \in G\) has uniformly bounded length with respect to \(d_X\).
Thus, up to conjugation by an element in \(\langle Q', R' \rangle\), we need only consider intersections of the form \(\langle Q', R' \rangle \cap sx H_\nu x^{-1}s^{-1}\), where \(s \in Q \cup R\) and \(\nu \in \Nu\).

\begin{lemma}
\label{lem:pass_to_q_conj}
    There are constants \(B_3 \geq 0\) and \(\sigma \geq 1\) such that the following is true.
    
    Suppose \(Q' \leqslant Q\) and \(R' \leqslant R\) satisfy conditions \descref{C1}-\descref{C5} with constants \(B \geq B_3, C \geq C_2(1)\) (where \(C_2(1)\) is obtained from Lemma~\ref{lem:pathreps_are_tamable}) and family \(\mathcal{P} \supseteq \mathcal{P}_0\) (as in Notation~\ref{not:Pm} with \(M = 0\)).
    Let \(P = b H_\nu b^{-1}\) be a maximal parabolic subgroup, with \(\abs{b}_{X\cup\mathcal{H}}\) minimal among elements of \(bH_\nu\).
    
    Suppose that \(\langle Q', R' \rangle \cap P\) is infinite.
    Then there are elements \(s \in Q \cup R, u \in \langle Q', R' \rangle\), and \(x \in G\) such that \(b = usx\) and \(\abs{x}_X \leq \sigma\).
    In particular,
    \[
        \langle Q', R' \rangle \cap P = u \Big(\langle Q', R' \rangle  \cap sxH_\nu x^{-1}s^{-1} \Big) u^{-1},
    \]
    and \(u s x H_\nu x^{-1} s^{-1} u^{-1} = P\).
    Moreover, if \(Q' \cap P\) or \(R' \cap P\) is infinite, then we may take \(u = 1\) in the above.
\end{lemma}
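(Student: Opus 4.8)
The plan is to analyse a path representative of a long parabolic element of $\langle Q', R' \rangle \cap P$ and extract the decomposition of $b$ from its initial segment together with a shortcutting argument, precisely in the style of Proposition~\ref{thm:parab_join_short_conj}. Since $\langle Q', R' \rangle \cap P$ is infinite, pick $g \in \langle Q', R' \rangle \cap P$ with $\abs{g}_X$ very large (larger than the relevant $N_1$-type threshold, applied with $M = 0$ since we only know $\elem{p} \in P$ with $p_- = 1$ and the conjugator of $P$ has minimal $\dxh$-length). Take a minimal type path representative $p = p_1 \dots p_n$ of $g$ with $p_- = 1$; if $n = 1$ then $g \in (Q' \cup R') \cap P$ and the conclusion will follow with $u = 1$, $s \in \{1\}$ or directly, so assume $n > 1$. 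By Lemma~\ref{lem:pathreps_are_tamable}, $p$ is $(B, C'_0, \zeta, \Theta_1)$-tamable with constants coming from $\eta_0(\xi_0 \cdot 0^2)$ (i.e. the $M = 0$ specialisation), and by Lemmas~\ref{lem:parab_short_one_ek} and \ref{lem:parab_short_no_sides}, its $\Theta_1$-shortcutting $\Sigma(p, \Theta_1) = f_0 e_1 f_1$ has exactly one jump edge $e_1$, with $(e_1)_-$ a non-terminal vertex of $p_1$ and $(e_1)_+$ a non-initial vertex of $p_n$.

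Next I would locate $b$ geometrically. By Lemma~\ref{lem:h_conn_to_ek} (applied with $M = 0$, $b = 1$ in its hypotheses — we only need $\elem{p} \in H_\nu$-conjugate), the edge $h$ with $h_- = 1$ (or $h_- = b_1$, the minimal-length representative, which equals $1 \cdot$ something close to $1$) and $g = b h b^{-1}$ is connected to $e_1$, with $d_X(h_\pm, (e_1)_\pm) \leq \kappa_0$. Hence $(e_1)_- H_\nu = b H_\nu$, so $(e_1)_-$ is an element of $bH_\nu$ lying $\kappa_0$-close (in $d_X$) to $h_- = b_1$. Now $(e_1)_-$ is a vertex of the geodesic segment $p_1$, and $\elem{p_1} \in Q' \cup R'$; write the prefix of $p$ up to $(e_1)_-$ as a single segment. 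Since $(e_1)_-$ is a vertex of the first segment $p_1$ only, $(e_1)_- = \elem{p_1'}$ where $p_1'$ is an initial subpath of $p_1$; its label lies in $Q'$ or in $R'$ (it is a prefix of a geodesic whose label is in $Q' \cup R'$, but this need not be in the subgroup — instead, use that $(e_1)_-$ is a node-adjacent vertex). More robustly: set $s = 1$ if $(e_1)_- $ is reached before the first node, and in general track the product of full segment labels $\elem{p_1} \cdots \elem{p_j}$ preceding the segment containing $(e_1)_-$; this product is an element $u \in \langle Q', R' \rangle$, and the remaining prefix inside $p_{j+1}$ contributes $s \in Q' \cup R' \subseteq Q \cup R$ times an $x$ with $\abs{x}_X$ bounded by the $d_X$-distance from the relevant node to $(e_1)_-$, which — since the only jump is $e_1$ and $(e_1)_-$ is in $p_1$ — forces $j = 0$, i.e. $(e_1)_- $ lies in the first segment, so $u = 1$, $s = \elem{p_1'}$ for a prefix $p_1'$ of $p_1$, and $x = (s)^{-1}(e_1)_-$ has $\abs{x}_X \leq \kappa_0 + \abs{b_1^{-1} b}_X \leq \kappa_0 + 2\xi_0 M^2$ bounded; set $\sigma$ to be this bound.

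Then $b \in (e_1)_- H_\nu = u s x H_\nu$ after absorbing the $H_\nu$-ambiguity into $x$ (replacing $x$ by $x$ times a bounded $H_\nu$-element, using minimality of $\abs{b}_{X\cup\mathcal{H}}$ and Lemma~\ref{lem:len_of_subgeodesic} to keep $\abs{x}_X$ bounded); so $b = usx$ with $\abs{x}_X \leq \sigma$, $s \in Q \cup R$, $u \in \langle Q', R'\rangle$. The identity $\langle Q', R'\rangle \cap P = u\big(\langle Q', R'\rangle \cap sxH_\nu x^{-1} s^{-1}\big)u^{-1}$ is then a formal consequence of $P = b H_\nu b^{-1} = u s x H_\nu x^{-1} s^{-1} u^{-1}$ together with $u \in \langle Q', R'\rangle$ (conjugation by $u$ preserves the subgroup). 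Finally, if $Q' \cap P$ or $R' \cap P$ is infinite, pick $g$ inside that subgroup; then $g$ is already a single segment ($n = 1$), $p_1$ is a geodesic for $g \in Q' \cap P$ (or $R' \cap P$), no shortcutting jump occurs in a separate segment, $(e_1)_-$ lands in $p_1$ with $u = 1$, and the same computation gives $b = sx$ with $s \in Q' \cup R'$, $u = 1$.

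The main obstacle is the bookkeeping in the middle paragraph: carefully arguing that the single shortcutting edge $e_1$ being an $\mathcal H$-component with $(e_1)_-$ a non-terminal vertex of the \emph{first} segment $p_1$ forces the "$\langle Q', R'\rangle$-part" $u$ of $b$ to be trivial at the level of the initial prefix, so that $b$ itself (not merely a conjugate of $P$) decomposes as $s x$ up to an element of $\langle Q', R'\rangle$ coming from the \emph{rest} of $p$ — this requires tracking how the parabolic element $g = bhb^{-1}$ wraps the path and using Lemma~\ref{lem:short_cusps} to control the cusp excursions, exactly as in the proof of Proposition~\ref{thm:parab_join_short_conj}. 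Once the decomposition $b = usx$ is in hand, the displayed conjugation identity and the "moreover" clause are routine.
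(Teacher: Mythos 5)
There is a genuine gap, and it comes from your treatment of the conjugator $b$. You repeatedly invoke the machinery of Section~5 ``with $M = 0$'', reasoning that since $p_- = 1$ and $\abs{b}_{X\cup\mathcal{H}}$ is minimal in $bH_\nu$, the conjugator is effectively trivial. But the lemma concerns an \emph{arbitrary} maximal parabolic $P = bH_\nu b^{-1}$: minimality of $\abs{b}_{X\cup\mathcal{H}}$ within the coset gives no bound whatsoever on $\abs{b}_X$. The hypotheses of Lemmas~\ref{lem:h_conn_to_ek}, \ref{lem:parab_short_one_ek} and \ref{lem:parab_short_no_sides} all require $\abs{b}_X \leq M$, so you would have to take $M \geq \abs{b}_X$, and then the constants $\eta_0(M)$, $\Theta_1$, $E_2$ — and hence your candidate $B_3$ and $\sigma$ — would depend on $b$, which destroys the uniformity the lemma needs. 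The paper's proof avoids exactly this: it applies only Lemma~\ref{lem:h_conn_to_ek}, with $\eta = 1$, exploiting the fact that $\kappa_0 = \kappa_0(C)$ is independent of $M$ while the threshold $N_1 = N_1(\Theta_1,\abs{b}_X)$ is allowed to depend on $b$ (harmless, since the infinite intersection always contains some $g$ with $\abs{g}_X \geq N_1$). In particular it does \emph{not} conclude $m=1$, nor that $(e_1)_-$ lies in $p_1$.

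This error propagates into a false conclusion: you argue that the single jump edge forces $(e_1)_-$ into the first segment and hence $u = 1$ always. That cannot be right — the ``Moreover'' clause exists precisely because $u$ is generally nontrivial, and is only guaranteed trivial when $Q' \cap P$ or $R' \cap P$ is infinite. The correct argument lets $h$ be connected to $e_k$ whose initial vertex $(e_k)_-$ coincides with $(h_i)_-$ for an $H_\nu$-component $h_i$ of an arbitrary segment $p_i$; one then sets $u = (p_i)_- \in \langle Q', R' \rangle$ (the product of the labels of $p_1, \dots, p_{i-1}$) and uses the relative quasiconvexity of $Q$ (or $R$) to produce $s \in Q$ (or $R$) with $d_X(us, (h_i)_-) \leq \varepsilon$, whence $x = s^{-1}u^{-1}b$ satisfies $\abs{x}_X \leq \kappa_0 + \varepsilon$. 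Your alternative of taking $s$ to be the label of a geodesic prefix of $p_1$ does not work either, as you yourself note that such a label need not lie in $Q$ or $R$; quasiconvexity is the missing ingredient. The final conjugation identity and the ``Moreover'' clause are indeed routine once the decomposition $b = usx$ is correctly established.
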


\begin{figure}[ht]
    \centering
    \includegraphics{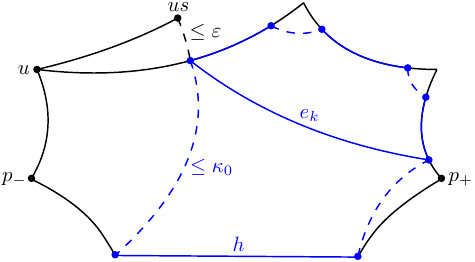}
    \caption{Illustration of Lemma~\ref{lem:pass_to_q_conj}.}
    \label{fig:quad}
\end{figure}

\begin{proof}
    We define the following notation for this proof:
    \begin{itemize}
        \item \(C'_0 = \max\{C_0, 14 \delta\}\), where \(C_0\) is the constant of Lemma~\ref{lem:bdd_inn_prod};
        \item \(\zeta = \zeta(1), \Theta_1 = \Theta_1(1), B_1 = B_1(1),\) and \(C_2 = C_2(1)\) are the constants of Lemma~\ref{lem:pathreps_are_tamable};
        \item \(E_1 = E_1(\Theta_1)\) is the constant of Lemma~\ref{lem:double_shortcutting};
        \item \(N_1 = N_1(\Theta_1,\abs{b}_X)\) and \(\kappa_0 = \kappa_0(C'_0)\) are obtained from Lemma~\ref{lem:h_conn_to_ek};
        \item \(B_3 = \max\{B_1, E_1\}\) and \(\sigma = \max\{\kappa_0 + \varepsilon, 1\}\).
    \end{itemize}
    
    Since \(\langle Q', R' \rangle \cap P\) is infinite, there is an element \(g \in \langle Q', R' \rangle\) with \(\abs{g}_X \geq N_1\).
    Let \(p = p_1 \dots p_n\) be a minimal type path representative of \(g\) (as an element of \(U \langle Q', R' \rangle V\), with \(U = V = \{1\}\)) with \(p_- = 1\), and let \(h\) be the \(H_\nu\)-edge of \(\relcay\) such that \(h_- = b\) and \(g = \elem{p} = b \elem{h} b^{-1}\).
    
    By Proposition~\ref{lem:pathreps_are_tamable}, \(p\) is \((B,C'_0,\zeta,\Theta_1)\)-tamable. 
    Denote by \(\Sigma(p,\Theta_1) = f_0 e_1 f_1 \dots f_{m-1} e_m f_m\) the \(\Theta_1\)-shortcutting of \(p\) obtained from Procedure~\ref{proc:shortcutting}. 
    Then by Lemma~\ref{lem:h_conn_to_ek}, \(h\) is connected to \(e_k\) for some \(k = 1, \dots, m\) with 
    \begin{equation}
    \label{eq:hek_bd}
        d_X(b,(e_k)_-) = d_X(h_-,(e_k)_-) \leq \kappa_0.
    \end{equation}
    Let \(h_i\) be the \(H_\nu\)-component of a segment \(p_i\) of \(p\), with \((h_i)_- = (e_k)_-\).
    
    Take \(u = (p_i)_- \in \langle Q', R' \rangle\).
    If \(\elem{p_i} \in Q'\) then by quasiconvexity of \(Q\), there is an element \(s \in Q\) such that 
    \begin{equation}
    \label{eq:us-1hi_bd}
        d_X(us,(h_i)_-) \leq \varepsilon.    
    \end{equation}
    Otherwise \(\elem{p_i} \in R'\), whence by the quasiconvexity of \(R\), there is an element \(s \in R\) satisfying the same inequality.
    In either case, take \(x = s^{-1} u^{-1} b\) and observe that combining (\ref{eq:hek_bd}) with (\ref{eq:us-1hi_bd}) gives 
    \[
        \abs{x}_X = d_X(b,us) \leq  d_X(b,(h_i)_-) + d_X(us,(h_i)_-)\leq \kappa_0 + \varepsilon \leq \sigma.
    \]
    
    It is immediate from the definition of \(x\) that \(b = usx\), whence \(u s x H_\nu x^{-1} s^{-1} u^{-1} = b H_\nu b^{-1} = P\).
    It follows that
    \begin{align*}
        u \Big(\langle Q', R' \rangle \cap sxH_\nu x^{-1}s^{-1}\Big) u^{-1} &= u\langle Q', R' \rangle u^{-1} \cap usxH_\nu x^{-1}s^{-1}u^{-1} \\
            &= \langle Q', R' \rangle \cap P,
    \end{align*}
    as required.

    Finally, note that when \(Q' \cap P\) is infinite we may take \(g \in Q' \cap P\) with \(\abs{g}_X \geq N_1\), in which case \(p\) consists of a single geodesic segment.
    Following the above argument in this case gives that \(h_i\) is an \(H_\nu\)-component of this segment and \(u = p_- = 1\).
    The case with \(R' \cap P\) infinite is identical.
\end{proof}

When \(s\) is not an element of \(Q'\) or \(R'\), the element \(sx\) obtained above cannot be further decomposed in a useful way, but it does fit into a sort of dichotomy. 
We find that either \(\langle Q', R' \rangle\) intersects \(sxH_\nu x^{-1} s^{-1}\) in an elementary way, or that \(sx\) is an element of \(Q'yH_\nu\) or \(R'yH_\nu\), where \(y\) has uniformly bounded length with respect to \(d_X\).
This completes the reduction (up to \(\langle Q', R' \rangle\)-conjugacy) of computing \(\langle Q', R' \rangle \cap P\) from arbitrary maximal parabolic \(P \leqslant G\) to finitely many conjugates of \(H_\nu\), for \(\nu \in \Nu\).

\begin{proposition}
\label{prop:back_to_Q'R'}
    There are constants \(B_4, C_4 \geq 0\) and \(\tau \geq \sigma\) (where \(\sigma\) is the constant of Lemma~\ref{lem:pass_to_q_conj}) such that if \(Q' \leqslant Q\) and \(R' \leqslant R\) satisfy \descref{C1}-\descref{C5} with constants \(B \geq B_4, C \geq C_4\) and family \(\mathcal{P} \supseteq \mathcal{P}_0\) (as in Notation~\ref{not:Pm}) then the following is true.
    
    Let \(s \in Q \cup R\), \(x \in G\) with \(\abs{x}_X \leq \sigma\), and \(\nu \in \Nu\). If \(\langle Q', R' \rangle \cap sx H_\nu x^{-1}s^{-1}\) is infinite then one of the following holds:
    \begin{itemize}
        \item \(s \in Q' \cup R'\) and \(\langle Q', R' \rangle \cap sx H_\nu x^{-1}s^{-1} = s \langle Q' \cap x H_\nu x^{-1}, R'\cap x H_\nu x^{-1} \rangle s^{-1}\), or
        \item \(s \in Q\) and \(\langle Q', R' \rangle \cap sx H_\nu x^{-1}s^{-1} = Q' \cap sx H_\nu x^{-1}s^{-1}\), or
        \item \(s \in R\) and \(\langle Q', R' \rangle \cap sx H_\nu x^{-1}s^{-1} = R' \cap sx H_\nu x^{-1}s^{-1}\), or
        \item there are elements \(t \in Q' \cup R'\) and \(y \in G\), with \(\abs{y}_X \leq \tau\), such that \(sx \in tyH_\nu\). In particular,
            \[
                \langle Q', R' \rangle \cap sx H_\nu x^{-1}s^{-1} = t \Big(\langle Q', R' \rangle \cap y H_\nu y^{-1}\Big) t^{-1}
            \]
            and \(t y H_\nu y^{-1} t^{-1} = sx H_\nu x^{-1}s^{-1}\).
    \end{itemize}
\end{proposition}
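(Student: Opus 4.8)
The plan is to split according to whether $s$ lies in $Q'\cup R'$.

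Suppose first that $s\in Q'\cup R'$. Then $s\in\langle Q',R'\rangle$, so conjugation by $s$ preserves $\langle Q',R'\rangle$ and
\[
    \langle Q',R'\rangle\cap sxH_\nu x^{-1}s^{-1}=s\bigl(\langle Q',R'\rangle\cap xH_\nu x^{-1}\bigr)s^{-1}.
\]
Since $\abs{x}_X\le\sigma$, the parabolic $xH_\nu x^{-1}$ lies in $\mathcal{P}_\sigma$, and $\langle Q',R'\rangle\cap xH_\nu x^{-1}$ is infinite, being conjugate to the given intersection. Taking $B_4\ge B_2(\sigma)$ and $C_4\ge C_3(\sigma)$, Proposition~\ref{thm:parab_join_short_conj} gives $\langle Q',R'\rangle\cap xH_\nu x^{-1}=\langle Q'\cap xH_\nu x^{-1},R'\cap xH_\nu x^{-1}\rangle$, which after conjugating back by $s$ is the first alternative. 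From now on $s\notin Q'\cup R'$; by symmetry we may assume $s\in Q$, and we aim for the second or fourth alternative. Set $P=sxH_\nu x^{-1}s^{-1}$ and assume, for contradiction with the fourth alternative, that there is no $t\in Q'\cup R'$ and $y$ with $\abs{y}_X\le\tau$ satisfying $sx\in tyH_\nu$, the value of $\tau$ to be fixed along the way. It suffices to show that every $g\in\langle Q',R'\rangle\cap P$ with $\abs{g}_X$ large enough lies in $Q'$; granting this, Lemma~\ref{lem:almost_containment_inf_subgps} yields $\langle Q',R'\rangle\cap P=Q'\cap P$, the second alternative. Here ``large enough'' is allowed to depend on $\abs{s}_X$, which is harmless since Lemma~\ref{lem:almost_containment_inf_subgps} tolerates finitely many exceptions.

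Fix such a $g$ and take a minimal-type path representative $p=p_1\cdots p_n$ of $g$ as an element of $\langle Q',R'\rangle$, with $p_-=1$ and $p_+=g$. For $B_4,C_4$ large, $p$ is tamable by Lemma~\ref{lem:pathreps_are_tamable}, so its shortcutting $\Sigma(p,\Theta_1)=f_0e_1f_1\cdots e_mf_m$ is quasigeodesic without backtracking from $1$ to $g$. Writing $g=(sx)\tilde h(sx)^{-1}$ with $\tilde h\in H_\nu$, the $H_\nu$-edge $h_0$ with $(h_0)_-=sx$ and $\elem{h_0}=\tilde h$ has $\abs{\tilde h}_X\ge\abs{g}_X-2\abs{s}_X-2\sigma$, which is as large as we wish. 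Rerunning the fellow-travelling arguments of Lemmas~\ref{lem:comp_of_parab_close_to_geodesic} and~\ref{lem:h_conn_to_ek} — with the parameter $M$ there replaced by $\abs{s}_X+\sigma$, which only enlarges the constants — shows that $h_0$ is connected to some $e_k$, recording an instance of consecutive backtracking along $H_\nu$-components $h_i,\dots,h_j$ of segments $p_i,\dots,p_j$ of $p$, all of whose vertices lie in the coset $sxH_\nu$.

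The crucial step is a case analysis on the position of this backtracking. If $n\ge 2$, then, using that Lemma~\ref{lem:short_cusps} forces one of the connecting components to start within a bounded $d_X$-distance of a node $(p_\ell)_-$, one can arrange that node to be a single factor $t=\elem{p_\ell}\in Q'\cup R'$ (the first or last node of $p$, depending on which end of the backtracking is involved); then $sx\in t\,y_0\,H_\nu$ with $\abs{y_0}_X$ bounded, and taking $\tau$ above that bound contradicts the failure of the fourth alternative. The same argument, localised to individual segments $p_\ell$ (whose $X$-length exceeds $B$ by condition \descref{C2} and hence dwarfs the bounded errors, so that Lemma~\ref{lem:(c3)->vertex_constr} applies with $M$ bounded independently of $s$) rules out the backtracking failing to reach either end of $p$. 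Hence $n=1$, so $g=\elem{p_1}\in Q'\cup R'$. If $g\in Q'$ then $g\in Q'\cap P$ and we are done. If $g\in R'\setminus S$, then $R'\cap P$ is infinite; since $R'$ is relatively quasiconvex, hence relatively hyperbolic with respect to its own maximal parabolics, $R'\cap P$ is $R'$-conjugate to a member of a fixed finite set of parabolics of $R'$, and (using that distinct maximal parabolics of $G$ have finite intersection, via Proposition~\ref{prop:int_of_parabs_conj_to_short}) this exhibits $sx\in tyH_\nu$ with $t\in R'$ and $\abs{y}_X\le\tau$, again contradicting the failure of the fourth alternative. Thus $g\in Q'$, completing the proof.

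The main obstacle is the unboundedness of the conjugator $sx$: the estimates of Section~\ref{sec:parab_paths} and the vertex-construction Lemma~\ref{lem:(c3)->vertex_constr} are stated only for bounded conjugators, so the argument must be arranged so that $\abs{s}_X$ enters only in the choice of how long $g$ must be, never in the final conclusion. This is achieved by localising every construction to individual segments of the path representative, whose intrinsic $X$-length is bounded below by $B$ and therefore dominates all unavoidable bounded errors. Intertwined with this is the verification that the cases $n\ge 2$ and $g\in R'\setminus S$ genuinely force the fourth alternative; these are where conditions \descref{C2}--\descref{C5} are used, in the same spirit as in the proofs of Lemmas~\ref{lem:parab_short_one_ek} and~\ref{lem:parab_short_no_sides}.
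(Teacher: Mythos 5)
There is a genuine gap, and it sits exactly where you flag "the main obstacle": the unboundedness of the conjugator $sx$. Your fix — letting the threshold on $\abs{g}_X$ depend on $\abs{s}_X$ — only addresses the harmless part of the problem. The harmful part is that the machinery of Section~\ref{sec:parab_paths} takes the conjugator bound $M$ as \emph{input to the structural constants}: $\eta_0=\eta_0(M)$ determines $\Theta_1$, which determines $B_1,C_2,E_2$, and Lemma~\ref{lem:(c3)->vertex_constr} needs $C\ge M+C_1+1$ and $\mathcal{P}\supseteq\mathcal{P}_M$. These feed into $B_4$, $C_4$, $\tau$ and the family $\mathcal{P}$, all of which must be fixed \emph{before} $Q'$, $R'$ are chosen and uniformly over all $s\in Q\cup R$. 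With $M=\abs{s}_X+\sigma$ unbounded, none of these constants can be chosen, and "localising to individual segments" does not rescue this: Lemmas~\ref{lem:h_conn_to_ek}--\ref{lem:parab_short_no_sides} need the global bound on the conjugator to conclude that the parabolic edge $h_0$ is connected to some $e_k$ at all (their proofs derive a contradiction from $\abs{b}_X\le M$ via Lemma~\ref{lem:qgds_with_long_comps}). The paper's resolution is to conjugate first: it analyses $s^{-1}gs\in x_1H_\nu x_1^{-1}$, whose conjugator $x_1$ is bounded by $\xi_0\sigma^2$ independently of $s$, taking a minimal-type path representative of $s^{-1}gs$ as an element of $U\langle Q',R'\rangle V$ with $U=s^{-1}Q'$, $V=U^{-1}$ — this is precisely why Section~\ref{sec:path_reps} develops path representatives for $U\langle Q',R'\rangle V$ rather than only for $\langle Q',R'\rangle$.

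A second, related gap is the production of the element $t\in Q'\cup R'$ in the fourth alternative. Working with a plain path representative $p=p_1\cdots p_n$ of $g\in\langle Q',R'\rangle$, the backtracking component $h_i$ of a segment $p_i$ satisfies, via quasiconvexity, only $(h_i)_-\in (p_i)_-\,Q\,B_\varepsilon$ (or with $R$), i.e.\ $sx\in u q y_0 H_\nu$ with $u\in\langle Q',R'\rangle$ and $q\in Q$ — which is the \emph{input} already provided by Lemma~\ref{lem:pass_to_q_conj}, not the required conclusion $sx\in tyH_\nu$ with $t\in Q'$. In the paper the upgrade from $Q$ to $Q'$ comes from the first segment $q_1$ of the representative of $s^{-1}gs$ having label $\elem{q_1}=s^{-1}t$ with $t\in Q'$ by the definition of $U$, combined with the two-case estimate ($d_X(\elem{q_1},h'_-)\le B_4+\xi_0\sigma^2+\kappa_0$ or $\le C_1$) placing $h'_-$ uniformly close to $(q_1)_+$. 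Finally, your claim that the case analysis forces $n=1$ is not correct and is not what the paper shows: elements of $\langle Q',R'\rangle\cap P$ can have arbitrarily long alternating decompositions (that is the content of Proposition~\ref{thm:parab_join_short_conj}); the paper's dichotomy is instead between all sufficiently long $g$ lying in $Q'$ (case two) and some long $g\notin Q'$ forcing case four, with no reduction of $n$. Your handling of the subcase $g\in R'\setminus S$ via Proposition~\ref{prop:int_of_parabs_conj_to_short} is likewise not a substitute for this.
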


\begin{proof}
    In this proof we use the following notation:
    \begin{itemize}
        \item \(C_0\) and \(C_1\) are the constants of Lemmas~\ref{lem:bdd_inn_prod} and \ref{lem:short_cusps} respectively, and \(C'_0 = \max\{C_0,14\delta\}\);
        \item \(x_1 \in x H_\nu\) has minimal length with respect to \(d_{X\cup\mathcal{H}}\) and \(\abs{x_1}_X \leq \xi_0 \sigma^2\) (as in Remark~\ref{rem:no_ending_cpts}), where \(\xi_0 \geq 1\) is the constant of Lemma~\ref{lem:len_of_subgeodesic};
        \item \(\eta_0 = \eta_0(\xi_0\sigma^2)\) is the constant of Lemma~\ref{lem:parab_short_one_ek};
        \item \(\zeta = \zeta(\eta_0), \Theta_1 = \Theta_1(\eta_0)\), and \(B_1 = B_1(\eta_0)\) are the constants obtained from Lemma~\ref{lem:pathreps_are_tamable};
        \item \(B_4 = \max\{E_1(\Theta_1), E_2(\Theta_1, \xi_0\sigma^2), B_1, B_2(\sigma)\}\) and \(C_4 = \max\{C_2(\eta_0), C_3(\sigma)\}\), where \(E_1(\Theta_1)\) is the constant of Lemma~\ref{lem:double_shortcutting}, \(E_2(\Theta_1, \xi_0\sigma^2)\) is the constant of Lemma~\ref{lem:parab_short_no_sides}, \(C_2(\eta_0)\) is the constant of Lemma~\ref{lem:pathreps_are_tamable}, and \(B_2(\sigma)\) and \(C_3(\sigma)\) are the constants of Proposition~\ref{thm:parab_join_short_conj};
        \item \(\kappa_0 = \kappa_0(C'_0)\) and \(N_1 = N_1(\Theta_1,\xi_0\sigma^2)\) are the constants of Lemma~\ref{lem:h_conn_to_ek};
        \item \(\tau = \max\{C_1, B_4 + \xi_0 \sigma^2 + \kappa_0\}\).
    \end{itemize}

    If \(s \in Q' \cup R'\), then \(s^{-1} \langle Q', R' \rangle s = \langle Q', R' \rangle\) so that
    \[
        \langle Q', R' \rangle \cap sx H_\nu x^{-1}s^{-1} = s \Big(\langle Q', R' \rangle \cap x H_\nu x^{-1}\Big) s^{-1}.
    \]
    Applying Theorem~\ref{thm:parab_join_short_conj} gives us that \(\langle Q', R' \rangle \cap x H_\nu x^{-1} = \langle Q' \cap x H_\nu x^{-1}, R' \cap x H_\nu x^{-1} \rangle\).
    Combining these two equalities gives the first case of the proposition.
    Thus we may assume \(s \notin Q' \cup R'\) for the remainder of the proof.

    If \(s \in Q\), we define \(U = s^{-1}Q'\), and otherwise set \(U = s^{-1}R'\).
    In either case let \(V = U^{-1}\).
    Throughout this proof we will assume that \(s \in Q\), with the case that \(s \in R\) being identical.
    Note that these two cases are mutually exclusive, for otherwise we would have \(s \in Q \cap R = Q' \cap R'\) by \descref{C1}, contradicting our assumption.
    
    If \(g \in Q'\) for all \(g \in \langle Q', R' \rangle \cap sx H_\nu x^{-1}s^{-1}\) with \(\abs{g}_X \geq N_1 + 2 \abs{s}_X\), then by Lemma~\ref{lem:almost_containment_inf_subgps} we have \(\langle Q', R' \rangle \cap sx H_\nu x^{-1}s^{-1} = Q' \cap sx H_\nu x^{-1}s^{-1}\) and we are done.
    Suppose to the contrary, then, that there exists some element \(g \in \langle Q', R' \rangle \cap sx H_\nu x^{-1}s^{-1}\) with \(\abs{g}_X \geq N_1 + 2\abs{s}_X\) such that \(g \notin Q'\).
    Then \(s^{-1}gs \notin s^{-1}Q's\), and so \(s^{-1}gs\) (as an element of \(U \langle Q', R' \rangle V\)) has a minimal type path representative \(p = q_1 p _1 \dots p_n q_2\) with \(n > 0\) and \(p_- = 1\). 
    Moreover, we have \(\abs{s^{-1}gs}_X \geq N_1\).
    
    Since \(x_1 H_\nu = x H_\nu\) and \(\elem{p} \in x H_\nu x^{-1}\), we have \(\elem{p} \in x_1 H_\nu x_1^{-1}\) also.
    Let \(h\) be the \(H_\nu\)-edge of \(\relcay\) with \(h_- = x_1\) and \(s^{-1}gs = \elem{p} = x_1 \elem{h} x_1^{-1}\).
    Denote the \(\Theta_1\)-shortcutting of \(p\) by \(\Sigma(p,\Theta_1) = f_0 e_1 f_1 \dots f_{m-1} e_m f_m\).
    
    By Lemma~\ref{lem:pathreps_are_tamable}, the path \(p\) is \((B_4,C'_0,\zeta,\Theta_1)\)-tamable. 
    Lemma~\ref{lem:h_conn_to_ek} tells us that \(h\) is connected to \(e_k\) for some \(k = 1, \dots, m\) and \(d_X(h_-,(e_k)_-) \leq \kappa_0\).
    Moreover, by Lemma~\ref{lem:parab_short_one_ek} \(k = m = 1\), so that \(\Sigma(p,\Theta_1) = f_0 e_1 f_1\) and 
    \begin{equation}
    \label{eq:h0_e1_close}
        d_X(h_-,(e_1)_-) \leq \kappa_0.
    \end{equation}
    Applying Lemma~\ref{lem:parab_short_no_sides}, we see that \((e_1)_-\) is a non-terminal vertex of \(q_1p_1\) and \((e_1)_+\) is a non-initial vertex of \(p_nq_2\).
    In any of the cases, \(p_1\) contains an \(H_\nu\)-component \(h'\) that is connected to \(e_1\) (and is thus, in turn, connected to \(h\)).
    
    \medskip
    \underline{\emph{Case 1:}} 
        Suppose first that \((e_1)_-\) is a vertex of \(p_1\).
        As \(h'\) is the \(H_\nu\)-component of \(p_1\) connected to \(e_1\), it must be that \((e_1)_- = h'_-\).
        By Lemma~\ref{lem:parab_short_no_sides}, we must have \(d_X(1,\elem{q_1}) = \abs{q_1}_X \leq B_4\).
        Further, \(d_X(1,h_-) = \abs{x_1}_X \leq \xi_0 \sigma^2\).
        Combining these two inequalities with (\ref{eq:h0_e1_close}), we obtain
        \begin{equation}
        \label{eq:dist_s_h'_1}
            \begin{split}
                d_X(\elem{q_1}, h'_-) &\leq d_X(\elem{q_1}, 1) + d_X(1,h_-) + d_X(h_-,h'_-)\\
                &\leq B_4 + \xi_0 \sigma^2 + \kappa_0.    
            \end{split}
        \end{equation}
    
    \medskip
    \underline{\emph{Case 2:}}
        Suppose now that \((e_1)_-\) is a non-terminal vertex of \(q_1\).
        Since \((e_1)_+\) is a vertex of either \(p_n\) or \(q_2\), \(e_1\) comes from an instance of consecutive backtracking along the segments \(q_1, p_1, \dots, p_n\) and possibly \(q_2\).
        In particular, \(q_1\) contains an \(H_\nu\)-component connected to \(h'\), the component of \(p_1\) associated with this consecutive backtracking.
        By Lemma~\ref{lem:short_cusps}, 
        \begin{equation}
        \label{eq:dist_s_h'_2}
            d_X(\elem{q_1}, h'_-) = d_X((q_1)_+, h'_-) \leq C_1.
        \end{equation}
    
    \medskip

    Since \(\elem{q_1} \in s^{-1}Q'\), there is some \(t \in Q'\) such that \(\elem{q_1} = s^{-1} t\).
    Take \(y = t^{-1} s h'_-\).
    Following (\ref{eq:dist_s_h'_1}) and (\ref{eq:dist_s_h'_2}), we have \[\abs{y}_X = d_X(s^{-1} t,h'_-) = d_X(\elem{q_1},h'_-) \leq \tau.\]
    Moreover, \(s^{-1} t y = h'_- \in x_1 H_\nu = x H_\nu\) since \(h'\) and \(h\) are connected and so \(sx \in tyH_\nu\).
    It follows that
    \begin{align*}
        t \Big(\langle Q', R' \rangle \cap y H_\nu y^{-1}\Big) t^{-1}
            &= t \langle Q', R' \rangle t^{-1} \cap t y H_\nu y^{-1} t^{-1} \\
            &= \langle Q', R' \rangle \cap s x H_\nu x^{-1} s^{-1}
    \end{align*}
    as required.
    
    Finally, note that as \(\xi_0, \sigma \geq 1\), we have \(\tau \geq \xi_0 \sigma^2 \geq \sigma\) as in the statement of the proposition.
\end{proof}


\section{Proofs of main results}
\label{sec:end}

For this section, in addition to Convention~\ref{conv:GQR}, we will suppose that \(G\) is QCERF. 
We begin with a proof of a technical intermediate to Theorem~\ref{thm:parab_join_better}.

\begin{theorem}
\label{thm:parab_join_intermediate}
    There is s finite set \(\mathcal{K}\) of maximal parabolic subgroups of \(G\) and constants \(B_5, C_5 \geq 0\) such that if \(Q' \leqslant Q\) and \(R' \leqslant R\) satisfy conditions \descref{C1}-\descref{C5} with \(B \geq B_5, C \geq C_5\), and family \(\mathcal{P} \supseteq \mathcal{P}_\tau\) (as in Notation~\ref{not:Pm}), where \(\tau\) is the constant obtained from Proposition~\ref{prop:back_to_Q'R'}, then the following is true.
    
    Suppose that \(P\) is such that \(\langle Q', R' \rangle \cap P\) infinite.
    Then there is an element \(u \in \langle Q', R' \rangle\) such that either
    \begin{enumerate}[label=(\roman*)]
        \item \(\langle Q', R' \rangle \cap P = u Q' u^{-1} \cap P\) or,
        \item \(\langle Q', R' \rangle \cap P = u R' u^{-1} \cap P\) or,
        \item \(\langle Q', R' \rangle \cap P =  u\langle Q' \cap K, R' \cap K \rangle u^{-1}\), where \(K = u^{-1} P u\) is an element of \(\mathcal{K}\).
    \end{enumerate}
    
    Moreover, if either \(Q' \cap P\) or \(R' \cap P\) is infinite, then we may take \(u = 1\) in cases (i) and (ii), and \(u \in Q' \cup R'\) in case (iii).
\end{theorem}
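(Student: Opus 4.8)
The plan is to chain together the three main results of Section~\ref{sec:short_conj}. I would first fix the data of the statement by setting $\mathcal{K} = \mathcal{P}_\tau$ (in the sense of Notation~\ref{not:Pm}), which is finite and depends only on $G$, $Q$, $R$ through the constant $\tau$ of Proposition~\ref{prop:back_to_Q'R'}, together with $B_5 = \max\{B_3, B_4, B_2(\tau)\}$ and $C_5 = \max\{C_2(1), C_4, C_3(\tau)\}$, where $B_3$ is the constant of Lemma~\ref{lem:pass_to_q_conj}, $B_4, C_4$ are the constants of Proposition~\ref{prop:back_to_Q'R'}, $B_2(\tau), C_3(\tau)$ are the constants of Proposition~\ref{thm:parab_join_short_conj} applied with $M = \tau$, and $C_2(1)$ is the constant of Lemma~\ref{lem:pathreps_are_tamable}. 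Since any family $\mathcal{P} \supseteq \mathcal{P}_\tau$ also contains $\mathcal{P}_0$, these choices make the hypotheses of all three cited results available, using as usual that conditions \descref{C3}-\descref{C5} for a family imply the same conditions for any subfamily.

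Given $P$ with $\langle Q',R'\rangle \cap P$ infinite, write $P = bH_\nu b^{-1}$ with $\abs{b}_{X\cup\mathcal{H}}$ minimal in $bH_\nu$ and apply Lemma~\ref{lem:pass_to_q_conj} to obtain $s \in Q\cup R$, $u \in \langle Q',R'\rangle$ and $x\in G$ with $\abs{x}_X \leq \sigma$, $b = usx$, $P = usxH_\nu x^{-1}s^{-1}u^{-1}$, and $\langle Q',R'\rangle \cap P = u\bigl(\langle Q',R'\rangle \cap sxH_\nu x^{-1}s^{-1}\bigr)u^{-1}$, with $u = 1$ when $Q'\cap P$ or $R'\cap P$ is infinite. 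The conjugate $\langle Q',R'\rangle \cap sxH_\nu x^{-1}s^{-1}$ is then infinite, so Proposition~\ref{prop:back_to_Q'R'} applies and returns one of four alternatives, which I would match to the conclusions as follows. If $s\in Q$ with intersection $Q'\cap sxH_\nu x^{-1}s^{-1}$, conjugating by $u$ gives $\langle Q',R'\rangle \cap P = uQ'u^{-1}\cap P$, which is conclusion (i); the case $s\in R$ gives (ii). If $s\in Q'\cup R'$ with intersection $s\langle Q'\cap K, R'\cap K\rangle s^{-1}$ for $K = xH_\nu x^{-1}$, then $\langle Q',R'\rangle \cap P = (us)\langle Q'\cap K, R'\cap K\rangle(us)^{-1}$ with $us\in \langle Q',R'\rangle$ and $K = (us)^{-1}P(us) \in \mathcal{P}_\sigma \subseteq \mathcal{K}$, which is conclusion (iii). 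Finally, if there are $t\in Q'\cup R'$ and $y\in G$ with $\abs{y}_X \leq \tau$, $tyH_\nu y^{-1}t^{-1} = sxH_\nu x^{-1}s^{-1}$ and intersection $t\bigl(\langle Q',R'\rangle \cap yH_\nu y^{-1}\bigr)t^{-1}$, then $K := yH_\nu y^{-1} \in \mathcal{P}_\tau = \mathcal{K}$, so Proposition~\ref{thm:parab_join_short_conj} (applied with $M = \tau$, using $B \geq B_2(\tau)$, $C \geq C_3(\tau)$ and $\mathcal{P} \supseteq \mathcal{P}_\tau$) gives $\langle Q',R'\rangle \cap K = \langle Q'\cap K, R'\cap K\rangle$, and conjugating by $ut \in \langle Q',R'\rangle$ yields $\langle Q',R'\rangle \cap P = (ut)\langle Q'\cap K, R'\cap K\rangle(ut)^{-1}$ with $K = (ut)^{-1}P(ut)$, again conclusion (iii).

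For the \emph{moreover} clause, when $Q'\cap P$ or $R'\cap P$ is infinite we have $u = 1$, so in the first two alternatives of Proposition~\ref{prop:back_to_Q'R'} the relevant conjugator is exactly $1$ and we obtain (i) and (ii) with $u = 1$, while in the remaining two alternatives it is $s$ or $t$, both lying in $Q'\cup R'$, giving (iii) with $u \in Q'\cup R'$.

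I expect the only real difficulty to be bookkeeping rather than geometry: checking that the maxima chosen for $B_5$, $C_5$ and the family $\mathcal{P}_\tau$ legitimise every application, carefully composing the conjugators ($u$ from Lemma~\ref{lem:pass_to_q_conj} with $s$ or $t$ from Proposition~\ref{prop:back_to_Q'R'}) so that they remain in $\langle Q',R'\rangle$ and collapse to elements of $Q'\cup R'$ exactly when $u = 1$, and confirming in conclusion (iii) that the subgroup $K$ really is $u^{-1}Pu$ for the composite conjugator $u$ and lies in the fixed finite set $\mathcal{K}$. All the geometric content has already been extracted by the three cited results, so no new estimates should be required.
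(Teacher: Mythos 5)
Your proposal is correct and follows essentially the same route as the paper's own proof: apply Lemma~\ref{lem:pass_to_q_conj}, then Proposition~\ref{prop:back_to_Q'R'}, compose the conjugators case by case, and invoke Proposition~\ref{thm:parab_join_short_conj} with \(M=\tau\) in the final alternative, with the same choices of \(B_5\) and \(C_5\). The only (harmless) deviation is taking \(\mathcal{K}=\mathcal{P}_\tau\) rather than the slightly smaller set \(\{yH_\nu y^{-1} \mid \nu\in\Nu,\ \abs{y}_X\le\tau\}\) used in the paper; both are finite and contain every \(K\) that arises.
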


\begin{proof}
    We define \(B_5 = \max\{B_2(\tau),B_3,B_4\}\) and \(C_5 = \max\{C_2(1), C_3(\tau), C_4\}\), where \(B_2(\tau)\) and \(C_3(\tau)\) are the constants of Theorem~\ref{thm:parab_join_short_conj}, \(B_3\) is the constant of Lemma~\ref{lem:pass_to_q_conj}, \(C_2(1)\) is the constant from Lemma~\ref{lem:pathreps_are_tamable}, and \(B_4\) and \(C_4\) are the constants of Proposition~\ref{prop:back_to_Q'R'}.
    Take \(\mathcal{K}\) to be the set \(\{y H_\nu y^{-1} \in G \, | \, \nu \in \Nu, \abs{y}_X \leq \tau\}\).
    Let \(Q' \leqslant Q\) and \(R' \leqslant R\) be subgroups satisfying conditions \descref{C1}-\descref{C5} with constants \(B \geq B_5, C \geq C_5\), and finite family \(\mathcal{P} \supseteq \mathcal{P}_\tau\).
    Let \(P = bH_\nu b^{-1}\) be such that \(\langle Q', R' \rangle \cap P\) is infinite and \(\abs{b}_{X\cup\mathcal{H}}\) minimal among elements of \(bH_\nu\).
    
    By Lemma~\ref{lem:pass_to_q_conj}, there is \(v \in \langle Q', R' \rangle\) and \(s \in Q \cup R\) such that
    \begin{equation}
    \label{eq:int_with_parab}
        \langle Q', R' \rangle \cap P = v \Big(\langle Q', R' \rangle \cap sx H_\nu x^{-1}s^{-1}\Big) v^{-1},
    \end{equation}
    where \(\nu \in \Nu\) and \(x \in G\) with \(\abs{x}_X \leq \sigma\) and \(b = vsx\). 
    It follows that
    \begin{equation}
    \label{eq:usHsu_P}
        v s x H_\nu x^{-1} s^{-1} v^{-1} = bH_\nu b^{-1} = P.
    \end{equation}
    Note that when \(Q' \cap P\) or \(R' \cap P\) is infinite, \(v\) may be taken to be trivial.
    
    Applying Proposition~\ref{prop:back_to_Q'R'}, we have either that one of the following equations holds
    \begin{equation}
    \label{eq:int_of_conj_0}
        \langle Q', R' \rangle \cap sx H_\nu x^{-1}s^{-1} = s \langle Q' \cap x H_\nu x^{-1}, R'\cap x H_\nu x^{-1} \rangle s^{-1} \quad \textrm{with } s \in Q' \cup R',
    \end{equation}
    or that
    \begin{equation}
    \label{eq:int_of_conj_1}
        \begin{split}
            \langle Q', R' \rangle \cap s x H_\nu x^{-1} s^{-1} = Q' \cap s x H_\nu x^{-1} s^{-1} \quad \textrm{with } s \in Q, \\
            \langle Q', R' \rangle \cap s x H_\nu x^{-1} s^{-1} = R' \cap s x H_\nu x^{-1} s^{-1} \quad \textrm{with } s \in R,
        \end{split}
    \end{equation}
    or finally
    \begin{equation}
    \label{eq:int_of_conj_2}
        \langle Q', R' \rangle \cap s x H_\nu x^{-1} s^{-1} = t\Big(\langle Q' ,R' \rangle \cap y H_\nu y^{-1}\Big)t^{-1}
    \end{equation}
    where \(t \in (Q' \cup R')\), \(y \in G\) with \(\abs{y}_X \leq \tau\), and \(sx \in tyH_\nu\) so that
    \begin{equation}
    \label{eq:syHys_xHx}
        ty H_\nu y^{-1} t^{-1} = sx H_\nu x^{-1}s^{-1}.
    \end{equation}

    If (\ref{eq:int_of_conj_0}) holds, then we set \(u = vs\) and \(K = xH_\nu x^{-1}\).
    The equality 
    \[
        \langle Q', R' \rangle \cap P = u\langle Q' \cap K, R' \cap K \rangle u^{-1}
    \]
    then follows immediately from (\ref{eq:int_with_parab}).
    Observe that (\ref{eq:usHsu_P}) tells us that \(K = u^{-1}Pu\).
    Moreover, noting that \(\abs{x}_X \leq \sigma \leq \tau\), we have that \(K = xH_\nu x^{-1} \in \mathcal{K}\), as required.
    
    If instead (\ref{eq:int_of_conj_1}) holds, then from (\ref{eq:int_with_parab}) we obtain
    \[
        \langle Q', R' \rangle \cap P = v\Big(Q' \cap sx H_\nu x^{-1}s^{-1} \Big) v^{-1}
    \]
    or
    \[
        \langle Q', R' \rangle \cap P = v\Big(R' \cap sx H_\nu x^{-1}s^{-1} \Big) v^{-1},
    \]
    where in either case setting \(u = v\) gives the desired conclusion by (\ref{eq:usHsu_P}).
    
    Lastly, if (\ref{eq:int_of_conj_2}) holds, then (\ref{eq:int_with_parab}) gives that
    \begin{equation}
    \label{eq:int_parab_final}
        \langle Q', R' \rangle \cap P = v t \Big(\langle Q' ,R' \rangle \cap y H_\nu y^{-1}\Big) t^{-1} v^{-1}.
    \end{equation}
    By the choice of \(B\) and \(C\), and the fact that \(\abs{y}_X \leq \tau\), we can apply Theorem~\ref{thm:parab_join_short_conj} to obtain
    \begin{equation}
    \label{eq:appl_of_short_conj_thm}
        \langle Q' ,R' \rangle \cap y H_\nu y^{-1} = \langle Q' \cap y H_\nu y^{-1}, R' \cap y H_\nu y^{-1} \rangle.
    \end{equation}
    Combining (\ref{eq:int_parab_final}) and (\ref{eq:appl_of_short_conj_thm}) we conclude that
    \[
        \langle Q', R' \rangle \cap P = v t \langle Q' \cap K, R' \cap K \rangle t^{-1} v^{-1},
    \]
    where \(K = y H_\nu y^{-1} \in \mathcal{K}\).
    We set \(u = v t\) and note that \(u \in \langle Q', R' \rangle\), since \(t \in Q' \cup R'\).
    Since \(v = 1\) when \(Q' \cap P\) or \(R' \cap P\) is infinite, we have \(u \in Q' \cup R'\) in these cases.
    Finally, observing that (\ref{eq:usHsu_P}) and (\ref{eq:syHys_xHx}) give \(K = t^{-1} sx H_\nu x^{-1}s^{-1} t = u^{-1} P u\) completes the proof.
\end{proof}

\begin{proposition}
\label{prop:parabs_infinite_index_only}
    Suppose that either \(Q\) and \(R\) have almost compatible parabolic subgroups or that each \(H_\nu\) is double coset separable.
    Then there is a family of pairs of subgroups \(Q'\) and \(R'\) as in \descref{E} satisfying the hypotheses of Theorem~\ref{thm:parab_join_intermediate} such that the following is true.
    
    Suppose that \(P \leqslant G\) is a maximal parabolic subgroup of \(G\) with \(\langle Q', R' \rangle \cap P\) infinite and \(u \in \langle Q', R' \rangle\) is the element obtained from Theorem~\ref{thm:parab_join_intermediate}. 
    If \(S \cap u^{-1}Pu\) has finite index in \(Q' \cap u^{-1}Pu\) (respectively, in \(R' \cap u^{-1}Pu\)), then \(\langle Q', R' \rangle \cap P = uR'u^{-1} \cap P\) (respectively, \(\langle Q', R' \rangle \cap P = uQ'u^{-1} \cap P\)).
    In particular, at least one of \(Q' \cap u^{-1}Pu\) or \(R' \cap u^{-1}Pu\) is infinite.
\end{proposition}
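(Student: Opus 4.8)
The plan is to run everything off the trichotomy of Theorem~\ref{thm:parab_join_intermediate}, choosing \(Q'\) and \(R'\) so that its case (iii) collapses to case (i) or (ii) whenever an almost-compatibility hypothesis is in force. First I would fix data independent of \(Q'\) and \(R'\): let \(\mathcal{K}\) and \(\tau\) be as in that theorem, and put \(\mathcal{K}_Q=\{K\in\mathcal{K}\mid[Q\cap K:S\cap K]<\infty\}\) and \(\mathcal{K}_R=\{K\in\mathcal{K}\mid[R\cap K:S\cap K]<\infty\}\), finite subsets of \(\mathcal{K}\). For \(K\in\mathcal{K}_Q\) pick representatives of the cosets of \(S\cap K\) in \(Q\cap K\); a representative lying in \(R\) would lie in \(S\cap K=Q\cap R\cap K\), so the non-trivial representatives lie in \(Q\setminus S\), and I let \(D_Q\subseteq Q\setminus S\) be the finite union of all of these over \(K\in\mathcal{K}_Q\). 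Define \(D_R\subseteq R\setminus S\) symmetrically.

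Next I would produce the family. By Proposition~\ref{prop:sep->C1-C5} when the peripherals are double coset separable, or by its counterpart available when \(Q\) and \(R\) have almost compatible parabolics, there is a family of pairs \(Q'\leqslant_f Q\), \(R'\leqslant_f R\) as in \descref{E} satisfying \descref{C1}--\descref{C5} with the constants \(B_5,C_5\) and family \(\mathcal{P}_\tau\) that Theorem~\ref{thm:parab_join_intermediate} requires; write \(L\leqslant_f G\) for the subgroup witnessing the outermost quantifier of \descref{E}. As \(S\) and \(Q\cap L\) are finitely generated relatively quasiconvex (Lemma~\ref{lem:props_of_qc_subgroups}) and \(G\) is QCERF, they are separable, so Lemma~\ref{lem:sep_from_finite_set} yields \(N_Q\leqslant_f G\) with \(S\subseteq N_Q\), \(N_Q\cap D_Q=\emptyset\) and \(N_R\leqslant_f G\) with \(Q\cap L\subseteq N_R\), \(N_R\cap D_R=\emptyset\). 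I would replace \(L\) by \(L\cap N_Q\) and, at the second stage of \descref{E}, replace each intermediate subgroup \(M\) by \(M\cap N_R\); the latter still contains \(Q\cap L'\subseteq Q\cap L\subseteq N_R\), so this again gives an \descref{E}-family satisfying \descref{C1}--\descref{C5}, now with the extra property that \(Q'\subseteq N_Q\) and \(R'\subseteq N_R\), hence \(Q'\cap D_Q=\emptyset\) and \(R'\cap D_R=\emptyset\), for every pair in it. The point of this is: for \(K\in\mathcal{K}_Q\) and \(x\in Q'\cap K\), writing \(x=gs\) with \(s\in S\cap K\) and \(g\) a chosen representative, if \(g\ne 1\) then \(g=xs^{-1}\in Q'\) (using \(S\subseteq Q'\), which holds by \descref{C1}), contradicting \(Q'\cap D_Q=\emptyset\); hence \(x=s\in S\cap K\). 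So \(Q'\cap K\subseteq S\cap K\) for all \(K\in\mathcal{K}_Q\), and symmetrically \(R'\cap K\subseteq S\cap K\) for all \(K\in\mathcal{K}_R\).

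With such \(Q',R'\) fixed, let \(P\) be a maximal parabolic with \(\langle Q',R'\rangle\cap P\) infinite and take \(u\in\langle Q',R'\rangle\), \(K=u^{-1}Pu\) from Theorem~\ref{thm:parab_join_intermediate}. Conjugating by \(u^{-1}\) and using \(u\in\langle Q',R'\rangle\), the three possible conclusions of that theorem say that \(\langle Q',R'\rangle\cap K\) equals \(Q'\cap K\), or \(R'\cap K\), or \(\langle Q'\cap K,R'\cap K\rangle\) with \(K\in\mathcal{K}\), respectively, while the target identity \(\langle Q',R'\rangle\cap P=uR'u^{-1}\cap P\) becomes \(\langle Q',R'\rangle\cap K=R'\cap K\). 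Assume \([Q'\cap K:S\cap K]<\infty\); since \([Q\cap K:Q'\cap K]<\infty\) this also gives \([Q\cap K:S\cap K]<\infty\). In the second case there is nothing to prove. In the first case \(Q'\cap K=\langle Q',R'\rangle\cap K\) is infinite, \((Q'\cap K)\cap(R'\cap K)=S\cap K\) has finite index in it, so \(R'\cap K\) is infinite and Lemma~\ref{lem:almost_containment_inf_subgps} gives \(Q'\cap K\subseteq R'\cap K\); then \(R'\cap K\subseteq\langle Q',R'\rangle\cap K=Q'\cap K\subseteq R'\cap K\), so equality holds. In the third case \(K\in\mathcal{K}\), hence \(K\in\mathcal{K}_Q\), so \(Q'\cap K\subseteq S\cap K\subseteq R'\cap K\) and \(\langle Q'\cap K,R'\cap K\rangle=R'\cap K\). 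The hypothesis \([R'\cap K:S\cap K]<\infty\) is treated identically, interchanging \(Q,R\) and using \(\mathcal{K}_R\) in place of \(\mathcal{K}_Q\). Finally, if \(Q'\cap K\) and \(R'\cap K\) were both finite, then \(S\cap K\subseteq Q'\cap K\) would be finite, so \([Q'\cap K:S\cap K]<\infty\), and the implication just proved would force \(\langle Q',R'\rangle\cap P=uR'u^{-1}\cap P=u(R'\cap K)u^{-1}\) to be finite, contradicting that \(\langle Q',R'\rangle\cap P\) is infinite.

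The hard part is the second paragraph: \descref{C1}--\descref{C5}, membership in an \descref{E}-family, and the two disjointness conditions must be secured simultaneously, and the nested shape of \descref{E} (where \(Q'\) is already determined once \(L'\) is chosen) limits how one may shrink. In the double coset separable case this is the refinement of Proposition~\ref{prop:sep->C1-C5} just sketched; in the almost compatible case it relies instead on the corresponding existence statement for \descref{C1}--\descref{C5}, which is the place where almost compatibility enters. Once the subgroups are chosen, the trichotomy argument above is routine.
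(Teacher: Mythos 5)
Your overall strategy coincides with the paper's: both proofs take the finite set \(\mathcal{K}\) from Theorem~\ref{thm:parab_join_intermediate}, collect (non-trivial) coset representatives of \(S \cap K\) in \(Q \cap K\) and \(R \cap K\) for those \(K \in \mathcal{K}\) where the index is finite, use separability of \(S\) and Lemma~\ref{lem:sep_from_finite_set} to shrink the \descref{E}-family so that \(Q'\) and \(R'\) avoid these representatives, deduce \(Q' \cap K = S \cap K\) (resp.\ \(R' \cap K = S \cap K\)) for the relevant \(K \in \mathcal{K}\), and then run the trichotomy, with case (iii) collapsing exactly as in the paper. The paper achieves the avoidance with a single subgroup \(G' \leqslant_f G\) containing \(S\) and intersects it only into \(L\) (since \(Q', R' \subseteq M' \subseteq L'\subseteq L\cap G'\) automatically), whereas you use two subgroups \(N_Q, N_R\) and additionally shrink \(M\); this is more complicated than necessary but not wrong. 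Two points, however, need attention.

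First, a genuine error: in case (i) you invoke Lemma~\ref{lem:almost_containment_inf_subgps} to conclude \(Q' \cap K \subseteq R' \cap K\) from the fact that \(S \cap K = (Q'\cap K)\cap(R'\cap K)\) has \emph{finite index} in \(Q' \cap K\). That lemma requires that all but \emph{finitely many elements} of \(Q'\cap K\) lie in \(R'\cap K\); a finite-index proper subgroup of an infinite group has infinite complement, so the hypothesis is not met, and indeed the conclusion \(Q'\cap K\subseteq R'\cap K\) would force the index to be \(1\), which is not given. In case (i) one only knows \(R'\cap K \subseteq \langle Q',R'\rangle\cap K = Q'\cap K\), hence \(R'\cap K = S\cap K\), and nothing forces \(Q'\cap K = S\cap K\) when \(K \notin \mathcal{K}\). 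The paper simply declares cases (i) and (ii) finished (reading its conclusion as the disjunction "one of the two equalities holds", which is all that is used downstream in Theorem~\ref{thm:parab_join_better} and Corollary~\ref{cor:almost_compat_parabs}); your attempt to prove the stronger "respectively" reading in case (i) does not go through as written. Second, a minor point: you justify separability of \(S\) by asserting that \(S = Q \cap R\) is finitely generated relatively quasiconvex, which Lemma~\ref{lem:props_of_qc_subgroups} does not give (intersections of finitely generated relatively quasiconvex subgroups need not be finitely generated). The paper's route is cleaner and correct: \(Q\) and \(R\) are separable by QCERF, hence so is their intersection \(S\).
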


\begin{proof}
    Let \(\mathcal{K} = \{K_1, \dots, K_n\}\) be the finite set of maximal parabolic subgroups of \(G\) provided by Theorem~\ref{thm:parab_join_intermediate}.
    If each \(H_\nu\) is double coset separable, then by Proposition~\ref{prop:sep->C1-C5}, there are subgroups \(Q' \leqslant_f Q\) and \(R' \leqslant_f R\) as in \descref{E} satisfying \descref{C1}-\descref{C5} with constants \(B_5,C_5\) (provided by Theorem~\ref{thm:parab_join_intermediate}) and finite family \(\mathcal{P}_\tau\), where \(\tau\) is the constant of Proposition~\ref{prop:back_to_Q'R'}.
    Arguing as in the proof of \cite[Theorem 14.5]{MinMin}, the same conclusion holds in the case that \(Q\) and \(R\) have almost compatible parabolics, without the double coset separability assumption.
    More precisely, there exists \(L \leqslant_f G\) with \(S \subseteq L\) such that for any \(L' \leqslant_f L\) with \(S \subseteq L'\), there is \(M \leqslant_f L'\) with \(Q \cap L' \subseteq M\) such that for any \(M' \leqslant_f M\) with \(Q \cap L' \subseteq M'\), the subgroups \(Q' = Q \cap M'\) and \(R' = R \cap M'\) satisfy these conditions.
    All such \(Q'\) and \(R'\) meet the hypotheses of Theorem~\ref{thm:parab_join_intermediate}.

    We will show that the subgroup \(L\) can be modified so that the desired conclusion holds.
    Fix some \(i = 1, \dots, n\) and note that since \(G\) is QCERF, \(Q\) and \(R\) are separable.
    Thus their intersection \(S\) is also separable.
    Whenever \(S \cap K_i \leqslant_f Q \cap K_i\), let \(U_i\) be a finite set of coset representatives of \(S \cap K_i\) in \(Q \cap K_i\), and otherwise take \(U_i\) to be the empty set.
    Similarly, whenever \(S \cap K_i \leqslant_f R \cap K_i\), let \(V_i\) be a finite set of coset representatives of \(S \cap K_i\) in \(R \cap K_i\), and otherwise take \(V_i\) to be the empty set.
    Take \(U = \bigcup_{i=1}^n (U_i \cup V_i)\), and note that \(U\) is a finite set disjoint from \(S\).
    
    Since \(S\) is separable, Lemma~\ref{lem:sep_from_finite_set} gives us \(G' \leqslant_f G\), disjoint from \(U\), with \(S \subseteq G'\).
    We take \(L_0 = L \cap G' \leqslant_f G\), noting that again \(S \subseteq L_0\) and \(L_0 \cap U = \emptyset\).
    For any \(L' \leqslant_f L_0\) with \(S \subseteq L'\), we have that \(L' \leqslant_f L\).
    Now there is \(M \leqslant_f L'\) with \(Q \cap L' \subseteq M\) as in \descref{E}. 
    Let \(M' \leqslant_f M\) be any finite index subgroup with \(Q \cap L' \subseteq M'\) and write \(Q' = Q \cap M', R' = R \cap M'\).
    By Proposition~\ref{prop:sep->C1-C5}, \(Q'\) and \(R'\) also satisfy \descref{C1}-\descref{C5}, so Theorem~\ref{thm:parab_join_intermediate} holds.

    Let \(P \leqslant G\) be a maximal parabolic subgroup of \(G\) such that \(\langle Q', R' \rangle \cap P\) is infinite, and let \(u \in \langle Q', R' \rangle\) be the element provided by Theorem~\ref{thm:parab_join_intermediate}.
    If either of the first two cases of the theorem hold, then we are done.
    Otherwise there is \(i = 1, \dots, n\) such that
    \[
        \langle Q', R' \rangle \cap P = u \langle Q' \cap K_i, R' \cap K_i \rangle u^{-1},
    \]
    with \(K_i = u^{-1} P u\).
    Suppose that \(S \cap K_i \leqslant_f Q' \cap K_i\).
    Then by the constructions of \(G'\) and \(Q'\) we have
    \[
        S \cap K_i \subseteq Q' \cap K_i = Q \cap M' \cap K_i \subseteq Q \cap G' \cap K_i = S \cap K_i,
    \]
    so that \(Q' \cap K_i = S \cap K_i\).
    It follows that \(\langle Q' \cap K_i, R' \cap K_i \rangle = R' \cap K_i\).
    Thus 
    \[
        \langle Q', R' \rangle \cap P = u\langle Q' \cap K_i, R' \cap K_i \rangle u^{-1} = u(R' \cap K_i)u^{-1} = uR'u^{-1} \cap P,
    \]
    as required.
    An identical argument (with the roles of \(Q'\) and \(R'\) swapped) gives us that \[\langle Q', R' \rangle \cap P = uQ'u^{-1} \cap P\] when \(S \cap K_i \leqslant_f R' \cap K_i\).

    To conclude, note that if \(Q' \cap u^{-1}Pu\) is finite, then \(S \cap u^{-1}Pu \leqslant_f Q' \cap u^{-1} P u\), whence \(uR'u^{-1} \cap P = \langle Q', R' \rangle \cap P\) is infinite by the hypotheses.
\end{proof}

\begin{proof}[Proof of Theorem~\ref{thm:parab_join_better}]
    Let \(\mathcal{K}\) be the finite set of maximal parabolic subgroups provided by Theorem~\ref{thm:parab_join_intermediate}.
    There is a family of pairs of finite index subgroups \(Q' \leqslant_f Q\) and \(R' \leqslant_f R\) satisfying Proposition~\ref{prop:parabs_infinite_index_only}.

    Let \(P \leqslant G\) be a maximal parabolic subgroup of \(G\) such that \(\langle Q', R' \rangle \cap P\) is infinite, and suppose that \(\langle Q', R' \rangle \cap P\) not equal to \(uQ'u^{-1} \cap P\)  or \(uR'u^{-1} \cap P\) for any \(u \in \langle Q', R' \rangle\).
    Then Theorem~\ref{thm:parab_join_intermediate} gives us \(u \in \langle Q', R' \rangle\) such that \(\langle Q', R' \rangle \cap P = u\langle Q' \cap K, R' \cap K \rangle u^{-1}\), where \(K = u^{-1}Pu\) is an element of \(\mathcal{K}\).
    Suppose that \(Q'\) and \(R'\) are almost compatible at \(K\). 
    Then either \(S \cap K \leqslant_f Q' \cap K\) or \(S \cap K \leqslant_f R' \cap K\).
    But then Proposition~\ref{prop:parabs_infinite_index_only}, gives that \(\langle Q', R' \rangle \cap P = uR'u^{-1} \cap P\) or \(\langle Q', R' \rangle \cap P = uQ'u^{-1} \cap P\) respectively.
    In either case we obtain a contradiction, completing the proof.
\end{proof}

When \(Q\) and \(R\) have almost compatible parabolics, then so do any pair of finite index subgroups \(Q' \leqslant_f Q\) and \(R' \leqslant_f R\).
It follows that the third case of Theorem~\ref{thm:parab_join_better} cannot occur for such \(Q\) and \(R\).

\begin{corollary}
\label{cor:almost_compat_parabs}
    Suppose that \(Q, R \leqslant G\) are finitely generated relatively quasiconvex subgroups with almost compatible parabolics.
    There is a family of pairs of finite index subgroups \(Q' \leqslant_f Q\) and \(R' \leqslant_f R\) as in \descref{E} with the following property.
    
    Suppose that \(P \leqslant G\) is a maximal parabolic subgroup of \(G\) with \(\langle Q', R' \rangle \cap P\) infinite.
    Then there is \(u \in \langle Q', R' \rangle\) such that \(\langle Q', R' \rangle \cap P\) is equal to either \(uQ'u^{-1} \cap P\) or \(uR'u^{-1} \cap P\).
    In particular, either \(uQ'u^{-1} \cap P\) or \(uR'u^{-1} \cap P\) is infinite.
    Moreover, if either \(Q' \cap P\) or \(R' \cap P\) is infinite, we may take \(u = 1\) in the above.
\end{corollary}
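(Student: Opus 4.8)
The plan is to show that almost compatibility of parabolics is inherited by finite index subgroups, and then to read off the conclusion from Theorem~\ref{thm:parab_join_better} by eliminating its third alternative.

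First I would record the elementary fact that if $Q$ and $R$ are almost compatible at a maximal parabolic $P \leqslant G$ and $Q' \leqslant_f Q$, $R' \leqslant_f R$, then $Q'$ and $R'$ are almost compatible at $P$. Indeed, suppose that $Q \cap R \cap P$ has finite index in $Q \cap P$, the other case being symmetric. Since $Q' \cap P$ has finite index in $Q \cap P$, the subgroup $Q' \cap R \cap P = (Q \cap R \cap P) \cap (Q' \cap P)$ has finite index in $Q' \cap P$. Since $R' \leqslant_f R$ and $Q' \cap R \cap P \leqslant R$, the subgroup $Q' \cap R' \cap P = (Q' \cap R \cap P) \cap R'$ has finite index in $Q' \cap R \cap P$. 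Composing these two containments shows that $Q' \cap R' \cap P$ has finite index in $Q' \cap P$, so $Q'$ and $R'$ are almost compatible at $P$. As this holds at every maximal parabolic subgroup of $G$, it follows that $Q'$ and $R'$ have almost compatible parabolics whenever $Q$ and $R$ do.

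Next I would invoke Theorem~\ref{thm:parab_join_better}. Since $Q$ and $R$ have almost compatible parabolics, its hypothesis is satisfied, so there is a finite set $\mathcal{K}$ of maximal parabolic subgroups and a family of pairs $Q' \leqslant_f Q$, $R' \leqslant_f R$ as in \descref{E} for which the trichotomy (i)--(iii) of that theorem holds, together with the refinement that $u = 1$ in cases (i) and (ii), and $u \in Q' \cup R'$ in case (iii), whenever $Q' \cap P$ or $R' \cap P$ is infinite. Fix such a pair $Q', R'$ and a maximal parabolic subgroup $P$ with $\langle Q', R' \rangle \cap P$ infinite, and let $u \in \langle Q', R' \rangle$ be the element supplied by the theorem. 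Case (iii) would require $Q'$ and $R'$ not to be almost compatible at $K = u^{-1} P u$; but by the first step $Q'$ and $R'$ are almost compatible at every maximal parabolic, so (iii) cannot occur. Hence $\langle Q', R' \rangle \cap P$ equals $u Q' u^{-1} \cap P$ or $u R' u^{-1} \cap P$. The ``in particular'' clause (infiniteness of one of these intersections) and the ``moreover'' clause ($u = 1$ when $Q' \cap P$ or $R' \cap P$ is infinite) are then immediate from the corresponding clauses of Theorem~\ref{thm:parab_join_better}.

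I do not expect a genuine obstacle: the argument is an inheritance lemma together with a citation of Theorem~\ref{thm:parab_join_better}. The only point requiring care is the index bookkeeping in the first step — in particular handling the asymmetry between the two factors so that both orientations of ``almost compatible at $P$'' are covered — but this is entirely routine.
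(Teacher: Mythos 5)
Your proof is correct and follows the same route as the paper: the paper's entire argument is the remark preceding the corollary, namely that almost compatibility of parabolics passes to finite index subgroups, so case (iii) of Theorem~\ref{thm:parab_join_better} is excluded. You have merely spelled out the finite-index bookkeeping that the paper leaves implicit, and that bookkeeping is accurate.
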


We now prove Corollaries~\ref{cor:almost_compat->virtual_compat} and \ref{thm:almost_compat_amalgamation}, with more precise existential statements than given in the introduction.
In particular, we find a finite index subgroup \(Q_1 \leqslant_f Q\) that takes over the role of \(Q\) in \descref{E} in the following. 

\begin{theorem}
\label{thm:almost_compat->virtual_compat_existence}
    Suppose that \(Q, R \leqslant G\) are finitely generated relatively quasiconvex subgroups with almost compatible parabolics.
    There is a finite index subgroup \(Q_1 \leqslant_f Q\) and a family of pairs of finite index subgroups \(Q' \leqslant_f Q_1\) and \(R' \leqslant_f R\) as in \descref{E} such that \(Q'\) and \(R'\) have compatible parabolics.    
\end{theorem}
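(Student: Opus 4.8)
To establish Theorem~\ref{thm:almost_compat->virtual_compat_existence} the plan is to combine the reduction of Proposition~\ref{prop:killing_finite_parabolics} with Corollary~\ref{cor:almost_compat_parabs}. First I would note that \(G\) is residually finite (being QCERF), so Proposition~\ref{prop:killing_finite_parabolics} supplies a finite index subgroup \(Q_1 \leqslant_f Q\) with the property that \(Q_1 \cap P\) is either infinite or trivial for every maximal parabolic subgroup \(P \leqslant G\). By Lemma~\ref{lem:props_of_qc_subgroups}, \(Q_1\) is again finitely generated and relatively quasiconvex, and a short index computation shows that \(Q_1\) and \(R\) still have almost compatible parabolics: at a maximal parabolic \(P\), if \(Q \cap R \cap P\) has finite index in \(Q \cap P\) then \(Q_1 \cap R \cap P = (Q_1 \cap P) \cap (Q \cap R \cap P)\) has finite index in \(Q_1 \cap P\), whereas if \(Q \cap R \cap P\) has finite index in \(R \cap P\) then \(Q_1 \cap R \cap P = Q_1 \cap (Q \cap R \cap P)\) still has finite index in \(R \cap P\).

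Next I would apply Corollary~\ref{cor:almost_compat_parabs} to the pair \((Q_1, R)\). This produces a family of pairs of finite index subgroups \(Q' \leqslant_f Q_1\) and \(R' \leqslant_f R\) as in \descref{E} (with \(Q_1\) playing the role of \(Q\)) such that for every maximal parabolic \(P\) with \(\langle Q', R' \rangle \cap P\) infinite there is \(u \in \langle Q', R' \rangle\) with \(\langle Q', R' \rangle \cap P\) equal to \(uQ'u^{-1} \cap P\) or \(uR'u^{-1} \cap P\), and moreover \(u\) may be taken to be \(1\) whenever \(Q' \cap P\) or \(R' \cap P\) is infinite. The claim is then that every pair \((Q', R')\) in this family has compatible parabolics. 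Since \(Q' = Q_1 \cap M'\) for some \(M' \leqslant_f G\) arising in \descref{E}, for every maximal parabolic \(P\) the subgroup \(Q' \cap P = (Q_1 \cap P) \cap M'\) has finite index in \(Q_1 \cap P\), so \(Q' \cap P\) is infinite exactly when \(Q_1 \cap P\) is, and trivial exactly when \(Q_1 \cap P\) is.

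Fixing a maximal parabolic \(P\), there are two cases. If \(Q_1 \cap P\) is trivial then \(Q' \cap P = \{1\} \leqslant R' \cap P\) and \(Q', R'\) are compatible at \(P\). If \(Q_1 \cap P\) is infinite then \(Q' \cap P\) is infinite, so \(\langle Q', R' \rangle \cap P \supseteq Q' \cap P\) is infinite and Corollary~\ref{cor:almost_compat_parabs} applies with \(u = 1\): either \(\langle Q', R' \rangle \cap P = Q' \cap P\), and then \(R' \cap P \subseteq \langle Q', R' \rangle \cap P = Q' \cap P\); or \(\langle Q', R' \rangle \cap P = R' \cap P\), and then \(Q' \cap P \subseteq R' \cap P\). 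In either case \(Q'\) and \(R'\) are compatible at \(P\), and since \(P\) was arbitrary they have compatible parabolics, proving the theorem.

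I do not expect a serious obstacle: the substantive input is entirely contained in Proposition~\ref{prop:killing_finite_parabolics} and Corollary~\ref{cor:almost_compat_parabs}, and the remainder is bookkeeping. The only point requiring a little care is to make the ``infinite or trivial'' dichotomy for \(Q_1 \cap P\) hold uniformly over the whole family \descref{E}; this is why it is cleanest to observe directly that each \(Q' \cap P\) is a finite index subgroup of \(Q_1 \cap P\), rather than to track the dichotomy through the successive finite index subgroups appearing in the definition of \descref{E}.
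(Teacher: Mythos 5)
Your proposal is correct and follows essentially the same route as the paper: obtain \(Q_1\) from Proposition~\ref{prop:killing_finite_parabolics}, apply Corollary~\ref{cor:almost_compat_parabs} to the pair \((Q_1,R)\), and split into the cases \(Q'\cap P\) trivial versus infinite, using \(u=1\) in the latter. The extra verifications you include (that almost compatibility passes to \((Q_1,R)\) and that the infinite-or-trivial dichotomy descends to \(Q'\cap P\) via finite index) are left implicit in the paper but are exactly the right bookkeeping.
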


\begin{proof}
    Suppose \(Q\) and \(R\) have almost compatible parabolics.
    By Proposition~\ref{prop:killing_finite_parabolics}, there is a finite index subgroup \(Q_1 \leqslant_f Q\) such that if \(P \leqslant G\) is a maximal parabolic subgroup of \(G\), then \(Q_1 \cap P\) is either infinite or trivial.
    Let \(Q' \leqslant_f Q_1\) and \(R' \leqslant_f R\) be finite index subgroups as in \descref{E} satisfying Corollary~\ref{cor:almost_compat_parabs}.
    Since \(Q\) and \(R\) have almost compatible parabolics, so do \(Q'\) and \(R'\).

    Let \(P \leqslant G\) be a maximal parabolic subgroup of \(G\).
    If \(Q' \cap P\) is finite, then \(Q_1 \cap P\) is finite and thus trivial by Proposition~\ref{prop:killing_finite_parabolics}.
    In this case \(Q' \cap P = \{1\} \leqslant R' \cap P\).
    On the other hand, if \(Q' \cap P\) is infinite then so is \(\langle Q', R' \rangle \cap P\).
    Now applying Corollary~\ref{cor:almost_compat_parabs}, we obtain that \(\langle Q', R' \rangle \cap P = Q' \cap P\) or \(\langle Q', R' \rangle \cap P = R' \cap P\).
    It follows that either \(R' \cap P \leqslant Q' \cap P\) or \(Q' \cap P \leqslant R' \cap P\) as required.
\end{proof}

\begin{theorem}
    Suppose that \(Q, R \leqslant G\) are finitely generated relatively quasiconvex subgroups with almost compatible parabolics.
    There is a finite index subgroup \(Q_1 \leqslant_f Q\) and a family of pairs of finite index subgroups \(Q' \leqslant_f Q_1\) and \(R' \leqslant_f R\) as in \descref{E} such that \(\langle Q',R' \rangle\) is quasiconvex and \(\langle Q',R' \rangle \cong Q' \ast_{Q' \cap R'} R'\).    
\end{theorem}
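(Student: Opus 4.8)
The plan is to combine Theorem~\ref{thm:almost_compat->virtual_compat_existence}, which upgrades almost compatibility to genuine compatibility of parabolics, with the Mart\'{i}nez-Pedroza--Sisto combination theorem (Theorem~\ref{thm:compat_parab_amalgam}); the only genuine work is to thread the metric hypothesis of the latter through the nested quantifiers of \descref{E} without destroying compatibility.

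First I would apply Theorem~\ref{thm:almost_compat->virtual_compat_existence} to obtain the finite index subgroup $Q_1\leqslant_f Q$ and a family of pairs $Q'\leqslant_f Q_1$, $R'\leqslant_f R$ as in \descref{E}, witnessed by some $L_1\leqslant_f G$, every member of which has compatible parabolics. Feeding $L'=L_1$ into the defining property of this family yields $M_1\leqslant_f L_1$ with $Q_1\cap L_1\subseteq M_1$; taking $M'=M_1$ isolates a concrete pair $Q_2:=Q_1\cap M_1$, $R_2:=R\cap M_1$ with compatible parabolics. Setting $S:=Q_1\cap R$, one checks that $Q_1\cap R\subseteq Q_1\cap L_1\subseteq M_1$, so that $Q_2\cap R_2=S$, and more generally that every pair in the family intersects in exactly $S$. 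Since $Q_2$ and $R_2$ are relatively quasiconvex (Lemma~\ref{lem:props_of_qc_subgroups}) with compatible parabolics, Theorem~\ref{thm:compat_parab_amalgam} applied with $S'=S$ furnishes a constant $M_0=M(Q_2,R_2,S)\geq 0$.

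Next I would refine the family so that the length condition of Theorem~\ref{thm:compat_parab_amalgam} holds uniformly for the fixed target $M_0$. Let $U_0=\{\,g\in(Q_2\cup R_2)\setminus S : \abs{g}_X<M_0\,\}$, a finite set disjoint from $S$. As $G$ is QCERF, $S=Q_1\cap R$ is separable, so Lemma~\ref{lem:sep_from_finite_set} produces $L_0\leqslant_f G$ with $S\subseteq L_0\subseteq M_1$ and $L_0\cap U_0=\emptyset$ (intersect the subgroup it gives with $M_1$). I claim $L_0$ witnesses the required family. Given $L'\leqslant_f L_0$ with $S\subseteq L'$, we have $L'\leqslant_f L_1$, so the original family supplies $M_{\mathrm{fam}}\leqslant_f L'$ with $Q_1\cap L'\subseteq M_{\mathrm{fam}}$ below which every pair has compatible parabolics; since $Q_1\cap L'$ is separable (it is finite index in $Q_1$, hence finitely generated relatively quasiconvex) and disjoint from $U_0$ (as $L'\subseteq L_0$), Lemma~\ref{lem:sep_from_finite_set} gives $G''\leqslant_f G$ with $Q_1\cap L'\subseteq G''$ and $G''\cap U_0=\emptyset$, and I put $M:=M_{\mathrm{fam}}\cap G''\leqslant_f L'$. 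Then for any $M'\leqslant_f M$ with $Q_1\cap L'\subseteq M'$, the pair $Q'=Q_1\cap M'$, $R'=R\cap M'$ lies in the original family and so has compatible parabolics; moreover $M'\subseteq M_1$ forces $Q'\leqslant Q_2$ and $R'\leqslant R_2$, $S\subseteq M'$ forces $Q'\cap R'=S$, and $M'\subseteq G''$ forces $\minx\bigl((Q'\cup R')\setminus S\bigr)\geq M_0$. Theorem~\ref{thm:compat_parab_amalgam} now yields that $\langle Q',R'\rangle$ is relatively quasiconvex and $\langle Q',R'\rangle\cong Q'\ast_S R'=Q'\ast_{Q'\cap R'}R'$, which is exactly the claim.

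The main obstacle is that the combination constant $M_0$ supplied by Theorem~\ref{thm:compat_parab_amalgam} depends on the pair of subgroups one inputs, so one cannot blithely ask for ``sufficiently deep index'' against a moving target. The resolution, carried out above, is to fix $M_0$ once using the concrete pair $(Q_2,R_2)$ and then observe that every member $(Q',R')$ of the refined family satisfies $Q'\leqslant Q_2$, $R'\leqslant R_2$, and---the crucial point---$Q'\cap R'=S$, which is constant along the whole family; hence the single constant $M_0=M(Q_2,R_2,S)$ works for all of them simultaneously. The remaining care is bookkeeping: each refinement ($L_0$, and then $M$ inside $M_{\mathrm{fam}}$) must be kept inside the family produced by Theorem~\ref{thm:almost_compat->virtual_compat_existence} so that compatibility of parabolics is not lost, which is why the separable subgroups $Q_1\cap R$ and $Q_1\cap L'$ are used to excise $U_0$ rather than some larger, possibly non-separable set.
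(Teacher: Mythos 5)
Your proposal is correct, and its skeleton is the same as the paper's: start from Theorem~\ref{thm:almost_compat->virtual_compat_existence} and finish with Theorem~\ref{thm:compat_parab_amalgam}. Where you genuinely diverge is in how the metric hypothesis of Theorem~\ref{thm:compat_parab_amalgam} is arranged and to which pair its constant is attached. The paper fixes \(S' = Q_1 \cap R\) and the constant \(M = M(Q,R,S')\), then obtains \(\minx \bigl((Q' \cup R') \setminus S'\bigr) \geq M\) by rerunning Proposition~\ref{prop:sep->C1-C5} (together with Theorem~\ref{thm:almost_compat->virtual_compat_existence}) so that the family additionally satisfies \descref{C2} with \(B = M\), quoting Lemma~\ref{lem:C2->C2_old}. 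You instead anchor the constant at a concrete member \((Q_2,R_2)\) of the compatible family and enforce the length condition by hand: excise the finite set \(U_0\) of short elements of \((Q_2 \cup R_2)\setminus S\) using separability of \(S = Q_1 \cap R\) and Lemma~\ref{lem:sep_from_finite_set}, keeping everything below \(M_1\) so that \(Q' \leqslant Q_2\), \(R' \leqslant R_2\), and \(Q' \cap R' = S\) persist along the whole refined family. This buys two things: your application of Theorem~\ref{thm:compat_parab_amalgam} is to a pair that literally has compatible parabolics, whereas the paper takes \(M(Q,R,S')\) for \((Q,R)\), which only has almost compatible parabolics, so your anchoring is the more careful reading of that theorem's hypotheses; and you avoid a second invocation of Proposition~\ref{prop:sep->C1-C5} for the length condition, replacing it with a more elementary one-shot separability argument. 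Two small remarks: the auxiliary subgroup \(G''\) in your refinement is redundant, since \(M' \subseteq M_{\mathrm{fam}} \subseteq L' \subseteq L_0\) already forces \((Q' \cup R') \cap U_0 = \emptyset\); and your explicit check that \(S \subseteq M_1\), whence \(Q_2 \cap R_2 = S\) and \(Q' \cap R' = S\) throughout, is exactly the point the paper compresses into the sentence that \descref{E} ensures \(Q' \cap R' = S'\).
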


\begin{proof}
    Suppose \(Q\) and \(R\) have almost compatible parabolics and let \(Q_1 \leqslant_f Q\) be the finite index subgroup provided by Theorem~\ref{thm:almost_compat->virtual_compat_existence}.
    Note that \(S' = Q_1 \cap R\) is a fixed finite index subgroup of \(Q \cap R\) depending only on \(Q\).
    Take \(M = M(Q,R,S') \geq 0\) to be the constant of Theorem~\ref{thm:compat_parab_amalgam}.
    
    Combining Proposition~\ref{prop:sep->C1-C5} and Theorem~\ref{thm:almost_compat->virtual_compat_existence} there is a family of pairs of finite index subgroups \(Q' \leqslant_f Q_1\) and \(R' \leqslant_f R\) as in \descref{E} that have compatible parabolics and satisfy condition \descref{C2} with parameter \(B = M\).
    By Lemma~\ref{lem:C2->C2_old}, \(\minx{(Q' \cup R') \setminus S'} \geq M\).
    Note that \descref{E} ensures that \(Q' \cap R' = S'\).
    Now applying Theorem~\ref{thm:compat_parab_amalgam}, we see that \(\langle Q', R' \rangle\) is relatively quasiconvex and \(\langle Q', R' \rangle \cong Q' \ast_{Q' \cap R'} R'\) as required.
\end{proof}

\begin{proof}[Proof of Corollary~\ref{cor:full_strongly_qc_join}]
    Recall that if \(Q\) and \(R\) are strongly quasiconvex or full, they have almost compatible parabolics.
    Let \(Q\) and \(R\) be strongly relatively quasiconvex subgroups of \(G\), and let \(Q' \leqslant_f Q\) and \(R' \leqslant_f R\) be subgroups as in \descref{E} for which Theorem~\ref{thm:virtual_join_qc} and Corollary~\ref{cor:almost_compat_parabs} hold.
    Let \(P \leqslant G\) be a maximal parabolic subgroup of \(G\).
    Since \(Q\) and \(R\) have finite intersections with maximal parabolic subgroups of \(G\), so do their subgroups \(Q'\) and \(R'\).
    In particular, \(uQ'u^{-1} \cap P\) and \(uR'u^{-1} \cap P\) are finite for all \(u \in \langle Q', R' \rangle\).
    Corollary~\ref{cor:almost_compat_parabs} now directly implies that \(\langle Q', R' \rangle \cap P\) is finite.
    Therefore \(\langle Q', R' \rangle\) is strongly relatively quasiconvex.

    Now suppose that \(Q\) and \(R\) are full relatively quasiconvex subgroups, and again let \(Q' \leqslant_f Q\) and \(R' \leqslant_f R\) be subgroups as in \descref{E} for which Theorem~\ref{thm:virtual_join_qc} and Corollary~\ref{cor:almost_compat_parabs} hold.
    If \(P \leqslant G\) is a maximal parabolic subgroup of \(G\) such that \(\langle Q', R' \rangle \cap P\) is infinite, then by Corollary~\ref{cor:almost_compat_parabs}, there is \(u \in \langle Q', R' \rangle\) such that at least one of \(Q' \cap u^{-1} P u\) or \(R' \cap u^{-1} P u\) is infinite.
    Without loss of generality, say that \(R' \cap u^{-1} P u\) is infinite.
    Now \(R' \cap u^{-1} P u\) has finite index in \(R \cap u^{-1} P u\), which has finite index in \(u^{-1} P u\) since \(R\) is fully relatively quasiconvex.
    Conjugating by \(u\), we see that \(uR'u^{-1} \cap P\) has finite index in \(P\).
    Observing that \(\langle Q', R' \rangle \cap P\) contains \(uR'u^{-1} \cap P\) completes the proof.
\end{proof}

As an immediate consequence of the above, the virtual joins \(\langle Q', R' \rangle\) are hyperbolic when \(Q\) and \(R\) are strongly relatively quasiconvex.
It may be of interest that this conclusion in fact holds the under slightly weaker hypotheses.

\begin{corollary}
    Let \(G\) be a finitely generated QCERF relatively hyperbolic group.
    
    If \(Q\) is a hyperbolic relatively quasiconvex subgroup and \(R\) is a strongly relatively quasiconvex subgroup of \(G\), then there is a family of pairs of finite index subgroups \(Q' \leqslant_f Q\) and \(R' \leqslant_f R\) as in \descref{E} such that \(\langle Q', R' \rangle\) is relatively quasiconvex and hyperbolic.
\end{corollary}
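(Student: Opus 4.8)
The plan is to combine the parabolic structure result of Corollary~\ref{cor:almost_compat_parabs} with the general fact that a relatively hyperbolic group all of whose peripheral subgroups are hyperbolic is itself hyperbolic.

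First I would observe that, since $R$ is strongly relatively quasiconvex, its intersection with every maximal parabolic subgroup of $G$ is finite; the same then holds for every finite index subgroup of $R$, and in particular $Q$ and $R$ have almost compatible parabolics. Thus I may invoke both Theorem~\ref{thm:virtual_join_qc} and Corollary~\ref{cor:almost_compat_parabs} — intersecting the two \descref{E}-families exactly as in the proof of Corollary~\ref{cor:full_strongly_qc_join} — to obtain a family of pairs $Q' \leqslant_f Q$ and $R' \leqslant_f R$ as in \descref{E} such that, for every pair in the family, $\langle Q', R'\rangle$ is relatively quasiconvex and, whenever $P \leqslant G$ is a maximal parabolic subgroup with $\langle Q', R'\rangle \cap P$ infinite, there is $u \in \langle Q', R'\rangle$ with $\langle Q', R'\rangle \cap P$ equal to $uQ'u^{-1}\cap P$ or $uR'u^{-1}\cap P$. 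I would also record that $Q'$, being a finite index subgroup of the hyperbolic group $Q$, is hyperbolic, and is relatively quasiconvex by Lemma~\ref{lem:props_of_qc_subgroups}, while $R'$ remains strongly relatively quasiconvex.

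Next, fixing such a pair and a maximal parabolic $P$ with $\langle Q', R'\rangle \cap P$ infinite, I would rule out the alternative $\langle Q', R'\rangle \cap P = uR'u^{-1}\cap P$: this subgroup equals $u(R' \cap u^{-1}Pu)u^{-1}$, and since $u^{-1}Pu$ is again a maximal parabolic subgroup of $G$ and $R'$ is strongly relatively quasiconvex, it is finite, contradicting infiniteness. Hence $\langle Q', R'\rangle \cap P = uQ'u^{-1}\cap P = u(Q' \cap u^{-1}Pu)u^{-1}$. Applying Lemma~\ref{lem:hyperbolic_parabolic_quasiconvex} to the hyperbolic relatively quasiconvex subgroup $Q'$ and the maximal parabolic subgroup $u^{-1}Pu$ shows that $Q' \cap u^{-1}Pu$ is hyperbolic, and therefore so is the conjugate $\langle Q', R'\rangle \cap P$.

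Finally I would invoke Hruska's description \cite[Theorem 9.1]{HruskaRHCG}: the relatively quasiconvex subgroup $\langle Q', R'\rangle$ is hyperbolic relative to a collection of conjugacy class representatives of the infinite subgroups of the form $\langle Q', R'\rangle \cap P$ ($P$ a maximal parabolic of $G$), a collection which is finite and, by the previous step, consists of hyperbolic subgroups. Since a group that is hyperbolic relative to a finite collection of hyperbolic subgroups is hyperbolic, $\langle Q', R'\rangle$ is hyperbolic, as required. The only non-routine ingredient is this last general fact about relatively hyperbolic groups; I expect the main (and very mild) obstacle to be citing it cleanly — it is standard, following for instance from Osin's work or from the Combination Theorem for relatively hyperbolic groups — while everything else is bookkeeping on top of results already available in the paper.
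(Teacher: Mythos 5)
Your proposal is correct and follows essentially the same route as the paper: reduce to Corollary~\ref{cor:almost_compat_parabs} via almost compatible parabolics, rule out the \(uR'u^{-1}\cap P\) case using strong relative quasiconvexity of \(R\), deduce hyperbolicity of the parabolic intersections from Lemma~\ref{lem:hyperbolic_parabolic_quasiconvex} (the paper conjugates \(Q'\) first via Lemma~\ref{lem:props_of_qc_subgroups}, which is equivalent to your use of \(u^{-1}Pu\)), and conclude via Hruska's peripheral structure theorem together with the standard fact that a group hyperbolic relative to hyperbolic peripherals is hyperbolic, for which the paper cites \cite[Corollary 2.41]{OsinRHG}.
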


\begin{proof}
    Let \(Q\) be a hyperbolic relatively quasiconvex subgroup of \(G\), and \(R\) a strongly relatively quasiconvex subgroup of \(G\).
    Since \(R\) is strongly quasiconvex, \(Q\) and \(R\) have almost compatible parabolics.
    Moreover, \(Q\) and \(R\) are hyperbolic they are certainly finitely generated, so Corollary~\ref{cor:almost_compat_parabs} applies.
    Let \(Q' \leqslant_f Q\) and \(R' \leqslant_f R\) be subgroups as in \descref{E} for which Corollary~\ref{cor:almost_compat_parabs} holds, and let \(P \leqslant G\) be a maximal parabolic subgroup of \(G\) with \(\langle Q', R' \rangle \cap P\) infinite.

    Since \(R\) is strongly relatively quasiconvex, \(uR'u^{-1} \cap P\) is finite for all \(u \in \langle Q', R' \rangle\).
    Hence Corollary~\ref{cor:almost_compat_parabs} implies that \(\langle Q', R' \rangle \cap P = uQ'u^{-1} \cap P\) for some \(u \in \langle Q', R' \rangle\).
    By Lemma~\ref{lem:props_of_qc_subgroups}, \(uQ'u^{-1}\) is relatively quasiconvex.
    Now applying Lemma~\ref{lem:hyperbolic_parabolic_quasiconvex} gives that \(uQ'u^{-1} \cap P\) is hyperbolic.
    By Hruska \cite[Theorem 9.1]{HruskaRHCG}, \(\langle Q', R' \rangle\) is hyperbolic relative to a collection of hyperbolic groups.
    Finally, \cite[Corollary 2.41]{OsinRHG} yields that \(\langle Q', R' \rangle\) is hyperbolic.
\end{proof}

\printbibliography

\end{document}